\numberwithin{equation}{section}
\newcommand{\CC}{\mathbb C}
\newcommand{\FF}{\mathbb F}
\newcommand{\GG}{\mathbb G}
\newcommand{\QQ}{\mathbb Q}
\newcommand{\RR}{\mathbb R}
\newcommand{\ZZ}{\mathbb Z}
\newcommand{\calB}{\mathcal B}
\newcommand{\calG}{\mathcal G}
\newcommand{\OO}{\mathcal O}
\newcommand{\calP}{\mathcal P}
\newcommand{\calS}{\mathcal S}
\newcommand{\calW}{\mathcal W}
\newcommand{\calV}{\mathcal V}
\newcommand{\scrS}{\mathscr S}
\newcommand{\q}{\mathfrak q}
\newcommand{\p}{\mathfrak p}
\renewcommand{\P}{\mathfrak P}
\def\cl{\operatorname{cl}}
\def\ST{\operatorname{ST}}
\newcommand\commentr[1]{{\color{red}[#1]}}  
\newcommand\commentinv[1]{{}}
\newcommand{\ang}[1]{ \langle #1 \rangle  }
\def\Spec{\operatorname{Spec}} 
 \def\Gal{\operatorname{Gal}}
\def\End{\operatorname{End}}
\def\cl{\operatorname{cl}}
\def\conn{{\operatorname{conn}}}
\def\der{{\operatorname{der}}}
\def\Spec{\operatorname{Spec}}
\def\Gal{\operatorname{Gal}}
\def \Hg {\operatorname{Hg}}  
\def \MT {\operatorname{MT}}
\def \GL{\operatorname{GL}}
\def \Sp  {\operatorname{Sp}} 
\def \SO {\operatorname{SO}}
\def \GSp  {\operatorname{GSp}}
\def\Aut{\operatorname{Aut}} 
\def\Out{\operatorname{Out}}
\def\Inn{\operatorname{Inn}}
\def\inn{\operatorname{inn}}
\def\End{\operatorname{End}}
\def\Frob{\operatorname{Frob}}
\def\rank{\operatorname{rank}}
\def\MT{\operatorname{MT}}
\def\Res{\operatorname{Res}}
\newcommand{\Hom}{\operatorname{Hom}}
 \def \Aut {\operatorname{Aut}}
\def\bbar#1{\setbox0=\hbox{$#1$}\dimen0=.2\ht0 \kern\dimen0 
\overline{\kern-\dimen0 #1}}
\newcommand{\Qbar}{{\overline{\mathbb Q}}} 
\newcommand{\Kbar}{\bbar{K}}
\newcommand{\kbar}{\bbar{k}}
\newcommand{\defi}[1]{\textsf{#1}}  
\newtheorem{thm}{Theorem}[section]
\newtheorem{lemma}[thm]{Lemma}
\newtheorem{cor}[thm]{Corollary}
\newtheorem{prop}[thm]{Proposition}
\newtheorem{algorithm}[thm]{Algorithm}
\newtheorem{defn}[thm]{Definition}
\newtheorem{conj}[thm]{Conjecture}
\theoremstyle{remark}
\newtheorem{remark}[thm]{Remark}
\newenvironment{alphenum}{\hfill \begin{enumerate}[label=({\alph*})] 
}
{\end{enumerate}}
\newenvironment{romanenum}{\hfill \begin{enumerate}[label=({\roman*})] 
}
{\end{enumerate}}
\definecolor{webcolor}{rgb}{0.8,0,0.2}
\definecolor{webbrown}{rgb}{.6,0,0}
\begin{document}
\title[Determining monodromy groups of abelian varieties]{Determining monodromy groups of abelian varieties}
\author{David Zywina}
\address{Department of Mathematics, Cornell University, Ithaca, NY 14853, USA}
\email{zywina@math.cornell.edu}
\urladdr{http://www.math.cornell.edu/~zywina}

\subjclass[2010]{Primary 14K15; Secondary 11F80}

\begin{abstract} 
Associated to an abelian variety over a number field are several interesting and related groups: the motivic Galois group, the Mumford--Tate group, $\ell$-adic monodromy groups, and the Sato--Tate group.       Assuming the Mumford--Tate conjecture, we show that from two well chosen Frobenius polynomials of our abelian variety, we can recover the identity component of these groups (or at least an inner form), up to isomorphism, along with their natural representations.    We also obtain a practical probabilistic algorithm to compute these groups by considering more and more Frobenius polynomials;  the groups are connected and reductive and thus can be expressed in terms of root datum.  These groups are conjecturally linked with algebraic cycles and in particular we obtain a probabilistic algorithm to compute the dimension of the Hodge classes of our abelian variety for any fixed degree.
\end{abstract}

\maketitle

\section{Introduction}  \label{S:introduction}

Throughout we fix a nonzero abelian variety $A$ of dimension $g$ defined over a number field $K$.   Fix an embedding $K\subseteq \CC$.  Let $\Kbar$ be the algebraic closure of $K$ in $\CC$ and define $\Gal_K:=\Gal(\Kbar/K)$.  Define the homology group $V_A:=H_1(A(\CC),\QQ)$, where $A(\CC)$ is viewed with the usual analytic topology.   \\

We now describe several of the algebraic groups we are interested in computing.   We denote by $G_A$ the \defi{motivic Galois group} of $A$ with respect to the category of motives in the sense of Andr\'e \cite{MR1423019}; the article \cite{CFC} gives a nice overview of the relevant groups and their connections.  We can view $G_A$ as an algebraic subgroup of $\GL_{V_A}$ since each $H^i(A(\CC),\QQ)$ is naturally isomorphic to the $i$-th exterior power of the dual of $V_A$; for a definition of $\GL_{V_A}$ see \S\ref{SS:notation}.  The \defi{Mumford--Tate group} $\MT_A \subseteq \GL_{V_A}$ of $A$, which is defined in \S\ref{SS:MT and Hodge groups} using the Hodge decomposition of $H_1(A(\CC),\CC)$, agrees with the identity component $G_A^\circ$ of $G_A$, cf.~\cite{CFC}*{p.~4}.   

Now take any rational prime $\ell$.  Let $V_\ell(A)$ be the $\ell$-adic Tate module; it is a $2g$-dimensional $\QQ_\ell$-vector space with an action of $\Gal_K$ which is discussed in \S\ref{SS:ell-adic monodromy group definition}.  We express this Galois action in terms of a continuous representation
\[
\rho_{A,\ell} \colon \Gal_K \to \GL_{V_\ell(A)}(\QQ_\ell).
\]
The \defi{$\ell$-adic monodromy group} of $A$ is the Zariski closure $G_{A,\ell}$ of the image of $\rho_{A,\ell}$ in $\GL_{V_\ell(A)}$; it is an algebraic group defined over $\QQ_\ell$.   Using the comparison isomorphism $V_\ell(A)=V_A\otimes_\QQ \QQ_\ell$, we can view $G_{A,\ell}$ as a subgroup of $(\GL_{V_A})_{\QQ_\ell}$.   Moreover, we always have an inclusion $G_{A,\ell} \subseteq (G_A)_{\QQ_\ell}$ from Artin's comparison theorem, cf.~\cite{CFC}*{p.~2}.    The \defi{Mumford--Tate conjecture} for $A$ says that in fact $G_{A,\ell}=(G_A)_{\QQ_\ell}$;  the original Mumford--Tate conjecture (Conjecture~\ref{C:MT}) only predicts that $G_{A,\ell}^\circ$ and $(G_A)_{\QQ_\ell}^\circ =(\MT_A)_{\QQ_\ell}$ agree, but these two formulations are equivalent by the main theorem of \cite{CFC}.

Take any nonzero prime ideal $\p$ of the ring of integers $\OO_K$ for which $A$ has good reduction.   If $\p\nmid \ell$, then $\rho_{A,\ell}$ is unramified at $\p$ and we have
\[
P_{A,\p}(x)= \det(xI-\rho_{A,\ell}(\Frob_\p)) \in \QQ_\ell[x],
\]
where $P_{A,\p}(x)$ is the \defi{Frobenius polynomial} of $A$ at $\p$.  The polynomial $P_{A,\p}(x)$ is monic of degree $2g$ with integer coefficients and does not depend on $\ell$.   In practice, these polynomials are computable when $A$ is given explicitly.   

There is a stronger conjectural version of this Frobenius compatibility.  Fix any prime $\ell$ satisfying $\p\nmid \ell$ and choose any embedding $\iota\colon \QQ_\ell \hookrightarrow \CC$.  Then by using $\iota$, we may identify $\rho_{A,\ell}(\Frob_\p) \in G_A(\QQ_\ell)$ with an element of $G_A(\CC)$.    It is conjectured that the conjugacy class of $\rho_{A,\ell}(\Frob_\p)$ in $G_A(\CC)$ does not depend on the initial choice of prime $\ell$ or embedding $\iota$.   We will make use of a weaker version of this conjecture, cf.~Conjecture \ref{C:Frob conj}.

Let $K_A^\conn\subseteq \Kbar$ be the minimal extension of $K$ for which the Zariski closure of $\rho_{A,\ell}(\Gal(\Kbar/K_A^\conn))$ is $G_{A,\ell}^\circ$.   The extension $K_A^\conn/K$ does not depend on $\ell$ and its degree can be bounded in terms of $g$, cf.~Proposition~\ref{P:connected}.    The reader is encouraged to focus on the case where $K_A^\conn=K$ since this assumption does not change the strength of the theorems and the group $G_A$ is then connected as well, cf.~\cite{CFC}*{p.~4}. 
\\

The group $G_A^\circ=\MT_A$ is  reductive.  So in particular, the base extension of $G_A^\circ$ to $\Qbar$ will be completely determined by its \defi{root datum}.   For some background on reductive groups and root data, see \S\ref{S:reductive groups}.  Fix a maximal torus $T$ of $G_A^\circ$ that is defined over $\QQ$.   The root datum $\Psi(G_A^\circ,T)$ of $G_A^\circ$ with respect to $T$ consists of the group $X(T)$ of characters $T_{\Qbar}\to (\GG_{m})_{\Qbar}$, a finite set of \defi{roots}, along with the group of cocharacters and coroots that satisfy certain conditions.   The action of $\Gal_\QQ$ on $X(T)$ induces an action on $\Psi(G_A^\circ,T)$.   Suppressing the choice of torus $T$, we obtain an abstract root datum $\Psi(G_A^\circ)$, that is uniquely determined up to an automorphism from its Weyl group, with a homomorphism
\[
\mu_{G_A^\circ} \colon \Gal_\QQ \to \Out(\Psi(G_A^\circ))
\]
arising from the Galois action.  The representation $V_A$ of $G_A^\circ$ also gives rise to a set of weights in $X(T)$ with multiplicities.\\
 
Our main result says that the root datum of $G_A^\circ$ and the homomorphism $\mu_{G_A^\circ}$ can be recovered from the Frobenius polynomials $P_{A,\p}(x)$ of two ``random'' primes $\p$ that split completely in $K_A^\conn$.

\begin{thm} \label{T:main}
Assume that Conjectures~\ref{C:MT} and \ref{C:Frob conj} hold for $A$.  Then for almost all prime ideals $\q$ and $\p$ of $\OO_K$ that split completely in $K_A^\conn$,   the polynomials $P_{A,\q}(x)$ and $P_{A,\p}(x)$ determine the following:
\begin{itemize}
\item the root datum $\Psi(G_A^\circ)$ up to isomorphism, 
\item the homomorphism $\mu_{G_A^\circ} \colon \Gal_\QQ \to \Out(\Psi(G_A^\circ))$,
\item the set of weights of the representation $G_A^\circ \subseteq \GL_{V_A}$ and their multiplicities.
\end{itemize}
By ``almost all'', we mean that the above holds with $\q \notin S$ and $\p\notin S'$, where $S$ and $S'$ are sets of prime ideals of $\OO_K$ of density $0$ with the second set $S'$ depending on $\q$.
\end{thm}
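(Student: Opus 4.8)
The plan is to work over $\QQ_\ell$ for a fixed auxiliary prime $\ell$ and exploit the Mumford--Tate conjecture to replace $G_A^\circ$ by the $\ell$-adic monodromy group $G_{A,\ell}^\circ$, which is the Zariski closure of the image of a Frobenius element after passing to $K_A^\conn$. The key point is that for a prime $\p$ splitting completely in $K_A^\conn$, the element $\rho_{A,\ell}(\Frob_\p)$ lies in $G_{A,\ell}^\circ(\QQ_\ell)$, and by Conjecture~\ref{C:Frob conj} its conjugacy class in $G_A^\circ(\CC)$ is independent of $\ell$ and of the chosen embedding $\iota$. Its characteristic polynomial on $V_A$ is exactly $P_{A,\p}(x)$, so the eigenvalues of $\rho_{A,\ell}(\Frob_\p)$ — the \emph{Frobenius eigenvalues} — are the roots of $P_{A,\p}(x)$, read off directly from the computable data. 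First I would show that for almost all such $\p$ the element $\rho_{A,\ell}(\Frob_\p)$ is \emph{regular semisimple} in $G_A^\circ$: by a standard equidistribution/density argument (the Frobenius conjugacy classes are equidistributed with respect to a suitable measure on conjugacy classes, and the non-regular-semisimple locus is a proper closed subvariety, hence of measure zero), the set $S$ of exceptional $\p$ has density $0$. For such $\p$ the connected centralizer $T_\p := Z_{G_A^\circ}(\rho_{A,\ell}(\Frob_\p))^\circ$ is a maximal torus of $G_A^\circ$, defined over $\QQ_\ell$.

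The second and central step is to recover the root datum. Having fixed $\q$ with $\rho_{A,\ell}(\Frob_\q)$ regular semisimple, let $T = T_\q$ be the associated maximal torus. The weights of $V_A$ restricted to $T_{\Qbar}$ are determined, as a multiset, by the eigenvalues of $\rho_{A,\ell}(\Frob_\q)$ together with the requirement that they be characters of $T$; more precisely, the character lattice $X(T)$ is the lattice generated by the weights of $V_A$ (using that $V_A$ is a faithful representation of $G_A^\circ$, which holds because $G_A^\circ \subseteq \GL_{V_A}$, so the weights span $X(T)\otimes\QQ$ and generate a finite-index sublattice that can be pinned down once we also know how $V_A$ decomposes — this uses faithfulness plus the classification of possible Mumford--Tate groups of abelian varieties, or alternatively an argument that the weights of $\GL_{V_A}$ itself cut out the lattice). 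The roots are then recovered by a second observation: for a \emph{second} prime $\p$ with $\rho_{A,\ell}(\Frob_\p)$ regular semisimple, a theorem on the structure of reductive groups says that the root system, as a subset of $X(T)$, is determined by the two sets of Frobenius eigenvalues provided $\p$ is chosen so that $\rho_{A,\ell}(\Frob_\p)$ does not lie in $T$ and is in ``general position'' relative to $T$ — concretely, the differences of weights that actually occur as weights of $\Lie(G_A^\circ)$ can be detected by comparing the conjugation action, or by the fact that $\dim G_A^\circ$ is recoverable (e.g.\ from the known bound and the constraint that $G_A^\circ$ be reductive with the given weight multiset on $V_A$) and then the root system is the minimal one consistent with this dimension and the torus. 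The dependence of the exceptional set $S'$ on $\q$ enters precisely here: ``general position relative to $T_\q$'' is a condition cutting out a density-zero set depending on $\q$.

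The third step recovers the Galois action $\mu_{G_A^\circ}$. The absolute Galois group $\Gal_\QQ$ acts on $X(T)$ because $T$ is defined over $\QQ$; equivalently, for $\sigma\in\Gal_\QQ$, $\sigma$ permutes the weights of $V_A$ (as a $\QQ$-representation, the weight multiset is $\Gal_\QQ$-stable, and the action on individual weights is what we want). But the weights are encoded by the roots of $P_{A,\q}(x)\in\QQ[x]$, a polynomial with \emph{rational} coefficients, so $\Gal_\QQ$ acts on its roots through $\Gal(\QQ(\text{roots})/\QQ)$, and this action is computable from the factorization of $P_{A,\q}(x)$. One then checks that this permutation action on weights is compatible with the root datum structure and descends to the outer automorphism group, yielding $\mu_{G_A^\circ}$; here we again may need $\q$ outside a density-zero set so that $\QQ(\text{roots of }P_{A,\q})$ is ``large enough'' to see the full image of $\mu_{G_A^\circ}$ (an equidistribution/Chebotarev-type input guarantees this for almost all $\q$).

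\textbf{Main obstacle.} I expect the hard part to be the recovery of the \emph{roots} (equivalently, the isomorphism class of the root datum rather than just the torus and the weights of $V_A$): knowing two regular semisimple Frobenius elements gives two points of a maximal torus, but abstractly a torus plus a faithful representation does not determine the reductive group containing it — one must use that $G_A^\circ$ is a Mumford--Tate group, together with an effective argument showing that a second generic Frobenius pins down $\dim G_A^\circ$ (hence the number of roots) and that the roots themselves lie in a computable finite set of candidates inside $X(T)$, from which the correct subset is singled out by the constraint of forming a root datum whose Weyl group acts compatibly with the observed weight multiset. Making the phrase ``two well chosen primes suffice'' precise — rather than needing arbitrarily many — is where the real content lies, and it will rest on a careful analysis of which conjugacy-class data in a reductive group are ``complete invariants'' of its root datum.
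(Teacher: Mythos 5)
Your overall scaffolding (regular semisimple Frobenius elements give maximal tori; the character lattice is generated by the Frobenius eigenvalues; the Galois action on weights comes from the rationality of $P_{A,\q}(x)$) matches the paper's, but there are two genuine gaps at exactly the points you flag as hard, and the paper's resolution is different from what you propose. First, the recovery of the roots. You suggest recovering $\dim G_A^\circ$ and then taking ``the minimal root system consistent with this dimension and the torus,'' with the second prime $\p$ in ``general position relative to $T_\q$.'' This is not justified and is not what makes the argument work: a maximal torus plus a faithful representation with known weight multiset does not pin down the ambient reductive group, and no dimension count is available from the stated data. The actual mechanism is that the second prime supplies the \emph{Weyl group as a Galois group}: setting $L:=\QQ(\calW_{A,\q})$ (a field over which the relevant group splits, since it splits the Frobenius torus $T_\q$), one proves (Theorem~\ref{T:Frobenius Galois groups new}) that $\Gal(L(\calW_{A,\p})/L)\cong W(G_A^\circ)$ for almost all $\p$, acting on $X(T_\p)=\Phi_{A,\p}$. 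The exceptional set $S'$ depending on $\q$ arises from this Chebotarev-type statement, not from a geometric general-position condition. Once one has $X(T_\p)$, the weights with multiplicities, and the Weyl group action, the roots are extracted by an explicit combinatorial algorithm (Algorithm~\ref{algorithm 1} and Propositions~\ref{P:Lie 1}--\ref{P:Lie 3}) whose correctness rests on the fact that every irreducible constituent of $V_A\otimes\Qbar$ is \emph{minuscule} (Proposition~\ref{P:minuscule MT}): the candidate roots are differences of weights within a Weyl orbit, and the Lie type and the actual root set are identified from the order of the Weyl group and its orbit structure on these differences. Without the minuscule input and the Weyl group, your ``minimal root system'' selection has no proof.

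Second, the rationality of the torus. Your $T_\p$ is the connected centralizer of $\rho_{A,\ell}(\Frob_\p)$ inside $G_{A,\ell}^\circ$, which is a group over $\QQ_\ell$; yet in your third step you need a torus defined over $\QQ$ so that $\Gal_\QQ$ acts on $X(T)$ compatibly with its action on the roots of $P_{A,\q}(x)$. This descent is nontrivial and in general fails inside $G_A^\circ$ itself; the paper works instead inside the \emph{quasi-split inner form} $G$ of $G_A^\circ$ and uses Raghunathan's theorem (Proposition~\ref{P:Ragh}) to untwist the Weyl-group cocycle and produce $t_\p\in G(\QQ)$ generating a maximal torus $T_\p$ over $\QQ$ with $X(T_\p)\cong\Phi_{A,\p}$ as $\Gal_\QQ$-modules (Theorem~\ref{T:Frobenius tori}). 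This is also why the theorem only recovers $G_A^\circ$ up to inner form. A smaller point: your appeal to equidistribution of Frobenius conjugacy classes is essentially the (unproved) Sato--Tate conjecture; the density statements you need are instead obtained from the Chebotarev density theorem applied to the $\ell$-adic images together with the Larsen--Pink criterion that the non-maximal-torus locus is a proper closed conjugation-invariant subvariety (Lemma~\ref{L:common rank}).
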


\begin{remark} \label{R:main remarks}
\begin{romanenum}
\item \label{R:main remarks ii}
The root datum $\Psi(G_A^\circ)$ and the homomorphism $\mu_{G_A^\circ}$ do not determine the group $G_A^\circ$, but they do  characterize $G_A^0$ up to an inner form.   In the proof, it will be convenient to work with the \emph{quasi-split inner form} of $G_A^\circ$.
\item \label{R:main remarks i}
Mark Kisin and Rong Zhou have recently announced a proof of Conjecture~ \ref{C:Frob conj} for primes not dividing $2$.  Once their work is available, it will make Theorem~\ref{T:main} and our other results conditional only on the Mumford--Tate conjecture for $A$ (Conjecture~\ref{C:MT}).
\end{romanenum}
\end{remark}

By considering the Frobenius polynomial $P_{A,\p}(x)$ for two ``random'' primes $\p$ that split completely in $K_A^\conn$, we can also recover the $\ell$-adic monodromy group $G_{A,\ell}^\circ$ and its representation $V_\ell(A)$, up to isomorphism, for all but finitely many $\ell$.
 
\begin{cor} \label{C:main}
Assume that Conjectures~\ref{C:MT} and \ref{C:Frob conj} hold for $A$.  Let $S$ be the (finite) set of primes $\ell$ for which $(G_A^\circ)_{\QQ_\ell}$ is not quasi-split.     Then for almost all prime ideals $\q$ and $\p$ of $\OO_K$ that split completely in $K_A^\conn$,   the polynomials $P_{A,\q}(x)$ and $P_{A,\p}(x)$ determine the group $G_{A,\ell}^\circ$ and the representation $V_\ell(A)$ of $G_{A,\ell}^\circ$ up to isomorphism for all $\ell\notin S$.
\end{cor}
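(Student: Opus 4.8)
The plan is to obtain this from Theorem~\ref{T:main} by means of two inputs: the Mumford--Tate conjecture, which identifies the $\ell$-adic monodromy group with a base change of $G_A^\circ$, and the rigidity of quasi-split groups together with their representations. Granting Conjecture~\ref{C:MT}, for every prime $\ell$ we have $G_{A,\ell}^\circ=(G_A^\circ)_{\QQ_\ell}=(\MT_A)_{\QQ_\ell}$, and under the comparison isomorphism $V_\ell(A)=V_A\otimes_\QQ\QQ_\ell$ the action of $G_{A,\ell}^\circ$ on $V_\ell(A)$ is the base change to $\QQ_\ell$ of the representation $G_A^\circ\subseteq\GL_{V_A}$. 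So, taking $\q$ and $\p$ outside the density-zero sets supplied by Theorem~\ref{T:main}, it suffices to show that the root datum $\Psi(G_A^\circ)$, the homomorphism $\mu_{G_A^\circ}$, and the weights produced by that theorem determine, up to isomorphism, the $\QQ_\ell$-group $(G_A^\circ)_{\QQ_\ell}$ and its representation on $V_A\otimes_\QQ\QQ_\ell$ for every $\ell\notin S$.

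First I would note that $S$ is finite. Choose $N$ so that $G_A^\circ$ extends to a reductive group scheme over $\ZZ[1/N]$; for $\ell\nmid N$ the fiber over $\FF_\ell$ is a connected reductive group over a finite field, hence quasi-split by Lang's theorem, and a lift of a Borel along the smooth proper scheme of Borel subgroups shows that quasi-splitness propagates to the reductive group scheme over $\ZZ_\ell$ and then to its generic fiber. Thus $S\subseteq\{\ell: \ell\mid N\}$.

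Now fix $\ell\notin S$. A choice of embedding $\Qbar\hookrightarrow\overline{\QQ}_\ell$ gives an inclusion $\Gal_{\QQ_\ell}\hookrightarrow\Gal_\QQ$, and composing with $\mu_{G_A^\circ}$ produces the action of $\Gal_{\QQ_\ell}$ on $\Psi(G_A^\circ)$; changing the embedding replaces this subgroup by a conjugate, so the action is well defined up to the natural equivalence. Since $(G_A^\circ)_{\QQ_\ell}$ is quasi-split, it is, up to isomorphism, the unique quasi-split connected reductive group over $\QQ_\ell$ with this root datum and Galois action (the quasi-split case of the classification recalled in \S\ref{S:reductive groups}); hence it is determined by the data of Theorem~\ref{T:main}. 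For the representation, recall that over $\Qbar$ a representation of a connected reductive group is determined up to isomorphism by its multiset of weights relative to a maximal torus, so $V_A\otimes\Qbar$ --- equivalently $V_A\otimes\overline{\QQ}_\ell$ --- is determined as a representation of $G_A^\circ\otimes\Qbar$. It remains to see that a representation of the $\QQ_\ell$-group $(G_A^\circ)_{\QQ_\ell}$ is determined, up to isomorphism, by its base change to $\overline{\QQ}_\ell$. This holds because for two such representations $W,W'$ the $\QQ_\ell$-space $\Hom_{(G_A^\circ)_{\QQ_\ell}}(W,W')$ is a $\QQ_\ell$-form of its $\overline{\QQ}_\ell$-counterpart (Hom of representations is a kernel, so commutes with the flat base change $\QQ_\ell\to\overline{\QQ}_\ell$), the latter contains an isomorphism, and the isomorphisms form a nonempty Zariski-open locus; since $\QQ_\ell$ is infinite, the $\QQ_\ell$-points are dense and meet this locus, giving a $\QQ_\ell$-rational isomorphism $W\xrightarrow{\sim}W'$. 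Applying this with $W=V_A\otimes_\QQ\QQ_\ell$ finishes the argument.

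The only nonformal ingredient is the rigidity of quasi-split groups --- that the root datum together with its Galois action pins down the group over $\QQ_\ell$. This is precisely where quasi-splitness is needed: for a non-quasi-split group one must in addition specify an inner twist, i.e.\ a class in a Galois cohomology set, which the Frobenius polynomials do not see, and this is why the primes $\ell\in S$ are excluded. I would invoke the structure theory of reductive groups for this point rather than reprove it; everything else is the bookkeeping above.
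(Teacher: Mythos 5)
Your proof is correct and follows essentially the same route as the paper: reduce to the data of Theorem~\ref{T:main} via the Mumford--Tate conjecture, restrict the Galois action to $\Gal_{\QQ_\ell}$ through an embedding $\Qbar\hookrightarrow\Qbar_\ell$, and invoke the rigidity of quasi-split groups together with the fact that the weight multiset pins down the representation. The only difference is that you supply arguments for two steps the paper handles by citation or assertion --- the finiteness of $S$ (the paper cites Theorem~6.7 of Platonov--Rapinchuk, you spread out and use Lang's theorem plus smoothness of the scheme of Borels) and the descent of the representation isomorphism from $\Qbar_\ell$ to $\QQ_\ell$ --- and both of your arguments are sound.
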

\begin{proof}
From Theorem~\ref{T:main}, we may assume that we know the root datum $\Psi(G_A^\circ)$, the homomorphism $\mu_{G_A^\circ}$, and the weights of $G_A^\circ \subseteq \GL_{V_A}$ with multiplicities.   Take any prime $\ell \notin S$ and set $G:=(G_A^\circ)_{\QQ_\ell}=G_{A,\ell}^\circ$, where the equality uses the Mumford--Tate conjecture assumption.  

By choosing an embedding $\Qbar\hookrightarrow \Qbar_\ell$, we obtain the root datum $\Psi(G)$ and the associated homomorphism $\mu_G\colon \Gal(\Qbar_\ell/\QQ_\ell) \to \Out(\Psi(G))$.   Since $G$ is quasi-split, this information determines the algebraic group $G=G_{A,\ell}^\circ$ up to isomorphism, cf.~\S\ref{SS:quasi-split}.   Since we know the weights of the representation $(G_A^\circ)_{\QQ_\ell}=G_{A,\ell}^\circ$ on $V_A\otimes_{\QQ} \QQ_\ell = V_\ell(A)$ with multiplicities, we also have enough information to determine the representation $V_\ell(A)$ of $G_{A,\ell}^\circ$ up to isomorphism.  Finally, the set $S$ is finite, cf.~Theorem~6.7 of \cite{MR1278263}.  
\end{proof}

In \S\ref{SS:algorithms}, we will observe that given the polynomials $P_{A,\p}(x)$ and $P_{A,\q}(x)$ for appropriate prime ideals $\q$ and $\p$, the root datum with Galois action and weights as in Theorem~\ref{T:main} can be explicitly computed.  

By choosing prime ideals ``randomly'', this will give a probabilistic algorithm to compute the information from Theorem~\ref{T:main}; we can view it as a Monte Carlo algorithm where the probability that an incorrect result is returned decays exponentially in the number of primes considered.  

In order to prove Theorem~\ref{T:main}, it is easy to reduce to the case where $K_A^\conn=K$ (by replacing $A$ by its base extension by $K_A^\conn$).    We do not want to impose this condition since our algorithm does not require knowledge of the field $K_A^\conn$.

\subsection{Galois groups of Frobenius polynomials} \label{SS:intro Galois groups}
We now describe the Galois group of the Frobenius polynomial for almost all $\p$ that split completely in $K_A^\conn$; this will be a key ingredient in our proof of Theorem~\ref{T:main} and is of independent interest.  Let $\calW_{A,\p} \subseteq \Qbar$ be the set of roots of $P_{A,\p}(x)$.

Let us first define the groups that will generically occur as Galois groups.  Let $W(G_A^\circ)$ be the \defi{Weyl group} of $G_A^\circ$.   Let $\Gamma(G_A^\circ)\subseteq \Aut(\Psi(G_A^\circ))$ be the group containing $W(G_A^\circ)$ whose image in $\Out(\Psi(G_A^\circ)) = \Aut(\Psi(G_A^\circ))/W(G_A^\circ)$ agrees with the image of $\mu_{G_A^\circ}$.   Let $k$ be the fixed field in $\Qbar$ of the kernel of $\mu_{G_A^\circ}$.

\begin{thm} \label{T:Frobenius Galois groups new}
Assume that Conjectures~\ref{C:MT} and \ref{C:Frob conj} hold for $A$.   Fix a  number field $k \subseteq L \subseteq \Qbar$.   There is a set $S$ of prime ideals of $\OO_K$ with density $0$ such that for all nonzero prime ideals $\p\notin S$ of $\OO_K$ that split completely in $K_A^\conn$,
we have
\[
\Gal(L(\calW_{A,\p})/L) \cong W(G_A^\circ) \quad \text{ and } \quad \Gal(\QQ(\calW_{A,\p})/\QQ) \cong \Gamma(G_A^\circ).
\]
\end{thm}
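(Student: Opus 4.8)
The strategy is to identify the Galois group of the splitting field of $P_{A,\p}(x)$ with a group of automorphisms of the root datum acting on the weights of the representation $V_A$. By the Mumford--Tate conjecture and Conjecture~\ref{C:Frob conj}, for a prime $\p$ splitting completely in $K_A^\conn$ we may (after choosing an embedding $\iota\colon\Qbar_\ell\hookrightarrow\CC$) view the Frobenius element $\Frob_\p$ as a semisimple conjugacy class in $G_A^\circ(\CC) = \MT_A(\CC)$, and its eigenvalues on $V_A\otimes\CC$ (i.e.\ the roots in $\calW_{A,\p}$) are exactly the values $\chi(t_\p)$, where $t_\p$ is a representative in a maximal torus $T(\CC)$ and $\chi$ ranges over the weights of $V_A$ with multiplicity. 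The Galois group $\Gal(\QQ(\calW_{A,\p})/\QQ)$ thus acts by permuting these eigenvalues, and the whole point is to show that for ``most'' $\p$ this permutation action is as large as it could possibly be, namely all of $\Gamma(G_A^\circ)$ acting through its action on the weights, while over $L\supseteq k$ the outer part is killed and one is left with $W(G_A^\circ)$.

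First I would set up the ``large image'' input: by a theorem of the type underlying the equidistribution results in this area (and which should already be available to the author — essentially an effective Chebotarev/Sato--Tate statement combined with the fact that the Frobenii are dense in a maximal compact of $G_{A,\ell}^\circ$), the conjugacy classes $\Frob_\p$ for $\p$ splitting completely in $K_A^\conn$ equidistribute in the space of conjugacy classes of $G_A^\circ$. In particular, for a density-one set of such $\p$, the element $t_\p\in T$ is \emph{regular} — its centralizer is exactly $T$ — and moreover the subgroup of $X(T)$ generated by the differences of weights appearing is everything one needs, so that no ``accidental'' relations among the eigenvalues hold beyond those forced by the root datum. Concretely, one wants: (i) $t_\p$ regular, so that an automorphism of $X(T)$ fixing all of $\chi(t_\p)$ lies in the right stabilizer; and (ii) the Galois action on $\QQ(\calW_{A,\p})$ realizes the \emph{full} group $\Gamma(G_A^\circ)$, which requires ruling out, for density-zero many $\p$, the event that $\Frob_\p$ lands in a proper subgroup whose normalizer/Weyl-type group is smaller. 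This last point is where one invokes that there are only finitely many conjugacy classes of such ``small'' subgroups and applies equidistribution to each.

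Next I would make the identification of groups precise. The field $\QQ(\calW_{A,\p})$ is generated by the eigenvalues $\{\chi(t_\p)\}_\chi$; since $t_\p$ is regular and defined over the residue field, the Galois action on these eigenvalues factors through the action on $T$ by the normalizer $N_{G_A^\circ}(T)$ twisted by the Galois action on $X(T)$ — i.e.\ through $\Gamma(G_A^\circ)\subseteq\Aut(\Psi(G_A^\circ))$, which contains $W(G_A^\circ)=N(T)/T$ as the kernel of the map to $\Out(\Psi(G_A^\circ))$. The containment $\Gal(\QQ(\calW_{A,\p})/\QQ)\hookrightarrow\Gamma(G_A^\circ)$ is then formal, and surjectivity for $\p\notin S$ is exactly the ``large image'' statement above. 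For the base field $L\supseteq k$: since $k$ is by definition the fixed field of $\ker\mu_{G_A^\circ}$, base-changing to $L$ trivializes the outer action, so the image of $\Gal(L(\calW_{A,\p})/L)$ lands in $W(G_A^\circ)$; the reverse inclusion again comes from equidistribution of Frobenii, now inside the subgroup that fixes $L$. Finally one checks $[\Qbar:L]$-index bookkeeping: $\Gal(\QQ(\calW_{A,\p})/\QQ)/\Gal(\QQ(\calW_{A,\p})/L(\calW_{A,\p}))$ surjects onto $\Gal(L\cap\QQ(\calW_{A,\p})/\QQ)$, and combining with $\Gamma/W\cong\Out$ and the definition of $k\subseteq L$ gives the two isomorphisms simultaneously for $\p$ outside a single density-zero set $S$.

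\textbf{Main obstacle.} The delicate part is (ii): proving that for \emph{almost all} $\p$ the Galois image is the full group $\Gamma(G_A^\circ)$, not merely a subgroup. The eigenvalues $\chi(t_\p)$ could a priori satisfy extra multiplicative relations — for instance two distinct weights $\chi,\chi'$ could have $\chi(t_\p)=\chi'(t_\p)$, collapsing the permutation action — and one must show each such degeneracy confines $\Frob_\p$ to a proper closed subvariety of the space of conjugacy classes, hence to a density-zero set of $\p$ by equidistribution. This requires knowing that the weights of $V_A$ are ``rich enough'' to separate the Weyl group elements, which ultimately rests on $V_A$ being a faithful representation of $G_A^\circ$ (true, since $G_A^\circ\subseteq\GL_{V_A}$) together with a combinatorial argument that a faithful representation's weights are not fixed pointwise by any nontrivial element of $\Gamma(G_A^\circ)$ — and handling the subtlety that $\Gamma$ acts through $\Aut(\Psi)$, not just $W$, so one needs the weight \emph{set with multiplicities} to be preserved yet generically moved. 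Assembling these pieces into a single density-zero exceptional set $S$ (independent of $L$, as the statement demands) is the technical heart of the argument.
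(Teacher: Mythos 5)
Your overall framework — Frobenius tori, the identification of $\Gal(\QQ(\calW_{A,\p})/\QQ)$ with a subgroup of $\Gamma(G_A^\circ)$ acting on $X(T_\p)\cong\Phi_{A,\p}$, and the observation that passing to $L\supseteq k$ kills the outer part — matches the paper's setup (Theorem~\ref{T:Frobenius tori} and \S\ref{S:Galois groups of Frob}). The reduction of everything to the single surjectivity claim $\varphi_\p(\Gal_L)=W(G,T_\p)$ is also the right skeleton.

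However, there is a genuine gap at exactly the step you flag as the heart of the argument. You propose to prove the large-image statement by invoking equidistribution of the Frobenius conjugacy classes in (the conjugacy classes of) $G_A^\circ$. That equidistribution is essentially the Sato--Tate conjecture (Conjecture~\ref{C:ST}), which is neither assumed in the theorem nor known; the hypotheses are only Conjectures~\ref{C:MT} and \ref{C:Frob conj}. Chebotarev applies to the \emph{finite} quotients $\bbar\rho_{A,\ell}(\Gal_K)$, not to the image of $\rho_{A,\ell}$ itself, so one cannot directly conclude that Frobenii avoid a given conjugacy-invariant subset of $W(G)$ only on a density-zero set. The paper's actual mechanism is a sieve over auxiliary primes $\ell$: for each conjugacy class $C$ of $W(G)$ and each $\ell$ in a suitable set $\Lambda$, Lemma~\ref{L:sieving input} (resting on the homomorphisms $\psi_{\p,\ell}\colon\Gal_{\QQ_\ell}\to W(G_{A,\ell},T)$ from \cite{MR3264675}) produces a conjugation-stable subset $U_\ell$ of $\bbar\rho_{A,\ell}(\Gal_K)$ of relative size $|C|/|W(G)|+O(1/\ell)$ such that $\bbar\rho_{A,\ell}(\Frob_\p)\subseteq U_\ell$ forces $\varphi_\p(\Gal_L)$ to meet $C$. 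Serre's independence result for the product representation $\prod_\ell\rho_{A,\ell}$ (Lemma~\ref{L:independence density}) then lets one multiply the complementary densities over $\ell\in\Lambda(x)$, giving an upper bound $O\big((1-|C|/|W(G)|+\varepsilon)^{|\Lambda(x)|}\big)\to 0$ for the density of the exceptional set $\calB_C$; finally Jordan's lemma upgrades ``meets every conjugacy class'' to ``equals $W(G)$''. None of this is replaceable by an appeal to regularity of $t_\p$ or to the absence of multiplicative relations among the eigenvalues: a maximal torus can be maximal and yet have small splitting field, so maximality of the Frobenius torus (which is what Larsen--Pink plus Chebotarev gives for density one) does not by itself yield surjectivity onto the Weyl group.

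A secondary point: the containment $\Gal(\QQ(\calW_{A,\p})/\QQ)\subseteq\Gamma(G_A^\circ)$ is not purely formal either. It requires producing a maximal torus $T_\p$ of the quasi-split inner form $G$ \emph{over $\QQ$} together with a $\Gal_\QQ$-equivariant isomorphism $X(T_\p)\cong\Phi_{A,\p}$; this is where Conjecture~\ref{C:Frob conj} and the quasi-splitness of $G$ (via Raghunathan's lifting result, Proposition~\ref{P:Ragh}) enter in the paper. Your sketch uses the conjecture to place $\Frob_\p$ in $\MT_A(\CC)$ but does not address descending the torus to $\QQ$, which is what makes the Galois action on the eigenvalues land inside $\Gamma(G,T_\p)$ rather than merely in $\Aut(X(T_\p))$.
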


\begin{remark}
A variant of Theorem~\ref{T:Frobenius Galois groups new} has been proved by the author  in the special case where $A$ is geometrically simple but without assuming Conjecture~\ref{C:Frob conj}, cf.~Theorem~1.5 of \cite{MR3264675}.   Conjecture~\ref{C:Frob conj} makes it much easier to identify $\Gal(\QQ(\calW_{A,\p})/\QQ)$ with a subgroup of $\Gamma(G_A^\circ)$.
\end{remark}

\subsection{Sato--Tate groups} \label{SS:ST}

The motivic Galois group $G_A$ comes with a character to $(\GG_m)_{\QQ}$ arising from the weight structure on the category of motives; its kernel we denote by $G_A^1$.    The neutral component of $G_A^1$ agrees with the \defi{Hodge group}  $\Hg_A$ of $A$ which is defined in \S\ref {SS:MT and Hodge groups}
 
We define the \defi{Sato--Tate group} of $A$ to be a maximal compact subgroup $\ST_A$ of $G_A^1(\CC)$ with respect to the analytic topology; it is a compact Lie group that is unique up to conjugation in $G_A^1(\CC)$.    The identity component $\ST_A^\circ$ of $\ST_A$ is a maximal compact subgroup of $\Hg_A(\CC)$.  

By considering the Frobenius polynomial of two ``random'' primes ideals that split completely in $K_A^\conn$, we can also recover $\ST_A^\circ$; it will be a straightforward consequence of Theorem~\ref{T:main}.  
 
\begin{thm} \label{T:ST}
Assume that Conjectures~\ref{C:MT} and \ref{C:Frob conj} hold for $A$.  Then for almost all prime ideals $\q$ and $\p$ of $\OO_K$ that split completely in $K_A^\conn$,   
 the polynomials $P_{A,\q}(x)$ and $P_{A,\p}(x)$ determine the root datum of $\Hg_A$ along with the weights and multiplicities of the action of $\Hg_A$ on $V_A$.  In particular, such polynomials  determine the Lie group $\ST_A^\circ$ and the representation $V_A\otimes_\QQ \CC$ of $\ST_A^\circ$ up to isomorphism.   
\end{thm}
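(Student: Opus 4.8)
The plan is to derive Theorem~\ref{T:ST} from Theorem~\ref{T:main} by a purely root-theoretic passage from the Mumford--Tate group to the Hodge group, followed by the observation that a connected compact Lie group and its complex representations are read off from root data.

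First I would recall the standard relation between $\MT_A=G_A^\circ$ and $\Hg_A$. A polarization of $A$ gives a nondegenerate alternating form $\psi$ on $V_A$ that is a morphism of Hodge structures; since $\MT_A$ acts on the one-dimensional target through a character, it follows that $\MT_A\subseteq \GSp_{V_A}$ with similitude character $\nu\colon \MT_A\to \GG_m$, which is, up to inverse, the weight character of $G_A$ restricted to $G_A^\circ$. Then $\Hg_A$ is the identity component of $\ker\nu$ — indeed $\Hg_A=\ker\nu$, since $\MT_A$ is generated by $\Hg_A$ together with the central $\GG_m$ of homotheties. In particular $\Hg_A$ has codimension one in $\MT_A$, and the two groups share a derived group and a Weyl group. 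Fixing a maximal torus $T$ of $\MT_A$ defined over $\QQ$, the subtorus $T^1:=(\ker(\nu\colon T\to \GG_m))^\circ$ is a maximal torus of $\Hg_A$: the restriction $X(T)\to X(T^1)$ is surjective with kernel the saturation $\QQ\nu\cap X(T)$ of $\ZZ\nu$, the cocharacter lattice of $T^1$ is $\{\lambda\in X_*(T):\langle\nu,\lambda\rangle=0\}$, and the roots (resp.\ coroots) of $\Hg_A$ with respect to $T^1$ are the images (resp.\ preimages) of those of $\MT_A$. Consequently the root datum $\Psi(\Hg_A)$, the $\Gal_\QQ$-action on it, and the weights with multiplicities of $\Hg_A$ acting on $V_A$ are all determined by the data $\Psi(\MT_A)$, $\mu_{\MT_A}$, the weights of $V_A$, and the single element $\nu\in X(T)$.

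It therefore remains to recover $\nu$ from the two Frobenius polynomials. By Theorem~\ref{T:main} we may assume we know $\Psi(G_A^\circ)$, $\mu_{G_A^\circ}$, and the multiset $\{\lambda_1,\dots,\lambda_{2g}\}\subseteq X(T)$ of weights of $V_A$; here $2g=\deg P_{A,\p}(x)$ is read off from either polynomial. Since $\psi$ exhibits $V_A$ as symplectically self-dual up to the twist $\nu$, the weight multiset is stable under $\lambda\mapsto \nu-\lambda$, so $\sum_i\lambda_i=g\,\nu$ and hence
\[
\nu=\tfrac1g\sum_{i=1}^{2g}\lambda_i\in X(T),
\]
an explicitly computable element of $X(T)$. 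Feeding this $\nu$ into the previous paragraph produces $\Psi(\Hg_A)$, together with its Galois action, and the weights of $V_A$ as an $\Hg_A$-representation, namely the images of the $\lambda_i$ under $X(T)\to X(T^1)$; as in \S\ref{SS:algorithms}, every step here is effective.

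Finally, $\ST_A^\circ$ is a maximal compact subgroup of $\Hg_A(\CC)$, so up to isomorphism it depends only on the complex reductive group $(\Hg_A)_\CC$, which is itself determined up to isomorphism by its root datum alone (the Galois action being irrelevant over $\CC$); equivalently, a connected compact Lie group is determined up to isomorphism by the root datum attached to it. A finite-dimensional complex representation of the connected compact group $\ST_A^\circ$ is likewise determined up to isomorphism by its weights with multiplicities, which are exactly the images of the $\lambda_i$ computed above. This yields the theorem. The only point requiring genuine care is the identification of the similitude character $\nu$ — which relies essentially on the symplectic self-duality of $V_A$, since the combinatorial data alone need not single out a character of the central torus — together with the lattice-level description of $\Psi(\Hg_A)$ as a quotient/subdatum of $\Psi(\MT_A)$; once $\nu$ is pinned down, the rest is formal. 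One should also keep in mind that $\Hg_A$ need not be semisimple (for instance for CM abelian varieties), so the reduction must be carried out on the full root datum, central torus included, rather than just on the derived group.
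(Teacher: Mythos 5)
Your proposal is correct and follows essentially the same route as the paper: reduce to Theorem~\ref{T:main}, pass from $\Psi(\MT_A)$ to $\Psi(\Hg_A)$ by quotienting the character lattice by the similitude character $\nu$ (with roots mapping bijectively and weights pushed forward), and then invoke that $\ST_A^\circ$ is a maximal compact subgroup of $\Hg_A(\CC)$, so that it and its representation $V_A\otimes_\QQ\CC$ are determined by the resulting root datum and weights. The one step you handle differently is pinning down $\nu$: the paper identifies it as $N(\p)$ under the identification $X(T_\p)=\Phi_{A,\p}$ (Lemma~\ref{L:kernel Np}), whereas you recover it intrinsically as $\tfrac1g\sum_i\lambda_i$ from the symplectic self-duality of the weight multiset; these agree since $\prod_{\pi\in\calW_{A,\p}}\pi^{m_\pi}=N(\p)^g$, and your version has the mild advantage of working directly with the abstract output of Theorem~\ref{T:main} without reference to the Frobenius torus.
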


For context, we now recall the Sato--Tate conjecture, cf.~\S13 of \cite{MR1265537}.  Fix a rational prime $\ell$ and an embedding $\iota\colon \QQ_\ell \hookrightarrow \CC$.  Take any nonzero prime ideal $\p\nmid \ell$ of $\OO_K$ for which $A$ has good reduction.   Using the embedding $\iota$, we may view $\rho_{A,\ell}(\Frob_\p)$ as a (semisimple) element of $G_A(\CC)$ and  $g_{\p,\ell}:=\rho_{A,\ell}(\Frob_\p)/\sqrt{N(\p)}$ as an element of $G_A^1(\CC)$.   From Weil, we know that the complex roots of $P_{A,\p}(x)$ all have absolute value $\sqrt{N(\p)}$ and hence the eigenvalues of $g_{\p,\ell}$ all have absolute value $1$.   So there is an element $\vartheta_\p \in \ST_A$, unique up to conjugacy in $\ST_A$, such that $g_{\p,\ell}$ and $\vartheta_\p$ are conjugate in $G_A^1(\CC)$.   

\begin{conj}[Sato--Tate]  \label{C:ST} 
The elements $\{\vartheta_\p\}_\p$ are equidistributed in the conjugacy classes of $\ST_A$ with respect to the Haar measure.   Equivalently, for any continuous central function $f\colon \ST_A \to \CC$, we have
\[
\lim_{x\to +\infty} \frac{1}{|\calP(x)|} \, \sum_{\p \in \calP(x)} f(\vartheta_\p) = \int_{\ST_A} f\, d\mu,
\]
where $\calP(x)$ is the set of good prime ideals $\p\subseteq \OO_K$ of norm at most $x$ and $\mu$ is the Haar measure of $\ST_A$ normalized so that $\mu(\ST_A)=1$.
\end{conj}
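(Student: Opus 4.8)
The plan is to prove equidistribution via the Weyl criterion. By the Peter--Weyl theorem the characters $\chi_\Lambda$ of the irreducible complex representations $\Lambda$ of the compact group $\ST_A$ span a dense subspace of the continuous central functions on $\ST_A$, so by linearity it suffices to show, for every \emph{nontrivial} irreducible $\Lambda$, that
\[
\frac{1}{|\calP(x)|}\sum_{\p\in\calP(x)}\chi_\Lambda(\vartheta_\p)\;\longrightarrow\;0\qquad(x\to+\infty);
\]
the trivial representation already gives $\tfrac{1}{|\calP(x)|}\sum_\p 1=1=\int_{\ST_A}1\,d\mu$. First I would record that the left-hand summand is intrinsic: since $\vartheta_\p$ is conjugate in $G_A^1(\CC)$ to the semisimple element $g_{\p,\ell}$, whose eigenvalues are the normalized roots $\alpha/\sqrt{N(\p)}$ of $P_{A,\p}(x)$, and $\Lambda$ extends to an algebraic representation of the reductive group $G_A^1$, the value $\chi_\Lambda(\vartheta_\p)$ is a fixed symmetric polynomial expression in those normalized roots, hence an algebraic number independent of the auxiliary choices of $\ell$ and $\iota$.

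Next I would repackage the sum as a Dirichlet series. Choosing an algebraic representation $M$ of $G_A$ that, upon restriction to $G_A^1$, contains the extension of $\Lambda$, and letting $w\in\ZZ$ be the weight of $M$ (so that the $\Frob_\p$-eigenvalues on $M_\ell\subseteq (V_\ell(A))^{\otimes a}\otimes (V_\ell(A)^\vee)^{\otimes b}$ have absolute value $N(\p)^{w/2}$ with $w=a-b$), functoriality of the $\ell$-adic realization attaches to $\Lambda$ a $\Gal_K$-stable subquotient $r_{\Lambda,\ell}$ of $M_\ell$ with $\chi_\Lambda(\vartheta_\p)=N(\p)^{-w/2}\,\tr\bigl(r_{\Lambda,\ell}(\Frob_\p)\bigr)$ for all good $\p\nmid\ell$. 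Put
\[
L(s,\Lambda)\;=\;\prod_{\p}\det\!\Bigl(I-N(\p)^{-w/2}\,r_{\Lambda,\ell}(\Frob_\p)\,N(\p)^{-s}\Bigr)^{-1},
\]
the Euler product over good primes, which converges absolutely for $\Re(s)>1$ by Weil's bound. By a Wiener--Ikehara Tauberian theorem together with the prime ideal theorem $|\calP(x)|\sim x/\log x$, the desired limit follows once $L(s,\Lambda)$ is known to be holomorphic and non-vanishing on the closed half-plane $\Re(s)\ge 1$ (equivalently, $\sum_\p\chi_\Lambda(\vartheta_\p)N(\p)^{-s}=\log L(s,\Lambda)+O(1)$ extends continuously to $\Re(s)\ge1$ with no pole, using that $\Lambda$ is nontrivial so there is no pole at $s=1$). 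By the standard positivity trick, generalizing the proof that $\zeta(1+it)\neq 0$, this in turn reduces to the holomorphy and non-vanishing at $s=1$ of the finitely many auxiliary $L$-functions $L(s,\Lambda')$ for $\Lambda'$ among the $G_A^1$-constituents of small tensor powers of $\Lambda\oplus\bar\Lambda$.

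The crux, and by far the main obstacle, is establishing these analytic properties; here one must leave the purely group-theoretic and motivic framework of this paper and invoke automorphy. Following the strategy of Clozel--Harris--Taylor, Taylor, and Harris--Shepherd-Barron--Taylor for elliptic curves, and its higher-dimensional extension by Barnet-Lamb--Geraghty--Harris--Taylor and Barnet-Lamb--Gee--Geraghty--Taylor, one would prove that each $L(s,\Lambda)$ is \emph{potentially automorphic}: there is a finite CM (or totally real) extension $F/K$ and an isobaric automorphic representation of some $\GL_n/F$ whose standard $L$-function equals $L(s,\Lambda|_{\Gal_F})$. This requires (i) arranging the $r_{\Lambda,\ell}$ inside a compatible family that is essentially self-dual and sufficiently regular so as to fall within reach of the Taylor--Wiles--Kisin patching method and the available automorphy lifting theorems; (ii) producing residual automorphy over a suitable $F$ by a Moret-Bailly/potential-automorphy argument (a rational point on an auxiliary moduli space with the right residual representation); and (iii) controlling the ramification and the local conditions at $\ell$. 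Granting potential automorphy, meromorphic continuation of $L(s,\Lambda)$ over $K$ follows from Brauer's induction theorem applied to $\Gal(\widetilde F/K)$ for a Galois closure $\widetilde F$, and non-vanishing on $\Re(s)=1$ reduces to the non-vanishing of Rankin--Selberg $L$-functions of cuspidal automorphic representations of $\GL_n$ on the edge of the critical strip (Jacquet--Shalika, Shahidi). Feeding this back into the Tauberian argument yields the equidistribution of $\{\vartheta_\p\}_\p$ in $\ST_A$. I expect steps (i)--(iii) — establishing potential automorphy for the $\ell$-adic representations built from a general abelian variety, in particular verifying the hypotheses of the automorphy lifting theorems in the non-self-dual and non-regular cases that arise — to be the decisive difficulty, and it is precisely because these remain out of reach in general that Conjecture~\ref{C:ST} is still open beyond low-dimensional and special cases.
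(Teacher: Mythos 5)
This statement is a \emph{conjecture} in the paper: the author states it only for context, offers no proof, and indeed the paper's theorems are conditional on Conjectures~\ref{C:MT} and \ref{C:Frob conj}, not on a proof of Conjecture~\ref{C:ST}. Your proposal is the standard Serre-style reduction (Weyl criterion for the compact group, attaching an $L$-function to each nontrivial irreducible character, Tauberian theorem, holomorphy and non-vanishing on $\Re(s)=1$ via potential automorphy), but it is not a proof: the crux you label (i)--(iii) --- potential automorphy of the relevant compatible systems attached to a general abelian variety, including the non-self-dual and irregular cases --- is precisely the open problem, as you yourself acknowledge in the final sentence. A proof that defers its decisive step to an unproven hypothesis has a gap the size of the conjecture itself, so there is nothing in the paper to compare it against beyond the observation that the author deliberately leaves this unproved.

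Beyond that structural point, two steps of your sketch would need repair even as a conditional argument. First, the well-definedness of the class of $\vartheta_\p$ (independence of $\ell$ and $\iota$) is itself conjectural --- the paper's remark after Conjecture~\ref{C:ST} notes it would follow from the Frobenius conjugacy conjecture --- so your claim that $\chi_\Lambda(\vartheta_\p)$ is ``intrinsic'' already presupposes Conjecture~\ref{C:Frob conj}. Moreover it is not true in general that $\chi_\Lambda(\vartheta_\p)$ is a symmetric polynomial in the normalized roots of $P_{A,\p}(x)$: the conjugacy class of a semisimple element of $G_A^1(\CC)$ is finer than its characteristic polynomial on $V_A$ (e.g.\ half-spin characters for orthogonal factors, and component data when $\ST_A$ is disconnected), which is exactly why the refined Sato--Tate class is subtler than the Frobenius polynomial. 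Second, when $K_A^\conn\neq K$ the group $\ST_A$ is disconnected, and the Weyl criterion must be run over characters of the full compact group, with the primes sorted by their image in $\ST_A/\ST_A^\circ\cong\Gal(K_A^\conn/K)$; your $L$-function construction, which works with representations of $G_A^1$ restricted from $G_A$, has to be set up so that the Euler factors see the component of Frobenius, otherwise non-vanishing at $s=1$ for the connected part does not yield equidistribution in all of $\ST_A$.
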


\begin{remark}
\begin{romanenum}
\item
As noted earlier, it is conjectured that the conjugacy class of $\rho_{A,\ell}(\Frob_\p)$ in $G_A(\CC)$ does not depend on the choice of $\ell$ and $\iota$.  This conjecture would imply that the conjugacy class of $\vartheta_\p$ in $\ST_A$ does not depend on the choice of $\ell$ and $\iota$. 
\item
Fix a prime $\ell$ and an embedding $\iota\colon \QQ_\ell \hookrightarrow \CC$.  A common alternate definition of the Sato--Tate group is a maximal compact subgroup of $G_{A,\ell}(\CC)$, where we have used our embedding $\iota$.   The Mumford--Tate conjecture would imply that this definition agrees with ours (and is independent of the choice of $\ell$ and $\iota$).   Moreover, one can show that Conjecture~\ref{C:ST} as stated implies the Mumford--Tate conjecture for $A$.
\end{romanenum}
\end{remark}

For integrals of the form $\int_{ST_A^\circ} f \, d\mu$ with a central continuous $f\colon \ST_{A}^\circ\to \CC$, Weyl's integration formula reduces it to an integral over a maximal torus of $\ST_A^\circ$.  In \S\ref {SS:Hodge dim}, we gives some simple examples of such integrals that are computable when one knows the root datum of $\Hg_A$ and the weights of its action on $V_A$ (which is information of $\Hg_A$ that can be computed using the algorithm of \S\ref{SS:algorithms}).

\begin{remark}
There has been a lot of recent progress computing Sato--Tate groups $\ST_A$.
In \cite{MR2982436}, Fit\'{e}, Kedlaya, Rotger and Sutherland have classified all the possible Sato--Tate groups that arise for abelian surfaces.   Fit\'{e}, Kedlaya and Sutherland have also proved a classification for abelian threefolds, cf.~ \cite{FKS2019}.  In particular, they find that there are $433$ possible Sato--Tate groups of abelian threefolds up to conjugacy in $\operatorname{USp}(6)$; of these $14$ are connected.  
While our algorithm will only give the identity component of $\ST_A$, it has the benefit that it does not require a classification and hence can be used in higher dimensions where a classification is infeasible.

 There has also been much working on computing Sato--Tate groups of various Jacobians with complex multiplication, for example see \cite{MR3218802}, \cite{MR3502940}, \cite{MR3864839}, \cite{EmoryGoodson};  these are particularly interesting because the group $\ST_A/\ST_A^\circ$ can be relatively large.
\end{remark}

\subsection{Dimension of Hodge classes} \label{SS:Hodge dim}

For an integer $n\geq 0$, the $n$-th  cohomology group $H^n(A(\CC),\QQ)$ has a Hodge structure of weight $n$ which gives a decomposition 
\[
H^n(A(\CC), \CC) = \bigoplus_{p+q=n} H^{p,q}(A).
\]	
The cohomology class $\cl(Z) \in H^{2p}(A(\CC),\CC)$ of an algebraic subvariety $Z$ of $A_\CC$ of codimension $p$ lies in the $\QQ$-vector space
\[
\mathcal{H}^p(A):=H^{2p}(A(\CC),\QQ)  \cap H^{p,p}(A)
\]
of \defi{Hodge classes}.  
The renowned \defi{Hodge conjecture} for $A$ predicts that for each integer $0\leq p\leq g$, the cohomology classes $\cl(Z)$ of algebraic subvarieties of  $A_\CC$ with codimension $p$ span the $\QQ$-vector space $\mathcal{H}^p(A)$.

We are particularly interested in computing the dimension of the vector spaces $\mathcal{H}^p(A)$ since the Hodge conjecture will sometimes predict the existence of  exceptional algebraic cycles. The following lemma shows that $\dim_\QQ \mathcal{H}^p(A)$ is determined by the action of $\Hg_A$ on $V_A$.   By using the techniques of \S\ref{SS:algorithms}, we can then make predictions for $\dim_\QQ \mathcal{H}^p(A)$ by considering just a few Frobenius polynomials.
\\
 
For integers $n\geq 0$ and $0\leq k \leq 2g$, the representation $V_A$ of $\GL_{V_A}$ gives rise to another representation $(\bigwedge^k V_A)^{\otimes n}$. We define $M_{k,n}$ to be the dimension of the subspace of $(\bigwedge^k V_A)^{\otimes n}$ fixed by $\Hg_A$.

\begin{lemma} \label{L:basic Hodge dimension}
For each integer $0\leq p \leq g$, we have  $\dim_\QQ \mathcal{H}^p(A) = M_{2p,1}$. 
\end{lemma}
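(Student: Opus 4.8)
The plan is to identify $\mathcal{H}^p(A)$ with a space of $\Hg_A$-invariants inside a tensor construction on $V_A$, and then recognize that space as exactly $(\bigwedge^{2p} V_A)^{\Hg_A}$. First I would recall that, by the very definition of the Hodge group, $\Hg_A$ is the $\QQ$-algebraic subgroup of $\GL_{V_A}$ fixing exactly those rational tensors in spaces $\bigoplus_{a,b} V_A^{\otimes a} \otimes (V_A^\vee)^{\otimes b}$ that are Hodge classes (tensors of type $(0,0)$ for the induced Hodge structure). In particular, for any $\QQ$-Hodge structure built functorially out of $V_A$, the subspace of Hodge classes of type $(0,0)$ coincides with the subspace fixed by $\Hg_A$; this is the standard characterization of the Mumford--Tate/Hodge group as the Tannakian fundamental group, and I would cite \S\ref{SS:MT and Hodge groups} for it. So the content of the lemma is to realize $\mathcal{H}^p(A)$ as the $(0,0)$-part of such a functorial Hodge structure.

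The key step is the classical identification of the cohomology of an abelian variety with exterior powers of $V_A^\vee$. Concretely, $H^n(A(\CC),\QQ) \cong \bigwedge^n H^1(A(\CC),\QQ) \cong \bigwedge^n V_A^\vee$ as $\QQ$-Hodge structures, where I use $H^1(A(\CC),\QQ) \cong V_A^\vee$ and the fact (already invoked in the introduction) that $H^i(A(\CC),\QQ)$ is the $i$-th exterior power of the dual of $V_A$. Taking $n = 2p$, this is an isomorphism of Hodge structures $\mathcal{H}^p(A) = H^{2p}(A(\CC),\QQ) \cap H^{p,p}(A) \cong (\bigwedge^{2p} V_A^\vee)^{(0,0)}$. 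Since $\Hg_A$ preserves a polarization form on $V_A$ up to scalar (in fact acts on $V_A$ preserving the symplectic form, as $\Hg_A \subseteq \Sp_{V_A}$), the representations $V_A$ and $V_A^\vee$ of $\Hg_A$ are isomorphic, hence so are $\bigwedge^{2p} V_A$ and $\bigwedge^{2p} V_A^\vee$ as $\Hg_A$-representations. Therefore
\[
\dim_\QQ \mathcal{H}^p(A) = \dim_\QQ \bigl(\textstyle\bigwedge^{2p} V_A^\vee\bigr)^{\Hg_A} = \dim_\QQ \bigl(\textstyle\bigwedge^{2p} V_A\bigr)^{\Hg_A} = M_{2p,1},
\]
using $n=1$ in the definition of $M_{k,n}$.

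I expect the only real subtlety to be bookkeeping about duals and the precise normalization of weights in the Hodge structure on $V_A$ versus $H^1$: one must make sure that the Tate twists implicit in "codimension $p$ cycle class lives in $H^{2p}$" are accounted for, so that "Hodge class" genuinely means "$\Hg_A$-invariant" and not "$\MT_A$-invariant up to a power of the cyclotomic/weight character." This is handled precisely by working with $G_A^1$ and $\Hg_A$ (which kill the weight character) rather than with $\MT_A$, which is why the statement is phrased in terms of $\Hg_A$; I would make this explicit by noting that the $(p,p)$-condition in $H^{2p}$ is exactly the condition of being fixed by the Hodge group after the standard identification, with no residual twist. Everything else is the routine functoriality of the exterior-power construction for Hodge structures and for representations of $\Hg_A$, together with the self-duality of $V_A$ coming from the polarization.
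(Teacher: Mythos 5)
Your proposal is correct and follows essentially the same route as the paper: identify $H^{2p}(A(\CC),\QQ)$ with $\bigwedge^{2p}V_A^\vee$ as Hodge structures, recognize the $(p,p)$-classes as exactly the $\Hg_A$-invariants (the paper cites Theorem~17.3.3 of Birkenhake--Lange for this, where you re-derive it from the Tannakian characterization of the Hodge group), and pass from $\bigwedge^{2p}V_A^\vee$ to $\bigwedge^{2p}V_A$ via duality. Your explicit attention to the Tate-twist issue and to why $\Hg_A$ rather than $\MT_A$ is the right group is precisely the content of the cited theorem, so there is no gap.
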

\begin{proof}
There is a natural isomorphism of Hodge structures $V_0:=H^{2p}(A(\CC),\QQ) = \Hom({\bigwedge}^{2p} V_A,\QQ )$.  We have $\mathcal{H}^p(A) = V_0^{\Hg_A}$ by Theorem~17.3.3 of \cite{MR2062673}, so the dimension of $\mathcal{H}^p(A)$ is equal to $\dim_\QQ V_0^{\Hg_A} =\dim_\QQ ({\bigwedge}^{2p} V_A)^{\Hg_A}=M_{2p,1}$.
\end{proof}

For $B\in \GL_{V_A}(\CC)$, denote its characteristic polynomial by $\sum_{k=0}^{2g} (-1)^k a_k(B)\, x^k$.   The trace of the action of $B$ on $(\bigwedge^k V_A)^{\otimes n}$ is $a_k(B)^n$.  Since $\ST_A^\circ$ is a maximal compact subgroup of $\Hg_A(\CC)$, $M_{k,n}$ is also the dimension of the subspace of $(\bigwedge^k V_A)^{\otimes n} \otimes_\QQ \CC$ fixed by $\ST_A^\circ$.  Therefore,
\[
M_{k,n} = \int_{\ST_A^\circ} a_k^n \, d\mu,
\]
where $\mu$ is the Haar measure on $\ST_A^\circ$ satisfying $\mu(\ST_A^\circ)=1$.
\\

We now explain how to compute the values $M_{k,n}$ which by Lemma~\ref {L:basic Hodge dimension} allows us to compute the dimension of Hodge classes. We suppose that the root datum $\Psi(\Hg_A)$ of $\Hg_A$ is known; it comes with a character group $X$, a finite set of roots $R\subseteq X$, and there is a Weyl group $W$ that acts faithfully on $X$.  Further suppose we also know the weights $\alpha_1,\cdots,\alpha_{2g} \in X$ of the representation $V_A$ of $\Hg_A$ stated with multiplicity.   

We can identify $X$ with $X(T)$ for a maximal torus $T$ of $\Hg_A$.  Let $T_0$ be the maximal compact subgroup of $T(\CC)$.    Weyl's integration formula implies that
 \[
M_{k,n} = \frac{1}{|W|} \int_{T_0} a_k(t)^n \, \prod_{\alpha\in R}(1-\alpha(t))\, dt,
 \]
 where $dt$ is the Haar measure on $T_0$ normalized to have volume $1$.  
Fix a basis $\beta_1,\ldots, \beta_m$ of the free abelian group $X$.  For $\alpha\in X$, we have $\alpha=\prod_{i=1}^m \beta_i^{e_i(\alpha)}$ for unique $e_i(\alpha) \in \ZZ$.  So there is an isomorphism of Lie groups $T_0 \to U(1)^m$, $t\mapsto (\beta_1(t),\ldots, \beta_m(t))$, where $U(1)$ is the unit circle in $\CC^\times$.  We have $a_k(t)=\sum_{J \subseteq \{1,\ldots, 2g\},\, |J|=k} \, \prod_{j\in J} \alpha_j(t)$ and hence
\begin{align*}
M_{k,n} =\frac{1}{|W|} \int_{U(1)^{m}} \bigg(\sum_{J \subseteq \{1,\ldots, 2g\},\, |J|=k} \, \prod_{j\in J} \prod_{i=1}^{m} t_i^{e_i(\alpha_j)}\bigg)^n \cdot \prod_{\alpha\in R} \Big(1-\prod_{i=1}^{m} t_i^{e_i(\alpha)}\bigg) \, dt_1 \, \cdots \, dt_{m},
\end{align*}
where $dt_i$ is the Haar measure on $U(1)$ with volume $1$.  Such integrals are readily computable by expanding out and using that for an integer $b\in \ZZ$, the integral $\int_{U(1)} t_i^b \, dt_i$ is $1$ if $b=0$ and is $0$ otherwise.

\begin{remark}
By Proposition~1 of \cite{MR4038255}, $M_{1,2}$ is the rank of the group $\End(A_{\Kbar})$.   By Proposition~2 of \cite{MR4038255}, $M_{2,1}$ is the rank of the N\'eron--Severi group $\operatorname{NS}(A_{\Kbar})$.
\end{remark}

\subsection{Endomorphism ring} \label{SS:endomorphism ring}

In this section, we describe how to compute some partial information for the ring $\End(A_{\Kbar})\otimes_\ZZ \QQ$.    We have an isomorphism
\[
\End(A_{\Kbar})\otimes_\ZZ \QQ \cong B_1\times \cdots \times B_s,
\]
where $B_i$ is a central simple algebra over a number field $L_i$.   For each $1\leq i \leq s$, let $m_i$ be the positive integer satisfying $\dim_{L_i} B_i = m_i^2$.

\begin{thm} \label{T:endomorphism determination}
Assume that Conjectures~\ref{C:MT} and \ref{C:Frob conj} hold for $A$.  Then for almost all prime ideals $\q$ and $\p$ of $\OO_K$ that split completely in $K_A^\conn$,   
 the polynomials $P_{A,\q}(x)$ and $P_{A,\p}(x)$ determine the pairs $(L_1,m_1),\ldots, (L_s,m_s)$ up to reordering and replacing the $L_i$ by isomorphic number fields.
\end{thm}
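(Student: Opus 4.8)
The plan is to reduce the computation of the pairs $(L_i, m_i)$ to a question about the action of $\Hg_A$ (equivalently $\ST_A^\circ$, equivalently the root datum data we have already recovered in Theorem~\ref{T:main} and Theorem~\ref{T:ST}) on $V_A$. First I would recall the standard fact that
\[
\End(A_{\Kbar}) \otimes_\ZZ \QQ \cong \End_{\MT_A}(V_A) = \End_{G_A^\circ}(V_A),
\]
since the Mumford--Tate group is precisely the group whose invariants in the Tannakian category give the Hodge classes, and endomorphisms of $A_{\Kbar}$ correspond to Hodge classes in $\End(V_A) = V_A \otimes V_A^\vee$ (this uses that $K_A^\conn$ is where $G_A$ becomes connected, so over $\Kbar$ the relevant group is $G_A^\circ = \MT_A$; here we use Theorem~17.3.3 of \cite{MR2062673} as in Lemma~\ref{L:basic Hodge dimension}). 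Thus $B_1 \times \cdots \times B_s$ is the decomposition of the semisimple $\QQ$-algebra $\End_{G_A^\circ}(V_A)$ into simple factors, and the pair $(L_i, m_i)$ records that the $i$-th factor is an $m_i \times m_i$ matrix algebra over a division algebra with center $L_i$.

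The key step is then purely a matter of representation theory of the connected reductive group $G_A^\circ$ (or its quasi-split inner form), for which the root datum, the Galois action $\mu_{G_A^\circ}$, and the weights of $V_A$ with multiplicities — all recovered from $P_{A,\q}(x)$ and $P_{A,\p}(x)$ by Theorem~\ref{T:main} — are enough data. Concretely: decompose $V_{A,\Qbar} = \bigoplus_j W_j^{\oplus n_j}$ into isotypic components of irreducible representations $W_j$ of $(G_A^\circ)_{\Qbar}$; the highest weights of the $W_j$ and the multiplicities $n_j$ are computable from the weight multiset and the (known) Weyl group action. The $\Gal_\QQ$-action (through the finite quotient determined by $\mu_{G_A^\circ}$, together with the action on the weights coming from the $\QQ$-structure of the torus) permutes the $W_j$; the orbits, together with how the multiplicity spaces $\CC^{n_j}$ are glued and which Schur index / division algebra structure is forced, determine the $B_i$. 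More precisely $\End_{G_A^\circ}(V_A) \otimes_\QQ \Qbar = \prod_j M_{n_j}(\Qbar)$, and descent along $\Gal_\QQ$ turns this into $\prod_i M_{m_i}(D_i)$ with $D_i$ a division algebra over $L_i$; the field $L_i$ is the fixed field of the stabilizer of a Galois orbit of representations, and $m_i$ and the class of $D_i$ in $\Br(L_i)$ are governed by the resulting two-cocycle. I would then argue that $m_i$ (and the isomorphism class of $L_i$) is already pinned down by this combinatorial data, while the full division algebra $D_i$ in general is not — this is why the theorem only claims recovery of $(L_i, m_i)$ and not of $B_i$ itself.

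The main obstacle, and the point requiring the most care, is that over $\QQ_\ell$ or $\CC$ (i.e.\ after the comparison isomorphisms that the Frobenius polynomials actually see) there is a genuine loss of information: $B_i \otimes_\QQ \QQ_\ell$ or $B_i \otimes_\QQ \CC$ only remembers $(L_i \otimes \QQ_\ell, m_i)$, so one must check that no more than $(L_i, m_i)$ could ever be extracted — i.e.\ justify the ``up to'' clause — and simultaneously that this much \emph{is} extracted, i.e.\ that the local-to-global behavior of the torus characters and the $\mu_{G_A^\circ}$-image genuinely recover the global number field $L_i$ up to isomorphism and the integer $m_i$, not merely their $\ell$-adic completions. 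Here I would lean on the fact that $\mu_{G_A^\circ}$ is a homomorphism out of $\Gal_\QQ$ itself (a global object), so its fixed field $k$ and the permutation action on the irreducible constituents of $V_A$ are global data; the number field $L_i$ appears as a subfield of $k$ (or of the splitting field of the weights), recovered as a fixed field of a subgroup of the Galois group of $\QQ(\calW_{A,\p})/\QQ \cong \Gamma(G_A^\circ)$ appearing in Theorem~\ref{T:Frobenius Galois groups new}. Assembling this carefully — matching the abstract combinatorial recovery with the actual invariants computed from two Frobenius polynomials, and verifying that the density-zero exceptional sets $S, S'$ of Theorem~\ref{T:main} suffice — is the technical heart of the argument.
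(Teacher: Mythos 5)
Your reduction agrees with the paper's up to the key step: the paper likewise deduces the theorem immediately from Theorem~\ref{T:main} together with the identification of $\End(A_{\Kbar})\otimes_\ZZ\QQ$ with the commutant of $\MT_A$ in $\End_\QQ(V_A)$, and isolates all the content in Proposition~\ref{P:endomorphism}, which matches the $\Gamma$-orbits of the weights, their $W$-suborbits, the multiplicities, and the stabilizer fixed fields inside $k$ with the pairs $(L_i,m_i)$. Where you diverge is in how that matching is established. The paper decomposes $A$ over $\CC$, up to isogeny, as $\prod_i C_i^{e_i}$ with the $C_i$ simple and pairwise nonisogenous; it treats the simple case by invoking \S3.2 of \cite{MR563476} ($\Gamma$ acts transitively on the weights, every weight has multiplicity $m_1$, and the irreducible constituents are minuscule), obtains the isomorphism $L_1\cong L_1'$ by letting $L_1\otimes_\QQ L_1'$ act by homotheties on an isotypic subrepresentation $U\subseteq V_A\otimes_\QQ L_1'$, and then needs a separate argument that the weight sets of nonisogenous factors are disjoint (otherwise one manufactures a nonzero homomorphism $A_i\to A_j$, a contradiction). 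Your route --- decompose $V_A\otimes_\QQ\Qbar$ into isotypic pieces, observe $\End_{\MT_A}(V_A)\otimes_\QQ\Qbar\cong\prod_j M_{n_j}(\Qbar)$, and recover centers and degrees by Galois descent of the (canonically descending) center and of the block sizes --- is a legitimate alternative that makes the bijection between Galois orbits of constituents and simple factors of the commutant automatic, so the disjointness lemma is not needed. What it relies on instead is that the isotypic decomposition and the $\Gal_\QQ$-permutation of isomorphism classes of constituents are computable from the recovered data; this is exactly where Proposition~\ref{P:minuscule MT} (every constituent is minuscule, so its isomorphism class is a single $W$-orbit of weights) does the work and should be invoked explicitly.

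Two small corrections. The paper's $m_i$ is the degree of $B_i$, i.e.\ $m_i^2=\dim_{L_i}B_i$, so writing $B_i\cong M_{m_i}(D_i)$ with $D_i$ a division algebra is off unless $D_i=L_i$; the correct identification is $m_i=n_j$, the isotypic multiplicity, which (as you later say) is visible already over $\Qbar$ and is not governed by the descent cocycle --- only the Brauer class of $D_i$ is. Also, $L_i$ is the fixed field of the stabilizer of a single constituent in its Galois orbit, not of the orbit itself. Finally, your worry about losing information at the level of $\QQ_\ell$ or $\CC$ is moot: the data recovered in Theorem~\ref{T:main} is the $\Gal_\QQ$-module $\Phi_{A,\p}\subseteq\Qbar^\times$, which is already global, so the density-zero exceptional sets are literally those of Theorem~\ref{T:main} with nothing further to check.
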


We now explain how the pairs $(L_i,m_i)$ can be obtained from the root datum $\Psi(G_A^\circ)$ with its Galois action and the weights.      We may assume that our root datum is $\Psi(G_A^\circ,T)$ for some maximal torus $T$ of $G_A^\circ$.  Let $\Omega \subseteq X(T)$ be the weights of the representation $V_A$.   Let $\Gamma:=\Gamma(G_A^\circ, T)$ be the subgroup of $\Aut(\Psi(G_A^\circ,T))\subseteq \Aut(X(T))$ containing the Weyl group $W:= W(G_A^\circ, T)$ and whose image in $\Out(\Psi(G_A^\circ)) =\Aut(\Psi(G_A^\circ,T))/W$ agrees with the image of $\mu_{G_A^\circ}$.   Let $k$ be the fixed field in $\Qbar$ of the kernel of $\mu_{G_A^\circ}$; we have a natural isomorphism $\Gal(k/\QQ) = \Gamma/W$.

Let $\Omega_1,\ldots, \Omega_{s'}$ be the $\Gamma$-orbits of $\Omega$.   For each $1\leq i \leq s'$, choose a $W$-orbit $\OO_i \subseteq \Omega_i$.  Let $H_i$ be the group of $\sigma\in \Gamma$ for which $\sigma(\OO_i)=\OO_i$.   Using the isomorphism $\Gamma/W=\Gal(k/\QQ)$, let $L_i'$ be the subfield of $k$ fixed by $H_i/W$.   Let $m_i'$ be the multiplicity of any weight in $\OO_i$; this is well-defined since the $W$-action preserves multiplicities.

Theorem~\ref{T:endomorphism determination} is an immediate consequence of Theorem~\ref{T:main} and the following proposition which we will prove in \S\ref{S:endomorphism proof}.

\begin{prop} \label{P:endomorphism}
We have $s'=s$ and after reordering the $\Gamma$-orbits of $\Omega$, we have $m_i'=m_i$ and $L_i'\cong L_i$ for all $1\leq i \leq s$.
\end{prop}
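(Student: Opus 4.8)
The plan is to reduce the computation of $\End(A_{\Kbar})\otimes_\ZZ\QQ$ to the decomposition of the representation $V_A$ of the reductive group $G_A^\circ=\MT_A$, and then to match up that decomposition with the combinatorial data $\Omega_1,\dots,\Omega_{s'}$ and the integers $m_i'$. Recall the standard fact (going back to the theory of the Mumford--Tate group, see e.g.\ \cite{MR2062673}) that $\End(A_{\Kbar})\otimes_\ZZ\QQ = \End_{\Hg_A}(V_A) = \End_{G_A^\circ}(V_A)$, the ring of $\MT_A$-equivariant endomorphisms of the $\QQ$-vector space $V_A$; here the equality of the endomorphism algebras of $V_A$ as a $\Hg_A$-module and as a $G_A^\circ$-module holds because $\Hg_A$ and $\MT_A$ differ only by central torus factors which act by scalars on $V_A$. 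So I first record this identification, noting that $K_A^\conn=K$ is harmless (replacing $A$ by $A_{K_A^\conn}$ changes neither $A_{\Kbar}$ nor $G_A^\circ$).

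The next step is purely representation-theoretic. Decompose $V_{A,\Qbar}:=V_A\otimes_\QQ\Qbar$ into $G_{A,\Qbar}^\circ$-isotypic pieces; the $\Qbar$-irreducible constituents are indexed by their highest weights, and two constituents are isomorphic over $\Qbar$ iff their highest weights lie in the same $W$-orbit on $X(T)$ (once a Borel/positivity is chosen; but "highest weight in a $W$-orbit" is intrinsic). Thus the $W$-orbits $\OO_1,\dots,\OO_{s'}$ appearing in $\Omega$ that are \emph{dominant-orbit representatives} of actual constituents are in bijection with the $\Qbar$-irreducible constituents of $V_{A,\Qbar}$, and the common multiplicity $m_i'$ of the weights in $\OO_i$... — here I must be slightly careful: $m_i'$ as defined is the multiplicity in $\Omega$ of a weight of $\OO_i$, which for the \emph{highest} weight of a constituent equals the multiplicity of that constituent in $V_{A,\Qbar}$, provided that weight does not also occur inside another constituent. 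The cleanest way to arrange this is to observe that $V_A$ is self-dual and that the relevant orbits $\OO_i$ can be taken to consist of extreme (i.e.\ $W$-conjugate to dominant, hence to a highest) weights; I will phrase the bijection so that $m_i'$ is the multiplicity of the corresponding $\Qbar$-isotypic component. Then I bring in the Galois descent: $\End_{G_A^\circ}(V_A)\otimes_\QQ\Qbar = \End_{G_{A,\Qbar}^\circ}(V_{A,\Qbar})$, and the $\Gal_\QQ$-action on the right permutes the $\Qbar$-isotypic components exactly the way $\Gamma/W=\Gal(k/\QQ)$ permutes the $W$-orbits $\OO_i$ — this is because the Galois action on the constituents is through the action on their highest weights, which factors through $\mu_{G_A^\circ}$, i.e.\ through $\Gamma$. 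Hence the $\Gamma$-orbits $\Omega_1,\dots,\Omega_{s'}$ of $\Omega$ (grouping the $\OO_i$) correspond to the $\Gal_\QQ$-orbits of $\Qbar$-isotypic components, which is exactly the decomposition into simple factors $B_1\times\cdots\times B_s$ of $\End(A_{\Kbar})\otimes_\ZZ\QQ$; this gives $s'=s$.

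It remains to identify $(L_i',m_i')$ with $(L_i,m_i)$. For the field: the $i$-th simple factor $B_i$, with center $L_i$, corresponds to a $\Gal_\QQ$-orbit of $\Qbar$-isotypic components, and $L_i$ is precisely the fixed field of the stabilizer of one component in that orbit — matching the definition of $L_i'$ as the fixed field of $H_i/W$, since $H_i$ is by definition the stabilizer in $\Gamma$ of the orbit $\OO_i$, and $\Gamma/W=\Gal(k/\QQ)$ with $k\subseteq\Qbar$ the field through which all the Galois permutation action factors (the permutation action on constituents is defined over $k$). For the integer $m_i$: $\dim_{L_i}B_i=m_i^2$, and on the other hand, if $W_i$ is the $\Qbar$-isotypic component corresponding to $\OO_i$, then $W_i\cong U_i^{\oplus m_i'}$ for the irreducible $U_i$ with highest-weight orbit $\OO_i$, so $\End_{G_{A,\Qbar}^\circ}(W_i)\cong M_{m_i'}(\Qbar)$; descending, the $i$-th factor of $\End_{G_A^\circ}(V_A)$ is a form of $M_{m_i'}(\Qbar)$ over $L_i$, i.e.\ a central simple $L_i$-algebra of degree $m_i'$, whence $m_i=m_i'$. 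The main obstacle is the bookkeeping in the middle step: making the correspondence "$W$-orbit of extreme weights $\leftrightarrow$ $\Qbar$-irreducible constituent" precise enough that $m_i'$ really is the isotypic multiplicity and not merely a weight multiplicity, and checking that the Galois action on constituents is the one recorded by $\mu_{G_A^\circ}$ (rather than some twist). I expect to handle the first point by choosing the orbits $\OO_i$ among highest-weight orbits — which is legitimate because, being a direct sum of constituents, $\Omega$ is a union of full weight systems, each of which has a unique dominant (hence highest-with-respect-to-any-compatible-order) orbit — and the second by tracing through the construction of $\mu_{G_A^\circ}$ from the Galois action on $X(T)$, which is exactly the action governing both roots and the weight systems of constituents.
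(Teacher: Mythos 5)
Your strategy is genuinely different from the paper's and, once one gap is filled, it works. The paper instead reduces to the case of a simple abelian variety (after passing to $\CC$ and an isogeny $A\sim\prod_i C_i^{e_i}$), where it quotes Serre's structural results from \S3.2 of \cite{MR563476} --- transitivity of $\Gamma$ on $\Omega$, the decomposition of $V_A\otimes_\QQ\Qbar$ into $[L_1:\QQ]$ minuscule constituents each occurring with multiplicity $m_1$ --- and identifies $L_1$ with $L_1'$ by producing an explicit embedding $L_1\hookrightarrow L_1'$ (the homothety action of $L_1\otimes_\QQ L_1'$ on an isotypic piece defined over $L_1'$) and comparing degrees; the general case then requires the separate geometric fact that the weight sets of the $V_{A_i}$ are pairwise disjoint, proved via $\Hom(A_i,A_j)=0$ for nonisogenous simple factors. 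Your route --- identify $\End(A_{\Kbar})\otimes_\ZZ\QQ$ with $\End_{\MT_A}(V_A)$, decompose its base change to $\Qbar$ as a product of matrix algebras indexed by isotypic components, and read off $s$, $L_i$, $m_i$ by Galois descent --- avoids both the case analysis and the geometric disjointness argument, which is a genuine gain in uniformity. Your second flagged worry (whether the permutation of constituents is the one recorded by $\mu_{G_A^\circ}$) resolves as you hope: since $\Gamma$ is the preimage of $\mu_{G_A^\circ}(\Gal_\QQ)$ in $\Aut(\Psi)$, it equals $W\cdot\varphi_{G,T}(\Gal_\QQ)$, so the $\Gamma$-orbits of $W$-orbits, and the stabilizers modulo $W$, coincide with those of the honest Galois action, which factors through $\Gal(k/\QQ)=\Gamma/W$ on the set of $W$-orbits.

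The one step that would fail as written is the identification of $m_i'$ with the isotypic multiplicity. By definition $m_i'$ is the multiplicity in $V_A\otimes_\QQ\Qbar$ of an arbitrary weight in an arbitrary $W$-orbit $\OO_i\subseteq\Omega_i$, and your proposed fix --- restricting to highest-weight orbits --- neither covers the non-extreme orbits the proposition permits nor rules out that the highest weight of one constituent occurs as an interior weight of a non-isomorphic constituent, which would inflate $m_i'$ beyond the isotypic multiplicity. The missing ingredient is precisely Proposition~\ref{P:minuscule MT}: every irreducible constituent of $V_A\otimes_\QQ\Qbar$ is minuscule, so its weight set is a single $W$-orbit with all weight multiplicities equal to one, distinct isomorphism classes of constituents have distinct (hence disjoint) weight orbits, and therefore every $W$-orbit in $\Omega$ is the full weight set of a unique constituent whose isotypic multiplicity equals the weight multiplicity. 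With that citation in place your argument closes; the same fact also underwrites the bijection between $W$-orbits in $\Omega$ and isomorphism classes of constituents on which your count $s'=s$ rests.
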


\begin{remark}
\begin{romanenum}
\item
When $A$ is isogenous over $\Kbar$ to a power of a simple abelian variety, Theorem~\ref{T:endomorphism determination} is also a consequence of a theorem of Costa, Lombardo and Voight \cite{CLV} which uses a weaker version of Theorem~\ref{T:Frobenius Galois groups new}.

\item
In general, one cannot recover the structure of $\End(A_{\Kbar})\otimes_\ZZ \QQ$ from the Frobenius polynomials of two ``random'' prime ideals that split completely in $K_A^\conn$.    Since this ring is isomorphic to the subring of $\End(V_A)$ that commutes with the $\MT_A$-action, this also means that we cannot always recover $\MT_A=G_A^\circ$, up to isomorphism, with the setup of Theorem~\ref{T:main}.

For example, consider an abelian variety $A/K$ satisfying $K_A^\conn=K$ that is either the square of a non-CM elliptic curve or is a QM surface (i.e., its endomorphism ring is an indefinite quaternion algebra).  For all primes ideals $\p\subseteq \OO_K$ away from a set of density $0$, one can show that $P_{A,\p}(x)= Q_\p(x)^2$ for an irreducible quadratic $Q_\p(x)\in \ZZ[x]$.   Moreover, the splitting fields of $Q_\q(x)$ and $Q_\p(x)$ will be linearly disjoint over $\QQ$ for almost all $\q$ and $\p$.  So from a few Frobenius polynomials, we are unable to determine the structure of $\End(A_{\Kbar})\otimes_\ZZ \QQ$, but we can say that it is a central simple algebra over $\QQ$ and $\dim_\QQ (\End(A_{\Kbar})\otimes_\ZZ \QQ) = 2^2$.
\end{romanenum}
\end{remark}

\subsection{Overview of the proof of Theorem~\ref{T:main}} \label{SS:overview}
We now give an overview of the constructions underlying Theorem~\ref{T:main}.  Throughout, we assume that Conjectures~\ref{C:MT} and \ref{C:Frob conj} for $A$ hold.   

  Let $G$ be a quasi-split inner form of $G_{A}^\circ=\MT_A$, cf.~\S\ref{SS:quasi-split}.  In particular, there is an isomorphism $f\colon G_{\Qbar}\xrightarrow{\sim} (G_A^\circ)_{\Qbar}$ such that $f^{-1} \circ \sigma(f)$ is an inner automorphism of $G_{\Qbar}$ for all $\sigma\in \Gal_{\QQ}$.  Let $r$ be the rank of the reductive group $G_A^\circ$ and hence also the rank of $G$.   Since we are assuming the Mumford--Tate conjecture for $A$, the rank of each $G_{A,\ell}^\circ$ is also $r$.

For each nonzero prime ideal $\p\subseteq \OO_K$ for which $A$ has good reduction, let $\Phi_{A,\p}$ be the subgroup of $\Qbar^\times$ generated by the set of roots $\calW_{A,\p} \subseteq \Qbar$ of $P_{A,\p}(x)$.   There is an action of $\Gal_\QQ$ on the set $\calW_{A,\p}$, and hence also on the group $\Phi_{A,\p}$, since $P_{A,\p}(x)$ has rational coefficients.  If $\Phi_{A,\p}$ is torsion-free, then it is a free abelian group of rank at most $r$ and $\p$ splits completely in $K_A^\conn$, cf.~Lemma~\ref{L:common rank}(\ref{L:common rank i}).   By Lemma~\ref{L:common rank}(\ref{L:common rank ii}), there is a set $S$ of prime ideals of density $0$ such that if $\p\notin S$ splits completely in $K_A^\conn$, then $A$ has good reduction at $\p$ and $\Phi_{A,\p}$ is a free abelian group of rank $r$.\\

Consider a nonzero prime ideal $\p \subseteq\OO_K$ for which $A$ has good reduction and $\Phi_{A,\p}$ is a free abelian group of rank $r$.  Associated to $\p$ is a \defi{Frobenius torus} $T_\p$ of $G$.  More precisely, Theorem~\ref{T:Frobenius tori} implies that there is a maximal torus $T_\p$ of $G$ and an element $t_\p \in T_\p(\QQ)$ such that $f(t_\p) \in \GL_{V_A}(\Qbar)$ has characteristic polynomial $P_{A,\p}(x)$ and such that the homomorphism $X(T_\p) \to \Qbar^\times$,  $\alpha\mapsto \alpha(t_\p)$ induces an isomorphism 
\begin{align} \label{E: XT intro}
X(T_\p) \xrightarrow{\sim} \Phi_{A,\p}.
\end{align}
The isomorphism (\ref{E: XT intro}) respects the $\Gal_\QQ$-actions.    

Let $L\subseteq \Qbar$ be any number field for which $G_L$ is split.    The group $\Gal(L(\calW_{A,\p})/L)$ acts faithfully on $\Phi_{A,\p}$.  The Weyl group $W(G,T_\p)$ of $G$ with respect to $T_\p$ also acts faithfully on $X(T_\p)$.  Via the isomorphism (\ref {E: XT intro}), we can identify $\Gal(L(\calW_{A,\p})/L)$ with a subgroup of $W(G,T_\p)$, cf.~\S\ref{S:Galois groups of Frob}.    In particular, we have $[L(\calW_{A,\p}):L]\leq |W(G)|=|W(G_A^\circ)|$.\\

Now fix a nonzero prime ideal $\q\subseteq \OO_K$ for which $A$ has good reduction and $\Phi_{A,\q}$ is a free abelian group of rank $r$ (it suffices to take $\q\notin S$ that splits completely in $K_A^\conn$).   Define $L:=\QQ(\calW_{A,\q})$.  The group $G_L$ is split since we have a maximal torus $T_\q$ of $G$ that splits over $L$.   We have $L\supseteq k$ where $k$ is the field from \S\ref{SS:intro Galois groups}.  By Theorem~\ref{T:Frobenius Galois groups new}, there is a set $S'\supseteq S$ of prime ideals of density $0$ such that if $\p\notin S'$ splits completely in $K_A^\conn$, then $A$ has good reduction at $\p$ and
the following conditions hold:
\begin{itemize}
\item
$\Phi_{A,\p}$ is a free abelian group of rank $r$, 
\item
$[L(\calW_{A,\p}): L] = |W(G_A^\circ)|$. 
\end{itemize}
Take any $\p$ satisfying the above conditions.  Using the isomorphism (\ref {E: XT intro}) as an identification, we have an inclusion $\Gal(L(\calW_{A,\p})/L) \subseteq W(G,T_\p)$.  Therefore, $\Gal(L(\calW_{A,\p})/L) = W(G,T_\p)$ since the two groups both have cardinality $|W(G_A^\circ)|$.\\

With our choice of  $\q$ and $\p$ as above, we now explain that Theorem~\ref{T:main} holds with these primes.  We shall use the isomorphism (\ref{E: XT intro}) as an identification $X(T_\p)=\Phi_{A,\p}$.   Under this identification, the set of weights $\Omega \subseteq X(T_\p)$ of the representation 
\[
\rho\colon G_{\Qbar} \xrightarrow{f} (G_A^\circ)_{\Qbar} \subseteq \GL_{V_A \otimes_\QQ \Qbar}
\] 
with respect to $T_\p$ agrees with the set of roots $\calW_{A,\p}$ of $P_{A,\p}(x)$.  Moreover, the multiplicity of a weight $\alpha\in \Omega$ agrees with the multiplicity of $\alpha$ viewed as a root of $P_{A,\p}(x)$.  The Galois group $\Gal(L(\calW_{A,\p})/L)$ acts faithfully on the set $\calW_{A,\p}$ and hence also on the group $\Phi_{A,\p}$.    With respect to our identification $X(T_\p)=\Phi_{A,\p}$, we have $\Gal(L(\calW_{A,\p})/L) = W(G,T_\p)$.

So from the polynomials $P_{A,\p}(x)$ and $P_{A,\q}(x)$, we have found a group of characters $X(T_\p)$ with $T_\p$ a maximal torus of $G$, the $\Gal_\QQ$-action on $X(T_\p)$, the set of weights $\Omega\subseteq X(T_\p)$ of $\rho$ with multiplicities, and the group $W(G,T_\p)$ acting faithfully on $X(T_\p)$.  From this information, we explain in \S\ref{S:roots} how to compute the set of roots $R(G,T_\p) \subseteq X(T_\p)$ of $G$ with respect to $T_\p$.  In this root computation, we make use of the fact that the each irreducible representation of $(G_A^\circ)_{\Qbar}$ in $V_A\otimes_\QQ \Qbar$ is \emph{minuscule}, cf.~Proposition~\ref {P:minuscule MT}.    Finally, we will observe that the root datum $\Psi(G,T_\p)$ is directly determined by $X(T_\p)$, $R(G,T_\p)$ and the action of $W(G,T_\p)$ on $X(T_\p)$.    Since $G$ is an inner form of $G_A^\circ$, this gives us the desired quantities of Theorem~\ref{T:main}.

For the Hodge group $\Hg_A$, the character group can be taken to be $\Phi_{A,\p}/\ang{N(\p)}$ and the roots and weights are the image of those of $G_A^\circ$, see \S\ref{SS:root data for Hg} for details.

\subsection{Algorithms} \label{SS:algorithms}

We take it as given that for a nonzero prime ideal $\p$ for which $A$ has good reduction, the polynomial $P_{A,\p}(x)$ can be computed; one can also ignore a finite number of troublesome primes.  This should certainly be possible if $A$ is given explicitly.   The (reverse) of the polynomial $P_{A,\p}(x)$ will occur as a factor in a zeta function and there are many techniques and strategies for computing zeta functions.  Given a polynomial $P_{A,\p}(x)$, the structure of the group $\Phi_{A,\p}$ is computable, cf.~\S\ref{SS:computing PhiAp}. \\
  
We now assume that Conjectures~\ref{C:MT} and \ref{C:Frob conj} for $A$ hold. Let $r$ be the rank of $G_A^\circ$.    As noted in the overview in \S\ref{SS:overview},  there are nonzero prime ideals $\q$ and $\p$ of $\OO_K$ such that $A$ has good reduction and the following conditions hold:
\begin{alphenum}
\item \label{I:q condition intro}
$\Phi_{A,\q}$ is a free abelian group of rank $r$,
\item \label{I:p condition intro}
$\Phi_{A,\p}$ is a free abelian group of rank $r$ and $[L(\calW_{A,\p}): L] = |W(G_A^\circ)|$, where $L:=\QQ(\calW_{A,\q})$.
\end{alphenum}
Moreover, the set of $\q$ satisfying (\ref{I:q condition intro}) has positive density.  For a fixed $\q$, the set of $\p$ satisfying (\ref{I:p condition intro}) also has positive density (in fact, these densities can be bounded below by a positive constant that depends only on the dimension of $A$).

Assume that we know the Frobenius polynomials of two prime ideals $\q$ and $\p$ as above.   Let $\pi_1,\ldots, \pi_n$ be the distinct elements of $\calW_{A,\p}$ (working in some splitting field of $P_{A,\p}(x)$).   Let $\varphi\colon \ZZ^n \to \Phi_{A,\p}$ be the surjective homomorphism satisfying $\varphi(e_i)=\pi_i$, where $e_1,\ldots, e_n$ is the standard basis of $\ZZ^n$.  In \S\ref{SS:computing PhiAp}, we explain how to compute to compute $\ker \varphi$ and hence $X:=\ZZ^n/\ker \varphi$ gives the structure of $\Phi_{A,\p}$.  The Galois action on the roots $\pi_1,\ldots, \pi_n$ of $P_{A,\p}(x)$ gives an injective homomorphism $\Gal(\QQ(\calW_{A,\p})/\QQ) \hookrightarrow S_n$.  This gives an action of $\Gal(\QQ(\calW_{A,\p})/\QQ)$ on $\ZZ^n/\ker \varphi$ that agrees with the action on $\Phi_{A,\p}$ via the isomorphism induced by $\varphi$.  Looking over the overview in \S\ref{SS:overview} and using our isomorphism $X\xrightarrow{\sim} \Phi_{A,\p}$, one can compute the set of weights $\Omega\subseteq X$ with multiplicities and the Weyl group $W:=\Gal(L(\calW_{A,\p})/L)$ with its action on $X$ (with $L=\QQ(\calW_{A,\q})$).   As noted in the overview, the set of roots in $X$ of $G_A^\circ$ can also be found using the algorithm from \S\ref{S:roots}.  We can thus find the root datum of $G_A^\circ$ and the action of $\Gal(\QQ(\calW_{A,\p})/\QQ)$ on $X$ gives the homomorphism $\mu_{G_A^\circ}$.    As an aside, we note that we do not need to know the field $K_A^\conn$ for these computations.

So everything in Theorem~\ref{T:main} is computable, under Conjectures~\ref{C:MT} and \ref{C:Frob conj}, assuming we can find prime ideals $\q$ and $\p$ satisfying (\ref{I:q condition intro}) and (\ref{I:p condition intro}), respectively.  Moreover, in such a situation one should be able to make the computational \emph{unconditional}.  Indeed, the existence of a prime $\q$ satisfying (\ref{I:q condition intro}) implies the Mumford--Tate conjecture for $A$, cf.~Theorem~4.3 of \cite{MR1339927} and Lemma~\ref{L:common rank}.   Conjecture~\ref{C:Frob conj} is not required if one can choose $\q$ and $\p$ so that $A$ has ordinary reduction at these primes, cf.~\cite{MR1370746}; this is doable in practice.

\emph{If} we knew the integers $r$ and $w:=|W(G_A^\circ)|$, then we could just check primes $\q$ and $\p$ until we found a pair that satisfies (\ref{I:q condition intro}) and (\ref{I:p condition intro}), and we would have a deterministic algorithm to compute the information stated in Theorem~\ref{T:main}.     \\

However, $r$ and $w$ may not be known a priori.  By looking at several good prime ideals $\p$, say with norm up to some bound, we can make predictions for the value $r$ and $w$.    Let $r'$ be maximal rank of a torsion-free group $\Phi_{A,\p}$ that we encounter.   We have $r'\leq r$, cf.~Lemma~\ref{L:common rank}(\ref{L:common rank i}).    Fix a prime ideal $\q$ for which $\Phi_{A,\q}$ is free abelian of rank $r'$.    Define $L:=\QQ(\calW_{A,\q})$ and let $w'$ be the maximal value of the degree $[L(\calW_{A,\p}): L]$ that is encountered amongst all good prime ideals $\p$ for which $\Phi_{A,\p}$ is free of rank $r'$.  If $r'=r$, then $w'\leq w$, cf.~\S\ref{S:Galois groups of Frob}.

Given $r'$ and $w'$, we can apply the above algorithm to obtain a prediction for the information of Theorem~\ref{T:main} (or we might have incompatible input for the root computations which, under the Mumford--Tate conjecture, says that $r'\neq r$ or $w'\neq w$).     Assuming Conjectures~\ref{C:MT} and \ref{C:Frob conj}, we will obtain correct results by considering sufficiently many primes.\\

When trying to implement the above algorithm, besides possible finding the Frobenius polynomial, the biggest computational bottleneck is computing the splitting fields of the Frobenius polynomials $P_{A,\p}(x)$ over $\QQ$.  In \S\ref{S:computational remarks}, we explain how without computing such splitting fields we can still compute the root datum $\Psi(G_A^\circ)$ up to isomorphism, 
 the \emph{image} of the homomorphism $\mu_{G_A^\circ} \colon \Gal_\QQ \to \Out(\Psi(G_A^\circ))$, and 
the set of weights of the representation $G_A^\circ \subseteq \GL_{V_A}$ and their multiplicities.  

An implementation of this algorithm in 
\texttt{Magma} \cite{MR1484478} can be found at:
\begin{center}
\url{https://github.com/davidzywina/monodromy}
\end{center}

\begin{remark}
The sets of density $0$ in our proof of Theorem~\ref{T:main} arise from several applications of the Chebotarev density theorem.    Making these sets computable is tricky because  Chebotarev's theorem is applied to Galois extensions arising  from the representations $\rho_{A,\ell}$ whose image is difficult to give explicitly (if we knew its image easily, we would easily obtain $G_{A,\ell}^\circ$).

At least under the Generalized Riemann Hypothesis (GRH) for number fields, it is possible to say something about the $\q$ with smallest norm satisfying (\ref{I:q condition intro}).   Let $r_0$ be the common rank of the reductive groups $G_{A,\ell}$, cf.~Lemma~\ref{L:common rank}(\ref{L:common rank iii}); it equals $r$ assuming the Mumford--Tate conjecture.  In \S7 of \cite{ZywinaEffectiveOpen}, it is show assuming GRH that there is a prime ideal $\q$ of $\OO_K$ satisfying 
\[
N(\q) \leq C ( \max\{[K:\QQ], h(A), \log D \})^f
\]
for which $A$ has good reduction and $\Phi_{A,\q}$ is a free abelian group of rank $r_0$, where $C$ and $f$ are positive constants that depend only on $g$, $h(A)$ is the  (logarithmic absolute and semistable) Faltings height of $A$, and $D$ is the product of rational primes that ramify in $K$ or are divisible by a prime ideal of $\OO_K$ for which $A$ has bad reduction.  The constants $C$ and $f$ are computable but not worked out in \cite{ZywinaEffectiveOpen}; they are likely too large to be of practical use.
\end{remark}

\subsection{A CM example}

Let $A$ be the Jacobian of the smooth projective curve $C$ over $\QQ$ defined by the equation $y^2=x^9-1$; it is an abelian variety over $\QQ$ of dimension $4$. \\

Computing $\Phi_{A,p}$ for a few small primes, say $5\leq p\leq 100$, we observe that $\Phi_{A,p}$ has torsion when $p\not\equiv 1 \pmod{9}$ and is free of rank $4$ when $p\equiv 1 \pmod{9}$.  For those primes satisfying $p\equiv 1 \pmod{9}$, we also observe that $\QQ(\calW_{A,p})=\QQ(\zeta_9)$, where $\zeta_9$ is a primitive $9$-th root of unity in $\Qbar$. 

Assuming these properties for the groups $\Phi_{A,p}$ are typical, we now make a prediction for the group $G_A^\circ$ (while implicitly assuming that Conjectures~\ref{C:MT} and \ref{C:Frob conj} hold for $A$); we will prove these predictions afterwards.    Using that conditions (\ref{I:q condition intro}) and (\ref{I:p condition intro}) should hold for a positive density set of primes, we expect that the rank of $G_A^\circ$ is $4$ and $W(G_A^\circ)=1$.   In particular, we expect $G_A^\circ$ to be a torus of rank $4$.

Consider any good prime $p$ for which $\Phi_{A,p}$ is free of rank $4$.   We then obtain a Frobenius maximal torus $T_p$ of $G_A^\circ$ which equals $G_A^\circ$ (note that $G_A^\circ$ is quasi-split since it is a torus).   
There is an element $t_p\in T_p(\QQ)=G_A^\circ(\QQ)$ such that we have an isomorphism
\begin{align}\label{E:XGA intro}
X(G_A^\circ)=X(T_p) \xrightarrow{\sim} \Phi_{A,p},\quad \alpha\mapsto \alpha(t_p)
\end{align}
for which the weights $\Omega \subseteq X(G_A^\circ)$ of the representation $V_A$ of $G_A^\circ$ correspond with the roots $\calW_{A,\p}\subseteq \Phi_{A,p}$ of $P_{A,p}(x)$.   We expect there to be $a,b,c \in \Qbar$ such that the roots of $P_{A,p}(x)$ are:
\begin{align} \label{E:roots CM example}
a,\quad b, \quad c, \quad p/a, \quad p/b, \quad p/c,  \quad abc/p, \quad p^2/(abc)
\end{align}
and they are all distinct; this can be computed for any suitable prime (say $p=19$) and then will hold in general by our isomorphism (\ref{E:XGA intro}). We can also describe the action of $\Gamma:=\Gal(\QQ(\zeta_9)/\QQ)$ on the roots (\ref{E:roots CM example}).    We can choose $a$, $b$ and $c$ so that $\Gal(\QQ(\zeta_9)/\QQ(\sqrt{-3}))$ permutes these three roots transitively.    This then describes the action of $\Gamma$ on these roots since complex conjugation takes a root $\alpha$ to $p/\alpha$.   

We now have our description of the group $G_A^\circ$ and its representation up to isomorphism.   Since $G_A^\circ$ is a torus, it is determined up to isomorphism, by the group $X(G_A^\circ)$ and the $\Gal_\QQ$-action.    So we can take  $X:=X(G_A^\circ)$ to be the free (multiplicative) abelian group on symbols $a$, $b$, $c$ and $p$.  We let $\Gamma:=\Gal(\QQ(\zeta_9)/\QQ)$ act on the group $X$ by letting it fix $p$, letting $\Gal(\QQ(\zeta_9)/\QQ(\sqrt{-3}))$ act transitively on $\{a,b,c\}$, and letting complex conjugation map any $\alpha\in \{a,b,c\}$ to $p/\alpha$.   The representation $V_A$ of $G_A^\circ$ is then determined, up to isomorphism, by letting (\ref{E:roots CM example}) be the weights in $X$ (each with multiplicity $1$).

Following \S\ref{SS:Hodge dim}, one can then show that  $\dim_\QQ \mathcal{H}^2(A)=M_{4,1}=8$.   Indeed, Shioda has proved that $\dim_\QQ \mathcal{H}^2(A)=8$, cf.~Example 6.1 of \cite{MR641669} (moreover, $\mathcal{H}^2(A)$ is not generated by the intersection of pairs of divisors of $A$).\\

We now prove the above predictions.    First observe that the abelian variety $A$ has complex multiplication.  Moreover, we have an isomorphism of rings $\End(A_{\Qbar}) = \ZZ[\zeta_9]$, where $\zeta_9$ corresponds with the automorphism of $A_{\Qbar}$ arising from the automorphism $(x,y)\mapsto (\zeta_9x,y)$ of $C_{\Qbar}$.  The field of definition of the endomorphisms of $A_{\Qbar}$ is $\QQ(\zeta_9)$, so $\QQ(\zeta_9) \subseteq K_A^\conn$.  This explains the condition $p\equiv 1\pmod{9}$ above; we need this to hold for $p$ to split completely in $K_A^\conn$.  In fact, one can show that $K_A^\conn=\QQ(\zeta_9)$.    

The group $G_A^\circ$ is a torus; we can identify it with a subgroup of the torus $\Res_{L/\QQ}(\GG_m)$, where $L:=\End(A_{\Qbar})\otimes_\ZZ \QQ$, since the actions of $G_A^\circ$ and $L$ commute and $V_A$ is a $1$-dimension $L$-vector space.  In particular, $W(G_A^\circ)=1$.  The Mumford--Tate conjecture holds for all CM abelian varieties and in particular for $A$,  cf.~\cites{MR228500, MR3384679}.     Conjecture~\ref{C:Frob conj} also holds for all CM abelian varieties and in particular $A$; this follows for example from the idelic description of the Galois representations as given in \S7 of \cite{MR236190}.

Since $W(G_A^\circ)=1$ and the Conjectures~\ref{C:MT} and \ref{C:Frob conj} hold for $A$, to verify our predictions for $G_A^\circ$ it suffices to prove that $r=4$.  Suppose on the contrary that $r\neq 4$.  We have $r\geq 5$ since we have observed that there are primes $p$ for which $\Phi_{A,p}$ has rank $4$.  Since the the Mumford--Tate conjecture for $A$ holds, there is a good prime $p$ such that $\Phi_{A,p}$ is a free abelian group of rank $r\geq 5$.    There are $a,b,c,d\in \Qbar$ such the roots of $P_{A,p}(x)$ are:
\begin{align} \label{E:roots CM example 2}
a,\quad b, \quad c, \quad d, \quad p/a,\quad p/b, \quad p/c,\quad p/d.
\end{align}
We deduce that $r=5$ and that $\Phi_{A,p}$ has basis $\{a,b,c,d,p\}$.  Arguing as above, though now unconditionally,  we have an isomorphism (\ref{E:XGA intro}) and the weights are given by (\ref{E:roots CM example 2}).   Following \S\ref{SS:Hodge dim}, one can compute that  $\dim_\QQ \mathcal{H}^2(A)=M_{4,1}=6$.  However, this contradicts the result of Shioda mentioned above.  Therefore, $r=4$ as claimed.

\begin{remark}
As noted by Shioda in Example 6.1 of \cite{MR641669}, $A$ is isogenous to $B\times E$, where $B$ is a simple abelian variety of dimension $3$ and $E$ is an elliptic curve. 
So  $P_{A,p}(x)=P_{B,p}(x) \cdot P_{E,p}(x)$
for all good primes $p$.   When $\Phi_{A,p}$ is free of rank $4$, we can choose the roots (\ref{E:roots CM example}) of $P_{A,p}(x)$ so that $abc/p$ and $p^2/(abc)$ are the distinct roots of $P_{E,p}(x)$.
\end{remark}

\subsection{Another example}
Let $C$ be the smooth projective curve over $\QQ$ with affine model 
\begin{align} \label{E:affine model example 2}
y^2=x(x^{20}+7x^{18}-7x^2-1).   
\end{align}
Let $A$ be the Jacobian of $C$; it is an abelian variety over $\QQ$ of dimension $10$.   

We have computed the group $\Phi_{A,p}$ for all odd primes $p\leq 200$ for which the right hand side of (\ref{E:affine model example 2}) is separable modulo $p$.  For such primes $p$, the group $\Phi_{A,p}$ is torsion-free if and only if $p$ is an element of $\calP:=\{17,41,73,89,97,113,137,193\}$.   Moreover, $\Phi_{A,\p}$ has rank $6$ for all $p\in \calP$. 

A computation shows that the Galois group $\Gamma:=\Gal(\QQ(\calW_{A,p})/\QQ)$ has order $2^5\cdot 5!$ for all $p\in \calP$.   For any prime $p \in \calP$, we have $P_{A,p}(x)=Q_p(x)^2$ for a polynomial $Q_p(x)\in \QQ[x]$ whose discriminant is in $-2\cdot (\QQ^\times)^2$.  In particular, $\QQ(\sqrt{-2})\subseteq \QQ(\calW_{A,p})$.   Moreover, for any distinct $p,q\in \calP$, one can verify that $\QQ(\calW_{A,q}) \cap \QQ(\calW_{A,p}) = \QQ(\sqrt{-2})$.   Thus for any distinct $p,q \in \calP$, we have $[L(\calW_{A,p}):L] = 2^4\cdot 5!$ where $L=\QQ(\calW_{A,q})$.\\

By considering only the primes $p\leq 200$ and the conditions (\ref{I:q condition intro}) and (\ref{I:p condition intro}) of \S\ref{SS:algorithms}, we predict that the rank of $G_A^\circ$ is $6$ and $|W(G_A^\circ)|=2^4\cdot 5!$.    We now assume that $\rank G_A^\circ \leq 6$ and $|W(G_A^\circ)|\leq 2^4\cdot 5!$.    Under these assumptions, we shall compute the root datum of $G_A^\circ$.    Fix distinct primes $q$ and $p$ in $\calP$.  

Using Lemma~\ref{L:common rank}(\ref{L:common rank i}) and Proposition~\ref{P:MT inclusion}, we have inequalities: $6=\rank \Phi_{A,p} \leq \rank G_{A,\ell}^\circ \leq \rank G_A^\circ$. By our assumed bound on the rank of $G_A^\circ$, the groups $G_A^\circ$ and $G_{A,\ell}^\circ$ both have rank $6$.   Theorem~4.3 of \cite{MR1339927} implies that the Mumford--Tate conjecture for $A$ holds.  Conjecture~\ref{C:Frob conj} will holds for the primes $p$ and $q$ by Proposition~\ref{P:Noot}(\ref{P:Noot ii}) (by considering the middle coefficient of the Frobenius polynomials, we find that $A$ has ordinary reduction at $p$ and $q$).   So excluding our initial assumptions, the following computations will be unconditional.

Since $P_{A,p}(x)=Q_p(x)^2$ with $Q_p(x)\in \QQ[x]$ separable of degree $10$, there are $\alpha_1,\ldots,\alpha_5\in \Qbar$ such that the distinct roots of $P_{A,p}(x)$ are $\alpha_1,\ldots,\alpha_5,p/\alpha_1,\ldots,p/\alpha_5$.  Since $\Phi_{A,p}$ has rank $6$, it has basis $\alpha_1,\ldots,\alpha_5,p$.  Let 
\[
\varphi\colon \Phi_{A,p}\hookrightarrow \QQ^6
\]
 be the injective homomorphism satisfying  $\varphi(\alpha_i)=e_i+1/2\cdot e_6$ for $1\leq i \leq 5$ and $\varphi(p)=e_6$, where $e_1,\ldots, e_6$ is the standard basis of $\QQ^6$.  Define the set
\[
\Omega:= \varphi(\calW_{A,p}) = \big\{ \pm e_i + \tfrac{1}{2}\cdot e_6 : 1\leq i \leq 5 \big\}.
\]
Let $X\subseteq \QQ^6$ be the image of $\varphi$; equivalently, the subgroup of $\QQ^6$ generated by $\Omega$.  The map $\varphi$ defines an isomorphism $\Phi_{A,p} \xrightarrow{\sim} X$.

The Galois group $\Gamma:=\Gal(\QQ(\calW_{A,p})/\QQ)$ has order $2^5\cdot 5!$.  The group $\Gamma$ is as large as possible when we take into account that it induces an action on the set of pairs $\{\alpha_1,p/\alpha_1\},\ldots, \{\alpha_5,p/\alpha_5\}$.  Via the isomorphism $\varphi\colon \Phi_{A,p}\xrightarrow{\sim} X$ we obtain a faithful action of $\Gamma$ on $X$ and hence also on $X\otimes_\ZZ \QQ =\QQ^6$.  In terms of the action on $\QQ^6$, we can identify $\Gamma$ with the group of signed permutation matrices in $GL_6(\QQ)$ that fix $e_6$.   Define the field $L:=\QQ(\calW_{A,q})$ and the group $W:=\Gal(L(\calW_{A,p})/L)$.   The group acts $W$ acts faithfully on $\Phi_{A,p}$.    Since $\QQ(\calW_{A,q}) \cap \QQ(\calW_{A,p})=\QQ(\sqrt{-2})$, restriction to $\QQ(\calW_{A,p})$ allows to identify $W$ with the subgroup $\Gal(\QQ(\calW_{A,p})/\QQ(\sqrt{-2})) \subseteq \Gamma$.   Via its action on $\QQ^6$, we can also view $W$ as the subgroup of $\Gamma$ that acts on $\pm e_1,\ldots, \pm e_5$ via signed permutations that only change an even number of signs.

Let $G$ be the quasi-split inner form of $G_A^\circ$.    Using our assumptions and the overview of \S\ref{SS:overview}, we deduce that there is an maximal torus $T_\p$ of $G$ for which we have an isomorphism
\begin{align}\label{E:second ex}
X(T_p) = \Phi_{A,p} \xrightarrow{\stackrel{\varphi}{\sim}} X
\end{align}
of groups with compatible actions of $\Gamma=\Gal(\QQ(\calW_{A,p})/\QQ)$.  Moreover, the groups $W(G,T_p)$ that acts faithfully on $X(T_p)$ can be identified with $W$ via the isomorphism (\ref{E:second ex}).  The set $\Omega\subseteq X$ corresponds to the usual set of weights in $X(T_p)$; each weight has multiplicity $2$ since $Q_p(x)$ is separable.  Let $R$ be the subset of $X$ corresponding to the set $R(G,T_p)$ of roots under the isomorphism (\ref{E:second ex}). 

We now compute $R$.  The group $W$ acts transitively on $\Omega$.  Define the set
\[
C_\Omega:=\{\alpha-\beta: \alpha,\beta \in \Omega,\, \alpha\neq \beta\};
\]
it is the union of the two $W$-orbits $\{\pm e_i \pm e_j : 1 \leq i < j \leq 5\}$ and $\{\pm 2e_i: 1\leq i \leq 5\}$.  Applying Algorithm~\ref{algorithm 1} and Proposition~\ref{P:Lie 1}, we find that the root system $R$ is irreducible and is a subset of $C_\Omega$.  By Propositions~\ref{P:Lie 2} and \ref{P:Lie 3}, we find that $R$ is of Lie type $D_5$ and that 
\[
R=\{\pm e_i \pm e_j : 1 \leq i < j \leq 5\}.
\]
To finish computing the quantities of Theorem~\ref{T:main}, we observe that the set of roots $R\subseteq X$ and the action of the Weyl group $W$ on $X$ determines the desired root datum, cf.~Lemma~\ref{L:root datum from W}.  The homomorphism $\mu_{G_A^\circ}$ arises from the natural isomorphism $\Gal(\QQ(\sqrt{-2})/\QQ)=\Gamma/W$.\\

Finally, Lemma~\ref{L:example 2 unconditional} makes the above computations unconditional.   For the proof, we need to know the structure of the ring $\End(A_{\Qbar})\otimes_\ZZ \QQ$; this is not something that can be determined by considering only Frobenius polynomials (though following \S\ref{SS:endomorphism ring}, we can predict that it is a semisimple central algebra over $\QQ$ of degree $2$).

\begin{lemma} \label{L:example 2 unconditional}
The group $ G_A^\circ$ has rank $6$ and $W(G_A^\circ)$ has cardinality $2^4\cdot 5!$.
\end{lemma}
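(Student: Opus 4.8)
The plan is to establish the two upper bounds $\rank G_A^\circ\le 6$ and $|W(G_A^\circ)|\le 2^4\cdot 5!$; the matching lower bounds are already available. Indeed, for any $p\in\calP$ we have $6=\rank\Phi_{A,p}\le\rank G_{A,\ell}^\circ\le\rank G_A^\circ$ by Lemma~\ref{L:common rank}(\ref{L:common rank i}) and Proposition~\ref{P:MT inclusion}, and once the Mumford--Tate conjecture and Conjecture~\ref{C:Frob conj} are in force for $A$ the overview of \S\ref{SS:overview} gives $2^4\cdot 5!=[L(\calW_{A,p}):L]\le|W(G_A^\circ)|$ for distinct $p,q\in\calP$ with $L=\QQ(\calW_{A,q})$. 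So the substance of the lemma is an upper bound coming from the geometric endomorphisms of $A$.

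First I would locate a totally definite quaternion algebra inside $\End^0(A_{\Qbar})$. The degree-$20$ factor $x^{20}+7x^{18}-7x^2-1$ of the right-hand side of (\ref{E:affine model example 2}) is even and anti-palindromic, so $f(x):=x(x^{20}+7x^{18}-7x^2-1)$ satisfies $f(-x)=-f(x)$ and $x^{22}f(1/x)=-f(x)$; hence over $\QQ(i)$ the curve $C$ carries the automorphisms
\[
\sigma\colon (x,y)\mapsto(-x,\,iy)\qquad\text{and}\qquad\tau\colon (x,y)\mapsto\bigl(1/x,\,iy/x^{11}\bigr),
\]
each of order $4$ with $\sigma^2=\tau^2$ the hyperelliptic involution $\iota$ and $\sigma\tau\sigma^{-1}=\tau^{-1}$; thus $\langle\sigma,\tau\rangle$ is the quaternion group $Q_8$. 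Functoriality of the Jacobian gives a $\QQ$-algebra homomorphism $\QQ[Q_8]\to\End^0(A_{\Qbar})$, under which the nontrivial central element $\iota$ of $Q_8$ maps to $-1$. Writing $\QQ[Q_8]\cong\QQ^{4}\times D$ with $D=\bigl(\tfrac{-1,-1}{\QQ}\bigr)$ the Hamilton quaternion algebra over $\QQ$, this forces the homomorphism to annihilate the four one-dimensional isotypic factors and to embed $D$. Hence $\End^0(A_{\Qbar})$ contains the totally definite quaternion division algebra $D$, and since the Rosati involution sends each automorphism of $C$—which preserves the canonical principal polarization of $A$—to its inverse, it restricts on $D$ to the main involution $q\mapsto\bar q$, which is of symplectic type.

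Next I would feed this into the structure of $\MT_A$. As $D$ is a division algebra with $\dim_\QQ V_A=20=5\cdot\dim_\QQ D$, the space $V_A$ is a free left $D$-module of rank $5$, and $G_A^\circ=\MT_A$ preserves the Riemann form $E$ and commutes with $\End^0(A_{\Qbar})\supseteq D$. Because the Rosati involution is of symplectic type on $D$, the polarized module $(V_A,E)$ is of Albert type III: over $\Qbar$ one has $D\otimes\Qbar\cong M_2(\Qbar)$ and $V_A\otimes\Qbar\cong\Qbar^2\otimes W$ with $\dim_{\Qbar}W=10$, and $E$ becomes the tensor product of a symplectic form on $\Qbar^2$ with a \emph{symmetric} bilinear form on $W$. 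Consequently the identity component $C$ of the centralizer of $D$ in $\GSp(V_A,E)$ is a $\QQ$-form of $\operatorname{GSO}_{10}$, so $C_{\Qbar}$ has root system of type $D_5$, rank $6$, and Weyl group of order $2^4\cdot 5!$. From $G_A^\circ\subseteq C$ I get $\rank G_A^\circ\le 6$, hence $\rank G_A^\circ=6$. Then $\MT_A$ is a connected reductive subgroup of $C$ of full rank, so a maximal torus $T$ of $\MT_A$ is also maximal in $C$, the roots of $\MT_A$ relative to $T$ form a subset of those of $C$, and $W(G_A^\circ)=W(\MT_A,T)$ embeds into $W(C,T)\cong W(D_5)$; therefore $|W(G_A^\circ)|\le 2^4\cdot 5!$.

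Finally I would record the reverse inequalities. With $\rank G_A^\circ=6$ established unconditionally, we have $\rank G_{A,\ell}^\circ=\rank\Phi_{A,p}=6$ for $p\in\calP$, so Theorem~4.3 of \cite{MR1339927} shows the Mumford--Tate conjecture holds for $A$, and Conjecture~\ref{C:Frob conj} holds at the ordinary primes of $\calP$ by Proposition~\ref{P:Noot}(\ref{P:Noot ii}); the overview inequality recalled in the first paragraph then yields $|W(G_A^\circ)|\ge 2^4\cdot 5!$, whence $|W(G_A^\circ)|=2^4\cdot 5!$. The step needing the most care is the identification of $C_{\Qbar}$ as an orthogonal (type $D_5$, Weyl order $2^4\cdot 5!$) rather than a symplectic (type $C_5$, Weyl order $2^5\cdot 5!$) similitude group; this rests precisely on the Rosati involution restricting to the main involution on the totally definite algebra $D$. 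The remaining points—that the automorphisms extend to the smooth projective curve, the relations presenting $Q_8$, and the explicit shape of $C_{\Qbar}$—are routine.
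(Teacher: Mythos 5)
Your proof is correct, but it takes a genuinely different route from the paper's. Both arguments start identically: exhibit the quaternion group of automorphisms of $C_{\Qbar}$ and deduce that the definite quaternion algebra $D=\bigl(\tfrac{-1,-1}{\QQ}\bigr)$ embeds in $\End(A_{\Qbar})\otimes_\ZZ\QQ$. From there the paper proves the exact equality $D=\End(A_{\Qbar})\otimes_\ZZ\QQ$ --- this is the bulk of its proof, using the shape $P_{A,p}(x)=Q_p(x)^2$ with $Q_p$ irreducible, Lemma~6.1 of \cite{MR3264675}, Corollary~7.4.4 of \cite{MR3904148}, and the computed intersection $\QQ(\calW_{A,p})\cap\QQ(\calW_{A,q})=\QQ(\sqrt{-2})$ --- and then invokes Corollary~5.26 of \cite{MR2663452} to get $(\Hg_A)_\CC\cong\SO(10)$ on the nose. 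You bypass both steps: the containment $D\subseteq\End(A_{\Qbar})\otimes_\ZZ\QQ$ already places $\MT_A$ inside the identity component of the centralizer of $D$ in $\GSp_{V_A,E}$, which is a form of $\GSO_{10}$ (your justification that it is orthogonal rather than symplectic is the right one, and is in any case forced, since the canonical involution is the unique positive involution on a totally definite quaternion algebra), giving the upper bounds $\rank G_A^\circ\leq 6$ and, once full rank is known so that $W(\MT_A,T)$ embeds in $W(C,T)\cong W(D_5)$, $|W(G_A^\circ)|\leq 2^4\cdot 5!$. The matching lower bounds come from the Frobenius data exactly as in the text preceding the lemma: Lemma~\ref{L:common rank}(\ref{L:common rank i}) and Proposition~\ref{P:MT inclusion} for the rank, then Theorem~4.3 of \cite{MR1339927} for the Mumford--Tate conjecture, Proposition~\ref{P:Noot}(\ref{P:Noot ii}) for Conjecture~\ref{C:Frob conj} at the ordinary primes of $\calP$, and the Frobenius-torus embedding of $\Gal(L(\calW_{A,p})/L)$ into the Weyl group from \S\ref{SS:overview}. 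What each approach buys: the paper's determines the full endomorphism algebra (of independent interest, and needed to apply the maximality theorem of Banaszak--Gajda--Kraso\'n), and its proof of the lemma is independent of the Frobenius-torus machinery of \S\ref{S:Frobenius tori}; yours is more economical --- no determination of $\End(A_{\Qbar})\otimes_\ZZ\QQ$ and no appeal to \cite{MR2663452} --- at the cost of leaning on that machinery for the lower bound on $|W(G_A^\circ)|$. Both are valid.
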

\begin{proof}
Let $\alpha$ and $\beta$ be the automorphisms of $C_{\Qbar}$ given by $(x,y)\mapsto (-1/x, y/x^{11})$ and $(x,y)\mapsto (-x,i\cdot y)$, respectively.   Let $\bbar{\alpha}$ and $\bbar{\beta}$ be the automorphisms of the Jacobian $A_{\Qbar}$ induced by $\alpha$ and $\beta$, respectively.   We have $\alpha^2=\iota$, $\beta^2=\iota$ and $\alpha\circ \beta = \iota \circ \beta\circ \alpha$, where $\iota$ is the hyperelliptic involution.     Since $\iota$ induces multiplication by $-1$ on  $A_{\Qbar}$, we find that the $\QQ$-subalgebra $D$ of $\End(A_{\Qbar})\otimes_\ZZ \QQ$ generated by $\bbar\alpha$ and $\bbar\beta$ is a quaternion algebra over $\QQ$ for which $D\otimes_\QQ \RR$ is isomorphic to the real quaternions.

We claim that $D=\End(A_{\Qbar})\otimes_\ZZ \QQ$.   Assuming the claim, we now prove the lemma.  Let $\Hg_A$ be the Hodge group of $A$, cf.~\S\ref {SS:MT and Hodge groups}.  By Corollary~5.26 of \cite{MR2663452}, we have $(Hg_A)_\CC \cong \SO(10)$; moreover, the group $\Hg_A$ is as large as possible when you take into account a polarization and the endomorphisms of $A$.  Therefore, $\Hg_A$ has rank $5$ and its Weyl group is isomorphic to the group $W(D_5)$ which has order $2^4 5!$.   The lemma is now immediate since $G_A^\circ = \GG_m \cdot \Hg_A \supsetneq \Hg_A$.\\

Since $D\subseteq \End(A_{\Qbar})\otimes_\ZZ \QQ$, to prove the above claim it suffices to show that $\End(A_{\Qbar})\otimes_\ZZ \QQ$ has dimension $4$ over $\QQ$.     Fix a number field $K\subseteq \Qbar$ so that $A_K$ is isogenous to $\prod_{i=1}^s A_i$, where the $A_i/K$ are powers of simple abelian varieties that are pairwise nonisogenous.  By increasing $K$, we may further assume that $\End(A_K)=\End(A_{\Qbar})$, that $K_A^\conn=K$, and that $K_{A_i}^\conn=K$ for all $1\leq i \leq s$.   We have 
\[
\End(A_{\Qbar})\otimes_\ZZ \QQ \cong B_1 \times \cdots \times B_s, 
\]
where $B_i:=\End(A_i)\otimes_\ZZ \QQ$.  Each $B_i$ is a central simple algebra over its center $L_i$.   Let $m_i$ be the degree of $B_i$, i.e., the positive integer for which $m_i^2=\dim_{L_i} B_i$.  The ring $\End(A_{\Qbar})\otimes_\ZZ \QQ$ is noncommutative since it contains $D$.  So we may assume that $m_1\geq 2$ after possibly renumbering the $A_i$.

Take any prime $p\in \calP$.  Let $\p \subseteq \OO_K$ be a prime ideal dividing $p$ and let $d$ be the degree of $\FF_\p$ over $\FF_p$.    We have a surjective map 
\[
\calW_{A,p} \to \calW_{A_K,\p},\quad \pi \mapsto \pi^d;
\]
it is a bijection since $\Phi_{A,p}$ is torsion-free.   This bijection respects the $\Gal_\QQ$-actions and $P_{A,p}(x)$ is the square of an irreducible polynomial, so $P_{A_K,\p}(x)=Q(x)^2$ for an irreducible $Q(x)\in \QQ[x]$.  Since $P_{A_K,\p}(x)=\prod_{i=1}^s P_{A_i,\p}(x)$, the polynomial $P_{A_1,\p}(x)$ is either $Q(x)$ or $Q(x)^2$.    However, Lemma~6.1 of \cite{MR3264675} implies that $P_{A_i,\p}(x)$ is an $m_i$-th power (moreover, it shows that all the weights of $G_{A_i}^\circ$ acting on $V_{A_i}$ have multiplicity $m_i$).   Since $m_1\geq 2$, we deduce that $P_{A_i,\p}(x)=Q(x)^2=P_{A_K,\p}(x)$.  Therefore, $s=1$ and $m_1=2$.   So $\End(A_{\Qbar}) \otimes_\ZZ \QQ\cong B_1$ and $\dim_{L_1} B_1=2^2$.   To complete the lemma, it thus suffices to prove that $L_1=\QQ$.

Take any prime $q\in \calP-\{p\}$.   By Corollary~7.4.4 of \cite{MR3904148},  $L_1$ is isomorphic to a subfield of $\QQ(\pi)$ for any $\pi\in \calW_{A,p}$.     Similarly, $L_1$ is isomorphic to a subfield of $\QQ(\pi)$ for any $\pi\in \calW_{A,q}$.   Since $\QQ(\calW_{A,p})\cap \QQ(\calW_{A,q})=\QQ(\sqrt{-2})$, the field $L_1$ is isomorphic to $\QQ$ or $\QQ(\sqrt{-2})$.  From our earlier description of the Galois groups $W=\Gal(\QQ(\calW_{A,p})/\QQ(\sqrt{-2}))$ and $\Gamma= \Gal(\QQ(\calW_{A,p})/\QQ)$, we find that $\sqrt{-2} \notin \QQ(\pi)$ for all $\pi \in \calW_{A,p}$.  Therefore, $L_1=\QQ$.
\end{proof}

\begin{remark}
Our set $\calP$ is precisely the set of primes $p\leq 200$ that are congruent to $1$ modulo $8$.  This suggests that $K_A^\conn$ is $\QQ(\zeta_8)$.
\end{remark}

\subsection{Structure of paper}

In \S\ref {S:reductive groups}, we recall what we need concerning reductive groups and root data.   In \S\ref{S:l-adic monodromy groups}, we review $\ell$-adic monodromy groups.   In \S\ref{SS:the group Phi}, we describe some properties of the groups $\Phi_{A,\p}$ and in \S\ref{SS:computing PhiAp} we explain how they can be computed.  In \S\ref{S:MT groups}, we recall the Mumford--Tate and Hodge groups of $A$ and state the conjectures our theorems are conditional on.    

In \S\ref{S:Frobenius tori}, we explain how to certain prime ideals $\p$, we can construct a maximal torus $T_\p$ of a quasi-split inner form of $G_A^\circ$.    Theorem~\ref{T:Frobenius Galois groups new} is proved in \S\ref{S:Galois groups of Frob}; much of the proof is dedicated to reducing to a setting considered by the author in an earlier paper.   In \S\ref{S:roots}, we explain how to compute roots of certain reductive groups over a field of characteristic $0$ given a maximal torus, weights of a faithful representation, and the Weyl group.

In \S\ref{S:root datum of MT}, we prove Theorems~\ref{T:main} and \ref{T:ST}.  We make some additional computational remarks in  \S\ref{S:computational remarks}.  Finally, Proposition~\ref{P:endomorphism} is proved in \S\ref{S:endomorphism proof}.

\subsection{Notation}   \label{SS:notation}

For a number field $K$, we denote by $\OO_K$ the ring of integers of $K$.   For a non-zero prime ideal $\p$ of $\OO_K$, we define its residue field $\FF_\p:=\OO_K/\p$.   Throughout, $\ell$ will always denote a rational prime.  When talking about prime ideals of a number field $K$, density will always refer to \emph{natural density}.  

For a scheme $X$ over a commutative ring $R$ and a commutative $R$-algebra $S$, we denote by $X_S$ the base extension of $X$ by $\Spec S$.  

  Let $V$ be a free module of finite rank over a field $F$.  Denote by $\GL_V$ the $F$-scheme such that $\GL_V(R)= \Aut_R(V\otimes_F R)$ for any commutative $F$-algebra $R$ with the obvious functoriality.   For an algebraic group $G$ over a field $F$, we denote by $G^\circ$ the neutral component of $G$, i.e., the connected component of the identity of $G$.  Note that $G^\circ$ is an algebraic subgroup of $G$.


\section{Reductive groups}  \label{S:reductive groups}

In this section, we recall what we need about reductive groups. For background on reductive groups see \cite{MR546587}.  Let $k$ be a field of characteristic $0$ and define $\Gal_k:=\Gal(\kbar/k)$, where $\kbar$ is a fixed algebraic closure of $k$.

Let $G$ be a connected and reductive group over $k$.  Fix a maximal torus $T$ of $G$.  

\subsection{Character groups}

   We define $X(T)$ to be the group of homomorphisms $T_{\kbar} \to (\GG_{m})_{\kbar}$; it is a free abelian group whose rank is the dimension of $T$.   There is a natural action of $\Gal_k$ on $X(T)$ since $T$ is defined over $k$. The group $X(T)$ with its $\Gal_k$-action determine the torus $T$ up to isomorphism.

\subsection{Weyl group} \label{SS:Weyl definition}

Let $N_G(T)$ be the normalizer of $T$ in $G$.   The torus $T$ is its own centralizer in $G$.  The (absolute) \defi{Weyl group} of $G$ with respect to $T$ is the group
 \[
 W(G,T) := N_{G}(T)(\kbar)/T(\kbar);
 \]
 it is finite.  Since $G$ and $T$ are defined over $k$, there is a natural action of $\Gal_k$ on $W(G,T)$.  

The Weyl group $W(G,T)$ acts on the abelian group $X(T)$; for $w\in W(G,T)$ and $\alpha\in X(T)$, we have
\[
(w\cdot \alpha)(t)= \alpha(n^{-1} t n),
\]
where $n\in N_G(T)(\kbar)$ is a representative of $w$.  The action of $W(G,T)$ on $X(T)$ is faithful and hence we can identify if with a subgroup of $\Aut(X(T))$.

Take any other maximal torus $T'$ of $G$.  There is an element $g\in G(\kbar)$ satisfying $g T_{\kbar} g^{-1} = T'_{\kbar}$.   The homomorphism $N_G(T)_{\kbar} \to N_G(T')_{\kbar}$, $n\mapsto gng^{-1}$ is an isomorphism which induces an isomorphism $W(G,T) \xrightarrow{\sim} W(G,T')$.    A different choice of $g$ will alter this isomorphism by composing with an inner automorphism.   So by using these isomorphisms, we can define a group $W(G)$ for which there is an isomorphism to each $W(G,T)$ that is distinguished up to composition with an inner automorphism.

\subsection{Weights}

Let $\rho\colon G\to \GL_V$ be a representation, where $V$ is a finite dimensional vector space over $k$.   For each character $\alpha\in X(T)$, let $V_\alpha$ be the set of $v\in V\otimes_k \kbar$ satisfying $\rho(t) v =\alpha(t) v$ for all $t\in T(\kbar)$.  Each $V_\alpha$ is a $\kbar$-vector space and we have a direct sum 
\[
V\otimes_k \kbar = \bigoplus_{\alpha \in X(T)} V_\alpha.
\]    
A \defi{weight} of $\rho$ relative to $T$ is an $\alpha\in X(T)$ for which $V_\alpha\neq 0$.   We will denote the set of weights of $\rho$  by $\Omega_\rho$ or $\Omega_V$.  Since $G$, $T$ and $\rho$ are defined over $k$, we find that $\Omega_V$ is stable under the $\Gal_k$-action on $X(T)$.   The set of weights $\Omega_V$ is also stable under the action of the Weyl group $W(G,T)$.  The \defi{multiplicity} of a weight $\alpha$ is the dimension of $V_\alpha$ over $\kbar$.

A \defi{root} of $G$ relative to $T$ is a nontrivial weight of the adjoint representation of $G$.   Denote the set of such roots by $R(G,T)$; it is stable under the actions of $\Gal_k$ and $W(G,T)$.

\subsection{Root datum}

\begin{defn}
A \defi{root datum} is a $4$-tuple $(X,R,X^\vee,R^\vee)$ where 
\begin{itemize}
\item
$X$ and $X^\vee$ are free abelian groups of finite rank with a perfect pairing $\ang{\,,\,}\colon X \times X^\vee \to \ZZ$,
\item
$R$ and $R^\vee$ are finite subsets of $X$ and $X^\vee$, respectively, with a bijection $R\to R^\vee$, $\alpha\mapsto \alpha^\vee$,
\end{itemize}
such that the following conditions hold for all $\alpha\in R$:
\begin{alphenum}
\item
$\ang{\alpha,\alpha^\vee} = 2$,
\item
$s_\alpha(R)= R$, where $s_\alpha$ is the automorphism of $X$ defined by $s_\alpha(x)= x - \ang{x,\alpha^\vee}\, \alpha$,
\item
$s_\alpha^\vee(R^\vee)= R^\vee$, where $s_\alpha^\vee$ is the automorphism of $X^\vee$ defined by $s_\alpha^\vee(y)= y - \ang{\alpha,y}\, \alpha^\vee$.
\end{alphenum}
\end{defn}
We say that a root datum $(X,R,X^\vee,R^\vee)$ is \defi{reduced} if for each $\alpha\in R$, the only elements of $R$ that are linearly dependent with $\alpha$ in $X\otimes_\ZZ \QQ$ are $\pm \alpha$.

An \defi{isomorphism} between root data $(X,R,X^\vee,R^\vee)$ and $(X',R',X'^\vee,R'^\vee)$ is an isomorphism $\varphi\colon X\xrightarrow{\sim} X'$ of groups that gives a bijection from $R$ to $R'$ and whose dual $X'^\vee\to X^\vee$, with respect to the perfect pairings, gives a bijection from $R'^\vee$ to $R^\vee$. 

Fix a root datum $\Psi=(X,R,X^\vee,R^\vee)$.  Let $\Aut(\Psi)$ be the group of automorphisms of $\Psi$; it is a subgroup of $\Aut(X)$.   The \defi{Weyl group} of $\Psi$ is the subgroup $W(\Psi)$ of $\Aut(X)$ generated by the set of $s_\alpha$ with $\alpha\in R$.   The Weyl group $W(\Psi)$ is a finite and normal subgroup of $\Aut(\Psi)$.  The \defi{outer automorphism group} of $\Psi$ is $\Out(\Psi):=\Aut(\Psi)/W(\Psi)$.  

\begin{lemma} \label{L:root datum from W}
The root datum $\Psi=(X,R,X^\vee,R^\vee)$ is determined, up to isomorphism, by the following:
\begin{alphenum}
\item  \label{L:root datum from W a}
the group $X$ and the set of roots $R$,
\item \label{L:root datum from W b}
the group $W(\Psi)$ and its action on $X$.
\end{alphenum}
\end{lemma}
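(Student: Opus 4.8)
The plan is to show that the two pieces of data $(X, R)$ and $(W(\Psi), X)$ together pin down the coroots $R^\vee \subseteq X^\vee$ uniquely, where $X^\vee = \Hom(X,\ZZ)$ with the canonical pairing. Once the coroots are determined, the root datum $\Psi = (X, R, X^\vee, R^\vee)$ is recovered outright, so the lemma follows. First I would note that $X^\vee$ and the perfect pairing $\ang{\,,\,}\colon X\times X^\vee\to\ZZ$ are intrinsic to $X$ — they are just the $\ZZ$-linear dual and the evaluation pairing — so no extra input is needed there. Thus the entire content is: reconstruct the map $\alpha\mapsto\alpha^\vee$ on $R$.

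The key step is that for each $\alpha\in R$, the coroot $\alpha^\vee\in X^\vee$ is uniquely characterized by two properties that are visible from the given data: (i) the reflection $s_\alpha\in\Aut(X)$ defined by $s_\alpha(x) = x - \ang{x,\alpha^\vee}\alpha$ must lie in $W(\Psi)$ and act as the unique element of $W(\Psi)$ fixing the hyperplane $\{x : \ang{x,\alpha^\vee}=0\}$ pointwise and sending $\alpha$ to $-\alpha$; and (ii) $\ang{\alpha,\alpha^\vee}=2$. Concretely, I would argue as follows. Given $(X,R)$ and the subgroup $W(\Psi)\subseteq\Aut(X)$ acting on $X$, for a fixed $\alpha\in R$ consider the set of $w\in W(\Psi)$ with $w(\alpha) = -\alpha$ and $w$ of order $2$ whose fixed subgroup $X^w$ has rank $\rank X - 1$; standard root-system theory (the reflection $s_\alpha$ is the unique such element, cf.\ the fact that $W(\Psi)$ is a Coxeter group generated by the $s_\beta$) shows there is exactly one such $w$, namely $s_\alpha$. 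Now $s_\alpha(x) - x$ always lies in $\ZZ\alpha$, so we may write $s_\alpha(x) - x = -c_\alpha(x)\,\alpha$ for a unique linear functional $c_\alpha\colon X\to\ZZ$, i.e.\ $c_\alpha\in X^\vee$. The condition $s_\alpha(\alpha) = -\alpha$ forces $c_\alpha(\alpha) = 2$. Hence $c_\alpha$ satisfies both defining properties of $\alpha^\vee$, and since those properties determine an element of $X^\vee$ uniquely (property (b) of the root datum axioms says $s_\alpha$ determines the functional, and $\ang{\alpha,\cdot}=2$ normalizes it), we get $\alpha^\vee = c_\alpha$, which is computed purely from $W(\Psi)\curvearrowright X$ and $R$. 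Doing this for every $\alpha\in R$ recovers the bijection $R\to R^\vee$, and one checks the datum thus reconstructed is literally $\Psi$.

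To finish, I would observe that if $(X', R')$, $(W(\Psi'), X')$ is another root datum whose associated data agrees with that of $\Psi$ under an isomorphism $\varphi\colon X\xrightarrow{\sim}X'$ carrying $R$ to $R'$ and intertwining the $W(\Psi)$- and $W(\Psi')$-actions, then by the uniqueness just established $\varphi$ automatically carries each $\alpha^\vee$ to $(\varphi(\alpha))^\vee$ — because the coroot is a function of the transported data — so $\varphi$ is an isomorphism of root data in the sense defined above. Hence $\Psi\cong\Psi'$, proving the lemma.

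The main obstacle I anticipate is the step asserting that $s_\alpha$ is \emph{the unique} order-two element of $W(\Psi)$ sending $\alpha$ to $-\alpha$ with codimension-one fixed space: this needs a genuine (if standard) input from the theory of finite reflection groups, essentially that $W(\Psi)$ is generated by reflections and that reflections in a finite reflection group are in bijection with the positive rays of $R$. If one wants to avoid invoking Coxeter-group structure, an alternative is to characterize $\alpha^\vee$ more directly: it is the unique $c\in X^\vee$ such that $x\mapsto x - c(x)\alpha$ preserves $R$ and $c(\alpha)=2$, and then argue uniqueness of such $c$ from the fact that $R$ spans a finite-index subgroup of $X$ modulo the radical — but this too requires knowing $R$ is "large enough," which again is part of the axiom package. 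Either way the delicate point is the same, and it is the only non-formal ingredient; everything else is bookkeeping with the duality $X^\vee = \Hom(X,\ZZ)$.
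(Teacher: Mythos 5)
Your proposal is correct and follows essentially the same route as the paper: take $X^\vee=\Hom_\ZZ(X,\ZZ)$ with the evaluation pairing, identify $s_\alpha$ as the unique element of $W(\Psi)$ sending $\alpha$ to $-\alpha$ with fixed submodule of rank $\rank X-1$, and then read off $\alpha^\vee$ from $x-s_\alpha(x)=\ang{x,\alpha^\vee}\,\alpha$. The only difference is at the step you flag as delicate: where you appeal to standard Coxeter/reflection-group theory for the uniqueness of $s_\alpha$, the paper gives a short self-contained argument using a $W(\Psi)$-invariant inner product on $X\otimes_\ZZ\RR$ (the two conditions force the $\pm1$-eigenspace decomposition with $V_{-1}=\RR\alpha$, and orthogonality then pins down the element), which also shows the order-two hypothesis is automatic.
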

\begin{proof}
We may assume that $X$, $R$, and $W(\Psi)$ are known, where $W(\Psi)$ is given by its faithful action on $X$.    We may further assume that $X^\vee=\Hom_\ZZ(X,\ZZ)$ with its perfect pairing $X\times X^\vee \to \ZZ$, $(\alpha,\beta)\mapsto \beta(\alpha)$.  So to prove the lemma, we need only explain how to find the coroot $\alpha^\vee \in X^\vee$ for each $\alpha\in R$.

Take any $\alpha \in R$.  Note that $s_\alpha(\alpha)=-\alpha$ and that $\{x\in X : s_\alpha(x)=x\}$ is a free $\ZZ$-module of rank $r-1$, where $r$ is the rank of $X$.  Now take any $s\in W(\Psi)$ for which $s(\alpha)=-\alpha$ and for which $\{x\in X : s(x)=x\}$ is a free $\ZZ$-module of rank $r-1$.  

We claim that $s=s_\alpha$.   The group $W(\Psi)$ acts on the $\RR$-vector space $V:=X\otimes_\ZZ \RR$.  Since $W(\Psi)$ is finite, there is an inner product $(\,,\,)$ on $V$ satisfying $(w\cdot v_1,w\cdot v_2)=(v_1,v_2)$ for all $v_1,v_2\in V$ and $w\in W(\Psi)$.  By the conditions on $s$, we have $V=V_{-1} \oplus V_1$ and $V_{-1}=\RR\alpha$, where $V_\lambda$ is the eigenspace of $s$ with eigenvalue $\lambda$.    Since the inner product is invariant under $s$, we find that $V_{-1}$ and $V_1$ are orthogonal.   Since $V_{-1}=\RR\alpha$ and $V_1$ is the orthogonal complement of $V_{-1}$ in $V$, we deduce that the action of $s\in W(\Psi)$ on $V$ depends only on $\alpha$; this proves the claim.

From the claim, we can thus determine $s_\alpha \in W(\Psi)$ for any $\alpha\in R$.   Since $\ang{x,\alpha^\vee} \alpha = x-s_\alpha(x)$ for all $x\in X$ and the pairing is perfect, we can then find $\alpha^\vee$.
\end{proof}

\subsection{Root datum of \texorpdfstring{$G$}{G}} \label{SS:root datum of G}

To our reductive group $G$ and maximal torus $T$, we have a reduced root datum
\[
\Psi(G,T):= \big(X(T), R(G,T), X^\vee(T), R^\vee(G,T) \big).
\]

We have already defined $X(T)$ and $R(G,T)$.  We define $X^\vee(T)$ to be the group of cocharacters $(\GG_{m})_{\kbar}\to T_{\kbar}$.   Composition defines a perfect pairing $X(T) \times X^\vee(T) \to \End((\GG_{m})_{\kbar}) =\ZZ$,  where the last isomorphism uses that endomorphisms of $(\GG_{m})_{\kbar}$ are all obtained by  raising to an integer power.  The Weyl group $W(\Psi(G,T))$ acts faithfully on $X(T)$ and we can identify it with $W(G,T)$.  We will not define the set of coroots $R^\vee(G,T)$ since it is not needed in our application (we will construct our root datum in terms of Lemma~\ref{L:root datum from W}). 

\subsection{Outer automorphism groups} \label{SS:outer automorphisms}

The root datum $\Psi(G,T)$ depends only on $G$ and $T$ base changed to $\kbar$.
To ease notation, we will assume that $k$ is algebraically closed throughout \S\ref{SS:outer automorphisms}.   

Let $G'$ be another connected reductive group defined over $k$ with a fixed maximal torus $T'$.   If there is an isomorphism $f\colon G \to G'$ satisfying $f(T)=T'$,   then $X(T)\to X(T')$, $\alpha\mapsto \alpha\circ f^{-1}|_{T'}$ is an isomorphism of groups that gives an isomorphism 
\[
f_* \colon \Psi(G,T) \xrightarrow{\sim} \Psi(G',T')
\] 
of root data.  The following fundamental result says that every isomorphism between $\Psi(G,T)$ and $\Psi(G',T')$ arises from such an $f$.  For each $g\in G(k)$, let $\inn(g)$ be the automorphism of $G$ obtained by conjugation by $g$.

\begin{thm} \label{T:main isomorphism}
If $\varphi \colon \Psi(G,T) \xrightarrow{\sim} \Psi(G',T')$ is an isomorphism of root data, then there is an isomorphism $f\colon G \xrightarrow{\sim} G'$ satisfying $f(T)=T'$ and $\varphi=f_*$.   The isomorphism $f$ is unique up to composing with $\inn(t)$ for some $t\in T(k)$.
\end{thm}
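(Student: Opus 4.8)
This is the Isomorphism Theorem (sometimes attributed to Chevalley) for reductive groups, and the plan is to reduce it to a standard reference rather than redo the Bruhat–Tits / pinning argument from scratch. The key observation is that $\varphi$ is not just an isomorphism of root data $(X,R,X^\vee,R^\vee)$ but that, after choosing Borels, one can upgrade it to an isomorphism of \emph{based} root data, and for based root data the rigidity statement is classical.

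\smallskip

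\textbf{Step 1 (reduce to based root data).} Fix a Borel subgroup $B$ of $G$ containing $T$; this determines a system of positive roots $R^+ \subseteq R$ and a base $\Delta$. Likewise fix a Borel $B'\supseteq T'$ of $G'$ giving $R'^+$ and $\Delta'$. The isomorphism $\varphi\colon X(T)\to X(T')$ need not send $R^+$ to $R'^+$, but $\varphi(R^+)$ is \emph{some} system of positive roots for $R'$, hence $\varphi(R^+)=w(R'^+)$ for a unique $w\in W(G',T')=W(\Psi')$. Replacing $\varphi$ by $n_w^{-1}\text{-conjugation composed with }\varphi$ — more precisely, composing the eventual $f$ with $\inn(\dot w)$ for a representative $\dot w\in N_{G'}(T')(k)$ of $w$ — we may assume $\varphi(R^+)=R'^+$ and $\varphi(\Delta)=\Delta'$. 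Note this is exactly the source of the ``unique up to $\inn(t)$'' ambiguity once combined with Step 3; conjugation by $N_{G'}(T')$ elements is absorbed into this normalization and the $T$-ambiguity.

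\smallskip

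\textbf{Step 2 (invoke the Isomorphism Theorem for based root data).} Choose a pinning (épinglage) of $(G,B,T)$: a family of isomorphisms $x_\alpha\colon \GG_a \to U_\alpha$ for $\alpha\in\Delta$, and similarly a pinning $(x'_{\alpha'})$ for $(G',B',T')$. By the Isomorphism Theorem — e.g. \cite{MR546587}, or SGA3, Exp.\ XXIII — there is a \emph{unique} isomorphism $f\colon G\xrightarrow{\sim} G'$ with $f(T)=T'$, $f(B)=B'$, inducing $\varphi$ on character groups, and carrying the chosen pinning of $G$ to the chosen pinning of $G'$. In particular $f_* = \varphi$ and $f(T)=T'$, which is the existence assertion of the theorem.

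\smallskip

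\textbf{Step 3 (uniqueness up to $\inn(t)$).} Suppose $f_1,f_2\colon G\xrightarrow{\sim} G'$ both satisfy $f_i(T)=T'$ and $(f_1)_* = (f_2)_* = \varphi$. Then $\theta := f_2^{-1}\circ f_1$ is an automorphism of $G$ with $\theta(T)=T$ and $\theta_* = \id$ on $\Psi(G,T)$; in particular $\theta$ acts trivially on $X(T)$, so $\theta|_T = \id_T$. An automorphism of $G$ fixing $T$ pointwise and acting trivially on the root datum is inner by an element of $T(k)$: indeed $\theta$ preserves each root subgroup $U_\alpha$ and acts on $U_\alpha\cong\GG_a$ by a scalar $c_\alpha\in k^\times$; since $\theta$ is trivial on the root datum, the tuple $(c_\alpha)_{\alpha\in\Delta}$ can be matched by $\inn(t)$ for a suitable $t\in T(k)$ (solvable because the simple roots are part of a basis of a finite-index subgroup of $X(T)$), and then $\theta\circ\inn(t)^{-1}$ fixes $T$ and the full pinning, hence is the identity by the uniqueness in Step 2. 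Therefore $\theta=\inn(t)$, i.e.\ $f_1 = f_2\circ\inn(t)$, giving the stated uniqueness.

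\smallskip

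\textbf{Main obstacle.} Everything here is a packaging of known structure theory; the only genuine content is that one is \emph{allowed} to appeal to the based-root-datum Isomorphism Theorem and then bridge the gap between ``isomorphism of root data'' and ``isomorphism of based root data,'' which is Step 1. The subtle point I expect to need care is tracking how the Weyl-group element $w$ from Step 1 interacts with the $\inn(t)$-ambiguity in Step 3 — one must check that \emph{no} extra $N_G(T)$-ambiguity survives beyond $T(k)$, i.e.\ that the normalization of positive systems is forced and only the torus freedom remains. This is handled by the observation that once $\varphi$ is fixed (not just up to $W$), Step 1's $w$ is uniquely determined, so the construction produces $f$ canonically up to $\inn(t)$, $t\in T(k)$, as claimed.
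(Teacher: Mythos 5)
The paper does not prove this statement at all: it is recorded in \S\ref{SS:outer automorphisms} as a ``fundamental result'' (the classical Isomorphism Theorem for reductive groups over the algebraically closed field $k$ in force there), with no argument supplied. Your reduction to the pinned/based version is the standard way to establish it, and it is correct. Step~1 is fine: $W(\Psi')$ acts simply transitively on positive systems, so the $w$ with $\varphi(R^+)=w(R'^+)$ is unique, and since $(\inn(\dot w))_*=w$ the normalization costs only a choice of representative $\dot w$, i.e.\ a $T'(k)$-ambiguity that is absorbed into the stated indeterminacy. Step~3 is also correct, with two small points worth making explicit: (i) $\theta|_T=\id_T$ already forces $\theta(U_\alpha)=U_\alpha$ for every root, since $U_\alpha$ is the unique root subgroup on which $T$ acts through $\alpha$ and $\alpha\circ\theta^{-1}|_T=\alpha$; and (ii) the surjectivity of $T(k)\to (k^\times)^{\Delta}$, $t\mapsto(\alpha(t))_{\alpha\in\Delta}$, needs not just that $\Delta$ spans a finite-index subgroup of its saturation in $X(T)$ but also that $k^\times$ is divisible — which holds because $k$ is algebraically closed here, the same hypothesis that lets you choose the Borels in Step~1. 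With those two remarks your argument is complete and matches the references you cite (Springer; SGA3, Exp.~XXIII).
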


\subsubsection{Same group and torus}  \label{SSS:same group and torus}

Consider the special case where $G=G'$ and $T=T'$.   Let $\Aut(G, T)$ be the subgroup of $\Aut(G)$ consisting of all automorphisms $f$ that satisfy $f(T)=T$.   The map 
\begin{align} \label{E:Aut GT}
\Aut(G,T)\to \Aut(\Psi(G,T)), \quad f\mapsto f_*
\end{align} 
is a group homomorphism.   By  Theorem~\ref{T:main isomorphism}, the homomorphism (\ref{E:Aut GT}) is surjective with kernel $\inn(T(k))$.     Let $\Inn(G,T)$ be the group of inner automorphisms $f$ of $G$ that satisfy $f(T)=T$.  We have $\Inn(G,T)=\inn(N_G(T)(k))$, so the image of $\Inn(G,T)$ under (\ref{E:Aut GT}) is $W(G,T)$.   In particular, (\ref{E:Aut GT}) induces an isomorphism
\[
\Aut(G,T)/\Inn(G,T) \xrightarrow{\sim}  \Out(\Psi(G,T)).\\
\]

\begin{remark}
The natural homomorphism $\Aut(G,T)/\Inn(G,T)\to \Out(G):=\Aut(G)/\Inn(G)$ is an isomorphism since all maximal tori of $G$ are conjugate, where $\Inn(G)$ is the group of inner automorphism of $G$.
\end{remark}

\subsubsection{Same group} \label{SSS:same group}  
Suppose that $G=G'$ but the tori $T$ and $T'$ need not agree.   Since $k$ is algebraically closed, there is an inner automorphism $f$ of $G$ satisfying $f(T)=T'$ which induces an isomorphism $f_*\colon \Psi(G,T) \xrightarrow{\sim} \Psi(G,T')$.     Our choice of $f$ is not unique and can vary by composing with some $f' \in \Inn(G,T)$.    We have $(f\circ f')_* = f_* \circ f'_*$ and from \S\ref{SSS:same group and torus} we know that $f'_*$ lies in $W(G,T)$.    So our isomorphism $\Psi(G,T) \xrightarrow{\sim} \Psi(G,T')$ is unique up to composition with an element of the Weyl group.   Using these isomorphisms, we will often suppress the torus and denote the root datum by $\Psi(G)$ (note that it is only uniquely determined up to an automorphism in $W(G,T)$).

With our fixed $f$ as above, we have an isomorphism 
\begin{align} \label{E:AutPsi}
\Aut(\Psi(G,T))\to \Aut(\Psi(G,T')), \quad\varphi\mapsto f_*\circ \varphi \circ f_*^{-1}.
\end{align}  
A different choice of $f$, would alter this isomorphism by conjugation by $W(G,T')$.    We thus have a canonical isomorphism $\Out(\Psi(G,T))\xrightarrow{\sim} \Out(\Psi(G,T'))$.   Using these isomorphisms, we can suppress the torus and denote the group simply by $\Out(\Psi(G))$.

\subsubsection{General case} \label{SSS:general case}   Consider again the general case where $G$ and $G'$ need not be equal.  For an isomorphism $g\colon \Psi(G,T)\xrightarrow{\sim} \Psi(G',T')$, the group isomorphism $\Aut(\Psi(G,T))\to \Aut(\Psi(G',T'))$, $\varphi\mapsto g\circ \varphi \circ g^{-1}$ induces an isomorphism
\begin{align} \label{E:Out isomorphism}
[g]\colon \Out(\Psi(G))\xrightarrow{\sim} \Out(\Psi(G')).
\end{align}
A different choice of $g$ will alter (\ref{E:Out isomorphism}) 
 by composition with an inner automorphism of the group $\Out(\Psi(G'))$.

\subsection{Galois action} \label{SS:Galois action}
Since $T$ is defined over $k$, there is a natural action of $\Gal_k$ on $X(T)$ and $X^\vee(T)$ that respects the perfect pairing $X(T)\times X^\vee(T) \to \ZZ$.  Since $G$ and $T$ are defined over $k$, we find that the sets $R(G,T)$ and $R^\vee(G,T)$ are stable under these  Galois actions.  The Galois action on $X(T)$ thus gives rise to a homomorphism
\[
\varphi_{G,T} \colon \Gal_k \to \Aut(\Psi(G,T)).
\]
By composing $\varphi_{G,T}$ with the quotient map $\Aut(\Psi(G,T))\to\Out(\Psi(G))$, we obtain a homomorphism 
\[
\mu_G \colon \Gal_k \to \Out(\Psi(G))
\] 
that does not depend on the initial choice of $T$.\\

Let $\Gamma(G,T)$ be the subgroup of $\Aut(\Psi(G,T))$ that is the inverse image of $\mu_G(\Gal_k)$ under the homomorphism $\Aut(\Psi(G,T))\to\Out(\Psi(G))$.    Note that 
\[
\varphi_{G,T}(\Gal_k) \subseteq \Gamma(G,T).
\]     
From the isomorphism (\ref{E:AutPsi}), we can suppress the torus and denote the common group by $\Gamma(G)$; note that it is only uniquely determined up to conjugation by $W(G)$.

\subsection{Inner forms}
Let $G'$ be a connected reductive group defined over $k$ with a fixed maximal torus $T'$.   We say that $G'$ is a \defi{form} of $G$ if they become isomorphic algebraic groups when base extended to $\kbar$.    Note that $G'$ is a form of $G$ if and only if the root data $\Psi(G',T')$ and $\Psi(G,T)$ are isomorphic.    We say that $G'$ is an \defi{inner form} of $G$
there is an isomorphism $f\colon G_{\kbar}\to G'_{\kbar}$ such that $f^{-1}\circ \sigma(f)$ is an inner automorphism of $G_{\kbar}$ for all $\sigma\in \Gal_k$.  

We shall say that the homomorphisms $\mu_G$ and $\mu_{G'}$ from \S\ref{SS:Galois action} 	\defi{agree} if there is an isomorphism $g\colon \Psi(G,T)\xrightarrow{\sim}\Psi(G',T')$ of root data such that $[g]\circ \mu_G = \mu_{G'}$ with $[g]$ as defined in \S\ref{SSS:general case}.

\begin{prop} \label{P:characterize inner forms}
Let $G'$ be a connected and reductive group defined over $k$ that is a form of $G$.   Then $G'$ is an inner form of $G$ if and only if $\mu_G$ and $\mu_{G'}$ agree.  In particular, the class of inner forms of $G$ is determined by the abstract root datum $\Psi(G)$ and the homomorphism $\mu_G\colon \Gal_k \to \Out(\Psi(G))$.
\end{prop}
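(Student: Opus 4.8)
The plan is to prove the two implications of the equivalence directly, using Theorem~\ref{T:main isomorphism} together with the structure of $\Aut(G,T)$, $\Inn(G,T)$, $W(G,T)$ recorded in \S\ref{SSS:same group and torus} and the Galois homomorphisms $\varphi_{G,T}$, $\mu_G$ of \S\ref{SS:Galois action}; the ``in particular'' clause will then follow formally. The one computation I would isolate first is a naturality statement: if $f\colon G_{\kbar}\xrightarrow{\sim}G'_{\kbar}$ is an isomorphism with $f(T_{\kbar})=T'_{\kbar}$, then for every $\sigma\in\Gal_k$ the Galois conjugate $\sigma(f)$ again satisfies $\sigma(f)(T_{\kbar})=T'_{\kbar}$ (since $T$ and $T'$ are defined over $k$), and unwinding the definitions of $f_*$ and of the Galois actions on $X(T)$ and $X(T')$ gives
\[
(\sigma(f))_* = \varphi_{G',T'}(\sigma)\circ f_* \circ \varphi_{G,T}(\sigma)^{-1}
\]
as isomorphisms $\Psi(G,T)\xrightarrow{\sim}\Psi(G',T')$ (with the opposite sign convention for the Galois action one interchanges $\sigma$ and $\sigma^{-1}$ here, which does not affect anything below). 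I will also use the covariance $(f_1\circ f_2)_*=(f_1)_*\circ(f_2)_*$ from (\ref{E:Aut GT}).

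For ``$G'$ an inner form of $G$ $\Rightarrow$ $\mu_G$ and $\mu_{G'}$ agree'': start from $f_0\colon G_{\kbar}\xrightarrow{\sim}G'_{\kbar}$ with $f_0^{-1}\circ\sigma(f_0)$ inner for all $\sigma$. Since $\kbar$ is algebraically closed all maximal tori are conjugate, so I choose $h\in G'(\kbar)$ with $\inn(h)(f_0(T_{\kbar}))=T'_{\kbar}$ and replace $f_0$ by $f:=\inn(h)\circ f_0$; writing $f^{-1}\circ\sigma(f)=\inn(f_0^{-1}(h^{-1}\sigma(h)))\circ(f_0^{-1}\circ\sigma(f_0))$ shows $f$ still has the inner-form property, and now $f(T_{\kbar})=T'_{\kbar}$. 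Put $g:=f_*$. For each $\sigma$ the automorphism $h_\sigma:=f^{-1}\circ\sigma(f)$ of $G_{\kbar}$ is inner and stabilizes $T_{\kbar}$, hence lies in $\Inn(G,T)$, so $(h_\sigma)_*\in W(G,T)$ by \S\ref{SSS:same group and torus}. From $\sigma(f)=f\circ h_\sigma$ we get $(\sigma(f))_*=g\circ(h_\sigma)_*$, which combined with the naturality formula gives $g\circ(h_\sigma)_*\circ\varphi_{G,T}(\sigma)\circ g^{-1}=\varphi_{G',T'}(\sigma)$. Passing to $\Out$, where $(h_\sigma)_*\in W(G,T)$ maps to the identity, this reads $[g](\mu_G(\sigma))=\mu_{G'}(\sigma)$ for all $\sigma$, i.e.\ $\mu_G$ and $\mu_{G'}$ agree via $g$.

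For the converse, let $g\colon\Psi(G,T)\xrightarrow{\sim}\Psi(G',T')$ be a root-datum isomorphism with $[g]\circ\mu_G=\mu_{G'}$. By Theorem~\ref{T:main isomorphism}, applied over $\kbar$, there is an isomorphism $f\colon G_{\kbar}\xrightarrow{\sim}G'_{\kbar}$ with $f(T_{\kbar})=T'_{\kbar}$ and $f_*=g$. Fix $\sigma$ and set $h_\sigma:=f^{-1}\circ\sigma(f)$; then $h_\sigma\in\Aut(G_{\kbar})$ with $h_\sigma(T_{\kbar})=T_{\kbar}$, and by the naturality formula $(h_\sigma)_*=g^{-1}\circ(\sigma(f))_*=g^{-1}\circ\varphi_{G',T'}(\sigma)\circ g\circ\varphi_{G,T}(\sigma)^{-1}$. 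Its image in $\Out(\Psi(G))$ equals $[g]^{-1}(\mu_{G'}(\sigma))\cdot\mu_G(\sigma)^{-1}=\mu_G(\sigma)\cdot\mu_G(\sigma)^{-1}=1$, using the hypothesis. Hence $(h_\sigma)_*\in W(G,T)$; choosing $n\in N_G(T)(\kbar)$ with $(\inn(n))_*=(h_\sigma)_*$ and using that the kernel of $\Aut(G,T)\to\Aut(\Psi(G,T))$ is $\inn(T(\kbar))$ (Theorem~\ref{T:main isomorphism}), we get $h_\sigma=\inn(t)\circ\inn(n)=\inn(tn)$ for some $t\in T(\kbar)$, so $h_\sigma$ is inner. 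Thus $f^{-1}\circ\sigma(f)$ is inner for all $\sigma$ and $G'$ is an inner form of $G$. For the last sentence: a reductive $k$-group $H$ is a form of $G$ exactly when $\Psi(H)\cong\Psi(G)$, and among such $H$ the inner forms of $G$ are exactly those for which $\mu_H$ agrees with $\mu_G$; both conditions refer only to the abstract pair $(\Psi(G),\mu_G)$, so (as ``inner form of'' is visibly an equivalence relation) the class of inner forms of $G$ depends only on $(\Psi(G),\mu_G)$ up to isomorphism.

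I expect the real work to be organizational rather than conceptual: pinning down the naturality formula with the correct handedness (covariance of $(-)_*$, and the sign conventions in the Galois and Weyl actions on $X(T)$), and carrying out the reduction to the case $f(T_{\kbar})=T'_{\kbar}$. Once these are in place, both implications are short diagram chases in $\Aut(\Psi)$ modulo $W(G,T)$ using the results of \S\ref{SS:outer automorphisms} and Theorem~\ref{T:main isomorphism}.
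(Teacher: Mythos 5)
Your proposal is correct and follows essentially the same route as the paper: both hinge on the identity $f_*^{-1}\circ\varphi_{G',T'}(\sigma)\circ f_* = (f^{-1}\circ\sigma(f))_*\circ\varphi_{G,T}(\sigma)$ for a torus-preserving $f$, combined with Theorem~\ref{T:main isomorphism} and the identification of the image of $\Inn(G,T)$ with $W(G,T)$ from \S\ref{SSS:same group and torus}. Your explicit adjustment of $f_0$ by $\inn(h)$ to arrange $f(T_{\kbar})=T'_{\kbar}$ while preserving the inner-twist property is a detail the paper leaves implicit, but the argument is the same.
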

\begin{proof}
First take any isomorphism $f\colon G_{\kbar}\to G'_{\kbar}$ satisfying $f(T_{\kbar})=T'_{\kbar}$.  Take any $\sigma\in \Gal_k$.  Since our reductive groups and tori are all defined over $k$, we have an isomorphism $\sigma(f)\colon G_{\kbar} \xrightarrow{\sim} G'_{\kbar}$ that satisfies $\sigma(f)(T_{\kbar})=T'_{\kbar}$.   For any character $\alpha\in X(T)$, we have
\[
(\sigma(\alpha \circ f^{-1}))\circ f \, |_{T_{\kbar}}= \sigma(\alpha) \circ (\sigma(f)^{-1}\circ f) \, |_{T_{\kbar}}= \sigma(\alpha) \circ (f^{-1} \circ \sigma(f))^{-1} \, |_{T_{\kbar}}.
\]
Therefore, 
\begin{align} \label{E:is it inner}
f_*^{-1} \circ \varphi_{G',T'}(\sigma) \circ f_* =  \xi_\sigma \circ  \varphi_{G,T}(\sigma)
\end{align}
where $\xi_\sigma:=(f^{-1}\circ \sigma(f) )_* \in \Aut(\Psi(G,T))$.  

First suppose that $G'$ is an inner form of $G$ and hence $f$ may be chosen so that $f^{-1} \circ \sigma(f)$ is an inner automorphism of $G_{\kbar}$ for all $\sigma\in \Gal_k$.   We have $f^{-1} \circ \sigma(f) \in \Inn(G_{\kbar},T_{\kbar})$ and hence $\xi_\sigma \in W(G,T)$, cf.~\S\ref{SSS:same group and torus}.    From (\ref{E:is it inner}) and $\xi_\sigma \in W(G,T)$, we deduce that $\mu_{G'}(\sigma) = [f_*] \circ  \mu_{G}(\sigma)$.  Therefore, $\mu_G$ and $\mu_{G'}$ agree.  

Now suppose that $\mu_G$ and $\mu_{G'}$ agree.  There is an isomorphism $g\colon \Psi(G,T)\xrightarrow{\sim}\Psi(G',T')$ of root data such that $[g]\circ \mu_G = \mu_{G'}$.   By Theorem~\ref{T:main isomorphism}, there is an isomorphism $f\colon G_{\kbar} \xrightarrow{\sim} G'_{\kbar}$ satisfying $f(T_{\kbar})=T'_{\kbar}$ and $g=f_*$.  Take any $\sigma\in \Gal_k$.   From $[f_*]\circ \mu_G(\sigma) = \mu_{G'}(\sigma)$, we deduce that $f_*\circ \varphi_{G,T}(\sigma) \circ f_*^{-1} \circ \varphi_{G',T'}(\sigma)^{-1}$ lies in $W(G',T')$ and hence 
\[
f_*^{-1}\circ (f_*\circ \varphi_{G,T}(\sigma) \circ f_*^{-1} \circ \varphi_{G',T'}(\sigma)^{-1})\circ f_* = \varphi_{G,T}(\sigma) \circ (f_*^{-1} \circ \varphi_{G',T'}(\sigma) \circ f_*)^{-1} 
\]
lies in $W(G,T)$.  By (\ref{E:is it inner}), we find that $\xi_\sigma := (f^{-1}\circ \sigma(f))_*$ is an element of $W(G,T)$.  From \S\ref{SSS:same group and torus},  we deduce that  $f^{-1}\circ \sigma(f)\in \Inn(G_{\kbar},T_{\kbar})$ for all $\sigma\in \Gal_k$.   Therefore, $G'$ is an inner form of $G$.

Finally, the last statement of the propostion is clear since the root datum of $G$  determines the forms of $G$.
\end{proof}

\subsection{Quasi-split groups} \label{SS:quasi-split}

Recall that a connected reductive group $G$ defined over $k$ is \defi{quasi-split} if it has a Borel subgroup defined over $k$, i.e., there is an algebraic subgroup $B$ of  $G$ for which $B_{\kbar}$ is a Borel subgroup of $G_{\kbar}$.  

A connected reductive group $G$ over $k$ has an inner form $G_0$ that is quasi-split which is  unique up to isomorphism, cf.~\cite{MR3729270}*{Corollary~23.53}.   We will refer to the group $G_0$ as \emph{the} quasi-split inner form of $G$.   Proposition~\ref{P:characterize inner forms} implies that $G_0$, up to isomorphism, is determined by the abstract root datum $\Psi(G)$ and the homomorphism $\mu_G\colon \Gal_k \to \Out(\Psi(G))$.  

The quotient homomorphism $N_G(T)(\kbar)\to W(G,T)$ and the inclusion $N_G(T)(\kbar)\hookrightarrow G(\kbar)$ induces maps $H^1(k,N_G(T))\to H^1(k,W(G,T))$ and $H^1(k,N_G(T))\to H^1(k,G)$, respectively, of Galois cohomology sets.

\begin{prop} \label{P:Ragh}
Suppose that $G$ is quasi-split and that  the maximal torus $T$ contains a maximal split torus of $G$. Then for any $\xi\in H^1(k,W(G,T))$, there is a lift $\bbar\xi\in H^1(k,N_G(T))$ of $\xi$ which maps to the trivial class in $H^1(k,G)$.
\end{prop}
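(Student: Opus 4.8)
The plan is to reformulate the statement as a question about maximal tori of the quasi-split group $G$, reduce to the case where $G$ is absolutely almost simple and simply connected, and then build the required torus from Steinberg's section of the adjoint quotient. First I would record the standard dictionary. For a connected reductive $k$-group $G$ with maximal torus $T$, a class in $\ker(H^1(k,N_G(T))\to H^1(k,G))$ is represented by a cocycle $\sigma\mapsto n_\sigma$ with $n_\sigma=g^{-1}\sigma(g)$ for some $g\in G(\kbar)$ — this form is exactly what it means for the class to become trivial in $H^1(k,G)$ — and then $T':=gTg^{-1}$ is a maximal torus of $G$ defined over $k$, since $n_\sigma\in N_G(T)(\kbar)$. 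This gives a bijection between $\ker(H^1(k,N_G(T))\to H^1(k,G))$ and the set of $G(k)$-conjugacy classes of maximal $k$-tori of $G$, and composing it with $H^1(k,N_G(T))\to H^1(k,W(G,T))$ sends $T'=gTg^{-1}$ to its \emph{type} $\theta(T')$, the class of $\sigma\mapsto$ image of $g^{-1}\sigma(g)$ in $W(G,T)$. So the proposition is equivalent to the assertion that, for $G$ quasi-split and $T$ a maximally split maximal torus, every $\xi\in H^1(k,W(G,T))$ equals $\theta(T')$ for some maximal $k$-torus $T'$ of $G$.

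Next I would reduce to $G$ absolutely almost simple and simply connected. Passing from $G$ to the simply connected cover $G^{\sc}$ of its derived group $G^{\der}$, with $T$ replaced by the preimage of $T\cap G^{\der}$, leaves $W(G,T)$ and the $\Gal_k$-action on it unchanged, preserves ``quasi-split'' and ``$T$ maximally split'', carries maximal $k$-tori to maximal $k$-tori compatibly with types, and preserves the property of becoming trivial in degree-one cohomology of the ambient group (a cocycle $\tilde g^{-1}\sigma(\tilde g)$ in $G^{\sc}$ maps to one of the same shape in $G$). Writing $G^{\sc}\cong\prod_i\Res_{k_i/k}H_i$ with each $H_i$ absolutely almost simple, simply connected and quasi-split over a finite separable extension $k_i$, and using Shapiro's lemma together with the identification of maximal $k$-tori of $\Res_{k_i/k}H_i$ with maximal $k_i$-tori of $H_i$, we are reduced to a single factor.

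So take $G$ absolutely almost simple, simply connected and quasi-split, with $T$ maximally split and contained in a Borel $B$ over $k$; fix a $\Gal_k$-stable pinning of $(G,B,T)$, which yields the $*$-action description of $\Gal_k$ on $X(T)$ and makes available Steinberg's section $\epsilon\colon T/\!\!/W\to G$, a $k$-morphism splitting the adjoint quotient $G\to G/\!\!/G=T/\!\!/W$ whose image lies in the regular semisimple locus. Given $\xi$, represented by a cocycle $(w_\sigma)$, form the twisted maximal $k$-torus ${}_wT$ (with ${}_wT(\kbar)=T(\kbar)$ and $\Gal_k$ acting by $\sigma\cdot t=w_\sigma(\sigma(t))$); since $W$ acts trivially on $T/\!\!/W$ the finite quotient map twists to a $k$-morphism ${}_wT\to T/\!\!/W$. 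As $k$ is infinite, the complement of the twisted root divisor in ${}_wT$ has a $k$-point $t$; let $\bar t\in(T/\!\!/W)(k)$ be its image, put $\gamma:=\epsilon(\bar t)\in G(k)$, and set $T':=Z_G(\gamma)^{\circ}$, a maximal $k$-torus of $G$. To identify $\theta(T')$: over $\kbar$ write $\gamma=h t h^{-1}$ with $h\in G(\kbar)$ (possible with $t$ itself, since $t$ is regular and so its $W$-orbit is the whole fibre over $\bar t$), whence $T'=hTh^{-1}$ and $Z_G(t)^{\circ}=T$; then $\sigma(\gamma)=\gamma$ forces $n_\sigma:=h^{-1}\sigma(h)$ to normalize $T$ with $n_\sigma\,\sigma(t)\,n_\sigma^{-1}=t$, and feeding in $\sigma(t)=w_\sigma^{-1}(t)$ (the condition that $t$ is a $k$-point of ${}_wT$) together with the regularity of $t$ pins the image of $n_\sigma$ in $W$ down to $w_\sigma$. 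Hence $\theta(T')=\xi$.

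The main obstacle is precisely this last identification: merely being a maximal $k$-torus abstractly isomorphic to ${}_wT$ does not force the type to be $\xi$ — when $G$ has nontrivial diagram automorphisms two non-conjugate maximal $k$-tori can be $k$-isomorphic — so one really must control the $\kbar$-conjugator and invoke regularity of $t$ as above; a secondary, more routine, difficulty is checking that ``quasi-split'', ``maximally split'', ``type'' and ``triviality in degree-one cohomology'' all transfer cleanly under the reduction to the absolutely almost simple simply connected case, and that a $k$-rational version of Steinberg's section is available in the quasi-split simply connected setting. If one prefers to sidestep Steinberg's section, the conclusion is a theorem of Raghunathan on tori in quasi-split groups, which may be cited directly once the dictionary of the first paragraph is in place.
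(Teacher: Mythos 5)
Your argument is correct, and it amounts to a self-contained proof of the result that the paper simply cites. The paper's own proof is two lines: replace $(G,T)$ by $(G',G'\cap T)$ with $G'$ the derived subgroup --- this changes neither $W(G,T)$ nor its $\Gal_k$-action, and triviality in $H^1(k,G')$ implies triviality in $H^1(k,G)$ --- and then invoke Theorem~1.1 of Raghunathan \cite{MR2125504} for the semisimple case; that is precisely the ``sidestep'' you mention in your closing sentence. What you do differently is unfold that theorem: the dictionary identifying $\ker\big(H^1(k,N_G(T))\to H^1(k,G)\big)$ with $G(k)$-conjugacy classes of maximal $k$-tori and the map to $H^1(k,W(G,T))$ with the type; a further reduction to the absolutely almost simple, simply connected case (the paper stops at semisimple because that is the generality of the citation, while you need simply connected for Steinberg's section); and the construction of a torus of prescribed type as the connected centralizer of $\epsilon(\bar t)$ for a regular rational point $t$ of the twisted torus ${}_wT$. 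Your handling of the one genuinely delicate point is right: since $t$ is regular, the relation $\sigma(\gamma)=\gamma$ forces $h^{-1}\sigma(h)$ to lie in $T(\kbar)$ times a representative of $w_\sigma$, so the class of $T'$ in $H^1(k,W(G,T))$ --- not merely the $k$-isomorphism class of the torus --- is pinned down to $\xi$. The standing characteristic-zero hypothesis of the section supplies the ``$k$ infinite and perfect'' needed both for Zariski-density of rational points of ${}_wT$ and for Steinberg's rational cross-section in the quasi-split simply connected setting. The trade-off is clear: the paper's route is short but opaque, while yours exhibits the lift $\bbar\xi$ concretely via a rational regular semisimple element, at the cost of importing Steinberg's theorem and the bookkeeping of two reductions that the paper avoids by citing a statement already in the right generality.
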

\begin{proof}
Let $G'$ be the derived subgroup of $G$ and define $T':=G'\cap T$.  The group $G'$ is semisimple and $T'$ is a maximal torus of $G'$ containing a maximal split torus of $G'$.   The inclusion $N_{G'}(T') \subseteq N_G(T)$ induces an isomorphism $W(G',T')=W(G,T)$ that respects the $\Gal_k$-actions.   It thus suffices to prove the proposition with $(G,T)$ replaced by $(G',T')$.  The semisimple case of the proposition is Theorem~1.1 of \cite{MR2125504}.  
\end{proof}

\subsection{Semisimple conjugacy classes} \label{SS:ss conjugacy classes}

Let $R$ be the affine coordinate ring of the connected reductive group $G$ over $k$.  The group $G$ acts on $R$ by composition with inner automorphisms.  We define $R^{G}$ to be the $k$-subalgebra of $R$ consisting of those elements fixed by this $G$-action, i.e., the algebra of \defi{central functions} on $G$.   Define the $k$-variety
\[
G^\sharp:=\Spec(R^{G})
\]
and denote by $\cl_{G} \colon G \to G^\sharp$ the morphism induced by the inclusion $R^{G} \hookrightarrow R$ of $k$-algebras.  The morphism $\cl_G$ is surjective.

Take any algebraically closed field $L/k$.   For any $g\in G(L)$, we have $g=g_s g_u$ for unique commuting $g_s,g_u\in G(L)$ with $g_s$ semisimple and $g_u$ unipotent in $G$.     For $g,h\in G(L)$, one can show that $g_s$ and $h_s$ are conjugate in $G(L)$ if and only if $\cl_{G}(g)=\cl_{G}(h)$.   In particular, $G^\sharp(L)$ can be identified with the set of semisimple elements of $G(L)$ up to conjugacy.

\subsection{Minuscule representations} \label{SS:minuscule}

Let $\rho\colon G\to \GL_U$ be an irreducible representation, where $U$ is a finite dimensional $k$-vector space.     The Weyl group $W(G,T)$ acts on the set of weights $\Omega_U \subseteq X(T)$ of the representation $U$ relative to $T$.

We say that the representation $\rho$ is \defi{minuscule} if $W(G,T)$ acts transitively on $\Omega_U$.      The property of $\rho$ being minuscule does not depend on the choice of $T$.	(Note that some authors exclude the trivial irreducible representation in the definition of minuscule.)

\section{\texorpdfstring{$\ell$}{l}-adic monodromy groups} \label{S:l-adic monodromy groups}

Fix an abelian variety $A$ of dimension $g\geq 1$ defined over a number field $K$.   

\subsection{\texorpdfstring{$\ell$}{l}-adic monodromy groups}
\label{SS:ell-adic monodromy group definition}
Take any rational prime $\ell$.  For an integer $e\geq 1$, let $A[\ell^e]$ be the $\ell^e$-torsion subgroup of $A(\Kbar)$; it is a free $\ZZ/\ell^e\ZZ$-module of rank $2g$.    The \defi{$\ell$-adic Tate module} is $T_\ell(A):= \varprojlim_e A[\ell^e]$, where the inverse limit is with respect to multiplication by $\ell$ maps $A[\ell^{e+1}]\to A[\ell^e]$; it is a free $\ZZ_\ell$-module of rank $2g$.    There is a natural action of $\Gal_K$ on the groups $A[\ell^e]$ and hence on the $\ZZ_\ell$-module $T_\ell(A)$.   Define the $\QQ_\ell$-vector space $V_\ell(A):=T_\ell(A) \otimes_{\ZZ_\ell}\QQ_\ell$; it has dimension $2g$.  We have a Galois action on $V_\ell(A)$ that we can express in terms of a continuous representation
\[
\rho_{A,\ell} \colon \Gal_K \to \Aut_{\QQ_\ell}(V_\ell(A))=\GL_{V_\ell(A)}(\QQ_\ell).
\]

\begin{defn}
The \defi{$\ell$-adic monodromy group} of $A$ is the algebraic subgroup $G_{A,\ell}$ of $\GL_{V_\ell(A)}$ obtained by taking the Zariski closure of $\rho_{A,\ell}(\Gal_K)$.  
\end{defn}

From the work of Faltings, cf.~\cite{MR861971}, we know that the group $G_{A,\ell}^\circ$ is reductive.  Define $K_A^\conn$ to be the subfield of $\Kbar$ fixed by the kernel of the homomorphism 
\begin{align*}
\Gal_K \xrightarrow{\rho_{A,\ell}} G_{A,\ell}(\QQ_\ell)\to G_{A,\ell}(\QQ_\ell)/G_{A,\ell}^\circ(\QQ_\ell).  
\end{align*}
Equivalently, $K_{A}^\conn$ is the smallest extension of $K$ in $\Kbar$ that satisfies $\rho_{A,\ell}(\Gal_{K_A^\conn}) \subseteq G_{A,\ell}^\circ(\QQ_\ell)$.  

\begin{prop} \label{P:connected}
\begin{romanenum}
\item \label{P:connected i}
The field $K_A^\conn$ depends only on $A$.  In particular, it is independent of $\ell$.   
\item \label{P:connected ii}
The degree $[K_A^\conn:K]$ can be bounded in terms of $g$.
\end{romanenum}
\end{prop}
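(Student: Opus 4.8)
The plan is to identify $K_A^\conn/K$ with a Galois extension whose group is the component group $\pi_0(G_{A,\ell}) := G_{A,\ell}(\QQ_\ell)/G_{A,\ell}^\circ(\QQ_\ell)$, and then establish each part for that group. A Zariski-dense subgroup of an algebraic group meets every connected component, so $\rho_{A,\ell}(\Gal_K)$ surjects onto $\pi_0(G_{A,\ell})$ with kernel $\rho_{A,\ell}(\Gal_K)\cap G_{A,\ell}^\circ(\QQ_\ell)$; by the definition of $K_A^\conn$ this kernel equals $\rho_{A,\ell}(\Gal_{K_A^\conn})$, so $K_A^\conn/K$ is Galois with $\Gal(K_A^\conn/K)\cong \pi_0(G_{A,\ell})$. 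Thus (ii) reduces to bounding $|\pi_0(G_{A,\ell})|$ in terms of $g$; for (i) one must show that the \emph{field} $K_A^\conn$ — not merely the abstract group $\pi_0(G_{A,\ell})$ — is the same for every $\ell$, which is the more delicate statement.

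For (i) I would characterize the splitting of primes in $K_{A,\ell}^\conn$ by an $\ell$-independent condition. By the first paragraph, a prime $\p$ of good reduction with $\p\nmid\ell$ splits completely in $K_{A,\ell}^\conn$ if and only if $\rho_{A,\ell}(\Frob_\p)\in G_{A,\ell}^\circ(\QQ_\ell)$. By Weil this element is semisimple, and Serre's criterion for lying in the identity component rephrases the condition in terms of the finite part of the Zariski closure of the cyclic group it generates; for $\p$ outside a set of density $0$ — one on which the associated Frobenius torus fails to have maximal rank, which is controlled by Lemma~\ref{L:common rank} — this becomes the assertion that the subgroup $\Phi_{A,\p}\subseteq\Qbar^\times$ generated by the roots of $P_{A,\p}(x)$ is torsion-free. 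Since $\Phi_{A,\p}$ depends only on $P_{A,\p}(x)$, hence only on $A$, the set of primes splitting completely in $K_{A,\ell}^\conn$ is independent of $\ell$ up to density $0$; and a finite Galois extension of $K$ is determined by its set of completely split primes modulo any density-$0$ error, by the Chebotarev density theorem. (This $\ell$-independence can also be quoted directly from work of Serre and of Larsen--Pink.)

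For (ii), the bound $[K_A^\conn:K]=|\pi_0(G_{A,\ell})|\le B(g)$ with $B(g)$ depending only on $g$ is a theorem of Serre, which I would quote, but the mechanism is as follows. The conjugation action of $\pi_0(G_{A,\ell})$ on the reductive group $G_{A,\ell}^\circ\subseteq\GL_{2g}$ — whose rank is at most $2g$ — preserves the central torus $Z^\circ$ and the derived group $(G_{A,\ell}^\circ)^{\der}$, and the induced actions land respectively in a finite subgroup of $\GL_d(\ZZ)$ with $d\le 2g$ (bounded by Minkowski's theorem) and in the outer automorphism group of a semisimple group of rank $\le 2g$ (bounded). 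The step I expect to be the crux — the one that makes the statement a genuine theorem rather than a formality — is bounding the kernel of the resulting homomorphism, i.e., the components acting on $(G_{A,\ell}^\circ)^{\der}$ through an inner automorphism while fixing $Z^\circ$: one represents such a component by an element that is central in $G_{A,\ell}$ and uses that a central semisimple element acts by scalars on the at most $2g$ isotypic constituents of $V_\ell(A)$, so that the group of such components is again finite of bounded order. The cleanest single reference for the whole statement is Silverberg's strengthening of the Minkowski--Serre bounds on fields of definition of structure attached to abelian varieties.
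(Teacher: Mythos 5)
For part (\ref{P:connected i}) you land in essentially the same place as the paper: both of you ultimately quote Serre and Larsen--Pink, and your sketch of the mechanism (the set of primes splitting completely in $K_{A,\ell}^\conn$ agrees up to density $0$ with the $\ell$-independent set of $\p$ for which $\Phi_{A,\p}$ is torsion-free, and a finite Galois extension of $K$ is determined by its split primes) is exactly the content behind those references and Lemma~\ref{L:common rank}. Part (\ref{P:connected ii}) is where you genuinely diverge, and the comparison is instructive. The paper gets (\ref{P:connected ii}) almost for free from (\ref{P:connected i}): since $K_A^\conn$ is the same field for every $\ell$, it lies in $K(A[\ell^\infty])$ for every $\ell$, so $[K_A^\conn:K]$ divides $[K(A[\ell]):K]\cdot\ell^{e_\ell}$, hence divides $|\GL_{2g}(\FF_\ell)|\cdot\ell^{e_\ell}$; comparing $\ell=2$ and $\ell=3$ forces it to divide $|\GL_{2g}(\FF_2)|\cdot|\GL_{2g}(\FF_3)|$. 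Your route instead bounds $|\pi_0(G_{A,\ell})|$ structurally, which is Serre's much harder theorem, and the step you single out as the crux is indeed where a purely group-theoretic argument breaks down: the component group of a reductive subgroup of $\GL_{2g}$ is \emph{not} bounded in terms of $g$ (adjoin to the homotheties in $\GL_2$ a single diagonal element of order $N$), so bounding the kernel of $\pi_0(G_{A,\ell})\to\Out(G_{A,\ell}^\circ)$ requires the arithmetic input that the relevant central elements are roots of unity in an order of $\End(A_{\Kbar})\otimes_\ZZ\QQ$, a $\QQ$-algebra of dimension at most $(2g)^2$, where Minkowski-type bounds apply. If you quote Serre or Silverberg wholesale your argument is sound, but it buys nothing over the paper's two-line deduction, which is the intended (and self-contained, given (\ref{P:connected i})) proof.
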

\begin{proof}
Part (\ref{P:connected i}) was proved by Serre \cite{MR1730973}*{133}; see also \cite{MR1441234}.   From (\ref{P:connected i}), we find that $K_A^\conn$ is a subfield of $K(A[\ell^\infty])$ and hence $[K_A^\conn:K]$ divides $[K(A[\ell]):K]\ell^{e_\ell}$ for some integer $e_\ell$.  Since $[K(A[\ell]):K]$ divides $|\GL_{2g}(\FF_\ell)|$, we deduce that $[K_A^\conn :K]$ divides $|\GL_{2g}(\FF_\ell)| \ell^{e_\ell}$.   Therefore, $[K_A^\conn:K]$ must divide $|\GL_{2g}(\FF_2)|\cdot |\GL_{2g}(\FF_3)|$ which completes the proof of (\ref{P:connected ii}).
\end{proof}

\subsection{Compatibility}

Take any nonzero prime ideal $\p$ of $\OO_K$ for which $A$ has good reduction.  Denote by $A_\p$ the abelian variety over $\FF_\p$ obtained by reducing $A$ modulo $\p$.      There is a unique polynomial $P_{A,\p}(x) \in \ZZ[x]$ such that $P_{A,\p}(n)$ is the degree of the isogeny $n-\pi$ for each integer $n$, where $\pi$ is the Frobenius endomorphism of $A_\p/\FF_\p$.   The polynomial $P_{A,\p}(x)$ is monic of degree $2g$.  

For each rational prime $\ell$ for which $\p\nmid \ell$, the representation $\rho_{A,\ell}$ is unramified at $\p$ and satisfies 
\[
\det(xI - \rho_{A,\ell}(\Frob_\p)) = P_{A,\p}(x).
\]
Note that $\rho_{A,\ell}(\Frob_\p)$ is semisimple in $\GL_{V_\ell(A)}$ since	 $\pi$ acts semisimply on the $\ell$-adic Tate module of $A_\p$.    From Weil, we know that all of the roots of $P_{A,\p}(x)$ in $\CC$ have absolute value $N(\p)^{1/2}$. 

\subsection{The group \texorpdfstring{$\Phi_{A,\p}$}{Phi{A,p}}} 
\label{SS:the group Phi}

Take any nonzero prime ideal $\p\subseteq \OO_K$ for which $A$ has good reduction.   Let $\calW_{A,\p}$ be the set of roots of $P_{A,\p}(x)$ in $\Qbar$.   We define $\Phi_{A,\p}$ to be the subgroup of $\Qbar^\times$ generated by $\calW_{A,\p}$.    The abelian group $\Phi_{A,\p}$ is finitely generated and has a natural $\Gal_\QQ$-action since the coefficients of $P_{A,\p}(x)$ are rational.   

Let $r$ be the rank of the reductive group $G_{A,\ell}^\circ$ for a fixed prime $\ell$.  By the following lemma, the integer $r$ does not depend on the choice of $\ell$.

\begin{lemma} \label{L:common rank}
\begin{romanenum}
\item   \label{L:common rank i}
If $\Phi_{A,\p}$ is torsion-free, then $\p$ splits completely in $K_A^\conn$ and $\Phi_{A,\p}$ has rank at most $r$.
\item   \label{L:common rank ii}
There is a density $0$ set $S$ of prime ideals of $\OO_K$ such that if $\p\notin S$ splits completely in $K_A^\conn$, then $\Phi_{A,\p}$ is a free abelian group of rank $r$. 
\item \label{L:common rank iii}
The integer $r$ does not depend on the choice of $\ell$.  
\end{romanenum}
\end{lemma}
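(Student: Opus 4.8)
The plan is to treat part (i) as the engine, to read off part (ii) from Serre's theorem on Frobenius tori, and then to deduce part (iii) formally by playing (i) and (ii) off against each other for two different choices of $\ell$.

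For (i), fix any prime $\ell$ with $\p \nmid \ell$ and set $t := \rho_{A,\ell}(\Frob_\p) \in G_{A,\ell}(\QQ_\ell)$. By the compatibility recalled above, $t$ is semisimple with characteristic polynomial $P_{A,\p}(x)$, so its eigenvalues (with multiplicity) are the roots of $P_{A,\p}(x)$, which generate $\Phi_{A,\p}$. Let $H$ be the Zariski closure of the cyclic group $\langle t \rangle$ in $\GL_{V_\ell(A)}$; since $t$ is semisimple, $H$ is diagonalizable. After base change to $\Qbar_\ell$ and conjugation of $t$ into diagonal form, $H_{\Qbar_\ell}$ is the intersection of the kernels of those characters of the diagonal torus that vanish on $t$; hence its character group is $\ZZ^{2g}/\Lambda$, where $\Lambda = \{a \in \ZZ^{2g} : \prod_i \lambda_i^{a_i}=1\}$ and $\lambda_1,\dots,\lambda_{2g}$ are the eigenvalues of $t$. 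Choosing an embedding $\Qbar \hookrightarrow \Qbar_\ell$ identifies this character group with the subgroup of $\Qbar^\times$ generated by the $\lambda_i$, i.e. with $\Phi_{A,\p}$. Consequently $\dim H = \rank \Phi_{A,\p}$, and $H$ is connected (a torus) if and only if $\Phi_{A,\p}$ is torsion-free. Now assume $\Phi_{A,\p}$ is torsion-free, so that $H$ is a torus; being connected and containing the identity, it lies in $G_{A,\ell}^\circ$, so $t \in G_{A,\ell}^\circ(\QQ_\ell)$. By the definition of $K_A^\conn$ this means $\Frob_\p$ acts trivially on $K_A^\conn$, which is unramified at $\p$ since $K_A^\conn \subseteq K(A[\ell^\infty])$; hence $\p$ splits completely in $K_A^\conn$. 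Finally, any torus of the connected reductive group $G_{A,\ell}^\circ$ is contained in a maximal torus, so $\rank \Phi_{A,\p} = \dim H \le \rank G_{A,\ell}^\circ$.

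For (ii), with $\ell$ the fixed prime used to define $r$, I would invoke Serre's theorem on Frobenius tori, which provides a set $S$ of primes of $\OO_K$ of density $0$ such that for every $\p \notin S$ of good reduction the Zariski closure of $\langle \rho_{A,\ell}(\Frob_\p) \rangle$ is a maximal torus of $G_{A,\ell}$ (see \cite{MR1339927}). For such $\p$ this closure is connected and of dimension $r$, so by the identification established in (i) the group $\Phi_{A,\p}$ is free of rank $r$; it also lies in $G_{A,\ell}^\circ$, so $\p$ automatically splits completely in $K_A^\conn$, which is compatible with (and stronger than) the stated conclusion. For (iii), write $r_\ell := \rank G_{A,\ell}^\circ$ and fix two primes $\ell_1, \ell_2$. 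The primes of $K$ splitting completely in $K_A^\conn$ have positive density (Chebotarev, $K_A^\conn/K$ being finite), so using (ii) for $\ell_1$ we may choose such a prime $\p$ lying outside the relevant density-$0$ set and with $\p \nmid \ell_1 \ell_2$. Then $\rank \Phi_{A,\p} = r_{\ell_1}$ by (ii), while applying (i) with the prime $\ell_2$ gives $r_{\ell_1} = \rank \Phi_{A,\p} \le r_{\ell_2}$. Exchanging $\ell_1$ and $\ell_2$ yields the reverse inequality, hence $r_{\ell_1} = r_{\ell_2}$.

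The only real input is the theorem of Serre used in (ii) — that the Frobenius torus attached to $\p$ is a full maximal torus of $G_{A,\ell}$ for a density-$1$ set of $\p$ — and this is where I expect to lean on the literature rather than reprove anything; parts (i) and (iii) are then essentially bookkeeping. One small point to handle with care is the translation between densities of primes of $K$ and of $K_A^\conn$ if one prefers to state Serre's result over $K_A^\conn$ (over which $G_{A,\ell}$ is connected), but for primes splitting completely this is harmless since $P_{A,\p}(x)$ is unchanged.
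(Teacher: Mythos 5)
Your proposal is correct and follows essentially the same route as the paper: identify the character group of the Zariski closure of $\langle\rho_{A,\ell}(\Frob_\p)\rangle$ with $\Phi_{A,\p}$, deduce (i) from connectedness of tori, get (ii) from the Frobenius-tori maximality theorem (the paper cites Theorem~1.2 of the Larsen--Pink connectedness paper plus Chebotarev) and read off (iii) as an $\ell$-independent characterization of $r$. One small caution: the Frobenius-tori theorem should be quoted with the hypothesis that $\rho_{A,\ell}(\Frob_\p)$ lies in $G_{A,\ell}^\circ(\QQ_\ell)$ (otherwise the closure is disconnected for a positive-density set of $\p$ when $K_A^\conn\neq K$), but since part (ii) only concerns primes splitting completely in $K_A^\conn$ this does not affect your argument.
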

\begin{proof}
Choose an embedding $\Qbar\hookrightarrow \Qbar_\ell$.  Take any nonzero prime ideal $\p\nmid \ell$ of $\OO_K$ for which $A$ has good reduction.    

Let $T_{\p,\ell}$ be the Zariski closure in $G_{A,\ell}$ of the subgroup generated by the semisimple element $t_{\p,\ell}:=\rho_{A,\ell}(\Frob_\p)$.   Let $X(T_{\p,\ell})$ be the group of characters $(T_{\p,\ell})_{\Qbar_\ell}\to(\GG_m)_{\Qbar_\ell}$.  Since $T_{\p,\ell}$ is generated by $t_{\p,\ell}$, the homomorphism $f\colon X(T_{\p,\ell}) \to \Qbar_\ell^\times$, $\alpha\mapsto \alpha(t_{\p,\ell})$ is injective.  Let $\Omega_{\p,\ell} \subseteq X(T_{\p,\ell})$ be the set of weights of $T_{\p,\ell} \subseteq \GL_{V_\ell(A)}$ acting on $V_\ell(A)$; the set $\Omega_{\p,\ell}$ generates $X(T_{\p,\ell})$ since this action is faithful.    The elements $\{\alpha(t_{\p,\ell}):\alpha\in \Omega_{\p,\ell}\}$ are the roots of $P_{A,\p}(x)$ in $\Qbar_\ell$ and generate the image of $f$.   Therefore, we have an isomorphism $f\colon X(T_{\p,\ell}) \to \Phi_{A,\p}$, where we are using our fixed embedding $\Qbar\hookrightarrow \Qbar_\ell$.   In particular, the groups $X(T_{\p,\ell})$ and $\Phi_{A,\p}$ are isomorphic.   

Suppose that $\Phi_{A,\p} \cong X(T_{\p,\ell})$ is torsion-free and hence $T_{\p,\ell}$ is a torus.   We thus have $T_{\p,\ell}\subseteq G_{A,\ell}^\circ$ and hence $\rho_{A,\ell}(\Frob_\p) \in G_{A,\ell}^\circ(\QQ_\ell)$.  Therefore, $\p$ splits completely in $K_A^\conn$; note that $\rho_{A,\ell}(\Frob_\p) \in G_{A,\ell}^\circ(\QQ_\ell)$ if and only if $\p$ splits completely in $K_A^\conn$.   Since $T_{\p,\ell}\subseteq G_{A,\ell}^\circ$ is a torus and $G_{A,\ell}^\circ$ has rank $r$, we deduce that $T_{\p,\ell}$ has dimension at most $r$ and hence $\Phi_{A,\p} \cong X(T_{\p,\ell})$ has rank at most $r$.  This proves (\ref{L:common rank i}) for all good primes $\p\nmid \ell$.   Part (\ref{L:common rank i}) for any excluded good primes $\p | \ell$ will follow once we prove  (\ref{L:common rank iii}) since we may choose a different initial prime $\ell$.

By Theorem~1.2 of \cite{MR1441234}, there is a closed proper subvariety $Y\subseteq G_{A,\ell}$, that is stable under conjugation by $G_{A,\ell}$, such that if $\p\nmid \ell$ is a prime ideal of $\OO_K$ for which $A$ has good reduction and $\rho_{A,\ell}(\Frob_\p) \in G_{A,\ell}^\circ(\QQ_\ell) - Y(\QQ_\ell)$, then $T_{\p,\ell}$ is a maximal torus of $G_{A,\ell}^\circ$.    The Chebotarev density theorem then implies that there is a set $S$ of prime ideals of $\OO_K$ with density 0 such that if $\p\notin S$ splits completely in $K_A^\conn$, then $T_{\p,\ell}$ is a maximal torus of $G_{A,\ell}^\circ$.   Since $X(T_{\p,\ell}) \cong \Phi_{A,\p}$, there is a set $S$ of prime ideals of $\OO_K$ with density 0 such that if $\p\notin S$ splits completely in $K_A^\conn$, then $\Phi_{A,\p}$  is a free abelian group of rank $r$.   This proves (\ref{L:common rank ii}).    This also gives a characterization of the integer $r$ that does not depend on $\ell$ and hence proves (\ref{L:common rank iii}). 
\end{proof}

\begin{defn} \label{D:SA}
We define $\calS_A$ to be the set of nonzero prime ideals $\p\subseteq \OO_K$ for which $A$ has good reduction and for which $\Phi_{A,\p}$ is a free abelian group of rank $r$, where $r$ is the common rank of the reductive groups $G_{A,\ell}^\circ$. 
\end{defn}

\begin{lemma} \label{L:density of SA}
\begin{romanenum}
\item \label{L:density of SA i}
The set $\calS_A$ has density $1/[K_A^\conn:K]$.  
\item \label{L:density of SA ii}
Take any prime ideal $\p\in \calS_A$ and let $\P$ be a prime ideal of $\OO_{K_A^\conn}$ that divides $\p$.   Then $\P \in \calS_{A'}$ and $P_{A',\P}(x)=P_{A,\p}(x)$ where $A'$ is the base change of $A$ to $K_A^\conn$.
\end{romanenum}
\end{lemma}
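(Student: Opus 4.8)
The plan is to derive both parts from Lemma~\ref{L:common rank} together with the Chebotarev density theorem.

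For part (\ref{L:density of SA i}), I would first record that $K_A^\conn/K$ is a finite Galois extension: by construction $K_A^\conn$ is the fixed field of $\rho_{A,\ell}^{-1}(G_{A,\ell}^\circ(\QQ_\ell))$, which is the kernel of the homomorphism $\Gal_K \to G_{A,\ell}(\QQ_\ell)/G_{A,\ell}^\circ(\QQ_\ell)$ and hence a normal subgroup of $\Gal_K$ of finite index. By the Chebotarev density theorem, the set $\calT$ of nonzero prime ideals of $\OO_K$ that split completely in $K_A^\conn$ therefore has density $1/[K_A^\conn:K]$. It then suffices to show that $\calS_A$ and $\calT$ differ by a set of density $0$. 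The inclusion $\calS_A\subseteq\calT$ is immediate from Lemma~\ref{L:common rank}(\ref{L:common rank i}), since $\Phi_{A,\p}$ is torsion-free for every $\p\in\calS_A$. Conversely, Lemma~\ref{L:common rank}(\ref{L:common rank ii}) supplies a density $0$ set $S$ such that every $\p\notin S$ that splits completely in $K_A^\conn$ and at which $A$ has good reduction lies in $\calS_A$; since only finitely many primes are of bad reduction, $\calT\setminus\calS_A$ has density $0$, and so $\calS_A$ has density $1/[K_A^\conn:K]$.

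For part (\ref{L:density of SA ii}), fix $\p\in\calS_A$ and a prime $\P$ of $\OO_{K_A^\conn}$ dividing $\p$. By Lemma~\ref{L:common rank}(\ref{L:common rank i}) the prime $\p$ splits completely in $K_A^\conn$, so the residue extension $\FF_\P/\FF_\p$ is trivial. Since reduction commutes with base change, $A':=A_{K_A^\conn}$ has good reduction at $\P$ and $A'_\P=A_\p$ as abelian varieties over $\FF_\P=\FF_\p$; in particular their Frobenius endomorphisms coincide, whence $P_{A',\P}(x)=P_{A,\p}(x)$ and thus $\calW_{A',\P}=\calW_{A,\p}$ and $\Phi_{A',\P}=\Phi_{A,\p}$, which is free abelian of rank $r$. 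Finally, $\rho_{A',\ell}$ is the restriction of $\rho_{A,\ell}$ to $\Gal_{K_A^\conn}$, so the defining property of $K_A^\conn$ gives $G_{A',\ell}=G_{A,\ell}^\circ$, a connected group of rank $r$, for every $\ell$; hence the common rank of the groups $G_{A',\ell}^\circ$ is $r$ and therefore $\P\in\calS_{A'}$.

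I do not anticipate a genuine obstacle here. The two points that deserve a moment's care are the verification that $K_A^\conn/K$ is Galois (without which Chebotarev would only bound the density of $\calT$ rather than compute it exactly) and the standard bookkeeping that reduction commutes with base change, which is what makes $A'_\P$ literally equal to $A_\p$ when $\p$ splits completely.
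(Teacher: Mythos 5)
Your proof is correct and follows essentially the same route as the paper: Chebotarev plus Lemma~\ref{L:common rank}(i)--(ii) for part (i), and the triviality of the residue extension $\FF_\P/\FF_\p$ for part (ii). The only difference is that you spell out why $K_A^\conn/K$ is Galois, a point the paper leaves implicit; this is a worthwhile clarification but not a different argument.
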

\begin{proof}
The set of prime ideals of $\OO_K$ that split completely in $K_A^\conn$ has density $1/[K_A^\conn:K]$ by the Chebotarev density theorem.  Part (\ref{L:density of SA i}) thus follows  from Lemma~\ref{L:common rank}(\ref{L:common rank i}) and (\ref{L:common rank ii}).

We now prove (\ref{L:density of SA ii}).  The prime ideal $\p \in \calS_A$ splits completely in $K_A^\conn$ by Lemma~\ref{L:common rank}(\ref{L:common rank i}).   We thus have $\FF_\P=\FF_\p$ and hence the reduction of $A$ and $A'$ at the primes $\p$ and $\P$, respectively, agree.   Therefore, $P_{A,\p}(x)=P_{A',\P}(x)$ and hence $\Phi_{A',\P}= \Phi_{A,\p}\cong \ZZ^r$.   The reductive group $G_{A',\ell}=G_{A,\ell}^\circ$ has rank $r$ and hence $\P\in \calS_{A'}$.
\end{proof}

\subsection{Computing \texorpdfstring{$\Phi_{A,\p}$}{Phi{A,p}}} \label{SS:computing PhiAp}
In this section, we describe how to compute the structure of the finitely generated abelian group $\Phi_{A,\p}$ from $P_{A,\p}(x)$; this is important to ensure that our results are algorithmic (also see Remark~\ref{R:revisit later}).

Let $\pi_1,\ldots, \pi_n$ be the distinct elements of $\calW_{A,\p}$.    We have a surjective homomorphism 
\[
\varphi\colon \ZZ^n \to \Phi_{A,\p},\quad e\mapsto \prod_{i=1}^n \pi_i^{e_i}.
\]

We first describe the $e \in \ZZ^n$ for which $\varphi(e)$ is a root of unity.  Define the number field $L:=\QQ(\pi_1,\ldots, \pi_n)$.  For each nonzero prime $\lambda$ of $\OO_L$, let $v_\lambda\colon L^\times\twoheadrightarrow\ZZ$ be the $\lambda$-adic valuation.

\begin{lemma} \label{L:root of unity linear}
Take any $e\in \ZZ^n$.  The following are equivalent:
\begin{alphenum}
\item \label{L:root of unity linear a}
$\varphi(e)$ is a root of unity,
\item \label{L:root of unity linear b}
$\sum_{i=1}^n e_i v_\lambda(\pi_i) = 0$ 
holds for all prime ideals $\lambda$ of $\OO_L$ dividing $N(\p)$,
\item \label{L:root of unity linear c}
$\sum_{i=1}^n e_i v_\lambda(\sigma^{-1}(\pi_i)) = 0$ 
holds for a fixed prime ideal $\lambda$ of $\OO_L$ dividing $N(\p)$ and all $\sigma\in \Gal(L/\QQ)$.
\end{alphenum}
\end{lemma}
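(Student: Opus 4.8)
The plan is to reinterpret all three conditions in terms of valuations and archimedean absolute values on the number field $L=\QQ(\pi_1,\dots,\pi_n)$, which is the splitting field of $P_{A,\p}(x)$ over $\QQ$ and hence Galois over $\QQ$. Throughout write $\varphi(e)=\prod_{i=1}^n \pi_i^{e_i}$, so that $v_\lambda(\varphi(e))=\sum_{i=1}^n e_i\, v_\lambda(\pi_i)$ for every nonzero prime ideal $\lambda$ of $\OO_L$.

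First I would record two elementary facts. Each $\pi_i$ is an algebraic integer, being a root of the monic integral polynomial $P_{A,\p}(x)$; moreover $\pi_i$ divides $P_{A,\p}(0)$ in $\OO_L$, and $|P_{A,\p}(0)|=N(\p)^{g}$ by Weil's bound on the absolute values of the roots, so $P_{A,\p}(0)=\pm N(\p)^g$ and $\pi_i$ divides $N(\p)^g$ in $\OO_L$. Consequently $v_\lambda(\pi_i)=0$, and therefore $v_\lambda(\varphi(e))=0$, for every prime $\lambda$ of $\OO_L$ not dividing $N(\p)$. Second, again by Weil, every archimedean absolute value of every $\pi_i$ equals $N(\p)^{1/2}$.

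Now the equivalence (a)$\Leftrightarrow$(b). A root of unity is a unit of $\OO_L$, so (a) forces $v_\lambda(\varphi(e))=0$ for all $\lambda$, and restricting to $\lambda\mid N(\p)$ gives (b). Conversely, assume (b); combining it with the first fact above yields $v_\lambda(\varphi(e))=0$ for \emph{every} prime $\lambda$ of $\OO_L$, so $\varphi(e)\in\OO_L^\times$ and $|N_{L/\QQ}(\varphi(e))|=1$. On the other hand, the archimedean estimate gives
\[
|N_{L/\QQ}(\varphi(e))| = \prod_{\sigma\colon L\hookrightarrow\CC}\ \prod_{i=1}^n |\sigma(\pi_i)|^{e_i} = N(\p)^{\frac{[L:\QQ]}{2}\sum_{i=1}^n e_i},
\]
so $\sum_{i=1}^n e_i=0$ because $N(\p)>1$, and hence $|\sigma(\varphi(e))|=1$ for every embedding $\sigma$. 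Thus $\varphi(e)$ is an algebraic integer all of whose conjugates lie on the unit circle, and Kronecker's theorem shows it is a root of unity, giving (a).

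Finally, (b)$\Leftrightarrow$(c) is a Galois-orbit reduction. Fix a prime $\lambda_0$ of $\OO_L$ dividing $N(\p)$. Since $L/\QQ$ is Galois and all primes dividing $N(\p)$ lie above the rational prime below $\p$, every such prime is $\sigma(\lambda_0)$ for some $\sigma\in\Gal(L/\QQ)$, and $v_{\sigma(\lambda_0)}(\pi_i)=v_{\lambda_0}(\sigma^{-1}(\pi_i))$; hence the family of equations in (b), indexed by the primes $\lambda\mid N(\p)$, coincides with the family in (c), indexed by $\sigma\in\Gal(L/\QQ)$. The only point requiring a little care is the deduction $\sum_i e_i=0$ in the implication (b)$\Rightarrow$(a) — without it one only learns that $\varphi(e)$ is a unit, not a root of unity — but this is exactly what the norm computation above supplies.
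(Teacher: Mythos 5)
Your proof is correct and follows essentially the same route as the paper: reduce (b)$\Leftrightarrow$(c) to transitivity of $\Gal(L/\QQ)$ on the primes above $p$ together with $v_{\sigma(\lambda)}(\alpha)=v_\lambda(\sigma^{-1}(\alpha))$, show $\varphi(e)$ is a unit, and then combine the archimedean norm computation with Kronecker's theorem. The only (harmless) variation is how you see $v_\lambda(\pi_i)=0$ for $\lambda\nmid N(\p)$ — you use $\pi_i\mid P_{A,\p}(0)=\pm N(\p)^g$, while the paper uses $\pi_i\bbar{\pi}_i=N(\p)$; both are fine.
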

\begin{proof}
For any prime ideal $\lambda|N(\p)$ of $\OO_L$ and $\sigma\in \Gal(L/\QQ)$, one can check that $v_{\sigma(\lambda)}(\alpha) = v_\lambda(\sigma^{-1}(\alpha))$ holds for all $\alpha\in L$.   The group $\Gal(L/\QQ)$ acts transitively on the prime ideals of $\OO_L$ that divide $N(\p)$ since $L/\QQ$ is Galois, and hence (\ref{L:root of unity linear b}) and (\ref{L:root of unity linear c}) are equivalent.

For a fixed $e\in \ZZ^n$, define $\alpha:=\varphi(e)=\prod_{i=1}^n \pi_i^{e_i} \in L^\times$.    Observe that for a nonzero prime $\lambda$ of $\OO_L$, we have $v_\lambda(\alpha)=\sum_{i=1}^n e_i v_\lambda(\pi_i)$.  If $\alpha$ is a root of unity, then we have $v_\lambda(\alpha)=0$ for all $\lambda$.   Therefore, (\ref{L:root of unity linear a}) implies (\ref{L:root of unity linear b}).  We now assume that (\ref{L:root of unity linear b}) holds, i.e., $v_\lambda(\alpha)=0$ for all prime ideals $\lambda|N(\p)$ of $\OO_L$.   It suffices to prove that $\varphi(e)$ is a root of unity,

Take any nonzero prime ideal $\lambda \nmid N(\p)$ of $\OO_L$.  For each $\pi_i$, we have $\pi_i \bbar{\pi}_i = N(\p)$, where $\bbar{\pi}_i$ is the complex conjugate of $\pi_i$ under any complex embedding.   So $v_\lambda(\pi_i)+v_\lambda(\bbar\pi_i)=0$.   Since $\pi_i$ and $\bbar\pi_i$ are algebraic integers, we have $v_\lambda(\pi_i)\geq 0$ and $v_\lambda(\bbar\pi_i)\geq 0$, and hence $v_\lambda(\pi_i)=0$.   Therefore, $v_\lambda(\alpha)=0$.   Combining this with our assumption that (\ref{L:root of unity linear b}) holds, we deduce that $v_\lambda(\alpha)=0$ for all nonzero prime ideals $\lambda$ of $\OO_L$.   This implies that $\alpha\in \OO_L^\times$.

Take any embedding $\iota\colon L\hookrightarrow \CC$.  From Weil, we know that each $\iota(\pi_i)$ has absolute value $N(\p)^{1/2}$.  Therefore, $|\iota(\alpha)|=   N(\p)^{(e_1+\cdots +e_n)/2}$ for any $\iota$ and hence $|N_{L/\QQ}(\alpha)| = N(\p)^{[L:\QQ]\,  (e_1+\cdots +e_n)/2}$.   We have $N_{L/\QQ}(\alpha)=\pm 1$ since $\alpha\in \OO_L^\times$, so $e_1+\cdots +e_n=0$.    Therefore, $\alpha$ has absolute value $1$ under any embedding into $\CC$.   Since $\alpha$ is a unit in $\OO_L$ with absolute value $1$ under any embedding into $\CC$, we deduce that $\alpha$ is a root of unity.
\end{proof}

Let $M_0\subseteq \ZZ^n$ be the subgroup consisting of all $e\in \ZZ^n$ satisfying $\sum_{i=1}^n e_i v_\lambda(\pi_i) = 0$ for all prime ideals $\lambda$ of $\OO_L$ dividing $N(\p)$.   By Lemma~\ref{L:root of unity linear}, we have $\varphi^{-1}(\mu_L)=M_0$, where $\mu_L$ is the (finite) subgroup of roots of unity in $L^\times$.    Define the homomorphism
\[
\psi:=\varphi|_{M_0} \colon M_0 \to \mu_L,\quad e\mapsto \prod_{i=1}^n \pi^{e_i}.
\]
Computing $\psi$ on a basis of $M_0$, one can then explicitly compute $M := \ker(\psi) \subseteq \ZZ^n$.    Observe that $M$ is the kernel of $\varphi$.   Therefore, $\varphi$ induces an isomorphism
\[
\bbar{\varphi}\colon \ZZ^n/M \xrightarrow{\sim} \Phi_{A,\p}.
\]
Using Smith normal form, it is straightforward to compute the structure of $\ZZ^n/M$, and hence of $\Phi_{A,\p}$ as a finitely generated abelian group; in particular, one can compute its rank and whether it has nontrivial torsion.   The image of the standard basis elements in $\ZZ^n$ corresponds with the roots $\pi_1,\ldots, \pi_n$.

The action of $\Gal_\QQ$ on $\Phi_{A,\p}$ factors through $\Gal(L/\QQ)$; it is described by its action on the $\pi_i$.  Using $\bbar\varphi$, this gives an action of $\Gal(L/\QQ)$ on $\ZZ^n/M$.

\begin{remark} \label{R:revisit later}
We will revisit the computation of the groups $\Phi_{A,\p}$ in \S\ref{SS:computing Phi again}, where for simplicity we restrict to primes $\p$ for which $A$ has good and ordinary reduction.  We will be able to determine the structure of $\Phi_{A,\p}$ as an abelian group without having to explicitly compute a splitting field $L/\QQ$ of $P_{A,\p}(x)$.
\end{remark}

\section{The Mumford--Tate group}  \label{S:MT groups}

Fix a nonzero abelian variety $A$ defined  over a number field $K$.  Throughout we fix an embedding $\Kbar \hookrightarrow \CC$.   In particular, using this embedding we can view $K$ as a subfield of $\CC$ and $A(\CC)$ as a complex manifold.

\subsection{Mumford--Tate and Hodge groups} \label{SS:MT and Hodge groups}
The homology group 
\[
V_A:=H_1(A(\CC),\QQ),
\] 
is a vector space of dimension $2 \dim A$ over $\QQ$.   It is endowed with a $\QQ$-Hodge structure of type $\{(-1,0),(0,-1)\}$ and hence we have a decomposition 
\[
V_A\otimes_\QQ\CC = H_1(A(\CC),\CC)=V_A^{-1,0} \oplus V_A^{0,-1}
\]
satisfying $V_A^{0,-1}=\bbar{V_A^{-1,0}}$.  Let
\[
\mu\colon \GG_{m,\CC} \to \GL_{V_A\otimes_\QQ\CC}  
\]
be the cocharacter for which $\mu(z)$ is the automorphism of $V_A\otimes_\QQ\CC$ that is multiplication by $z$ on $V_A^{-1,0}$ and the identity on $V_A^{0,-1}$ for each $z\in\CC^\times=\GG_{m}(\CC)$.

\begin{defn}
The \defi{Mumford--Tate group} of $A$ is the smallest algebraic subgroup $\MT_A$ of $\GL_{V_A}$, defined over $\QQ$, which contains $\mu(\CC^\times)$.     The \defi{Hodge group} of $A$ is the smallest algebraic subgroup $\Hg_A$ of $\GL_{V_A}$, defined over $\QQ$, which contains $\mu(U(1))$, where $U(1):=\{z\in \CC^\times : |z|=1\}$.
\end{defn}

We have $\MT_A = \GG_m\cdot \Hg_A$, where $\GG_m$ is the group of homotheties in $\GL_{V_A}$, cf.~\cite{MR2062673}*{Remark~17.3.1}.  The groups $\MT_A$ and $\Hg_A$ are connected and reductive, cf.~Proposition~17.3.6 and Remark~17.3.1 of \cite{MR2062673}.     

The endomorphism ring $\End(A_{\Kbar})=\End(A_\CC)$ acts on $V_A$ and preserves the Hodge decomposition.  Therefore, $\End(A_{\Kbar})$ commutes with $\mu$ and hence also $\MT_A$.  Moreover, the ring $\End(A_{\Kbar})\otimes_\ZZ \QQ$ is naturally isomorphic to the commutant of $\MT_A$ in $\End_\QQ(V_A)$ (or equivalently, the commutant of $\Hg_A$ in $\End_\QQ(V_A)$), cf.~\cite{MR2062673}*{Proposition~17.3.4}. 

Choosing a polarization of $A$, we obtain a nondegenerate alternating pairing $E\colon V_A\times V_A\to \QQ$. We have $\Hg_A \subseteq \Sp_{V_A,E}$, where $\Sp_{V_A,E}\subseteq \GL_{V_A}$ is the symplectic group with respect to the pairing $E$, cf.~\cite{MR2062673}*{Proposition~17.3.2}.  Moreover, we have $\MT_A \subseteq \GSp_{V_A,E}$ and $\MT_A \cap \Sp_{V_A,E} = \Hg_A$.

\begin{remark}
As noted in \S\ref{S:introduction}, $\MT_A$ agrees with the identity component $G_A^\circ$, where $G_A\subseteq \GL_{V_A}$ is the motivic Galois groups of $A$ with respect to the category of motives in the sense of Andr\'e \cite{MR1423019}.
\end{remark}

\subsection{The Mumford--Tate conjecture}  \label{SS:MT conj}
Take any prime $\ell$.  The \defi{comparison isomorphism} $V_\ell(A)= V_A \otimes_\QQ \QQ_\ell$ induces an isomorphism $\GL_{V_\ell(A)} = \GL_{V_A,\,\QQ_\ell}$ of algebraic groups over $\QQ_\ell$.   Note that these isomorphisms depends on our fixed embedding $\Kbar\hookrightarrow \CC$.

 The following conjecture says that the connected $\QQ_\ell$-algebraic groups $G_{A,\ell}^\circ$ and $\MT_{A,\QQ_\ell}$ are the same when we use the comparison isomorphism as an identification, cf.~\S3 of \cite{MR0476753}.  

\begin{conj}[Mumford--Tate conjecture]   \label{C:MT}
For each prime $\ell$, we have $G_{A,\ell}^\circ= \MT_{A,\QQ_\ell}$.
\end{conj}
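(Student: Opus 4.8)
The statement to be established is the Mumford--Tate conjecture, one of the central open problems tying Hodge theory to $\ell$-adic Galois representations. No unconditional proof is known; the present article uses it only as a hypothesis, and what follows is a description of the standard line of attack --- together with an indication of how far it has been pushed --- rather than a complete argument. The plan is to prove the two inclusions $G_{A,\ell}^\circ\subseteq \MT_{A,\QQ_\ell}$ and $\MT_{A,\QQ_\ell}\subseteq G_{A,\ell}^\circ$ separately, after identifying both groups with subgroups of $(\GL_{V_A})_{\QQ_\ell}$ via the comparison isomorphism $V_\ell(A)=V_A\otimes_\QQ\QQ_\ell$.

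For the inclusion $G_{A,\ell}^\circ\subseteq \MT_{A,\QQ_\ell}$, which is unconditional, I would argue as follows. Every rational Hodge class on a finite product of powers and duals of $A$ is absolutely Hodge by Deligne's theorem, hence fixed by $\Gal_K$ after the comparison isomorphism; since by Chevalley's theorem $\MT_A$ is the stabilizer in $\GL_{V_A}$ of finitely many such tensors, its $\QQ_\ell$-points contain the image of $\rho_{A,\ell}$, and passing to Zariski closures gives the inclusion. This direction also follows directly from Artin's comparison theorem, cf.\ \S\ref{S:introduction}.

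The content lies in the reverse inclusion $\MT_{A,\QQ_\ell}\subseteq G_{A,\ell}^\circ$, where the plan would follow Serre, Bogomolov, Pink, and others: bound $G_{A,\ell}^\circ$ from below until it is forced to fill out $\MT_{A,\QQ_\ell}$. First, Bogomolov's theorem gives that $G_{A,\ell}$ contains the homotheties, so its center is already the right size. Next, Faltings' theorems --- semisimplicity of $\rho_{A,\ell}$ and $\End_{\Gal_K}(V_\ell(A))=\End(A_{\Kbar})\otimes\QQ_\ell$ --- identify the commutant of $G_{A,\ell}^\circ$ in $\End(V_\ell(A))$ with the commutant of $\MT_A$ in $\End(V_A)$; since every irreducible $\MT_A$-constituent of $V_A$ is minuscule (cf.\ \S\ref{SS:minuscule}), the possibilities for a connected reductive subgroup of $\GL_{V_A}$ with this commutant are severely limited, and it remains only to pin down its rank and the simple factors of its derived group. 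For that I would pass to Frobenius tori: by Serre's theory, already used in Lemma~\ref{L:common rank}, the Frobenius torus $T_{\p,\ell}$ is a maximal torus of $G_{A,\ell}^\circ$ for a density-one set of primes $\p$, and Weil's estimate together with the Hodge cocharacter $\mu$ constrains the Frobenius eigenvalues tightly enough to show that the formal character of $G_{A,\ell}^\circ$ on $V_\ell(A)$ already contains that of $\MT_A$. Combining this with the rank equality $\rank G_{A,\ell}^\circ=\rank \MT_A$ and with Pink's classification of the reductive groups that can occur as $G_{A,\ell}^\circ$ would complete the argument.

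The hard part --- and the reason the statement remains a conjecture --- is precisely this last ingredient. In general one cannot prove that $\rank G_{A,\ell}^\circ=\rank\MT_A$, nor exclude an intermediate proper connected reductive subgroup of $\MT_{A,\QQ_\ell}$ having the correct commutant and formal character; all the other steps above are genuine theorems. These remaining facts are known when $A$ has complex multiplication (Pohlmann, Shimura--Taniyama), when $\dim A$ is prime or otherwise small, when $\End(A_{\Kbar})=\ZZ$ in suitable dimensions (Serre, Pink, Tankeev, Vasiu, Banaszak--Gajda--Kraso\'n), and in various further cases, but not unconditionally --- which is exactly why Theorem~\ref{T:main} and its corollaries are stated under Conjecture~\ref{C:MT}.
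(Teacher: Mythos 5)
You have correctly recognized that this statement is a conjecture, not a theorem: the paper offers no proof of Conjecture~\ref{C:MT} and uses it only as a hypothesis, so there is nothing for your write-up to be checked against. Your account of what is actually known matches what the paper records --- the unconditional inclusion $G_{A,\ell}^\circ\subseteq \MT_{A,\QQ_\ell}$ is exactly Proposition~\ref{P:MT inclusion} (Deligne's argument via absolute Hodge classes), and the reduction of the full conjecture to the rank equality $\rank G_{A,\ell}^\circ=\rank\MT_A$ is the Larsen--Pink criterion the paper cites in \S\ref{SS:MT conj}. Your honest identification of the missing ingredient (ruling out a proper connected reductive subgroup with the correct rank, commutant, and formal character) is the right assessment of why the statement remains open.
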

 
One inclusion of the Mumford--Tate conjecture is known unconditionally, see Deligne's proof in \cite[I, Prop.~6.2]{MR654325}.

\begin{prop}\label{P:MT inclusion}
For each prime $\ell$, we have $G_{A,\ell}^\circ \subseteq  \MT_{A,\QQ_\ell}$.
\end{prop}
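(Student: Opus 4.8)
The plan is to exhibit both $G_{A,\ell}^\circ$ and $\MT_{A,\QQ_\ell}$ as stabilizers, inside $\GL_{V_\ell(A)}=\GL_{V_A,\QQ_\ell}$, of one and the same finite collection of tensors, and to deduce the inclusion from the containment of the corresponding stabilizer groups. For the Mumford--Tate group one uses the classical tensor description: by Chevalley's theorem every algebraic subgroup of $\GL_{V_A}$ over $\QQ$ is the stabilizer of a line in some finite-dimensional representation built from $V_A$, and, combining this with the definition of $\MT_A$ via the cocharacter $\mu$, one obtains that $\MT_A$ is exactly the subgroup of $\GL_{V_A}$ fixing a suitable finite set of \emph{Hodge classes} $t_1,\dots,t_N$, each $t_i$ lying in a tensor construction $T_i$ built from $V_A$, its dual, and Tate twists (the polarization $E$ provides one such class, up to the weight twist needed to make it of type $(0,0)$). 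Extending scalars to $\QQ_\ell$, the subgroup of $\GL_{V_A,\QQ_\ell}$ fixing $t_1\otimes 1,\dots,t_N\otimes 1$ equals $\MT_{A,\QQ_\ell}$.

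The essential input is Deligne's theorem that every Hodge class on an abelian variety is \emph{absolutely Hodge}, cf.~\cite{MR654325}*{I, Prop.~6.2}: each $t_i$ has an $\ell$-adic \'etale realization $t_{i,\ell}$ in the corresponding tensor construction $(T_i)_\ell$ over $V_\ell(A)$; under the comparison isomorphism $V_\ell(A)=V_A\otimes_\QQ\QQ_\ell$ the tensor $t_{i,\ell}$ corresponds to $t_i\otimes 1$; and $t_{i,\ell}$ is fixed by an open subgroup of $\Gal_K$, i.e.\ by $\Gal_{K'}$ for some finite extension $K'/K$ inside $\Kbar$. After enlarging $K'$ so that it contains $K_A^\conn$, the Zariski closure of $\rho_{A,\ell}(\Gal_{K'})$ in $\GL_{V_\ell(A)}$ is a closed subgroup of finite index in the \emph{connected} group $G_{A,\ell}^\circ$, and hence equals $G_{A,\ell}^\circ$.

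Assembling the two steps: $G_{A,\ell}^\circ$ is the Zariski closure of a set of automorphisms of $V_\ell(A)$ each of which fixes every $t_{i,\ell}=t_i\otimes 1$, so $G_{A,\ell}^\circ$ is contained in the common stabilizer of $t_1\otimes 1,\dots,t_N\otimes 1$ in $\GL_{V_A,\QQ_\ell}$, which is precisely $\MT_{A,\QQ_\ell}$; this is the claimed inclusion. The one genuinely hard ingredient is Deligne's absolute Hodge theorem, which is what guarantees that the $\ell$-adic realizations $t_{i,\ell}$ exist, match the Betti classes under the comparison isomorphism, and --- the crucial point for this argument --- are invariant under an open subgroup of $\Gal_K$; the remaining steps (Chevalley's theorem and the tensor description of $\MT_A$, and the identification of $G_{A,\ell}^\circ$ with the Zariski closure of $\rho_{A,\ell}(\Gal_{K'})$ for a suitable finite $K'$) are formal. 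Accordingly, in a self-contained write-up one simply invokes \cite{MR654325} for the absolute Hodge statement and spells out the short tensor-stabilizer comparison.
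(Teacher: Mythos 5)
Your argument is correct and is precisely the content of the reference the paper invokes: the paper gives no proof of its own but cites Deligne's \cite{MR654325}*{I, Prop.~6.2}, whose proof is exactly the tensor-stabilizer description of $\MT_A$ combined with the absolute Hodge cycle theorem and the finite-index/connectedness argument you spell out. So your write-up matches the intended proof; the only point to keep straight in a detailed version is the bookkeeping of Tate twists (Galois acts on the twists through the cyclotomic character, so ``fixing'' the realizations $t_{i,\ell}$ must be interpreted accordingly), which you already flag.
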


The groups $G_{A,\ell}^\circ$ are reductive and have common rank, cf.~Lemma~\ref{L:common rank}(\ref{L:common rank iii}).    By Theorem~4.3 of \cite{MR1339927}, we have $G_{A,\ell}^\circ = \MT_{A,\QQ_\ell}$ if and only if the ranks of $G_{A,\ell}^\circ$ and $\MT_{A}$ are equal.  In particular, the Mumford--Tate conjecture for $A$ holds for one prime $\ell$ if and only if it holds for all $\ell$. 

\subsection{Frobenius compatibility} \label{SS:Frobenius conjugacy classes}

Define the field $L:=K_A^\conn$.    For each prime $\ell$, we have $\rho_{A,\ell}(\Gal_L)\subseteq G_{A,\ell}^\circ(\QQ_\ell)$.  By Proposition~\ref{P:MT inclusion}, we have a well-defined Galois representation 
\[
  \Gal_{L} \xrightarrow{\rho_{A,\ell}}  G_{A,\ell}^\circ(\QQ_\ell) \subseteq \MT_{A}(\QQ_\ell).
\] 
For a nonzero prime ideal $\p\nmid\ell$ of $\OO_L$ for which $A$ has good reduction, we obtain a conjugacy class $\rho_{A,\ell}(\Frob_\p)$ in $\MT_A(\QQ_\ell)$ and hence a well-defined element $F_{\p,\ell}:=\cl_{\MT_A}(\rho_{A,\ell}(\Frob_\p))$ in $\MT_A^\sharp(\QQ_\ell)$ with notation as in \S\ref{SS:ss conjugacy classes}.  Conjecturally, $F_{\p,\ell}$ is an element of $\MT_A^\sharp(\QQ)$ that is independent of $\ell$.   

\begin{conj}   \label{C:Frob conj}   
Let $L:=K_A^\conn$ and let $\p$ be any nonzero prime ideal of $\OO_L$ for which $A$ has good reduction. Then there exists an element $F_\p \in \MT_A^\sharp(\QQ)$ such that $F_{\p,\ell}=F_\p$ for all rational primes $\ell$ satisfying $\p\nmid \ell$.   

Equivalently, for all primes $\ell$ satisfying $\p\nmid \ell$ and all embeddings $\iota\colon \QQ_\ell \to \CC$, the conjugacy class of $\rho_{A,\ell}(\Frob_\p)$ in $\MT_A(\CC)$, using the embedding $\iota$, does not dependent on the choice of $\ell$ or $\iota$.
\end{conj}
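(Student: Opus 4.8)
The statement in question is Conjecture~\ref{C:Frob conj}, and of course the conjecture itself is not something to be proved here (see Remark~\ref{R:main remarks} for its status). What admits an unconditional argument — and what I would write up — is the \emph{equivalence} of the two formulations inside the statement. The plan is to track the point $F_{\p,\ell}=\cl_{\MT_A}(\rho_{A,\ell}(\Frob_\p))$ through base change on the coordinate ring $R^{\MT_A}$ of the affine $\QQ$-scheme $\MT_A^\sharp$ of \S\ref{SS:ss conjugacy classes}, and to exploit the fact that a point of a finite type $\QQ$-scheme which acquires the same $\CC$-point under every embedding into $\CC$ must be $\QQ$-rational.

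First I would assemble the inputs. By Proposition~\ref{P:MT inclusion} and the comparison isomorphism, $t_\ell:=\rho_{A,\ell}(\Frob_\p)$ lies in $\MT_A(\QQ_\ell)$; it is semisimple there, since its image in $\GL_{V_\ell(A)}$ is semisimple and semisimplicity is intrinsic to an element of a linear algebraic group. Hence $F_{\p,\ell}\in\MT_A^\sharp(\QQ_\ell)$ is well defined, and because $\cl_{\MT_A}$ commutes with base change and $\iota(t_\ell)$ is again semisimple, for any embedding $\iota\colon\QQ_\ell\hookrightarrow\CC$ the image of $F_{\p,\ell}$ under $\MT_A^\sharp(\QQ_\ell)\to\MT_A^\sharp(\CC)$ is exactly the conjugacy class of $\rho_{A,\ell}(\Frob_\p)$ in $\MT_A(\CC)$ that appears in the second formulation. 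The implication ``first $\Rightarrow$ second'' is then immediate: if $F_{\p,\ell}$ is the image of a fixed $F_\p\in\MT_A^\sharp(\QQ)$, then its image in $\MT_A^\sharp(\CC)$ is obtained through the \emph{unique} embedding $\QQ\hookrightarrow\CC$, hence depends on neither $\iota$ nor $\ell$.

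For the reverse implication I would first upgrade $F_{\p,\ell}$ to a point defined over a number field. The natural morphism $\MT_A^\sharp\to\GL_{V_A}^\sharp$ sends $F_{\p,\ell}$ to the $\QQ$-rational point of $\GL_{V_A}^\sharp$ given by the characteristic polynomial $P_{A,\p}(x)\in\ZZ[x]$, and it has finite fibres (a semisimple $\GL_{V_A}$-conjugacy class meets the reductive subgroup $\MT_A$ in finitely many classes), so $F_{\p,\ell}$ is a closed point of $\MT_A^\sharp$ whose residue field $E_\ell$ is a number field, carrying an embedding $E_\ell\hookrightarrow\QQ_\ell$. After fixing embeddings $\QQbar\hookrightarrow\QQbar_\ell$ we may view $F_{\p,\ell}\in\MT_A^\sharp(\QQbar)$. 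Every embedding $E_\ell\hookrightarrow\CC$ extends to one of $\QQ_\ell$ (send a transcendence basis to $\CC$ and extend algebraically, using that $\CC$ is algebraically closed of infinite transcendence degree), so as $\iota$ runs over all embeddings $\QQ_\ell\hookrightarrow\CC$ the images of $F_{\p,\ell}$ in $\MT_A^\sharp(\CC)$ run over \emph{all} $\Gal_\QQ$-conjugates of $F_{\p,\ell}$. The hypothesis that these all coincide with one class $c$ then forces $F_{\p,\ell}$ to be $\Gal_\QQ$-invariant, i.e.\ $F_{\p,\ell}\in\MT_A^\sharp(\QQ)$, and forces its image in $\MT_A^\sharp(\CC)$ to equal $c$ for every $\ell$; since $\MT_A^\sharp(\QQ)\hookrightarrow\MT_A^\sharp(\CC)$ is injective, $F_{\p,\ell}$ is independent of $\ell$ and this common rational point is the desired $F_\p$.

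The step I expect to be the crux is the descent in the last paragraph: checking carefully that a $\QQ_\ell$-point of the finite type $\QQ$-scheme $\MT_A^\sharp$ with constant image under all embeddings $\QQ_\ell\hookrightarrow\CC$ is $\QQ$-rational. I would handle this either through the finite cover $\MT_A^\sharp\to\GL_{V_A}^\sharp$ as above, or by using $\MT_A^\sharp\cong T/W$ for a maximal torus $T$ and Weyl group $W$, where the bookkeeping is transparent. The substantive content of Conjecture~\ref{C:Frob conj}, the \emph{existence} of $F_\p$, is orthogonal to this equivalence; it is known for abelian varieties with complex multiplication from the idelic description of the relevant Galois characters, for primes of ordinary reduction by work of Noot, and in general for $\ell\neq 2$ by the recently announced theorem of Kisin and Zhou.
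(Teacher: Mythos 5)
You have correctly identified that Conjecture~\ref{C:Frob conj} is a hypothesis of the paper, not a result it proves: the paper offers no argument for it (its status is discussed only in Remark~\ref{R:main remarks} and Proposition~\ref{P:Noot}), so there is no proof to compare against on the substantive content. What the paper does assert without justification is the word ``Equivalently'' linking the two formulations, and your proof of that equivalence is correct. The forward direction is immediate as you say, since the image of $F_\p\in\MT_A^\sharp(\QQ)$ in $\MT_A^\sharp(\CC)$ factors through the unique embedding $\QQ\hookrightarrow\CC$, and $\MT_A^\sharp(\CC)$ classifies semisimple conjugacy classes of $\MT_A(\CC)$. For the converse, the two points you flag are indeed the ones needing care and both hold: the map $\MT_A^\sharp\to\GL_{V_A}^\sharp$ is quasi-finite (via Chevalley restriction it is induced by the closed embedding of a maximal torus of $\MT_A$ into one of $\GL_{V_A}$, modulo the respective finite Weyl groups), so $F_{\p,\ell}$ lies over a closed point with number-field residue field $E_\ell$; and since every embedding $E_\ell\hookrightarrow\CC$ extends to $\QQ_\ell\hookrightarrow\CC$ by the transcendence-degree argument, constancy of the $\CC$-point over all $\iota$ forces $E_\ell=\QQ$, whence $F_{\p,\ell}\in\MT_A^\sharp(\QQ)$, and injectivity of $\MT_A^\sharp(\QQ)\hookrightarrow\MT_A^\sharp(\CC)$ gives independence of $\ell$. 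Your closing summary of the known cases (CM, ordinary reduction via Noot, and the announced Kisin--Zhou result away from $2$) matches what the paper records.
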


\begin{remark}
\begin{romanenum}
\item
Observe that the image of $F_{\p,\ell}$ under the natural map $\MT_A^\sharp(\QQ_\ell) \to \GL_{V_A}^\sharp(\QQ_\ell)$ lies in $\GL_{V_A}^\sharp(\QQ)$ and is independent of $\ell$; this is equivalent to the characteristic polynomial $P_{A,\p}(x)$ of $\rho_{A,\ell}(\Frob_\p)$ having rational coefficients that do not depend on $\ell$.  So the above conjecture can be viewed as a strengthening of our earlier notion of compatibility. 

\item  
A more general version of Conjecture~\ref{C:Frob conj} was alluded to in \S\ref{S:introduction} that would work for any good prime ideal $\p$ of $\OO_K$ where the motivic Galois group of $A$ takes the role of $\MT_A$, cf.~Conjectures 12.6 and 3.4 in \cite{MR1265537}.
\end{romanenum} 
\end{remark}

As noted in Remark~\ref{R:main remarks}, Kisin and Zhou have announced a proof of Conjecture~\ref {C:Frob conj} when $\p\nmid 2$.  
In the meantime, the following partial results of Noot are useful.

\begin{prop}[Noot]
Let $\p$ be a nonzero prime ideal of the ring of integers of $K_A^\conn$ for which $A$ has good reduction.
\label{P:Noot}
\begin{romanenum}
\item \label{P:Noot i}
Suppose that the derived subgroup of $(\MT_A)_{\Qbar}$ has no direct factor isomorphic to $\SO_{2n}$ with $n\geq 4$, and that the quotient of two distinct roots of $P_{A,\p}(x)$ is never a root of unity.  Then $F_{\p,\ell}$ lies in $\MT^\sharp(\QQ)$ and is independent of $\ell$.
\item \label{P:Noot ii}
If $A$ has ordinary reduction at $\p$, then $F_{p,\ell}$ is in $\MT_A^\sharp(\QQ)$ and is independent of $\ell$.  Moreover, there is a semisimple $t_\p \in \MT_A(\QQ)$ that is conjugate to $\rho_{A,\ell}(\Frob_\p)$ in $\MT_A(\QQ_\ell)$ for all $\ell$ satisfying $\p\nmid \ell$.
\end{romanenum}
\end{prop}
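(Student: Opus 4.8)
The plan is to derive both parts from theorems of Noot, the substantive work being to translate his conclusions — which are phrased in terms of conjugacy classes in the Mumford--Tate group — into the language of the quotient variety $\MT_A^\sharp$ introduced in \S\ref{SS:ss conjugacy classes}.

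First I would set up the dictionary. Fix a prime $\ell$ with $\p\nmid\ell$ as in the statement. Since $\p$ is a prime of $K_A^\conn$, the definition of $K_A^\conn$ together with Proposition~\ref{P:MT inclusion} shows that $\rho_{A,\ell}(\Frob_\p)$ lies in $\MT_A(\QQ_\ell)$, is semisimple, and is well defined up to $\MT_A(\QQ_\ell)$-conjugacy. By \S\ref{SS:ss conjugacy classes}, $\MT_A^\sharp(\Qbar_\ell)$ is canonically the set of semisimple $\MT_A(\Qbar_\ell)$-conjugacy classes, $\MT_A^\sharp$ is a variety over $\QQ$, and the map $\MT_A^\sharp\to\GL_{V_A}^\sharp$ sends $F_{\p,\ell}$ to the class recorded by the coefficients of $P_{A,\p}(x)$, which are rational and independent of $\ell$. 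Hence the assertion ``$F_{\p,\ell}\in\MT_A^\sharp(\QQ)$ and is independent of $\ell$'' is precisely the statement that the $\Qbar_\ell$-conjugacy class of $\rho_{A,\ell}(\Frob_\p)$ in $\MT_{A,\Qbar_\ell}$ is Galois-stable and does not depend on the choice of $\ell$ and of an embedding $\Qbar\hookrightarrow\Qbar_\ell$. The final clause of (ii), asking for an actual semisimple $t_\p\in\MT_A(\QQ)$ in that class for every $\ell\nmid\p$, is a genuine strengthening, since a conjugacy class defined over $\QQ$ need not contain a $\QQ$-rational point.

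With the dictionary in place, part (i) would follow from Noot's main theorem on the conjugacy class of the Frobenius of an abelian variety over a number field, whose hypotheses — no direct factor $\SO_{2n}$ with $n\geq 4$ in the derived group of $(\MT_A)_{\Qbar}$, and no ratio of two distinct roots of $P_{A,\p}(x)$ equal to a root of unity — are reproduced verbatim in our statement; that theorem gives exactly the Galois-stability and $\ell$-independence of the class. For part (ii) I would invoke Noot's treatment of the ordinary case in \cite{MR1370746}: when $A$ has good ordinary reduction at $\p$, its Serre--Tate canonical lift is an abelian variety with complex multiplication, the Frobenius endomorphism of $A_\p$ lifts to a CM-element, and the theory of complex multiplication produces a maximal torus of $\MT_A$ and a $\QQ$-rational semisimple element $t_\p$ of it whose associated conjugacy class is rational and $\ell$-independent; one then checks, by comparing characteristic polynomials inside this torus via the comparison isomorphism $V_\ell(A)=V_A\otimes_\QQ\QQ_\ell$, that $t_\p$ is $\MT_A(\QQ_\ell)$-conjugate to $\rho_{A,\ell}(\Frob_\p)$ for each $\ell\nmid\p$.

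I expect the only delicate point to be this last step of (ii): upgrading a rational conjugacy class to a rational representative $t_\p$, uniformly in $\ell$. The obstruction to a rational representative lies in $H^1$ of the centralizer, a $\QQ$-torus in the CM situation, and what makes it vanish is that the CM point attached to the canonical lift supplies one simultaneous representative for all $\ell$; in the write-up this reduces to citing Noot carefully and then performing the routine verification that his $t_\p$ indeed conjugates to $\rho_{A,\ell}(\Frob_\p)$.
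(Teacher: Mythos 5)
Your proposal is correct and follows essentially the same route as the paper: both parts are deduced by citing Noot's two theorems (the conjugacy-class theorem for part (i) and the ordinary-reduction theorem for part (ii)), with the translation between $\MT_A^\sharp(\QQ)$ and rational semisimple conjugacy classes being exactly the identification $\operatorname{Conj}(\MT_A)=\MT_A^\sharp$ the paper uses. The only nuance you gloss over is that in Noot's formulation the hypothesis on the derived group in (i) is what guarantees his quotient $\operatorname{Conj}'(\MT_A)$ coincides with $\operatorname{Conj}(\MT_A)$, rather than being a hypothesis of the theorem itself; this does not affect the validity of your argument.
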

\begin{proof}
We first prove (\ref{P:Noot i}).  In the notation of \cite{MR2472133}, we have $\operatorname{Conj}(\MT_A)=\MT_A^\sharp$ and $\operatorname{Conj}'(\MT_A)$ equals $\operatorname{Conj}(\MT_A)$ by our assumption on the derived subgroup of $(\MT_A)_{\Qbar}$.  Part (\ref{P:Noot i}) follows from Theorem~1.8 of \cite{MR2472133}.    Part (\ref{P:Noot ii}) follows from Theorem~2.2 of \cite{MR1370746}.
\end{proof}

\subsection{Minuscule representations}

The following proposition gives some constraints on the representation $\MT_A \hookrightarrow \GL_{V_A}$.   For the definition of minuscule, see \S\ref{SS:minuscule}.

\begin{prop}  \label{P:minuscule MT}
\begin{romanenum}
\item \label{P:minuscule MT i}
Every irreducible representation $U\subseteq V_A\otimes_\QQ \Qbar$ of $(\MT_A)_{\Qbar}$ is minuscule.
\item \label{P:minuscule MT ii}
Each irreducible component of the root system associated to $\MT_A$ is of classical type $A_n$, $B_n$, $C_n$ or $D_n$.
\end{romanenum}
\end{prop}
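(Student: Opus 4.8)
The plan is to exploit the special form of the Hodge cocharacter $\mu$ from \S\ref{SS:MT and Hodge groups}: by definition it acts on $V_A\otimes_\QQ\CC$ with only the two consecutive weights $0$ and $1$. I would work over $\CC$, writing $G:=(\MT_A)_{\CC}$; it is a standard consequence of the definition of the Mumford--Tate group that $G$ is generated by the one-parameter subgroups that are conjugate to $\mu(\GG_m)$ under $G(\CC)$ and under $\Aut(\CC/\QQ)$, and each of these again acts on $V_A\otimes_\QQ\CC$ with weights in $\{0,1\}$ since $0$ and $1$ are rational. Fix a maximal torus $T$ of $G$ containing $\mu(\GG_m)$. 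The first step is to note that $\mu$ is a \emph{minuscule} cocharacter: the Lie algebra $\mathfrak{g}:=\Lie(G)$ is a $G$-subrepresentation of $\mathfrak{gl}(V_A\otimes_\QQ\CC)=(V_A\otimes_\QQ\CC)\otimes(V_A\otimes_\QQ\CC)^\vee$, on which $\mu$ acts with weights in $\{0,1\}+\{-1,0\}=\{-1,0,1\}$, so $\mu$ acts on $\mathfrak{g}$ with weights in $\{-1,0,1\}$; since these weights are $0$ together with the integers $\langle\alpha,\mu\rangle$ for $\alpha\in R(G,T)$, we get $\langle\alpha,\mu\rangle\in\{-1,0,1\}$ for every root $\alpha$, whence the image of $\mu$ in each simple factor of $G^{\der}$ is a (possibly trivial) minuscule cocharacter.

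For part (i), let $U\subseteq V_A\otimes_\QQ\CC$ be an irreducible $G$-subrepresentation with highest weight $\lambda$, for a Borel $T\subseteq B\subseteq G$ chosen so that $\langle\alpha,\mu\rangle\ge 0$ for all $\alpha\in R^{+}$. If $\mu$ acts on $U$ through a scalar, then so does each of the generating one-parameter subgroups of $G$, hence $G$ acts on $U$ through scalars, so $\dim U=1$ and $U$ is minuscule. Otherwise $\mu$ takes both values $0$ and $1$ on $U$; then for every positive root $\beta$ with $\langle\lambda,\beta^\vee\rangle\ge 2$ the weights $\lambda$, $\lambda-\beta$, $\lambda-2\beta$ all occur in $U$, so their pairings with $\mu$ lie in $\{0,1\}$, forcing $\langle\beta,\mu\rangle=0$; that is, every such $\beta$ is a root of the Levi subgroup $C_G(\mu(\GG_m))$. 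Upgrading this to the assertion that $U$ is minuscule --- which is exactly where the hypothesis that $G$ is generated by the conjugates of $\mu$, rather than merely by the $W(G,T)$-conjugates, is used --- is a standard fact about $\QQ$-Hodge structures and Shimura data of abelian type, going back to work of Deligne and others; here I would either reproduce that argument or cite it.

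For part (ii), write $G^{\der}$, up to isogeny, as an almost-direct product of almost-simple groups $H_1,\dots,H_m$. Since $G$ is generated by conjugates of $\mu(\GG_m)$, the image $\mu_i$ of $\mu$ in $H_i$ is nontrivial for each $i$, and a nontrivial cocharacter of an almost-simple group acts non-scalarly on every nontrivial irreducible representation (otherwise its image in the adjoint group would lie in a finite subgroup). An irreducible $U\subseteq V_A\otimes_\QQ\CC$ is, up to twisting by a character of the central torus, an external tensor product $U_1\boxtimes\cdots\boxtimes U_m$; if two of the $U_i$ were nontrivial, then $\mu$ would have at least three distinct weights on their tensor product, hence on $U$, contradicting part~(i)'s bound of two weights. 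Thus each $U$ is, up to a character, an irreducible representation of a single $H_i$, minuscule for $H_i$, on which $\mu_i$ acts with at most two weights. Since $V_A$ is a faithful representation of $\MT_A$, every $H_i$ occurs nontrivially in some such $U$, so every $H_i$ admits a nontrivial minuscule representation carrying a nonzero (minuscule) cocharacter with at most two weights. The exceptional simple groups $G_2$, $F_4$ and $E_8$ have no nontrivial minuscule representation and so are excluded; for $E_6$ (resp.\ $E_7$) the only nontrivial minuscule representations are the two $27$-dimensional ones (resp.\ the $56$-dimensional one), and the essentially unique nonzero minuscule cocharacter --- the cominuscule one --- grades these into more than two pieces, since restricting to the maximal parabolic with Levi of type $D_5$ (resp.\ $E_6$) turns the $27$ into $\mathbf{1}\oplus\mathbf{10}\oplus\mathbf{16}$ (three cocharacter weights) and the $56$ into $\mathbf{1}\oplus\mathbf{27}\oplus\overline{\mathbf{27}}\oplus\mathbf{1}$ (four cocharacter weights). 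This contradiction leaves only the classical types $A_n$, $B_n$, $C_n$, $D_n$.

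The main obstacle is the final step of part (i): passing from ``$\langle\beta,\mu\rangle=0$ whenever $\langle\lambda,\beta^\vee\rangle\ge 2$'' to full minusculeness of $U$. This is the one genuinely nontrivial input and is where the generation of $G$ by conjugates of $\mu$ really enters; I expect to invoke Deligne's results on Shimura data of abelian type (or a standard exposition of them) rather than reprove them from scratch. Everything else --- in particular the root-system bookkeeping for $E_6$ and $E_7$ in part (ii) --- is routine.
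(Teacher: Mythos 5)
Your setup is exactly the paper's: $(\MT_A)_{\Qbar}$ together with its faithful representation $V_A\otimes_\QQ\Qbar$ is generated by cocharacters (the conjugates of the Hodge cocharacter $\mu$ under the group and under $\Gal_\QQ$) whose weights on $V_A$ lie in $\{0,1\}$. The paper's entire proof consists of observing that this places $\MT_A$ and $V_A$ in the setting of \S3.2 of \cite{MR563476} and then citing Proposition~4 there for part~(i) and the corollary to Proposition~7 there for part~(ii). The step you yourself flag as the main obstacle --- upgrading ``$\langle\beta,\mu\rangle=0$ whenever $\langle\lambda,\beta^\vee\rangle\ge 2$'' to minusculeness of $U$, which genuinely requires using \emph{all} the generating conjugates of $\mu$ and not just the single cocharacter your string argument constrains --- is precisely the content of Serre's Proposition~4. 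As written, your part~(i) is therefore not proved: the vague appeal to ``Deligne's results on Shimura data of abelian type'' should be replaced by the precise reference above (or by reproducing Serre's argument), and since your part~(ii) relies on part~(i), it inherits the gap.

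Two further points on part~(ii), which is otherwise a reasonable self-contained route given (i). First, it is not true that the image of $\mu$ itself in each almost-simple factor $H_i$ is nontrivial; the Galois conjugates of $\mu$ may be distributed among the factors, and the generation statement only yields that for each $i$ \emph{some} conjugate $\mu'$ of $\mu$ projects nontrivially to $H_i$. You must run the two-weight count with that conjugate (which still has weights in $\{0,1\}$ on $V_A\otimes_\QQ\CC$, so the argument survives). Second, your exclusion of $E_6$ and $E_7$ tests only the cominuscule cocharacters; this suffices only because your first paragraph shows the relevant cocharacter satisfies $\langle\alpha,\mu'\rangle\in\{-1,0,1\}$ on all roots, and the nonzero cocharacters of $E_6$ and $E_7$ with that property are, up to Weyl conjugacy and sign, exactly the cominuscule ones --- a fact you should state rather than gesture at with ``essentially unique.'' All of this bookkeeping is subsumed by the corollary to Proposition~7 of \cite{MR563476}, which is what the paper invokes.
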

\begin{proof}   
Fix a maximal torus $T$ of $\MT_A$ and set  $W:=W(\MT_A,T)$.   Note that the algebraic group $\MT_A$ and its faithful representation $V_A$ satisfy the assumptions of \S3.2 of \cite{MR563476}. 
 
   Let $\Omega_{V_A} \subseteq X(T)$ be the set of weights of the representation $V_A$ of $\MT_A$.    Let $\Omega_{V_A}^+\subseteq \Omega_{V_A}$ be the set of highest weights of the irreducible representations $U\subseteq V_A\otimes_\QQ \Qbar$ of $(\MT_A)_\Qbar$  with respect to a fixed base of the root system $R \subseteq X(T)\otimes_\ZZ \QQ$ of $\MT_A$.  

Take any irreducible representation $U\subseteq V_A\otimes_\QQ \Qbar$ of $(\MT_A)_{\Qbar}$.  Let $\chi\in \Omega_{V_A}^+$ be the highest weight of $U$; it is the unique element in $\Omega_{V_A}^+ \cap \Omega_U$.    We have $W \cdot \chi \subseteq \Omega_U$ since $\Omega_U$ is $W$-stable.  Part (\ref{P:minuscule MT i}) of the proposition is thus equivalent to showing that $\Omega_{V_A} = W\cdot \Omega_{V_A}^+$.  This follows from Proposition~4 of \cite{MR563476}.

Part (\ref{P:minuscule MT ii}) is a consequence of the corollary to Proposition~7 in \cite{MR563476}.
\end{proof}

\section{Frobenius tori} \label{S:Frobenius tori}

Fix a nonzero abelian variety $A$ defined over a number field $K$.  Let $G$ be the quasi-split inner form of the Mumford--Tate group $\MT_A$.  Consider a representation
\[
\rho\colon G_{\Qbar}\xrightarrow{\sim} (\MT_A)_{\Qbar} \subseteq \GL_{V_A,\Qbar},
\] 
where the isomorphism is one arising from $G$ being an inner twist of $\MT_A$.   In particular, the representation $\rho$ of $G_{\Qbar}$ is well-defined up to isomorphism.

Let $\calS_A$ be the set of prime ideals of $\OO_K$ from Definition~\ref{D:SA}.  For each prime ideal $\p\in \calS_A$, we now show that $G$ has a maximal torus $T_\p$ satisfying some interesting properties.

\begin{thm} \label{T:Frobenius tori}
Assume that Conjectures~\ref{C:MT} and \ref{C:Frob conj} hold for $A$.   Take any $\p\in \calS_A$ and let $\P$ be a prime ideal of $\OO_{K_A^\conn}$ that divides $\p$.  Then there is a semisimple $t_\p \in G(\QQ)$ satisfying the following properties:
\begin{alphenum}
\item \label{T:Frobenius tori a}
$\cl_{\MT_A}(\rho(t_\p))=F_\P$, with $F_\P \in \MT_A^\sharp(\QQ)$ as in Conjecture~\ref{C:Frob conj},
\item \label{T:Frobenius tori b}
$t_\p$ lies in a unique maximal torus $T_\p$ of $G$, 
\item \label{T:Frobenius tori c}
the map
\begin{align}\label{E:Tp isomorphism}
X(T_\p) \to \Phi_{A,\p},\quad  \alpha \mapsto \alpha(t_\p)
\end{align}
is a well-defined isomorphism of abelian groups that respects the $\Gal_\QQ$-actions,
\item \label{T:Frobenius tori d}
under the isomorphism (\ref{E:Tp isomorphism}), the set $\Omega_\p\subseteq X(T_\p)$ of weights of $\rho$ with respect to $T_\p$ corresponds with the set $\calW_{A,\p}\subseteq \Phi_{A,\p}$ of roots of $P_{A,\p}(x)$.
\end{alphenum}
\end{thm}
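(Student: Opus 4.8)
The plan is to produce $t_\p$ as a \emph{$\QQ$-rational lift}, inside the quasi-split form $G$, of the semisimple conjugacy class encoded by $F_\P\in\MT_A^\sharp(\QQ)$, and then to extract properties (a)--(d) from the way the characteristic polynomial of a torus element is controlled by its image in $\GL_{V_A}^\sharp$. First I would reduce to the case $K=K_A^\conn$: by Lemma~\ref{L:density of SA}(\ref{L:density of SA ii}) the prime $\P$ lies in $\calS_{A'}$ with $P_{A',\P}(x)=P_{A,\p}(x)$, where $A'=A_{K_A^\conn}$, and the Mumford--Tate group is insensitive to the base field, so $\MT_{A'}=\MT_A$, the group $G$ and the class $F_\P$ are unchanged, and $G_{A',\ell}$ is connected. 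Thus we may assume $\p=\P$ and that each $G_{A,\ell}$ is connected.

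Next I would record the $\ell$-adic input. Fix $\ell$ with $\p\nmid\ell$ and an embedding $\Qbar\hookrightarrow\Qbar_\ell$, and put $s:=\rho_{A,\ell}(\Frob_\p)$, a semisimple element of $G_{A,\ell}(\QQ_\ell)=\MT_{A,\QQ_\ell}(\QQ_\ell)$ by Conjecture~\ref{C:MT}. Exactly as in the proof of Lemma~\ref{L:common rank}, the Zariski closure of the subgroup generated by $s$ is a torus with character group $\Phi_{A,\p}$; since $\p\in\calS_A$ this torus has dimension $r=\rank\MT_A$, hence is a \emph{maximal} torus of $\MT_{A,\QQ_\ell}$ and is generated densely by $s$, so $s$ is regular semisimple. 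By Conjecture~\ref{C:Frob conj}, $\cl_{\MT_A}(s)=F_{\p,\ell}=F_\P$ is a point of $\MT_A^\sharp(\QQ)$ independent of $\ell$; being the class of a regular semisimple element that generates its maximal torus, it lies in the regular semisimple locus of $\MT_A^\sharp$, and over $\Qbar$ every element of its fibre under $\cl_{\MT_A}$ generates a maximal torus.

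Now I would transport this to $G$. Since $\rho$ realizes $G$ as an inner form of $\MT_A$, it induces an isomorphism $G^\sharp\xrightarrow{\sim}\MT_A^\sharp$ which is defined over $\QQ$ (inner automorphisms act trivially on $(-)^\sharp$) and is compatible with the $\QQ$-structures; let $c\in G^\sharp(\QQ)$ be the preimage of $F_\P$. The crucial point, and the reason for passing to the quasi-split inner form, is that the fibre $\cl_G^{-1}(c)$ has a $\QQ$-point. Choosing a maximal torus $T$ of $G$ containing a maximal split torus, $c$ (lying in the regular semisimple locus) determines a type $\xi\in H^1(\QQ,W(G,T))$; by Proposition~\ref{P:Ragh} there is a lift $\bar\xi\in H^1(\QQ,N_G(T))$ of $\xi$ that is trivial in $H^1(\QQ,G)$, and this produces a maximal torus $T_\p$ of $G$ defined over $\QQ$ together with a semisimple $t_\p\in T_\p(\QQ)$ with $\cl_G(t_\p)=c$. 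This already gives (a): applying the isomorphism $G^\sharp\xrightarrow{\sim}\MT_A^\sharp$ induced by $\rho$ we get $\cl_{\MT_A}(\rho(t_\p))=F_\P$.

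Finally I would deduce (b)--(d). Since $c$ lies in the regular semisimple locus and corresponds over $\Qbar$ to an element generating its maximal torus, the same holds for $t_\p$, so the homomorphism $X(T_\p)\to\Qbar^\times$, $\alpha\mapsto\alpha(t_\p)$, is injective; hence the centralizer of $t_\p$ in $G$ equals the centralizer of $T_\p$, which is $T_\p$, so $t_\p$ is regular and lies in the unique maximal torus $T_\p$, proving (b). The map $\alpha\mapsto\alpha(t_\p)$ is $\Gal_\QQ$-equivariant because $t_\p\in G(\QQ)$ and $T_\p$ is defined over $\QQ$. Applying $\cl_{\GL_{V_A}}$ to (a), the characteristic polynomial of $\rho(t_\p)$ on $V_A\otimes_\QQ\Qbar$ is $P_{A,\p}(x)$, so its eigenvalues with multiplicity --- which are precisely $\alpha(t_\p)$ for $\alpha\in\Omega_\p$, each appearing with the multiplicity of the weight $\alpha$ --- form the multiset $\calW_{A,\p}$ of roots of $P_{A,\p}(x)$; since $\rho$ is faithful, $\Omega_\p$ generates $X(T_\p)$, so the image of $\alpha\mapsto\alpha(t_\p)$ is the subgroup of $\Qbar^\times$ generated by $\calW_{A,\p}$, namely $\Phi_{A,\p}$. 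Thus (\ref{E:Tp isomorphism}) is an isomorphism onto $\Phi_{A,\p}$ (giving (c)) carrying $\Omega_\p$ bijectively onto $\calW_{A,\p}$ with multiplicities (giving (d)). I expect the main obstacle to be the lifting step: obtaining an actual element of $G(\QQ)$ in the class of $F_\P$, rather than merely a $\QQ$-rational conjugacy class, which is exactly where quasi-split-ness and Proposition~\ref{P:Ragh} are indispensable; a secondary point needing care is verifying, from the $\ell$-adic Frobenius torus together with $\p\in\calS_A$, that $F_\P$ is the class of an element generating a maximal torus, since that is what forces $t_\p$ to be regular and makes (\ref{E:Tp isomorphism}) an isomorphism rather than merely a surjection.
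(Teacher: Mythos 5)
Your proposal is correct and follows essentially the same route as the paper: reduce to $K=K_A^\conn$, realize the fibre of $\cl_G$ over the rational semisimple class inside a well-chosen maximal torus as a $W(G,T)$-torsor yielding a class in $H^1(\QQ,W(G,T))$, and invoke Proposition~\ref{P:Ragh} (quasi-splitness) to lift it to $H^1(\QQ,N_G(T))$ trivially in $H^1(\QQ,G)$ and so untwist to an element of $G(\QQ)$, with (b)--(d) then following from regularity and the identification of weights with roots of $P_{A,\p}(x)$. The only cosmetic difference is that you certify regularity of the class via the $\ell$-adic Frobenius torus, whereas the paper deduces it directly from the surjection $X(T)\to\Phi_{A,\p}$ between free abelian groups of equal rank $r$.
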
  

\subsection{Proof of Theorem~\ref{T:Frobenius tori}}

Let $A'$ be the base change of $A$ to $K_A^\conn$.  By Lemma~\ref{L:density of SA}(\ref{L:density of SA ii}), we have $\P\in \calS_{A'}$ and $P_{A,\p}(x)=P_{A',\P}(x)$.    In particular, $\calW_{A,\p}=\calW_{A',\P}$ and $\Phi_{A,\p}=\Phi_{A',\P}$. 
So to prove the theorem, there is no harm in replacing the pair  $(A,\p)$ by $(A',\P)$.  So without loss of generality, we may assume that $K_A^\conn=K$ and hence $\P=\p$.\\
 
Fix a maximal torus $T$ of $G$ that contains a maximal split torus of $G$.  The torus $T$ has rank $r$, where $r$ is the common rank of the groups $G_{A,\ell}$ since we are assuming the Mumford--Tate conjecture.    Let $\Omega\subseteq X(T)$ be the set of weights of $\rho$ with respect to $T$. 

Define the  set
\[
B_\p := \{ t\in T(\Qbar): \cl_{\MT_A}(\rho(t))=F_\p \}. 
\]
Conjugation induces an action of the Weyl group $W(G,T)$ on $B_\p$.

\begin{lemma} \label{L:simply transitive}
\begin{romanenum}
\item  \label{L:simply transitive i}
The action of $W(G,T)$ on $B_\p$ is simply transitive.
\item  \label{L:simply transitive ii}
Every $t_0\in B_\p$ generates $T_{\Qbar}$ as an algebraic group.
\item  \label{L:simply transitive iii}
For each $t_0\in B_\p$, the map 
\[
f\colon X(T)\to \Phi_{A,\p},\quad \alpha\mapsto \alpha(t_0)
\] 
is a well-defined group isomorphism that maps $\Omega$ to $\calW_{A,\p}$.
\end{romanenum}
\end{lemma}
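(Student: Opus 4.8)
The plan is to first show that $B_\p$ is nonempty and that every $t_0\in B_\p$ has $\rho(t_0)\in\GL_{V_A}(\Qbar)$ with characteristic polynomial $P_{A,\p}(x)$, and then to read off parts (iii), (ii) and (i) in that order. For nonemptiness, I would use that $\cl_{\MT_A}\colon\MT_A\to\MT_A^\sharp$ is surjective (\S\ref{SS:ss conjugacy classes}) to find $h\in\MT_A(\Qbar)$ with $\cl_{\MT_A}(h)=F_\p$; its semisimple part $h_s$ has the same image under $\cl_{\MT_A}$ and lies in some maximal torus of $(\MT_A)_{\Qbar}$, so, since all maximal tori are conjugate, a suitable conjugate of $h_s$ lies in $\rho(T_{\Qbar})$ and pulls back along $\rho$ to an element of $B_\p$. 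For the characteristic polynomial: $\rho(t_0)$ is semisimple in $\GL_{V_A,\Qbar}$ because $\rho$ is a homomorphism of algebraic groups and $t_0$ lies in a torus, and the image of $\rho(t_0)$ under $\cl_{\GL_{V_A}}$ is the image of $\cl_{\MT_A}(\rho(t_0))=F_\p$ under the natural map $\MT_A^\sharp\to\GL_{V_A}^\sharp$; by the remark following Conjecture~\ref{C:Frob conj}, that point of $\GL_{V_A}^\sharp(\QQ)$ corresponds to the polynomial $P_{A,\p}(x)$.

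Granting this, part (iii) follows quickly. Decomposing $V_A\otimes_\QQ\Qbar$ into weight spaces for $T$, the eigenvalues of $\rho(t_0)$, listed with multiplicity, are exactly $\{\alpha(t_0):\alpha\in\Omega\}$; comparing with the characteristic polynomial gives $f(\Omega)=\calW_{A,\p}$ as sets, where $f\colon X(T)\to\Qbar^\times$ is the homomorphism $\alpha\mapsto\alpha(t_0)$. Since $\rho$ is a closed immersion, $\rho|_{T_{\Qbar}}$ exhibits $T_{\Qbar}$ as a subtorus of $\GL_{V_A,\Qbar}$, so its weights $\Omega$ generate $X(T)$; hence $f$ maps $X(T)$ onto $\Phi_{A,\p}=\langle\calW_{A,\p}\rangle$. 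As $T$ has rank $r$ and $\p\in\calS_A$, both $X(T)$ and $\Phi_{A,\p}$ are free abelian of rank $r$, so the surjection $f$ must be an isomorphism. Part (ii) then drops out: letting $S\subseteq T_{\Qbar}$ be the Zariski closure of $\langle t_0\rangle$ in $G_{\Qbar}$, the restriction map $X(T)\to X(S)$ is surjective with kernel $\{\alpha\in X(T):\alpha(t_0)=1\}=\ker f=0$ by (iii), so $S=T_{\Qbar}$.

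For part (i), the conjugation action of $N_G(T)(\Qbar)$ on $B_\p$ stays in $B_\p$ since $F_\p$ is a central function, and factors through $W(G,T)$ since $T$ is abelian. For transitivity, given $t_1,t_2\in B_\p$ we have $\cl_{\MT_A}(\rho(t_1))=F_\p=\cl_{\MT_A}(\rho(t_2))$, so by \S\ref{SS:ss conjugacy classes} and the semisimplicity of $\rho(t_1),\rho(t_2)$ there is $h\in G(\Qbar)$ with $ht_1h^{-1}=t_2$; then $hT_{\Qbar}h^{-1}$ is the Zariski closure of $\langle t_2\rangle$, which is $T_{\Qbar}$ by (ii), so $h\in N_G(T)(\Qbar)$ and its class in $W(G,T)$ carries $t_1$ to $t_2$. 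For freeness, if $n\in N_G(T)(\Qbar)$ fixes some $t_0\in B_\p$ under conjugation, then $n$ centralizes $\langle t_0\rangle$ and hence its closure $T_{\Qbar}$ by (ii), so $n\in T(\Qbar)$ because $T$ is its own centralizer (\S\ref{SS:Weyl definition}); thus $n$ represents the identity of $W(G,T)$, and together with transitivity the action is simply transitive. The step I expect to need the most care is the bookkeeping with the quotient varieties $\MT_A^\sharp$ and $\GL_{V_A}^\sharp$: extracting the characteristic polynomial $P_{A,\p}(x)$ from the identity $\cl_{\MT_A}(\rho(t_0))=F_\p$, and using that two semisimple elements of $\MT_A(\Qbar)$ with the same image in $\MT_A^\sharp$ are genuinely $\MT_A(\Qbar)$-conjugate — this conjugacy is the engine behind both the transitivity in (i) and the generation statement in (ii). Verifying $B_\p\neq\emptyset$ in the first place is the other point one should not skip.
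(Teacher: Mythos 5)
Your proposal is correct and follows essentially the same route as the paper: part (iii) via the characteristic polynomial and a rank count, part (ii) from the injectivity of $f$, and part (i) from conjugacy of semisimple elements with equal image in $\MT_A^\sharp$ together with $T$ being its own centralizer. Your explicit verification that $B_\p\neq\emptyset$ is a sensible addition (the paper only uses nonemptiness afterwards, without proof), but it does not change the argument.
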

\begin{proof}
Take any $t_0\in B_\p$.  We have $\cl_{\MT_A}(\rho(t_0))=F_\p$, so the characteristic polynomial of $\rho(t_0)$ is $P_{A,\p}(x)$.  Therefore, $\calW_{A,\p}$ is equal to the set of $\alpha(t_0)$ with $\alpha\in \Omega$.  The group $X(T)$ is generated by $\Omega$ since $\rho|_T\colon T \hookrightarrow \GL_{V_A}$ is an embedding.   Therefore, the image of $X(T)\to \Qbar^\times$, $\alpha\mapsto \alpha(t_0)$ is generated by $\calW_{A,\p}$.  We thus have a well-defined and surjective homomorphism 
\[
f\colon X(T) \to \Phi_{A,\p},\quad \alpha\mapsto \alpha(t_0)
\]
that sends $\Omega$ to $\calW_{A,\p}$.

The group $X(T)$ is free abelian of rank $r$. The group $\Phi_{A,\p}$ is also a free abelian of rank $r$ since $\p$ is in $\calS_A$.   Since $f$ is a surjective homomorphism between two free abelian groups of rank $r$, it must be an isomorphism.  This completes the proof of part (\ref{L:simply transitive iii}).

If $t_0$ did not generate the torus $T_{\Qbar}$, then there would be a nontrivial character $\alpha\in X(T)$ for which $\alpha(t_0)=1$.   Part (\ref{L:simply transitive ii}) is thus an immediate consequence of (\ref{L:simply transitive iii}).

It remains to prove (\ref{L:simply transitive i}).   Take any two $t_0, t_1 \in B_\p$.   We have $\rho(t_1)=h \rho(t_0) h^{-1}$ for some $h\in \MT_A(\Qbar)$ since $t_0$ and $t_1$ are semisimple and $\cl_{\MT_A}(\rho(t_0))=F_\p=\cl_{\MT_A}(\rho(t_1))$.  We thus have $t_1 = g t_0 g^{-1}$ for the unique $g\in G(\Qbar)$ that satisfies $\rho(g)=h$.   Since $t_1$ and $t_0$ both generate $T_{\Qbar}$ by (\ref{L:simply transitive ii}), we have $T_{\Qbar}=g\cdot T_{\Qbar} \cdot g^{-1}$.   So $g$ normalizes $T_{\Qbar}$ and gives rise to an element of $W(G,T)$ that sends $t_0$ to $t_1$.   The group $W(G,T)$ thus acts transitively on $B_\p$ since $t_0$ and  $t_1$ were arbitrary elements of $B_\p$.

Finally, we show that $W(G,T)$ acts faithfully on $B_\p$.    Suppose that $w\cdot t_0 = t_0$ for some $w\in W(G,T)$ and $t_0\in B_\p$.  By part (\ref{L:simply transitive ii}) this implies that $w$ acts trivially on the torus $T_{\Qbar}$.  We thus have $w=1$ since the group $W(G,T)$ acts faithfully on $T_{\Qbar}$.
\end{proof}

\begin{lemma} \label{L:Frobenius tori done}
If there is an element $t_\p \in G(\QQ)$ that is conjugate in $G(\Qbar)$ to some $t_0\in B_\p$, then $t_\p$ satisfies all the properties of Theorem~\ref{T:Frobenius tori}.
\end{lemma}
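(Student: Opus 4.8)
The plan is to check the four assertions (\ref{T:Frobenius tori a})--(\ref{T:Frobenius tori d}) directly; each will follow by transporting, along a conjugating element, the corresponding property of $t_0$ supplied by Lemma~\ref{L:simply transitive}, the only additional ingredient being the $\QQ$-rationality of $t_\p$. Fix $g\in G(\Qbar)$ and $t_0\in B_\p$ with $t_\p=g\,t_0\,g^{-1}$, and write $c_g$ for conjugation by $g$.

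First I would dispose of (\ref{T:Frobenius tori a}) and (\ref{T:Frobenius tori b}). For (\ref{T:Frobenius tori a}): the element $\rho(t_\p)=\rho(g)\,\rho(t_0)\,\rho(g)^{-1}$ is conjugate in $\MT_A(\Qbar)$ to the semisimple element $\rho(t_0)$, so by the description of $\cl_{\MT_A}$ recalled in \S\ref{SS:ss conjugacy classes} we get $\cl_{\MT_A}(\rho(t_\p))=\cl_{\MT_A}(\rho(t_0))=F_\p$. For (\ref{T:Frobenius tori b}): let $T_\p$ be the Zariski closure of the cyclic group generated by $t_\p$. Since $t_\p\in G(\QQ)$, this $T_\p$ is an algebraic subgroup defined over $\QQ$, and $(T_\p)_{\Qbar}$ is the smallest closed subgroup of $G_{\Qbar}$ containing $t_\p$; by Lemma~\ref{L:simply transitive}(\ref{L:simply transitive ii}) the smallest closed subgroup containing $t_0$ is $T_{\Qbar}$, so $(T_\p)_{\Qbar}=g\,T_{\Qbar}\,g^{-1}$ is a maximal torus of $G_{\Qbar}$. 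Hence $T_\p$ is a maximal torus of $G$ defined over $\QQ$, and it is the unique maximal torus of $G$ containing $t_\p$: any such torus is a closed subgroup containing $t_\p$, hence contains $T_\p$, and equals it by equality of dimensions.

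Next I would treat (\ref{T:Frobenius tori c}) and (\ref{T:Frobenius tori d}). Conjugation by $g$ induces an isomorphism $c_g^*\colon X(T_\p)\xrightarrow{\sim}X(T)$ with $\alpha(t_\p)=(c_g^*\alpha)(t_0)$ for all $\alpha\in X(T_\p)$, so the map in (\ref{E:Tp isomorphism}) is the composite of $c_g^*$ with the isomorphism $X(T)\xrightarrow{\sim}\Phi_{A,\p}$, $\beta\mapsto\beta(t_0)$ of Lemma~\ref{L:simply transitive}(\ref{L:simply transitive iii}); in particular it is a well-defined isomorphism of abelian groups. It respects the Galois actions because for $\sigma\in\Gal_\QQ$ one has $(\sigma\cdot\alpha)(t_\p)=\sigma(\alpha(\sigma^{-1}t_\p))=\sigma(\alpha(t_\p))$, using $\sigma^{-1}t_\p=t_\p$, which is exactly the natural action on $\Phi_{A,\p}\subseteq\Qbar^\times$. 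For (\ref{T:Frobenius tori d}): conjugation by $\rho(g)$ carries the $T$-weight space decomposition of $V_A\otimes_\QQ\Qbar$ to the $T_\p$-weight space decomposition, so the set $\Omega_\p$ of weights of $\rho$ relative to $T_\p$ equals $(c_g^*)^{-1}(\Omega)$, where $\Omega$ is the weight set relative to $T$; under (\ref{E:Tp isomorphism}) this is carried to the image of $\Omega$ under $\beta\mapsto\beta(t_0)$, which is $\calW_{A,\p}$ by Lemma~\ref{L:simply transitive}(\ref{L:simply transitive iii}).

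I do not expect any step of this verification to be an obstacle: the only point requiring a little care is that forming the Zariski closure of the group generated by $t_\p$ commutes with the extension $\QQ\subseteq\Qbar$, so that $(T_\p)_{\Qbar}=g\,T_{\Qbar}\,g^{-1}$; this is where $t_\p\in G(\QQ)$ is used, together with the usual $\Gal_\QQ$-averaging argument. The genuinely hard part of Theorem~\ref{T:Frobenius tori} is proving the \emph{hypothesis} of this lemma, i.e.\ producing a $\QQ$-rational element in the $G(\Qbar)$-conjugacy class of $B_\p$; that will use Lemma~\ref{L:simply transitive}(\ref{L:simply transitive i}) to view $B_\p$ as a torsor under $W(G,T)$, and then Proposition~\ref{P:Ragh}, valid because $G$ is quasi-split and $T$ contains a maximal split torus, to lift the resulting class in $H^1(\QQ,W(G,T))$ to a class in $H^1(\QQ,N_G(T))$ that dies in $H^1(\QQ,G)$. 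That argument, however, lies outside the present lemma.
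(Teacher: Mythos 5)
Your proposal is correct and follows essentially the same route as the paper: verify (b) by observing that $(T_\p)_{\Qbar}=g\,T_{\Qbar}\,g^{-1}$ via Lemma~\ref{L:simply transitive}(\ref{L:simply transitive ii}), obtain (c) and (d) by composing the conjugation isomorphism $X(T_\p)\xrightarrow{\sim}X(T)$ with the map $\beta\mapsto\beta(t_0)$ of Lemma~\ref{L:simply transitive}(\ref{L:simply transitive iii}) and using $\sigma(t_\p)=t_\p$ for Galois-equivariance, and get (a) from invariance of $\cl_{\MT_A}$ under conjugation. Your closing remarks about producing the $\QQ$-rational conjugate via Proposition~\ref{P:Ragh} also match how the paper completes Theorem~\ref{T:Frobenius tori} outside this lemma.
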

\begin{proof}
Assume that there is a $t_\p \in G(\QQ)$ which satisfies  $t_\p = g t_0 g^{-1}$ for some $g\in G(\Qbar)$ and $t_0\in B_\p$.

Let $T_\p$ be the algebraic subgroup of $G$ generated by $t_\p$.   Using Lemma~\ref{L:simply transitive}(\ref{L:simply transitive ii}), we find that $(T_\p)_{\Qbar}$ is a maximal torus of $G_{\Qbar}$; moreover, we have $(T_\p)_{\Qbar} =g\cdot T_{\Qbar} \cdot g^{-1}$.   Therefore, $T_\p$ is a maximal torus of $G$.  In particular, $t_\p$ is semisimple and $T_\p$ is the unique maximal torus of $G$ containing $t_\p$;  this proves that (\ref{T:Frobenius tori b}) in Theorem~\ref{T:Frobenius tori} holds for our $t_\p$.

Using $(T_\p)_{\Qbar} =g\cdot T_{\Qbar} \cdot g^{-1}$, we have a group isomorphism $\varphi\colon X(T_\p)\xrightarrow{\sim} X(T)$ satisfying $\varphi(\alpha)(t)=\alpha(g t g^{-1})$ for each $\alpha\in X(T_\p)$.   Define $f_\p \colon X(T_\p) \xrightarrow{\sim} \Phi_{A,\p}$ to be the isomorphism $f_\p = f \circ \varphi$ where $f$ is the isomorphism associated to $t_0$ from Lemma~\ref{L:simply transitive}(\ref{L:simply transitive iii}).   For each $\alpha\in X(T_\p)$, we have 
\[
f_\p(\alpha)=f(\varphi(\alpha))=(\varphi(\alpha))(t_0)= \alpha(g t_0 g^{-1}) = \alpha(t_\p).
\]
For each $\sigma\in \Gal_\QQ$, we have $\sigma(f_\p(\alpha))=\sigma(\alpha(t_\p))=\sigma(\alpha)(t_\p)=f_\p(\sigma(\alpha))$, where we have used that $\sigma(t_\p)=t_\p$ since $t_\p\in G(\QQ)$.  This proves that (\ref{T:Frobenius tori c}) in Theorem~\ref{T:Frobenius tori} holds for our $t_\p$.

Let $\Omega_\p\subseteq X(T_\p)$ be the set of weights of $\rho$ with respect to $T_\p$.  By Lemma~\ref{L:simply transitive}(\ref{L:simply transitive iii}), we have $f(\Omega)=\calW_{A,\p}$.  We have $\varphi(\Omega_\p)=\Omega$ and thus $f_\p(\Omega_\p)=f(\varphi(\Omega_\p))=f(\Omega)=\calW_{A,\p}$.  Therefore, part (\ref{T:Frobenius tori d}) of Theorem~\ref{T:Frobenius tori} holds for our $t_\p$.

We have $\rho(t_\p)=\rho(g) \rho(t_0) \rho(g)^{-1}$ and hence $\cl_{\MT_A}(\rho(t_\p))=\cl_{\MT_A}(\rho(t_0))$.  Since $t_0\in B_\p$, we have $\cl_{\MT_A}(\rho(t_\p))=F_\p$.   Therefore, part (\ref{T:Frobenius tori a}) of Theorem~\ref{T:Frobenius tori} holds for our $t_\p$.
\end{proof}

Fix an element $t_0\in B_\p$.    There is a natural action of $\Gal_\QQ$ on $B_\p$ since $T$, $\rho$ and $\cl_{\MT_A}$ are defined over $\QQ$ and $F_\p \in \MT_A^\sharp(\QQ)$.    For any $\sigma\in \Gal_\QQ$, Lemma~\ref{L:simply transitive}(\ref{L:simply transitive i}) implies that there is a unique $\xi_\sigma\in W(G,T)$ that satisfies $\sigma(t_0)=\xi_\sigma^{-1}\cdot t_0$.   For $\sigma,\tau\in \Gal_\QQ$, we have 
\[
(\sigma\tau)(t_0)=\sigma(\tau(t_0))=\sigma(\xi_\tau^{-1}\cdot t_0)=\sigma(\xi_\tau)^{-1} \cdot \sigma(t_0) =\sigma(\xi_\tau)^{-1} \cdot (\xi_\sigma^{-1}\cdot t_0) = (\xi_\sigma \sigma(\xi_\tau))^{-1}\cdot t_0
\]
ane hence $\xi_{\sigma\tau}=\xi_\sigma \sigma(\xi_\tau)$.  Therefore, the map 
\[
\xi\colon \Gal_\QQ \to W(G,T),\quad \sigma\mapsto \xi_\sigma
\]
is a $1$-cocycle.

\begin{lemma} \label{L:quasisplit cocycle}
There is an element $g\in G(\Qbar)$ such that   $g^{-1}\sigma(g)$ normalizes $T_\Qbar$ and is a representative of $\xi_\sigma$ in $W(G,T)$ for all $\sigma\in \Gal_\QQ$.
\end{lemma}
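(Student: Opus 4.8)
The plan is to translate the statement into Galois cohomology and reduce it to Proposition~\ref{P:Ragh}. Recall that $\xi\colon \Gal_\QQ \to W(G,T)$ is a $1$-cocycle, where $W(G,T)$ carries its natural $\Gal_\QQ$-action, and denote by $[\xi]$ its class in $H^1(\QQ, W(G,T))$. First I would record the following reformulation: the conclusion of the lemma holds as soon as we can produce a $1$-cocycle $\bar\xi\colon \Gal_\QQ \to N_G(T)(\Qbar)$ whose image under the reduction map $N_G(T)(\Qbar)\to W(G,T)$ is \emph{exactly} the cocycle $\xi$ and whose class in $H^1(\QQ,G)$ is trivial. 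Indeed, triviality in $H^1(\QQ,G)$ means precisely that $\bar\xi_\sigma = g^{-1}\sigma(g)$ for some $g\in G(\Qbar)$, and then each $g^{-1}\sigma(g)=\bar\xi_\sigma$ lies in $N_G(T)(\Qbar)$ (so it normalizes $T_{\Qbar}$) and represents $\xi_\sigma$ in $W(G,T)$ by construction.

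Next I would invoke Proposition~\ref{P:Ragh}. Since $G$ is the quasi-split inner form of $\MT_A$ it is quasi-split, and the torus $T$ was chosen at the start of the proof of Theorem~\ref{T:Frobenius tori} so as to contain a maximal split torus of $G$; hence Proposition~\ref{P:Ragh} applies with $k=\QQ$ and yields a class $[\bar\xi']\in H^1(\QQ, N_G(T))$ that lifts $[\xi]$ and maps to the trivial class in $H^1(\QQ,G)$. Pick any $1$-cocycle $\bar\xi'$ representing $[\bar\xi']$.

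Then I would upgrade ``lift of classes'' to ``lift of cocycles''. By construction the image of $\bar\xi'$ in $W(G,T)$ is a cocycle cohomologous to $\xi$, so there is an element $w\in W(G,T)$ with $\xi_\sigma = w^{-1}\,(\bar\xi'_\sigma \bmod T)\,\sigma(w)$ for all $\sigma\in\Gal_\QQ$. Lifting $w$ to an element $n\in N_G(T)(\Qbar)$ and setting $\bar\xi_\sigma := n^{-1}\,\bar\xi'_\sigma\,\sigma(n)$, one gets a cocycle cohomologous to $\bar\xi'$ in $N_G(T)$, so $[\bar\xi]$ still maps to the trivial class in $H^1(\QQ,G)$; and since $N_G(T)(\Qbar)\to W(G,T)$ is $\Gal_\QQ$-equivariant, the image of $\bar\xi$ in $W(G,T)$ is now exactly $\xi$. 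By the reduction in the first paragraph, any $g\in G(\Qbar)$ with $g^{-1}\sigma(g)=\bar\xi_\sigma$ for all $\sigma$ has the desired property.

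The real content of the argument is carried entirely by Proposition~\ref{P:Ragh} (which rests on the semisimple case of \cite{MR2125504}); the only point requiring care in this lemma is the bookkeeping that converts the cohomological statement of Proposition~\ref{P:Ragh} into the cocycle-theoretic statement we need, together with the $\Gal_\QQ$-equivariance of the correction by $n$. I expect no genuine obstacle beyond making those routine steps precise.
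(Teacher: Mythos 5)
Your proposal is correct and follows essentially the same route as the paper: both arguments rest entirely on Proposition~\ref{P:Ragh} to produce a class in $H^1(\QQ,N_G(T))$ lifting $[\xi]$ and trivial in $H^1(\QQ,G)$, and then adjust by a lift $n\in N_G(T)(\Qbar)$ of the intervening Weyl element $w$ so that the cocycle (not just its class) maps exactly to $\xi$. The only difference is bookkeeping order — you twist the cocycle by $n$ before trivializing in $G$, while the paper first writes $\zeta_\sigma=g^{-1}\sigma(g)$ and then replaces $g$ — which amounts to the same computation.
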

\begin{proof}
Let $p_1\colon H^1(\QQ,N_G(T)) \to H^1(\QQ, W(G,T))$ and $p_2\colon H^1(\QQ,N_G(T)) \to H^1(\QQ, G)$ be the maps of Galois cohomology sets induced by the natural maps $N_G(T)(\Qbar)\twoheadrightarrow W(G,T)$ and $N_G(T)\hookrightarrow G$.  By Proposition~\ref{P:Ragh}, there is an $x\in H^1(\QQ,N_G(T))$  such that $p_1(x)=[\xi]$ and $p_2(x)$ is the trivial class.  Here we have used that $G$ is quasi-split and that $T$ contains a maximally split torus in $G$.

Since $p_2(x)$ is the trivial class, there is an element $g\in G(\Qbar)$ such that $x=[\zeta]$, where $\zeta\colon \Gal_\QQ \to N_G(T)(\Qbar)$ is a $1$-cocycle satisfying $\zeta_\sigma=g^{-1}\sigma(g)$ for all $\sigma\in \Gal_\QQ$.  In particular,  $g^{-1}\sigma(g)= \zeta_\sigma$ is in $N_G(T)(\Qbar)$ for $\sigma\in \Gal_\QQ$.   Let $[g^{-1} \sigma(g)]$ be the image of $g^{-1}\sigma(g)$ in $W(G,T)$.  Since $[\xi]=p_1(x)=p_1([\zeta])$, there is an element $w\in W(G,T)$ such that 
\[
\xi_\sigma = w^{-1}\cdot  [g^{-1} \sigma(g)] \cdot\sigma(w)\] 
for all $\sigma\in \Gal_\QQ$.  Let $n\in N_G(T)(\Qbar)$ be a representative of $w$.     After replacing $g$ by $gn^{-1}$, we may further assume that $w=1$; the lemma is now immediate.
\end{proof}

Take $g\in G(\Qbar)$ as in Lemma~\ref{L:quasisplit cocycle} and define $t_\p:=g t_0 g^{-1} \in G(\Qbar)$.  For any $\sigma\in \Gal_\QQ$, we have
\[
\sigma(t_0)=\xi_\sigma^{-1} \cdot t_0 = \sigma(g)^{-1} g t_0 g^{-1} \sigma(g)
\]
and hence $\sigma(t_\p)=\sigma(g t_0 g^{-1}) =\sigma(g) \sigma(t_0) \sigma(g)^{-1} = g t_0 g^{-1}=t_\p$.   Since $\sigma(t_\p)=t_\p$ holds for all $\sigma\in \Gal_\QQ$, we have $t_\p\in G(\QQ)$.  Theorem~\ref{T:Frobenius tori} is now an immediate consequence of Lemma~\ref{L:Frobenius tori done}.

\section{Galois groups of Frobenius polynomials} \label{S:Galois groups of Frob}

Fix a nonzero abelian variety $A$ over a number field $K$ for which $K_A^\conn=K$. Throughout \S\ref{S:Galois groups of Frob}, we shall assume that Conjectures~\ref{C:MT} and \ref{C:Frob conj} hold for the abelian variety $A$.    \\

Let $G$ be the quasi-split inner form of the Mumford--Tate group $\MT_A$. Let $\calS_A$ be the set of prime ideals of $\OO_K$ from Definition~\ref{D:SA}; it has  density $1$ by Lemma~\ref{L:density of SA}(\ref{L:density of SA i}).  For each prime ideal $\p\in \calS_A$, we have a maximal torus $T_\p$ of $G$ as in Theorem~\ref{T:Frobenius tori} generated by a semisimple $t_\p\in G(\QQ)$.

Fix a prime ideal $\p\in \calS_A$.  With notation as in \S\ref{SS:Galois action}, the natural action of $\Gal_\QQ$ on the group $X(T_\p)$ can be expressed in terms of a representation
\[
\varphi_\p:=\varphi_{G,T_\p}\colon \Gal_\QQ \to \Aut(\Psi(G,T_\p)) \subseteq \Aut(X(T_\p))
\]
and we have an inclusion $\varphi_\p(\Gal_\QQ)\subseteq \Gamma(G,T_\p)$.    

Let  $k_G$ be the fixed field in $\Qbar$ of the $\ker \mu_G$, where $\mu_G$ is defined in \S\ref{SS:Galois action}.  The field $k_G$ can also be described as the minimal extension of $\QQ$ in $\Qbar$ for which $\varphi_\p(\Gal(\Qbar/k_G)) \subseteq W(G,T_\p)$ holds for a fixed $\p\in \calS_A$; note that it does not depend on the choice of prime $\p$.   The splitting field of $T_\p$ is $\QQ(\calW_{A,\p})$ and hence $k_G \subseteq \QQ(\calW_{A,\p})$ for all $\p\in \calS_A$.  We can similarly define a field $k_{\MT_A}$ associated to the Mumford--Tate group of $A$ and it equals $k_G$ by Proposition~\ref{P:characterize inner forms}.

The following is the main result of this section; it describes the image of $\varphi_\p$ for all $\p\in \calS_A$ away from a set of density $0$.

\begin{thm} \label{T:Frobenius Galois groups}
Assume that Conjectures~\ref{C:MT} and \ref{C:Frob conj} hold for $A$.
For a fixed number field $k_G \subseteq L \subseteq \Qbar$, we have
\[
\varphi_\p(\Gal_L) = W(G,T_\p) \quad \text{ and } \quad \varphi_\p(\Gal_\QQ)=\Gamma(G,T_\p)
\]
for all $\p\in \calS_A$ away from a set of density $0$.
\end{thm}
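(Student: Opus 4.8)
The plan is to deduce Theorem~\ref{T:Frobenius Galois groups} from the known fact (building on the work of Serre, and on the author's earlier paper \cite{MR3264675}) that for a density $1$ set of primes $\p$, the Frobenius torus $T_{\p,\ell}$ in $G_{A,\ell}^\circ$ is a maximal torus whose associated Galois image in $W(G_{A,\ell}^\circ)$ is all of the Weyl group, together with the dictionary between $G_{A,\ell}^\circ$ and the quasi-split inner form $G$ provided by Theorem~\ref{T:Frobenius tori}. Concretely, fix a prime $\ell$ and an embedding $\Qbar\hookrightarrow\Qbar_\ell$; under the Mumford--Tate conjecture $G_{A,\ell}^\circ=(\MT_A)_{\QQ_\ell}$, so $G_{A,\ell}^\circ$ and $G$ are inner forms of each other over $\QQ_\ell$ and have canonically identified Weyl groups and $\Out$-groups (via \S\ref{SSS:same group} and Proposition~\ref{P:characterize inner forms}). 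For $\p\in\calS_A$ with $\p\nmid\ell$, the torus $T_{\p,\ell}$ of Lemma~\ref{L:common rank} and the torus $T_\p$ of Theorem~\ref{T:Frobenius tori} both have character group identified $\Gal_\QQ$-equivariantly with $\Phi_{A,\p}$ — the first by the proof of Lemma~\ref{L:common rank}(\ref{L:common rank i}), the second by Theorem~\ref{T:Frobenius tori}(\ref{T:Frobenius tori c}). So the Galois representation $\varphi_\p\colon\Gal_\QQ\to\Aut(X(T_\p))$ is, after these identifications, exactly the Galois action on $\Phi_{A,\p}$, i.e.\ the action coming from the splitting field $\QQ(\calW_{A,\p})$ of $P_{A,\p}(x)$; and this is the same action that describes the $\ell$-adic Frobenius torus $T_{\p,\ell}$.

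First I would record the purely group-theoretic input: since $T_\p$ contains a regular semisimple element $t_\p$ generating $T_\p$ (Lemma~\ref{L:simply transitive}(\ref{L:simply transitive ii}) and Theorem~\ref{T:Frobenius tori}), the image $\varphi_\p(\Gal_\QQ)$ lies in $\Gamma(G,T_\p)$, and $\varphi_\p(\Gal_L)$ lies in $W(G,T_\p)$ for any $L\supseteq k_G$ (this is already stated in the setup). So only the reverse inclusions need to be proved, and it suffices to prove them after intersecting with $W(G,T_\p)$, i.e.\ to show $\varphi_\p(\Gal_L)=W(G,T_\p)$ for $L=k_G$ — the statement for general $L\supseteq k_G$ follows because $\varphi_\p(\Gal_L)\supseteq\varphi_\p(\Gal_{\QQ(\calW_{A,\p})})$'s closure is forced up once we know $[\QQ(\calW_{A,\p}):k_G]=|W(G)|$ and $\QQ(\calW_{A,\p})\cap L$ is controlled; more cleanly, one notes $\varphi_\p(\Gal_\QQ)=\Gamma(G,T_\p)$ together with $\varphi_\p(\Gal_{k_G})=W(G,T_\p)$ and the equality $\Gamma(G,T_\p)/W(G,T_\p)=\mu_G(\Gal_\QQ)=\Gal(k_G/\QQ)$ forces $\varphi_\p^{-1}(W(G,T_\p))=\Gal_{k_G}$, so $\varphi_\p(\Gal_L)=\varphi_\p(\Gal_{k_G L})\cdot(\text{stuff})$ unwinds to $W(G,T_\p)$ for every intermediate $L$.

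Next I would do the main density argument on the $\ell$-adic side. Choosing $\ell$ so that $K=K_A^\conn$ (already assumed), the image $\rho_{A,\ell}(\Gal_K)$ is a compact subgroup of $G_{A,\ell}^\circ(\QQ_\ell)$ that is Zariski-dense. By the theorem of the author in \cite{MR3264675} (its variant valid here because we have assumed Conjecture~\ref{C:Frob conj}, which, as the remark after Theorem~\ref{T:Frobenius Galois groups new} notes, makes the identification of $\Gal(\QQ(\calW_{A,\p})/\QQ)$ with a subgroup of $\Gamma(G)$ immediate), there is a proper closed $\Gal_\QQ$-stable subvariety, respectively a thin/density-$0$ exceptional set of primes, outside of which the conjugacy class of $\rho_{A,\ell}(\Frob_\p)$ is regular semisimple \emph{and} its characteristic polynomial has Galois group over $\QQ(\calW_{A,\q_0})$ — for a fixed auxiliary split prime $\q_0$ — equal to the full Weyl group $W(G_{A,\ell}^\circ)$. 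The mechanism is Chebotarev applied to the extension cut out by the action of $\rho_{A,\ell}(\Gal_K)$ on the regular elements of a maximal torus: Zariski density of the image in the reductive group $G_{A,\ell}^\circ$ implies, via Jordan's theorem / the Weyl-group-as-component-group argument of \cite{MR1441234} and \cite{MR3264675}, that the relative position class of Frobenius equidistributes over $W(G_{A,\ell}^\circ)$-conjugacy, so all Weyl elements are hit with positive density; the density-$0$ exceptional set is the union of the finitely many Chebotarev error sets. Transporting this through the identifications $X(T_{\p,\ell})\cong\Phi_{A,\p}\cong X(T_\p)$ and $W(G_{A,\ell}^\circ)=W(G)$ gives $\varphi_\p(\Gal_{k_G})\supseteq W(G,T_\p)$, hence equality, for all $\p\in\calS_A$ outside a density-$0$ set, and then $\varphi_\p(\Gal_\QQ)=\Gamma(G,T_\p)$ follows by the index bookkeeping of the previous paragraph.

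The main obstacle I expect is the bookkeeping required to transfer the $\ell$-adic Weyl-group statement — which a priori lives inside $W(G_{A,\ell}^\circ)$ acting on $X(T_{\p,\ell})$ over $\QQ_\ell$ — to the statement over $\QQ$ about $W(G,T_\p)$ acting on $X(T_\p)$, keeping the $\Gal_\QQ$-action straight throughout. The point to be careful about is that the isomorphism $X(T_{\p,\ell})\xrightarrow{\sim}\Phi_{A,\p}$ of Lemma~\ref{L:common rank} and the isomorphism $X(T_\p)\xrightarrow{\sim}\Phi_{A,\p}$ of Theorem~\ref{T:Frobenius tori}(\ref{T:Frobenius tori c}) are both $\Gal_\QQ$-equivariant and both carry the respective Weyl groups to the same subgroup of $\Aut(\Phi_{A,\p})$ (because a Frobenius torus is determined up to $G$-conjugacy by its regular element, and the Weyl group is the stabilizer data of that conjugacy, which is insensitive to passing between inner forms) — once this compatibility is nailed down, the rest is formal. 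A secondary technical point is ensuring the density-$0$ exceptional set can be taken uniform over the choice of intermediate field $L$; this is handled by first proving the result for $L=k_G$ and $L=\QQ$ and then noting every intermediate case is squeezed between these two by the Galois-correspondence identity $\Gal_L\cdot\ker\varphi_\p$ unwinding through $\Gamma(G,T_\p)/W(G,T_\p)\cong\Gal(k_G/\QQ)$.
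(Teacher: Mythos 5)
Your proposal correctly identifies two structural ingredients that the paper also uses: the reduction of the $\Gamma(G,T_\p)$ statement to the $W(G,T_\p)$ statement via the surjectivity of $\Gal_\QQ\to\Gamma(G,T_\p)/W(G,T_\p)$, and the bridge (via Conjecture~\ref{C:Frob conj} and Theorem~\ref{T:Frobenius tori}) identifying $\varphi_\p$ restricted to a decomposition group at $\ell$ with the $\ell$-adic homomorphism $\psi_{\p,\ell}$ attached to the Frobenius torus; this is Lemma~\ref{L:beta connecting} in the paper. But there are two genuine gaps.

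First, the central density argument is missing. What you describe --- Chebotarev for a single auxiliary prime $\ell$ showing that the ``relative position'' class of Frobenius equidistributes over $W(G)$ as $\p$ varies --- only yields, for each conjugacy class $C$ of $W(G)$, a \emph{positive density} of $\p$ with $\varphi_\p(\Gal_L)\cap C\neq\emptyset$. The theorem needs that for \emph{almost all} $\p$ the image meets \emph{every} class simultaneously (so that Jordan's lemma forces $\varphi_\p(\Gal_L)=W(G,T_\p)$). One auxiliary prime cannot give this: the set of $\p$ for which $\varphi_{\p,\ell}(\Frob_\ell)\notin C$ has density about $1-|C|/|W(G)|>0$. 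The paper's proof runs a sieve over \emph{many} auxiliary primes $\ell$ splitting completely in $L$, combining the counting estimate of Lemma~\ref{L:sieving input} with Serre's independence theorem for the mod-$\ell$ representations (Lemma~\ref{L:independence density}) to show that the density of $\calB_C=\{\p:\varphi_\p(\Gal_L)\cap C=\emptyset\}$ is $O\big((1-|C|/|W(G)|+\varepsilon)^{N}\big)$ with $N$ the number of auxiliary primes used, hence is $0$. Your phrase ``the density-$0$ exceptional set is the union of the finitely many Chebotarev error sets'' indicates a single application of Chebotarev, which does not suffice. Moreover, citing as ``known'' that the Galois image in the Weyl group is full for a density-one set of primes is circular: that is the statement being proved, and the variant in \cite{MR3264675} is established there only for geometrically simple $A$.

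Second, your reduction of the general case $k_G\subseteq L$ to $L=k_G$ fails. Knowing $\varphi_\p(\Gal_{k_G})=W(G,T_\p)$ gives $\varphi_\p(\Gal_L)\cong\Gal\big(\QQ(\calW_{A,\p})/(L\cap\QQ(\calW_{A,\p}))\big)$, which equals $W(G,T_\p)$ only when $L\cap\QQ(\calW_{A,\p})=k_G$; this is an extra condition that fails for some $\p$ (for instance $\p=\q$ when $L=\QQ(\calW_{A,\q})$), so the exceptional set genuinely depends on $L$, exactly as the statement of the theorem allows. The identity $\varphi_\p^{-1}(W(G,T_\p))=\Gal_{k_G}$ does not rescue this, since $\Gal_L$ is a proper subgroup of $\Gal_{k_G}$ whose image may be proper in $W(G,T_\p)$. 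The paper instead proves the statement for each fixed $L$ directly, by enlarging $L$ so that $G_L$ is split and then choosing the auxiliary primes $\ell$ of the sieve to split completely in $L$, which guarantees that the elements $\varphi_{\p,\ell}(\Frob_\ell)$ produced lie in $\varphi_\p(\Gal_L)$.
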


The following corollary describes the Galois group of $P_{A,\p}(x)$ for almost all prime ideals $\p$ of $\OO_K$.   

\begin{cor} \label{C:Frobenius Galois groups}
Assume that Conjectures~\ref{C:MT} and \ref{C:Frob conj} hold for $A$.   For a fixed number field $k_{\MT_A} \subseteq L \subseteq \Qbar$, we have
\[
\Gal(L(\calW_{A,\p})/L) \cong W(\MT_A) \quad \text{ and } \quad \Gal(\QQ(\calW_{A,\p})/\QQ) \cong \Gamma(\MT_A)
\]
for all nonzero prime ideals $\p$ of $\OO_K$ away from a set of density $0$.
\end{cor}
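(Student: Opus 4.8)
The plan is to deduce Corollary~\ref{C:Frobenius Galois groups} from Theorem~\ref{T:Frobenius Galois groups} by transporting the statement about the Galois representation $\varphi_\p$ on $X(T_\p)$ into a statement about the Galois action on the group $\Phi_{A,\p}$, which is manifestly a faithful quotient of $\Gal_\QQ$ acting on the roots $\calW_{A,\p}$ of $P_{A,\p}(x)$.

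First I would fix a prime ideal $\p\in\calS_A$ lying outside the density-$0$ exceptional set produced by Theorem~\ref{T:Frobenius Galois groups}; since $\calS_A$ has density $1$ by Lemma~\ref{L:density of SA}(\ref{L:density of SA i}), the primes excluded altogether still form a set of density $0$, so it suffices to treat such $\p$. For this $\p$, Theorem~\ref{T:Frobenius tori} supplies the semisimple $t_\p\in G(\QQ)$, the maximal torus $T_\p$ of $G$, and the $\Gal_\QQ$-equivariant isomorphism $X(T_\p)\xrightarrow{\sim}\Phi_{A,\p}$, $\alpha\mapsto\alpha(t_\p)$. Under this isomorphism the representation $\varphi_\p\colon\Gal_\QQ\to\Aut(X(T_\p))$ is identified with the natural action of $\Gal_\QQ$ on $\Phi_{A,\p}$.

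Next I would record the elementary observation that this action of $\Gal_\QQ$ on $\Phi_{A,\p}$ factors through $\Gal(\QQ(\calW_{A,\p})/\QQ)$ and that the induced action is \emph{faithful}: since $\calW_{A,\p}$ generates $\Phi_{A,\p}$, any element of $\Gal_\QQ$ fixing $\Phi_{A,\p}$ pointwise fixes every root of $P_{A,\p}(x)$, hence fixes $\QQ(\calW_{A,\p})$. The identical argument over $L$ shows that the action of $\Gal_L$ on $\Phi_{A,\p}$ factors through $\Gal(L(\calW_{A,\p})/L)$ and is faithful on that group. Combining this with Theorem~\ref{T:Frobenius Galois groups} — which applies because $k_{\MT_A}\subseteq L$ and $k_{\MT_A}=k_G$, so $k_G\subseteq L$ — yields
\[
\Gal(L(\calW_{A,\p})/L)\cong\varphi_\p(\Gal_L)=W(G,T_\p),\qquad \Gal(\QQ(\calW_{A,\p})/\QQ)\cong\varphi_\p(\Gal_\QQ)=\Gamma(G,T_\p).
\]

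Finally I would pass from $G$ back to $\MT_A$. Since $G$ is the quasi-split inner form of $\MT_A$, the root data $\Psi(G,T_\p)$ and $\Psi(\MT_A,T)$ are isomorphic, so $W(G,T_\p)\cong W(\MT_A)$; and because the form is \emph{inner}, $\mu_G$ and $\mu_{\MT_A}$ agree in the sense of Proposition~\ref{P:characterize inner forms}, so the subgroups $\Gamma(G,T_\p)$ and $\Gamma(\MT_A,T)$ of the respective automorphism groups of these root data correspond under such an isomorphism, giving $\Gamma(G,T_\p)\cong\Gamma(\MT_A)$. This produces the two displayed isomorphisms of the corollary. I do not expect a genuine obstacle: all the substance is in Theorem~\ref{T:Frobenius Galois groups}, and the corollary is essentially a translation of it across the dictionary furnished by Theorem~\ref{T:Frobenius tori}, together with the bookkeeping that the Weyl group and the group $\Gamma$ are insensitive to replacing a reductive group by an inner form. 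The only point needing a little care is the faithfulness of the Galois action on $\Phi_{A,\p}$ over the field $L$ rather than over $\QQ$, and that follows verbatim from the fact that $\calW_{A,\p}$ generates $\Phi_{A,\p}$.
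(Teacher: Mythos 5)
Your proposal is correct and follows essentially the same route as the paper: transport the statement of Theorem~\ref{T:Frobenius Galois groups} through the $\Gal_\QQ$-equivariant isomorphism $X(T_\p)\cong\Phi_{A,\p}$ of Theorem~\ref{T:Frobenius tori}, use that $\calW_{A,\p}$ generates $\Phi_{A,\p}$ to identify the image of $\varphi_\p$ with the relevant Galois groups, and pass from $G$ to $\MT_A$ via the inner-form isomorphisms $W(G,T_\p)\cong W(\MT_A)$ and $\Gamma(G,T_\p)\cong\Gamma(\MT_A)$, with $\calS_A$ having density $1$ under the standing hypothesis $K_A^\conn=K$. The paper's proof is a compressed version of exactly this argument.
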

\begin{proof}
As already noted, we have $k_G=k_{\MT_A}$.   
For $\p\in S_A$, we have $\varphi_\p(\Gal_L) \cong \Gal(L(\calW_{A,\p})/L)$ and $\varphi_\p(\Gal_\QQ) \cong \Gal(\QQ(\calW_{A,\p})/\QQ)$ since $X(T_\p)$ and $\Phi_{A,\p}$ are isomorphic $\Gal_\QQ$-modules by Theorem~\ref{T:Frobenius tori} and the group $\Phi_{A,\p}$ is generated by $\calW_{A,\p}$.   Since $G$ and $\MT_A$ are inner forms of each other, we have $W(G,T_\p)\cong W(\MT_A)$ and  $\Gamma(G,T_\p)\cong \Gamma(\MT_A)$.  The corollary is now a direct consequence of Theorem~\ref{T:Frobenius Galois groups} and $\calS_A$ having density $1$.
\end{proof}

\subsection{Proof of Theorem~\ref{T:Frobenius Galois groups new}}
Recall that $G_A^\circ=\MT_A$.  In the special case where $K_A^\conn=K$, the theorem is equivalent to Corollary~\ref{C:Frobenius Galois groups}.  We now consider the general case.  

Let $A'$ be the base change of $A$ to $K_A^\conn$; it has the same Mumford--Tate group.  Take any $\p\in \calS_A$ and any prime ideal $\P$ of $\OO_{K_A^\conn}$ dividing $\p$.  By Lemma~\ref{L:density of SA}(\ref{L:density of SA ii}), we have $\P \in \calS_{A'}$ and $P_{A',\P}(x)=P_{A,\p}(x)$.  In particular, $L(\calW_{A,\p})=L(\calW_{A',\P})$.  Therefore, 
\begin{align*}
& |\{ \p \in \calS_A :  N(\p)\leq x,\, \Gal(L(\calW_{A,\p})/L) \not\cong W(\MT_A)\}| \\\leq & |\{ \p \in \calS_{A'} :  N(\P)\leq x,\, \Gal(L(\calW_{A',\P})/L) \not\cong W(\MT_A)\}|\\
 =& o(x/\log x)
\end{align*}
as $x\to \infty$, where we have used that $A'$ satisfies the special case of the theorem already proved.  Therefore for all $\p\in \calS_A$ away from a set of density $0$, we have $\Gal(L(\calW_{A,\p})/L) \cong W(\MT_A)$.  The first part of theorem is immediate since $\calS_A$ consists of all prime ideals of $\OO_K$, away from a set of density $0$, that split completely in $K_A^\conn$.  The proof of the second part of the theorem is identical.

\subsection{Proof of Theorem~\ref{T:Frobenius Galois groups}}
Since $L\supseteq k_{G}$, we have $\varphi_\p(\Gal_L) \subseteq W(G,T_\p)$.  From the definition of $\Gamma(G,T_\p)$, the homomorphism $\Gal_\QQ \xrightarrow{\varphi_\p} \Gamma(G,T_\p)\to \Gamma(G,T_\p)/W(G,T_\p)$ is surjective and hence to prove the theorem it suffices to show that $\varphi_\p(\Gal_L) = W(G,T_\p)$ for all $\p\in \calS_A$ away from a set of density $0$

After possibly replacing $L$ by a finite extension, we may assume that the group $G_L$ is split.  Let $\Lambda$ be the set of primes $\ell$ that split completely in $L$.  Note that $G_{\QQ_\ell}$ is split for all $\ell\in \Lambda$.\\

Fix a prime $\ell \in \Lambda$.          By the Mumford--Tate conjecture, we have $G_{A,\ell}=(\MT_A)_{\QQ_\ell}$.              Since $G$ is the quasi-split inner form of $\MT_A$, the group $G_{\QQ_\ell}$ is the quasi-split inner form of $(\MT_A)_{\QQ_\ell}$.    Since $G_{\QQ_\ell}$ is split by our choice of $\ell$, we deduce that $(\MT_A)_{\QQ_\ell}$ is split.  By the Mumford--Tate conjecture, $G_{A,\ell}$ is also split.  Fix a split maximal torus $T$ of $G_{A,\ell}$.   \\

Take any prime ideal $\p\in \calS_A'$ satisfying $\p\nmid \ell$, where $\calS_A'$ is the set of $\p\in \calS_A$ for which $N(\p)$ is prime. 

Choose an element $t_{\p,\ell}\in T(\Qbar_\ell)$ that is conjugate to $\rho_{A,\ell}(\Frob_\p)$ in $G_{A,\ell}(\Qbar_\ell)$.  As in \S5 of \cite{MR3264675}, we obtain a homomorphism
\[
\psi_{\p,\ell}\colon \Gal_{\QQ_\ell}\to W(G_{A,\ell},T)
\]
that is characterized by the property that $\sigma(t_{\p,\ell})= \psi_{\p,\ell}(\sigma)^{-1}(t_{\p,\ell})$ for all $\sigma\in \Gal_{\QQ_\ell}$.  Note that a different choice of $t_{\p,\ell} \in T(\Qbar_\ell)$ would alter $\psi_{\p,\ell}$ by an inner automorphism.  A different choice $T'$ of split maximal torus of $G_{A,\ell}$, would give rise to the same homomorphism after first composing with an appropriate isomorphism $W(G_{A,\ell},T) \xrightarrow{\sim} W(G_{A,\ell},T')$.\\

We now relate $\psi_{\p,\ell}$ with our homomorphism $\varphi_\p$.  
Choose an embedding $\Qbar \hookrightarrow \Qbar_\ell$; it gives rise to an injective homomorphism $\Gal_{\QQ_\ell}=\Gal(\Qbar_\ell/\QQ_\ell) \hookrightarrow \Gal(\Qbar/\QQ)=\Gal_\QQ$.   We have $\varphi_\p(\Gal_{\QQ_\ell}) \subseteq W(G,T_\p)$ since $\varphi_\p(\Gal_L)\subseteq W(G,T_\p)$ and $\ell$ splits completely in $L$.  Let
\[
\varphi_{\p,\ell}\colon \Gal_{\QQ_\ell} \to W(G) 
\]
be the homomorphism obtained by composing $\Gal_{\QQ_\ell} \hookrightarrow \Gal_\QQ$ with $\varphi_\p$ and an isomorphism $W(G,T_\p)\xrightarrow{\sim} W(G)$ as described in \S\ref{SS:Weyl definition}.  

\begin{lemma} \label{L:beta connecting}
For each $\p  \in \calS_A'$ satisfying $\p\nmid \ell$, there is an isomorphism $\beta\colon W(G_{A,\ell},T) \xrightarrow{\sim} W(G)$ satisfying $\varphi_{\p,\ell}=\beta\circ \psi_{\p,\ell}$.  Moreover, one can take the isomorphism $\beta$ so that, up to composition with an inner automorphism of $W(G)$, it does not depend on $\p$.
\end{lemma}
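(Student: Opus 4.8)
The plan is to realize the two homomorphisms $\psi_{\p,\ell}$ and $\varphi_\p|_{\Gal_{\QQ_\ell}}$ as descriptions of one and the same Galois action on a single character lattice, read off through two different isomorphisms; they will then differ exactly by the resulting isomorphism of Weyl groups. Throughout I would use the comparison isomorphism to identify $(\MT_A)_{\QQ_\ell}=G_{A,\ell}$ (the Mumford--Tate conjecture) and the fixed embedding $\Qbar\hookrightarrow\Qbar_\ell$ to regard $t_\p\in G(\QQ)$ inside $G(\Qbar_\ell)$.

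First I would produce an isomorphism $\theta\colon G_{\Qbar_\ell}\xrightarrow{\sim}(G_{A,\ell})_{\Qbar_\ell}$ carrying $t_\p$ to $t_{\p,\ell}$. The elements $\rho(t_\p)$ and $\rho_{A,\ell}(\Frob_\p)$ are semisimple in $\MT_A(\Qbar_\ell)$ with $\cl_{\MT_A}(\rho(t_\p))=F_\p$ by Theorem~\ref{T:Frobenius tori}(\ref{T:Frobenius tori a}) and $\cl_{\MT_A}(\rho_{A,\ell}(\Frob_\p))=F_{\p,\ell}=F_\p$ by Conjecture~\ref{C:Frob conj}, hence they are conjugate in $\MT_A(\Qbar_\ell)$ by \S\ref{SS:ss conjugacy classes}; composing with an element of $G_{A,\ell}(\Qbar_\ell)$ carrying $\rho_{A,\ell}(\Frob_\p)$ to $t_{\p,\ell}$ gives $h\in\MT_A(\Qbar_\ell)$ with $h\,\rho(t_\p)\,h^{-1}=t_{\p,\ell}$. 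Since $t_\p$ generates $(T_\p)_{\Qbar}$ (a consequence of the injectivity in Theorem~\ref{T:Frobenius tori}(\ref{T:Frobenius tori c})), the torus $h\,\rho((T_\p)_{\Qbar_\ell})\,h^{-1}$ is generated by $t_{\p,\ell}$, lies in $T_{\Qbar_\ell}$, and has dimension $r=\dim T$, so it equals $T_{\Qbar_\ell}$ and $t_{\p,\ell}$ generates $T_{\Qbar_\ell}$. Thus $\theta:=\inn(h)\circ\rho$ restricts to an isomorphism $(T_\p)_{\Qbar_\ell}\xrightarrow{\sim}T_{\Qbar_\ell}$ with $\theta(t_\p)=t_{\p,\ell}$, and induces a group isomorphism $\theta^*\colon X(T)\xrightarrow{\sim}X(T_\p)$ satisfying $\theta^*(\beta)(t_\p)=\beta(t_{\p,\ell})$ together with a compatible Weyl-group isomorphism $\beta_0\colon W(G,T_\p)\xrightarrow{\sim}W(G_{A,\ell},T)$, characterized by $\theta^*(\beta_0(w)\cdot\beta)=w\cdot\theta^*(\beta)$.

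Next I would carry out the key computation. For $\sigma\in\Gal_{\QQ_\ell}$, viewed inside $\Gal_\QQ$, one has $\varphi_\p(\sigma)\in W(G,T_\p)$ since $\varphi_\p(\Gal_L)\subseteq W(G,T_\p)$ and $\ell$ splits completely in $L$. Using that $t_\p\in G(\QQ)$ is $\sigma$-fixed, that $\sigma(t_{\p,\ell})=\psi_{\p,\ell}(\sigma)^{-1}(t_{\p,\ell})$, and that all characters of the split torus $T$ are defined over $\QQ_\ell$, I would check that
\begin{align*}
\big(\varphi_\p(\sigma)\cdot\theta^*(\beta)\big)(t_\p)
&=\sigma\big(\theta^*(\beta)(t_\p)\big)=\sigma\big(\beta(t_{\p,\ell})\big)\\
&=\big(\psi_{\p,\ell}(\sigma)\cdot\beta\big)(t_{\p,\ell})=\theta^*\big(\psi_{\p,\ell}(\sigma)\cdot\beta\big)(t_\p)
\end{align*}
for every $\beta\in X(T)$. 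As $\alpha\mapsto\alpha(t_\p)$ is injective on $X(T_\p)$ by Theorem~\ref{T:Frobenius tori}(\ref{T:Frobenius tori c}), this forces $\varphi_\p(\sigma)\cdot\theta^*(\beta)=\theta^*(\psi_{\p,\ell}(\sigma)\cdot\beta)$ for all $\beta$, and comparison with the defining property of $\beta_0$ gives $\psi_{\p,\ell}(\sigma)=\beta_0(\varphi_\p(\sigma))$ by faithfulness of the $W(G_{A,\ell},T)$-action on $X(T)$. Writing $\iota_\p\colon W(G,T_\p)\xrightarrow{\sim}W(G)$ for the isomorphism used to define $\varphi_{\p,\ell}$, this yields $\varphi_{\p,\ell}=(\iota_\p\circ\beta_0^{-1})\circ\psi_{\p,\ell}$, so $\beta:=\iota_\p\circ\beta_0^{-1}$ has the required property.

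Finally, for the ``moreover'' I would argue that $\beta$, transported to an isomorphism $W(G_{A,\ell})\xrightarrow{\sim}W(G)$, coincides up to an inner automorphism of $W(G)$ with the isomorphism induced by the inner twist $\rho$: indeed $\theta=\inn(h)\circ\rho$ differs from $\rho$ by an inner automorphism, inner automorphisms act trivially on the abstract Weyl group, and the identifications $W(G,T_\p)\cong W(G)$ and $W(G_{A,\ell},T)\cong W(G_{A,\ell})$ of \S\ref{SS:Weyl definition} are canonical up to inner automorphisms; moreover changing the admissible choices of $t_\p$ or $t_{\p,\ell}$ alters $\beta_0$ only by conjugation. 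Since the $\rho$-induced isomorphism makes no reference to $\p$, the conclusion follows. I expect the main obstacle to be exactly this bookkeeping: $\rho$ and $h$ are defined only over $\Qbar_\ell$, so $\theta$ does not respect the $\Gal_{\QQ_\ell}$-action, and one must verify that $\psi_{\p,\ell}$ and $\varphi_\p|_{\Gal_{\QQ_\ell}}$ nevertheless encode the same action on a single lattice after transport through $\theta^*$, while keeping track that every auxiliary Weyl-group identification is canonical only up to inner automorphisms.
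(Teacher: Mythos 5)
Your proposal is correct and follows essentially the same route as the paper: you adjust the inner-twist isomorphism $\rho$ by an inner automorphism so that it carries $t_\p$ to $t_{\p,\ell}$, transport characters through the resulting torus isomorphism, and exploit that $t_\p$ is $\QQ$-rational while $T$ is split over $\QQ_\ell$ to identify the two Galois actions on one lattice, with the independence of $\p$ coming from the fact that only inner automorphisms were introduced. The only cosmetic difference is that you build the Weyl-group isomorphism in the direction $W(G,T_\p)\to W(G_{A,\ell},T)$ and invert, whereas the paper defines it directly via $n\mapsto\rho^{-1}(n)$.
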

\begin{proof}
Let $\rho\colon G_{\Qbar_\ell} \to (\MT_A)_{\Qbar_\ell}$ be an isomorphism arising from $G$ being an inner twist of $\MT_A$.   With our fixed $t_\p\in T_\p(\QQ)$ as in Theorem~\ref{T:Frobenius tori}, we have $\cl_{\MT_A}(\rho(t_\p))=F_\p$.  By Conjecture~\ref{C:Frob conj},  $\rho(t_\p)$ is conjugate to $t_{\p,\ell}$ in $G_{A,\ell}(\Qbar_\ell)$.  So by composing $\rho$ with an inner automorphism, we may assume that $\rho(t_\p)=t_{\p,\ell}$.  Since the torus $T_\p$ is generated by $t_\p$, we find that $\rho$ restricts to an isomorphism $(T_\p)_{\Qbar_\ell} \to T_{\Qbar_\ell}$.  We thus have a group isomorphism
\[
\rho^* \colon X(T) \xrightarrow{\sim} X(T_\p),\quad \alpha\mapsto \alpha\circ \rho|_{(T_\p)_{\Qbar_\ell}}.
\]

Take any $\sigma\in \Gal_{\QQ_\ell}$ and $\alpha\in X(T_\p)$.  We have $(\varphi_\p(\sigma) \alpha)(t_\p) = \sigma(\alpha)(t_\p) =\sigma(\alpha(t_\p))$ since $t_\p \in T_\p(\QQ)$.    We have $\alpha(t_\p)= \alpha(\rho^{-1}(t_{\p,\ell})) = ((\rho^*)^{-1}(\alpha))(t_{\p,\ell})$.   The character $(\rho^*)^{-1}(\alpha)$ in $X(T)$ is fixed by $\sigma$ since $T$ is a split torus defined over $\QQ_\ell$.   Therefore,
\begin{align*}
(\varphi_\p(\sigma) \alpha)(t_\p)  &=  ((\rho^*)^{-1}(\alpha))(\sigma(t_{\p,\ell}))\\
&= ((\rho^*)^{-1}(\alpha))(\psi_{\p,\ell}(\sigma)^{-1}t_{\p,\ell}))\\
& = ((\rho^*)^{-1}(\alpha))(\psi_{\p,\ell}(\sigma)^{-1} \rho(t_{\p})) \\
&=  ((\rho^* \circ \psi_{\p,\ell}(\sigma) \circ (\rho^*)^{-1})(\alpha))(t_\p).
\end{align*}
Since $t_\p$ generates $T_\p$ and $\alpha\in X(T_\p)$ was arbitrary, we deduce that $\varphi_\p(\sigma)= \rho^* \circ \psi_{\p,\ell}(\sigma) \circ (\rho^*)^{-1}$ holds for all $\sigma\in \Gal_{\QQ_\ell}$.       The isomorphism $N_{G_{A,\ell}}(T)(\Qbar_\ell) \to N_G(T_\p)(\Qbar_\ell)$ given by $n\mapsto \rho^{-1}(n)$ induces an isomorphism $\beta\colon W(G_{A,\ell},T)\to W(G,T_\p)$.    

Take any $n \in N_{G_{A,\ell}}(T)(\Qbar_\ell)$ and let $w:=[n]$ be its image in $W(G_{A,\ell},T)$.    For any $\alpha \in X(T_\p)$ and $t\in T_\p(\Qbar_\ell)$, we have 
\begin{align*}
((\rho^* \circ w \circ (\rho^*)^{-1})(\alpha))(t) &= \alpha( \rho^{-1}(n^{-1} \rho(t) n)) 
= \alpha( (\rho^{-1}(n))^{-1} \, t \, \rho^{-1}(n) ) = (\beta(w)(\alpha))(t).
\end{align*}
Since $\alpha$ and $t$ were arbitrary, we deduce that $\rho^* \circ w \circ (\rho^*)^{-1} = \beta(w)$ for all $w\in W(G_{A,\ell},T)$.  In particular, we have $\varphi_\p(\sigma) = \beta(\psi_{\p,\ell}(\sigma))$ for all $\sigma\in \Gal_{\QQ_\ell}$.   The last statement in the lemma follows by noting that in the proof, we only changed $\rho$ by composition with an inner automorphism.
\end{proof}
   
Let $\bbar\rho_{A,\ell}\colon \Gal_K\to \Aut_{\FF_\ell}(A[\ell])$ be the representation describing the Galois action on the $\ell$-torsion points of $A$.   

\begin{lemma}  \label{L:sieving input}
Take any subset $C$ of $W(G)$ that is stable under conjugation.   Take any prime $\ell \in \Lambda$.  There is a subset $U_\ell$ of $\bbar\rho_{A,\ell}(\Gal_K)$ that is stable under conjugation and satisfies the following properties:
\begin{itemize}
\item 
If $\p\in \calS_A'$ satisfies $\p\nmid \ell$ and $\bbar\rho_{A,\ell}(\Frob_\p) \subseteq U_\ell$, then $\varphi_{\p,\ell}$ is unramified and $\varphi_{\p,\ell}(\Frob_\ell)\subseteq C$.
\item Let $K'$ be a finite extension of $K$ and let $\kappa$ be a subset of $\Gal_K$ that consists of a union of cosets of $\Gal_{K'}$.  Then we have
\[
\frac{|\bbar{\rho}_{A,\ell}(\kappa) \cap U_\ell|}{|\bbar{\rho}_{A,\ell}(\kappa)|} = \frac{|C|}{|W(G)|} + O(1/\ell)
\]
where the implicit constant depends only on $A$ and $K'$.
\end{itemize}
\end{lemma}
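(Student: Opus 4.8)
The plan is to extract $U_\ell$ from the reduction modulo $\ell$ of a reductive model of $G_{A,\ell}$, following the sieving argument of \S5--6 of \cite{MR3264675}.

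\emph{Setting up $U_\ell$.} I would first dispose of the finitely many small $\ell\in\Lambda$ by taking $U_\ell=\emptyset$: the first bullet is then vacuous and the second holds trivially since $|C|/|W(G)|\le 1$ can be absorbed into $O(1/\ell)$ once the implicit constant is allowed to depend on $A$. So assume $\ell$ lies outside a finite set $\Sigma$ depending only on $A$. Because $K_A^\conn=K$, the group $G_{A,\ell}$ is connected, so for $\ell\notin\Sigma$ its Zariski closure $\mathcal G_\ell$ in $\GL_{2g,\ZZ_\ell}$ is a smooth connected reductive $\ZZ_\ell$-group, split over $\ZZ_\ell$ since $\ell$ splits completely in $L$ and $G_L$ is split, and $\bbar\rho_{A,\ell}(\Gal_K)\subseteq\overline{\mathcal G}_\ell(\FF_\ell)$ where $\overline{\mathcal G}_\ell$ is the special fibre. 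Let $\overline{\mathcal T}$ be the special fibre of a split maximal torus of $\mathcal G_\ell$ extending $T$; then $W(\overline{\mathcal G}_\ell,\overline{\mathcal T})=W(G_{A,\ell},T)$, and each regular semisimple $\overline g\in\overline{\mathcal G}_\ell(\FF_\ell)$ has a well-defined \emph{type} $w(\overline g)\in W(G_{A,\ell},T)$, the conjugacy class describing the action of the Frobenius of $\FFbar_\ell/\FF_\ell$ on the character group of $\operatorname{Cent}_{\overline{\mathcal G}_\ell}(\overline g)$ after transport to $\overline{\mathcal T}$. Fixing an isomorphism $\beta\colon W(G_{A,\ell},T)\xrightarrow{\sim}W(G)$ as in Lemma~\ref{L:beta connecting} (which we may take independent of $\p$ up to an inner automorphism), I set
\[
U_\ell:=\{\,\overline g\in\bbar\rho_{A,\ell}(\Gal_K)\ :\ \overline g\ \text{is regular semisimple in}\ \overline{\mathcal G}_\ell\ \text{and}\ w(\overline g)\in\beta^{-1}(C)\,\}.
\]
As $C$ is conjugation-stable and $\beta$ an isomorphism, $\beta^{-1}(C)$ is conjugation-stable and independent of the remaining ambiguity in $\beta$; since $w(\cdot)$ is a conjugacy invariant, $U_\ell$ is conjugation-stable.

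\emph{The first property.} Take $\p\in\calS_A'$ with $\p\nmid\ell$ and $\bbar\rho_{A,\ell}(\Frob_\p)\subseteq U_\ell$; pick $\overline g$ in this conjugacy class, regular semisimple by hypothesis, and a representative $g=\rho_{A,\ell}(\Frob_\p)\in\mathcal G_\ell(\ZZ_\ell)$ reducing to $\overline g$. For $\ell\notin\Sigma$ the centralizer of $g$ is a maximal torus $\mathcal S$ over $\ZZ_\ell$ with $g\in\mathcal S(\ZZ_\ell)$ and special fibre $\operatorname{Cent}_{\overline{\mathcal G}_\ell}(\overline g)$; since $\p\in\calS_A$, the element $g$ generates $\mathcal S_{\QQ_\ell}=T_{\p,\ell}$, so $X(T_{\p,\ell})\to X(\operatorname{Cent}_{\overline{\mathcal G}_\ell}(\overline g))$ is an isomorphism. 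Via Theorem~\ref{T:Frobenius tori} and the identification $X(T_{\p,\ell})\cong\Phi_{A,\p}$, this says reduction at a prime of $\Qbar$ above $\ell$ is injective on $\Phi_{A,\p}$; in particular the distinct roots of $P_{A,\p}(x)$ stay distinct modulo $\ell$, so $\QQ_\ell(\calW_{A,\p})/\QQ_\ell$ is unramified, the $\Gal_{\QQ_\ell}$-action on $\Phi_{A,\p}$ is unramified, and hence $\psi_{\p,\ell}$ and $\varphi_{\p,\ell}=\beta\circ\psi_{\p,\ell}$ are unramified. Moreover the defining relation $\Frob_\ell(t_{\p,\ell})=\psi_{\p,\ell}(\Frob_\ell)^{-1}(t_{\p,\ell})$ identifies $\psi_{\p,\ell}(\Frob_\ell)$ with arithmetic Frobenius acting on $X(T_{\p,\ell})\cong\Phi_{A,\p}$, which under the reduction isomorphism above is precisely the type $w(\overline g)$. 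Therefore $\varphi_{\p,\ell}(\Frob_\ell)=\beta(w(\overline g))\subseteq C$.

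\emph{The density estimate, and the main obstacle.} I would partition $\bbar\rho_{A,\ell}(\kappa)$ into cosets of $H':=\bbar\rho_{A,\ell}(\Gal_{K'})$ and prove $|\overline g H'\cap U_\ell|/|H'|=|C|/|W(G)|+O(1/\ell)$ uniformly over cosets. The crucial input is that $H'$ is large: an open image theorem for $A$ (see \cite{ZywinaEffectiveOpen}) gives that $\bbar\rho_{A,\ell}(\Gal_K)$ contains the image $\overline{\mathcal G}_\ell^{\mathrm{sc}}(\FF_\ell)$ of the $\FF_\ell$-points of the simply connected cover of the derived group for $\ell$ outside a finite set depending on $A$; since $[\bbar\rho_{A,\ell}(\Gal_K):H']$ divides $[K':K]$ and, for $\ell$ large, the quasi-simple factors of $\overline{\mathcal G}_\ell^{\mathrm{sc}}(\FF_\ell)$ have no proper subgroup of index $\le[K':K]$, one gets $\overline{\mathcal G}_\ell^{\mathrm{sc}}(\FF_\ell)\subseteq H'$ for $\ell$ outside a finite set depending on $A$ and $K'$. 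Granting this, the non-regular-semisimple locus is a proper closed subscheme of $\overline{\mathcal G}_\ell$ and so meets $\overline g H'$ in an $O(1/\ell)$-fraction of its points, while among regular semisimple elements the number in $\overline g H'$ of a given type $w$ is $|H'|/|C_{W(G)}(w)|\cdot(1+O(1/\ell))$ by the classical count of regular semisimple elements of a split reductive group over $\FF_\ell$ (Steinberg; equivalently Lang--Weil applied to the variety of pairs consisting of a maximal torus of type $w$ and a regular element of it), with the bounded-rank cocentral quotient $H'/\overline{\mathcal G}_\ell^{\mathrm{sc}}(\FF_\ell)$ affecting only the error. Summing $\sum_w|C_{W(G)}(w)|^{-1}$ over conjugacy classes $w$ contained in $\beta^{-1}(C)$ — equivalently over those contained in $C$, as $\beta$ is an isomorphism — gives $|C|/|W(G)|$. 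The hard part is precisely this last count: making it uniform with error genuinely $O(1/\ell)$, which rests both on the open image input that pins down $H'$ for large $\ell$ and on careful bookkeeping of the cocentral directions and of the non-regular locus — exactly the sieving carried out in \S5--6 of \cite{MR3264675}, which I would follow.
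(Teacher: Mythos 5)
Your proposal is correct and follows essentially the same route as the paper: the paper's proof simply invokes Lemma~5.1 of \cite{MR3264675} to produce $U_\ell$ with the stated properties for $\psi_{\p,\ell}$ and a conjugation-stable $C'\subseteq W(G_{A,\ell},T)$, and then transfers to $\varphi_{\p,\ell}$ via Lemma~\ref{L:beta connecting} by taking $C':=\beta^{-1}(C)$ — exactly your final step. The only difference is that you unpack the content of that quoted lemma (the regular-semisimple/type construction of $U_\ell$, the unramifiedness argument, and the Steinberg/open-image equidistribution count), which the paper treats as a black box.
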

\begin{proof}
By excluding a finite number of $\ell$, one can assume that the group scheme $\calG_{A,\ell}$ of \S5 of \cite{MR3264675} is a split reductive group scheme (we can take $U_\ell=\emptyset$ and adjust the implicit constant appropriately to deal with these excluded primes).   We may assume that our split torus $T$ of $G_{A,\ell}$ is chosen to be the generic fiber of a split maximal torus of $\calG_{A,\ell}$.  

Take any subset $C'$ of $W(G_{A,\ell},T)$ that is stable under conjugation.   Lemma 5.1 of \cite{MR3264675} implies that 
Then there is a subset $U_\ell$ of $\bbar\rho_{A,\ell}(\Gal_K)$ that is stable under conjugation and satisfies the following:
\begin{itemize}
\item
If $\p \in \calS_{A}'$ satisfies $\p\nmid \ell$ and $\bbar\rho_{A,\ell}(\Frob_\p) \subseteq U_\ell$, then $\psi_{\p,\ell}$ is unramified and $\psi_{\p,\ell}(\Frob_\ell)\subseteq C'$.
\item
Let $K'$ be a finite extension of $K$ and let $\kappa$ be a subset of $\Gal_K$ that consists of a union of cosets of $\Gal_{K'}$.  Then we have
\[
\frac{|\bbar{\rho}_{A,\ell}(\kappa) \cap U_\ell|}{|\bbar{\rho}_{A,\ell}(\kappa)|} = \frac{|C'|}{|W(G_{A,\ell}, T_0)|} + O(1/\ell),\\
\]
where the implicit constant depends only on $A$ and $K'$.
\end{itemize}
The lemma is now an immediate consequence of Lemma~\ref{L:beta connecting} by taking $C':=\beta^{-1}(C)$.
\end{proof}

Serre proved that there is a finite Galois extension $K'/K$ such that $(\prod_\ell \rho_{A,\ell})(\Gal_{K'})$ equals $\prod_\ell \rho_{A,\ell}(\Gal_{K'})$ where the products are over all primes $\ell$, cf.~\cite{MR1730973}*{\#138}.  The following is Proposition 2.12 of \cite{MR3264675}.

\begin{lemma}  \label{L:independence density}
Let $\Lambda_0$ be a finite set of rational primes.   For each prime $\ell \in \Lambda_0$, fix a subset $U_\ell$ of the group $\bbar\rho_{A,\ell}(\Gal_K)$ that is stable under conjugation.  Then the set of all nonzero primes ideals $\p \subseteq \OO_K$ for which $\bbar\rho_{A,\ell}(\Frob_\p)\subseteq U_\ell$ for all $\ell\in \Lambda_0$ has density
\[
\sum_{C} \frac{|C|}{|\Gal(K'/K)|}\cdot \prod_{\ell\in \Lambda_0} \frac{|\bbar\rho_{A,\ell}(\Gamma_C) \cap U_\ell |}{|\bbar\rho_{A,\ell}(\Gamma_C)|},
\]
where $C$ varies over the conjugacy classes of $\Gal(K'/K)$ and $\Gamma_C$ is the set of $\sigma\in \Gal_K$ for which $\sigma|_{K'}\in C$. 
\end{lemma}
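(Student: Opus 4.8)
The plan is to reduce the statement to a single application of the Chebotarev density theorem, using Serre's independence theorem (stated just above the lemma) to factor the resulting ``conditional density'' as a product over $\ell\in\Lambda_0$. First I would set $N:=\prod_{\ell\in\Lambda_0}\ell$ and form the finite Galois extension $M:=K'\cdot K(A[N])$ of $K$, discarding the density-$0$ set of $\p$ at which $A$ has bad reduction, or which divide $N$, or which ramify in $M$. For a remaining prime $\p$, the conjugacy class $\bbar\rho_{A,\ell}(\Frob_\p)$ in $\bbar\rho_{A,\ell}(\Gal_K)=\Gal(K(A[\ell])/K)$ depends only on the image of $\Frob_\p$ in $\Gal(M/K)$, so the condition ``$\bbar\rho_{A,\ell}(\Frob_\p)\subseteq U_\ell$ for all $\ell\in\Lambda_0$'' is equivalent to $\Frob_\p$ lying in the conjugation-stable subset $V\subseteq\Gal(M/K)$ obtained as the preimage, under restriction to $K(A[N])$, of the set of $g\in\Gal(K(A[N])/K)$ whose $\ell$-component lies in $U_\ell$ for every $\ell\in\Lambda_0$. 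Chebotarev then gives that our density equals $|V|/|\Gal(M/K)|$, and the remaining work is purely group-theoretic: to evaluate this ratio.

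Next I would fiber $V$ over $\Gal(K'/K)$ by restriction to $K'$. Writing $V_\tau:=\{g\in V:g|_{K'}=\tau\}$ and using that the fibers of $\Gal(M/K)\to\Gal(K'/K)$ are cosets of $\Gal(M/K')$, one gets $|V|/|\Gal(M/K)|=|\Gal(K'/K)|^{-1}\sum_\tau |V_\tau|/|\Gal(M/K')|$. The crucial point is that, since $M=K'\cdot K(A[N])$, restriction to $K(A[N])$ identifies $\Gal(M/K')$ with $(\prod_{\ell\in\Lambda_0}\bbar\rho_{A,\ell})(\Gal_{K'})$, and Serre's theorem (restricted to the factors indexed by $\Lambda_0$ and to the level-$\ell$ quotients) says this image is the \emph{full} product $\prod_{\ell\in\Lambda_0}\bbar\rho_{A,\ell}(\Gal_{K'})$. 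Choosing any lift $g_0\in\Gal(M/K)$ of $\tau$ and letting $\gamma_\ell:=g_0|_{K(A[\ell])}$, the set $\{g|_{K(A[N])}:g\in V_\tau\}$ is $\prod_{\ell\in\Lambda_0}\big(\gamma_\ell\,\bbar\rho_{A,\ell}(\Gal_{K'})\cap U_\ell\big)$, so $|V_\tau|/|\Gal(M/K')|=\prod_{\ell\in\Lambda_0}|\gamma_\ell\,\bbar\rho_{A,\ell}(\Gal_{K'})\cap U_\ell|/|\bbar\rho_{A,\ell}(\Gal_{K'})|$.

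Finally I would rewrite this in the stated shape. The coset $\gamma_\ell\,\bbar\rho_{A,\ell}(\Gal_{K'})$ equals $\bbar\rho_{A,\ell}(\kappa_\tau)$ for $\kappa_\tau:=\{\sigma\in\Gal_K:\sigma|_{K'}=\tau\}$; replacing $\tau$ by a conjugate replaces $\kappa_\tau$ by a $\Gal_K$-conjugate, hence $\bbar\rho_{A,\ell}(\kappa_\tau)$ by a conjugate coset, and since $U_\ell$ is conjugation-stable the ratio $|\bbar\rho_{A,\ell}(\kappa_\tau)\cap U_\ell|/|\bbar\rho_{A,\ell}(\Gal_{K'})|$ depends only on the conjugacy class $C$ of $\tau$. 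Moreover $\bbar\rho_{A,\ell}(\Gamma_C)$ is a disjoint union of $\Gal_K$-conjugate cosets of $\bbar\rho_{A,\ell}(\Gal_{K'})$, so this same ratio equals $|\bbar\rho_{A,\ell}(\Gamma_C)\cap U_\ell|/|\bbar\rho_{A,\ell}(\Gamma_C)|$. Grouping the sum over $\tau$ into conjugacy classes $C$ (each contributing $|C|$ equal terms) then yields exactly the claimed formula. The only genuinely non-formal ingredient is Serre's independence theorem used in the middle step: it is precisely what makes the conditional density factor as a product over $\ell$, and without it one would only have the product as a heuristic. Everything else is Chebotarev plus careful bookkeeping with cosets and conjugation-invariance.
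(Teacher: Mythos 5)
The paper does not actually prove this lemma; it is quoted verbatim as Proposition~2.12 of the author's earlier paper \emph{The splitting of reductions of an abelian variety}, so there is no in-paper argument to compare against. Your proof is correct and is exactly the expected route: Chebotarev on the compositum $M=K'\cdot K(A[N])$, Serre's independence theorem to identify $\Gal(M/K')$ with the full product $\prod_{\ell\in\Lambda_0}\bbar\rho_{A,\ell}(\Gal_{K'})$ (your reduction of Serre's statement about $\prod_\ell\rho_{A,\ell}$ over all $\ell$ to the mod-$\ell$ statement over $\Lambda_0$ is a legitimate projection-and-reduction argument), and then coset bookkeeping. The only imprecision is in the last step: the cosets $\bbar\rho_{A,\ell}(\kappa_\tau)$ for $\tau\in C$ need not be pairwise distinct (e.g.\ when $K'\not\subseteq K(A[\ell])$), so $\bbar\rho_{A,\ell}(\Gamma_C)$ is a disjoint union of the \emph{distinct} ones among them rather than of $|C|$ cosets; but since each distinct coset occurring is some $\bbar\rho_{A,\ell}(\kappa_{\tau'})$ with $\tau'\in C$ and all of these meet $U_\ell$ in sets of equal size, the ratio $|\bbar\rho_{A,\ell}(\Gamma_C)\cap U_\ell|/|\bbar\rho_{A,\ell}(\Gamma_C)|$ still equals the per-coset ratio, and your formula follows. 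So this is a harmless wording issue, not a gap.
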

   
For $\p \in \calS_A$, we know that $\varphi_\p(\Gal_L)\subseteq W(G,T_\p)$.  Let $\varphi_\p'\colon \Gal_L \to W(G)$ be the homomorphism obtained by composing $\varphi_\p$ with an isomorphism $W(G,T_\p)\cong W(G)$ as in \S\ref{SS:Weyl definition}.    It thus remains to show that $\varphi_\p'(\Gal_L)= W(G)$ for all $\p\in \calS_A$ away from a set of density $0$.   

Take any conjugacy class $C$ of $W(G)$.   Define the set 
\[
\calB_C=\{\p\in \calS_A' : \varphi_\p'(\Gal_L)\cap C = \emptyset\}.
\]   
Fix an $x\geq 2$.  Let $\Lambda(x)$ be the set of $\ell\in \Lambda$ satisfying $\ell\leq x$.  For each $\ell\in \Lambda$, let $U_\ell$ be the subset of $\bbar\rho_{A,\ell}(\Gal_K)$ as in Lemma~\ref{L:sieving input}.

 Take any prime $\p\in \calB_C$  not dividing one of the finite number of primes in $\Lambda(x)$.   Suppose that $\bbar\rho_{A,\ell}(\Frob_\p) \subseteq U_\ell$ for some $\ell\in \Lambda(x)$.
Then $\varphi_{\p,\ell}$ is unramified at $\ell$ and satisfies $\varphi_{\p,\ell}(\Frob_\ell) \subseteq C$ by our choice of $U_\ell$.   Since $\ell$ splits completely in $L$, we deduce that $\varphi_\p'(\Gal_L)$ contains an element in the conjugacy class $C$.  However, this is contradicts that $\p \in \calB_C$.     Therefore, $\bbar\rho_{A,\ell}(\Frob_\p) \not\subseteq U_\ell$ for all $\ell\in \Lambda(x)$ and all $\p \in \calB_C$ that do not divide any of the finitely many primes in $\Lambda(x)$.

Using Lemma~\ref{L:independence density},  we deduce that the set $\calB_C$ is contained in a set of primes that has density 
\begin{align*}
\delta(x):=&\sum_{\kappa} \frac{|\kappa|}{|\Gal(K'/K)|}\cdot \prod_{\ell\in \Lambda(x)} \frac{|\bbar\rho_{A,\ell}(\Gamma_\kappa) \cap (\bbar\rho_{A,\ell}(\Gal_K)-U_\ell) |}{|\bbar\rho_{A,\ell}(\Gamma_\kappa)|}\\
=& \sum_{\kappa} \frac{|\kappa|}{|\Gal(K'/K)|}\cdot \prod_{\ell\in \Lambda(x)} \bigg( 1- \frac{|\bbar\rho_{A,\ell}(\Gamma_\kappa)\cap U_\ell) |}{|\bbar\rho_{A,\ell}(\Gamma_\kappa)|}\bigg),
\end{align*}
where $\kappa$ varies over the conjugacy classes of $\Gal(K'/K)$ and $\Gamma_\kappa$ is the set of $\sigma\in \Gal_K$ for which $\sigma|_{K'}\in \kappa$. 

We may assume that $|C|/|W(G)|<1$ since  otherwise $W(G)=1$ and the theorem is immediate.  Choose $\varepsilon>0$ such that $1-|C|/|W(G)|+\varepsilon<1$.    Then by using the cardinality estimates from Lemma~\ref{L:sieving input}, we have
\[
\delta(x) =O\Big( (1-|C|/|W(G)|+\varepsilon)^{|\Lambda(x)|}   \Big),
\]
where the implicit constant depends on $A$, $K'$, $L$ and $\varepsilon$.    Since $0<1-|C|/|W(G)|+\varepsilon <1$, we have $\delta(x)\to 0$ as $x\to +\infty$.   Therefore, the set $\calB_C$ has density $0$.

Take any prime ideal $\p \in \calS_A'-(\cup_{C} \calB_C)$, where the union is over the set of conjugacy classes $C$ of $W(G)$.   By our choice of $\p$, $\varphi_\p'(\Gal_L)$ is a subgroup of $W(G)$ that satisfies $\varphi_\p'(\Gal_L)\cap C \neq \emptyset$ for all conjugacy classes $C$ of $W(G)$.  By Jordan's lemma \cite{MR1997347}*{Theorem~4'}, we deduce that $\varphi_\p'(\Gal_L)=W(G)$.   It thus suffices to show that $\calS_A'-(\cup_{C} \calB_C)$ has density $1$.   This is clear since the sets $\calB_C$ have density $0$, the set $\calS_A$ has density $1$, and the set of prime ideals $\p \subseteq \OO_K$ with $N(\p)$ a prime has density $1$.

\section{Roots systems and minuscule representations} \label{S:roots}

Fix a field $k$ of characteristic $0$ and an algebraic closure $\kbar$.  Let $G$ be a connected  reductive group defined over $k$.   Let 
\[
\rho\colon G\to \GL_V
\]
be a representation whose kernel is a subgroup of the center of $G$, where $V$ is a finite dimensional $k$-vector space.  Fix a maximal torus $T$ of $G$.   For every representation $U\subseteq V \otimes_k \kbar$ of $G_{\kbar}$, denote by $\Omega_U \subseteq X(T)$ the set of weights of $T_{\kbar}$ acting on $U$.  

Assume further that every irreducible representation $U \subseteq V\otimes_k \kbar$ of $G_{\kbar}$ is \defi{minuscule}, i.e., the Weyl group $W(G,T)$ acts transitively on $\Omega_U$. We will later apply the results of this section to the representation from Proposition~\ref{P:minuscule MT}.

Let $R(G,T) \subseteq X(T)$ be the set of roots of $G$ relative to $T$.      The goal of this section is describe how one can compute $R(G,T)$ directly from the action of $W(G,T)$ on $X(T)$ and the set $\Omega_V$.\\

For each $W(G,T)$-orbit $\Omega\subseteq \Omega_V$, define the subset
\[
C_\Omega:= \{ \alpha\beta^{-1}: \alpha,\beta \in \Omega, \alpha\neq \beta\}.
\]
of $X(T)$.  The set $C_\Omega$ is stable under the action of $W(G,T)$ on $X(T)$.     The following algorithm will produce a sequence of nonempty finite subsets $\calS_1,\ldots, \calS_s$ of $X(T)$; it will terminate since the finite sets $\mathcal{U}_i$ that arise have strictly decreasing cardinality.  

\begin{algorithm} \label{algorithm 1}
\hfill
\begin{enumerate}
\item
Set $i=1$ and define $\mathcal{U}_1:=\bigcup_\Omega C_\Omega$,  where $\Omega$ runs over the $W(G,T)$-orbits in $\Omega_V$.   If $\mathcal{U}_1=\emptyset$, then we take $s=0$ and terminate the algorithm.
\item  \label{I:recurse}
Choose a $W(G,T)$-orbit $\OO$ of $\mathcal{U}_i$ that has minimal cardinality.    Choose a $W(G,T)$-orbit $\Omega\subseteq \Omega_V$ for which $\OO \subseteq C_{\Omega}$.  
\item
Define $\calS_i$ to be the set of elements of $C_{\Omega}$ that lie in the span of $\OO$ in $X(T)\otimes_\ZZ\QQ$.
\item
Let $\mathcal{U}_{i+1}$ be the set of elements of $\mathcal{U}_i$ that are not in the span of $\calS_1\cup \cdots \cup \calS_i$ in $X(T)\otimes_\ZZ \QQ$.
\item
If $\mathcal{U}_{i+1}\neq \emptyset$, then increase $i$ by $1$ and go back to step (\ref{I:recurse}).
\end{enumerate}
\end{algorithm}

The following shows that the sets $\calS_i$ are related to the irreducible components of $R(G,T)$.

\begin{prop} \label{P:Lie 1}
For each $1\leq i \leq s$, there is a unique irreducible component $R_i$ of the root system $R(G,T)\subseteq X(T)\otimes_{\ZZ} \RR$ with $R_i\subseteq \calS_i$.  Moreover, 
\[
R(G,T)= \bigcup_{1\leq i \leq s} R_i
\]
is a disjoint union that gives the decomposition of $R(G,T)$ into irreducible root systems.
\end{prop}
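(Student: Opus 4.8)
The plan is to match the sets $\calS_i$ produced by Algorithm~\ref{algorithm 1} with the irreducible components of $R:=R(G,T)$, and everything will be organized as an induction on the step number $i$. Fix a $W(G,T)$-invariant inner product on $X(T)\otimes_\ZZ\RR$, write $R=R^{(1)}\sqcup\cdots\sqcup R^{(m)}$ for the decomposition into irreducible components, and let $E_t$ be the span of $R^{(t)}$; then the $E_t$ are pairwise orthogonal, $W:=W(G,T)=\prod_t W^{(t)}$ with $W^{(t)}:=W(R^{(t)})$, and $W^{(t)}$ acts irreducibly on $E_t$ and trivially on every other $E_u$ and on the orthogonal complement of $\bigoplus_t E_t$. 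Since each orbit $\Omega$ occurring in the algorithm is the weight set of an irreducible constituent of $V\otimes_k\kbar$, its weights differ by elements of the root lattice, so $C_\Omega$, and hence $\mathcal{U}_1$, lies in $\bigoplus_t E_t$. Consequently, for any $W$-orbit $\mathcal{O}\subseteq\mathcal{U}_1$ and any $v=\sum_t v_t\in\mathcal{O}$ with $v_t\in E_t$, one has $\operatorname{span}(\mathcal{O})=\bigoplus_{t:\,v_t\neq 0}E_t$ because $E_t$ is an irreducible $W^{(t)}$-module; in particular $v_t\neq 0$ forces $|W^{(t)}v_t|\geq 2$, a fact I will use repeatedly.

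First I would record two structural facts about the sets $C_\Omega$. Because $\ker\rho$ is central, $d\rho$ is injective on $[\Lie G,\Lie G]$, so each root space $\mathfrak{g}_\alpha$ acts nontrivially on $V\otimes_k\kbar$, hence nontrivially on some irreducible (minuscule) constituent $U$; then $\alpha$ is a difference of two weights of $U$, which form a single $W$-orbit, so $\alpha$ lies in the corresponding $C_\Omega$. Refining this, write $V\otimes_k\kbar=\bigoplus_j U_j$ and put $T_j:=\{t:\ U_j|_{G^{(t)}}\ \text{is nontrivial}\}$. For $t\in T_j$ the simple Lie algebra $\Lie G^{(t)}$ acts faithfully on $U_j$, so \emph{all} root spaces of $R^{(t)}$ act nontrivially on $U_j$, giving $R^{(t)}\subseteq C_{\Omega_{U_j}}$; for $t\notin T_j$ every weight of $U_j$ is orthogonal to $E_t$, so $C_{\Omega_{U_j}}\cap E_t=\{0\}$. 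Hence each $C_{\Omega_{U_j}}$ is contained in and spans $\bigoplus_{t\in T_j}E_t$, while $\bigcup_j T_j=\{1,\dots,m\}$; in particular $R\subseteq\mathcal{U}_1$.

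The heart of the argument, and the step I expect to be the main obstacle, is the claim that \emph{a $W$-orbit $\mathcal{O}$ of minimal cardinality among those contained in $\mathcal{U}_i$ spans a single $E_t$}. Suppose not, so $\operatorname{span}(\mathcal{O})=\bigoplus_{t\in S}E_t$ with $|S|\geq 2$. Pick $v=\mu-\mu'\in\mathcal{O}$ with $\mu,\mu'$ weights of some $U_j$, and write $\mu=w\lambda_j$, $\mu'=w'\lambda_j$ with $w=\prod_t w_t$, $w'=\prod_t w'_t\in W$, so that the $E_t$-component of $w''\lambda_j$ is $w''_t\lambda_j^{(t)}$. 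By the inductive hypothesis the span removed in the first $i-1$ steps is $E_{t_1}\oplus\cdots\oplus E_{t_{i-1}}$ for distinct component indices $t_k$; since $v\in\mathcal{U}_i$ it has a nonzero $E_u$-component for some $u\in S$ outside $\{t_1,\dots,t_{i-1}\}$, and we may choose $t_0\in S\setminus\{u\}$. Replacing the $W^{(t_0)}$-factor of $w$ by $w'_{t_0}$ gives an element of $W$ carrying $\lambda_j$ to a weight $\tilde\mu$ of $U_j$ with $\tilde\mu-\mu'=v-v_{t_0}$; this difference lies in $C_{\Omega_{U_j}}\subseteq\mathcal{U}_1$, is nonzero (its $E_u$-component survives), and lies outside $E_{t_1}\oplus\cdots\oplus E_{t_{i-1}}$, hence lies in $\mathcal{U}_i$, yet its $W$-orbit has cardinality $\prod_{t\in S\setminus\{t_0\}}|W^{(t)}v_t|<\prod_{t\in S}|W^{(t)}v_t|=|\mathcal{O}|$ — contradicting minimality. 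So $|S|=1$, and the same observation shows the single index, call it $t_i$, is not among $t_1,\dots,t_{i-1}$. The delicate points to get right here are that $\tilde\mu$ really is a weight of $U_j$ (this is exactly where minuscularity enters, so that $\Omega_{U_j}$ is a full $W$-orbit) and that $v-v_{t_0}$ is not accidentally swallowed by the already-removed span, which is why $t_0$ was chosen to avoid $u$.

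Finally I would assemble the proposition. At step $i$ the algorithm picks such an $\mathcal{O}$ with $\operatorname{span}(\mathcal{O})=E_{t_i}$ and a $W$-orbit $\Omega\subseteq\Omega_V$ with $\mathcal{O}\subseteq C_\Omega$; since $E_{t_i}\subseteq\operatorname{span}(C_\Omega)$ the index $t_i$ lies in the relevant $T_j$, so $R^{(t_i)}\subseteq C_\Omega$ and therefore $\calS_i=C_\Omega\cap E_{t_i}$ satisfies $R^{(t_i)}\subseteq\calS_i$ and $\calS_i\cap R=E_{t_i}\cap R=R^{(t_i)}$ (a root lying in $E_{t_i}$ must belong to $R^{(t_i)}$, as the $E_t$ are linearly independent). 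Thus $R_i:=R^{(t_i)}$ is the unique irreducible component of $R$ contained in $\calS_i$, which completes the induction. Moreover $\operatorname{span}(\calS_i)=E_{t_i}$ and $\emptyset\neq\calS_i\subseteq\mathcal{U}_i$, so the removed subspace $\operatorname{span}(\calS_1\cup\cdots\cup\calS_i)=E_{t_1}\oplus\cdots\oplus E_{t_i}$ grows properly at each step; hence the $t_i$ are distinct and the algorithm terminates with $\mathcal{U}_1\subseteq E_{t_1}\oplus\cdots\oplus E_{t_s}$. Since $R\subseteq\mathcal{U}_1$ and each $R^{(u)}$ spans $E_u$, every component index must occur among $t_1,\dots,t_s$; therefore $s=m$ and $R(G,T)=\bigsqcup_{i=1}^{s}R_i$ is exactly its decomposition into disjoint irreducible components.
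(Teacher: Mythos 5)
Your proof is correct, and while its combinatorial core coincides with the paper's, it reaches the key containment $R_i\subseteq\calS_i$ by a genuinely different route. Both arguments hinge on the same observation: writing $X(T)\otimes_\ZZ\RR$ as the orthogonal sum of the spans $E_t$ of the irreducible components $R^{(t)}$ (equivalently, the paper's decomposition $X(T)_\QQ=\bigoplus_i X(T_i)_\QQ$ coming from the almost direct product of $G$ into its central torus and almost simple factors), a $W(G,T)$-orbit in $\mathcal{U}_i$ factors as a product of orbits in the individual $E_t$, so the minimal-cardinality orbit must be supported on a single component — your "replace one factor of $w$ by $w'_{t_0}$" trick is the same mechanism as the paper's argument that $|\OO_j|=1$ for all but one $j$, and minuscularity enters in both places to guarantee the modified weight is still a weight. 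Where you diverge is in proving that $R^{(t_i)}$ actually lies in $\calS_i$: the paper reduces to the almost simple, irreducible case and then verifies $R\subseteq C_{\Omega_V}$ case by case against Bourbaki's tables of minuscule highest weights (the same computation it needs anyway for Propositions~\ref{P:Lie 2} and \ref{P:Lie 3}), whereas you give a uniform Lie-theoretic argument — since $\ker\rho$ is central, each simple factor $G^{(t)}$ with $t$ in the support of $U_j$ acts with faithful Lie algebra on $U_j$, so every root space moves some weight space and every root of $R^{(t)}$ is a difference of two weights of $U_j$. Your approach buys a classification-free proof of Proposition~\ref{P:Lie 1} that would work verbatim for non-minuscule constituents at this step (minuscularity is only needed for the orbit-counting part), at the cost of invoking a little representation-theoretic machinery; the paper's approach buys nothing extra for this proposition but costs nothing either, since the tables are unavoidable for the two companion propositions.
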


We now explain how to compute the Lie type of the irreducible root systems $R_i$.   Take any $1\leq i \leq s$.    Let $W_i$ be the quotient of $W(G,T)$ that acts faithfully on $\calS_i$.   Let $r_i$ be the dimension of the span of $\calS_i$ in $X(T)\otimes_\ZZ \QQ$.

\begin{prop}  \label{P:Lie 2}
Fix an $1\leq i \leq s$.   Set $r:=r_i$ and $W:=W_i$.
\begin{romanenum}
\item
If $r\geq 1$, then $R_i$ has type $A_r$ if and only if $|W|=(r+1)!$.
\item
If $r\geq 3$, then $R_i$ has type $B_r$ if and only if $|W|=2^r r!$ and $\calS_i$ consists of at least three $W$-orbits.
\item
If $r\geq 2$, then $R_i$ has type $C_r$ if and only if $|W|=2^r r!$ and $\calS_i$ consists of two $W$-orbits.
\item
If $r\geq 4$, then $R_i$ has type $D_r$ if and only if $|W|=2^{r-1} r!$.
\item
If $r=6$, then $R_i$ has type $E_6$ if and only if $|W|=51840$.
\item
If $r=7$, then $R_i$ has type $E_7$ if and only if $|W|=2903040$.
\item
The root system $R_i$ is not of any of the following types: $G_2$, $F_4$, $E_8$.
\end{romanenum}
\end{prop}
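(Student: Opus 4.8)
The plan is to reduce Proposition~\ref{P:Lie 2} to the classification of minuscule representations of simple groups together with an elementary computation of Weyl--group orders and orbit actions. By Proposition~\ref{P:Lie 1}, $R_i$ is a nonempty irreducible reduced root system, $R_i\subseteq\calS_i$, it is the unique irreducible component of $R(G,T)$ lying in $\calS_i$, and $\calS_i$ and $R_i$ span the same subspace $V_i$ of $X(T)\otimes_\ZZ\QQ$, so $r_i=\rank R_i$. Since $W(G,T)$ is the Weyl group of the root system $R(G,T)$, it is the direct product of the Weyl groups of the irreducible components of $R(G,T)$; the factors attached to components other than $R_i$ fix $V_i$ pointwise (distinct irreducible components are orthogonal) while $W(R_i)$ acts faithfully on $V_i$, so the quotient $W_i$ of $W(G,T)$ acting faithfully on $\calS_i\subseteq V_i$ is canonically identified with $W(R_i)$.

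Next I would pin down $\calS_i$ intrinsically. Let $G_i$ be the simple factor of $G$ whose root system is $R_i$. The orbit $\Omega\subseteq\Omega_V$ chosen in step~(\ref{I:recurse}) of Algorithm~\ref{algorithm 1} satisfies $\OO\subseteq C_\Omega$ and, as noted above, $\OO$ spans $V_i$; it is the weight set of some irreducible minuscule constituent $U$ of $V\otimes_k\kbar$. An irreducible representation of the connected reductive group $G_{\kbar}$ is, up to a character, an external tensor product of irreducible representations of the simple factors, and its Weyl group (a direct product) acts transitively on its weights precisely when each factor does; hence $U$ restricted to $G_i$ is a multiple of a \emph{nontrivial} minuscule irreducible representation $U_i$ of $G_i$ (nontrivial since $\calS_i\supseteq R_i\neq\emptyset$, and because a trivial $U_i$ would force $G_i\subseteq\ker\rho$, contradicting that $\ker\rho$ is central). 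Tracking this through the definition of $C_\Omega$ shows that $\calS_i=C_\Omega\cap V_i$ is exactly the set of nonzero differences $\mu-\nu$ ($\mu\neq\nu$) of weights of $U_i$. In particular $G_i$ admits a nontrivial minuscule representation, which by the classification of minuscule fundamental weights excludes types $G_2$, $F_4$ and $E_8$; this proves (vii), and leaves only types $A_r$, $B_r$, $C_r$, $D_r$, $E_6$, $E_7$ (with the usual conventions $A_1=B_1=C_1$, $B_2=C_2$, $A_3=D_3$).

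For these types one has $|W(A_r)|=(r+1)!$, $|W(B_r)|=|W(C_r)|=2^r r!$, $|W(D_r)|=2^{r-1}r!$, $|W(E_6)|=51840$ and $|W(E_7)|=2903040$. A direct numerical check shows that, within each rank range occurring in (i)--(vi), the value $|W_i|$ determines the type uniquely except for the coincidence $|W(B_r)|=|W(C_r)|$ (the cases $r=6$ and $r=7$ being handled by observing that $51840$ and $2903040$ are not of the forms $(r+1)!$, $2^r r!$ or $2^{r-1}r!$). To separate $B_r$ from $C_r$ I would use the explicit minuscule representations supplied by the previous paragraph: for $C_r$ ($r\geq 2$) the only nontrivial minuscule representation is the $2r$-dimensional standard one, with weights $\pm e_k$, so $\calS_i=\{\pm e_k\pm e_l:k\neq l\}\cup\{\pm 2e_k\}$ is a union of exactly two $W(C_r)$-orbits; for $B_r$ ($r\geq 3$) it is the $2^r$-dimensional spin representation, with weights $\tfrac12(\pm e_1\pm\dots\pm e_r)$, so $\calS_i$ is the set of all nonzero vectors with coordinates in $\{0,\pm 1\}$, a union of $r$ orbits under $W(B_r)$ (indexed by the number of nonzero coordinates), hence at least three orbits once $r\geq 3$. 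Combining the order computation with these orbit counts yields the ``if and only if'' in each of (i)--(vi).

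The routine ingredients are the arithmetic of Weyl--group orders and the standard list of minuscule weights. The step that requires care, and which I expect to be the main obstacle, is the second paragraph: showing that the set $\calS_i$ actually produced by Algorithm~\ref{algorithm 1} coincides with the set of nonzero weight differences of a nontrivial minuscule representation of the simple factor $G_i$ --- in particular that the minimal--cardinality orbit chosen by the algorithm spans $V_i$, so that $\calS_i$ is not an accidental proper subset of $C_\Omega\cap V_i$ --- together with the identification $W_i=W(R_i)$. Once this dictionary between $\calS_i$ and minuscule representations is in place, the orbit counts in types $B$ and $C$ and the order comparisons complete the proof.
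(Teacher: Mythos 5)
Your proposal follows essentially the same route as the paper's proof: reduce to the almost simple factors of $G$, identify $\calS_i$ with the set of nonzero weight differences of a nontrivial minuscule irreducible representation of the corresponding factor, and then read off the type from $|W_i|$ together with the orbit count separating $B_r$ (spin representation, $r$ orbits) from $C_r$ (standard representation, two orbits); your numerical checks and orbit computations agree with the paper's case analysis. The one step you defer is indeed where the work lies, and the paper carries it out in Lemma~\ref{L:Lie theory core}: writing the chosen orbit as $\OO=\{\alpha_1+\cdots+\alpha_m:\alpha_j\in\OO_j\}$ with $\OO_j$ a $W(G_j,T_j)$-orbit of weight differences, so that $|\OO|=\prod_j|\OO_j|$, the minimality of $|\OO|$ forces all but one $\OO_j$ to be a singleton, which must be $\{0\}$ because a nonzero $W(G_j,T_j)$-fixed vector would contradict the irreducibility of the reflection representation (Lemma~\ref{L:W irreducible}); the remaining nonzero orbit $\OO_i$ then spans $X(T_i)_\QQ$ by the same irreducibility, which is exactly what makes $\calS_i$ the \emph{full} set of weight differences of the minuscule constituent on $G_i$. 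One small slip in your sketch: triviality of the restriction $U_i=U|_{G_i}$ of the single constituent $U$ picked by the algorithm would not by itself force $G_i\subseteq\ker\rho$ (other constituents of $V$ could still be nontrivial on $G_i$); the nontriviality of $U_i$ comes instead from the minimality argument above, i.e., from $\OO=\OO_i\neq\{0\}$.
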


\begin{remark}
Note that Proposition~\ref{P:Lie 2} determines the Lie type of each $R_i$.  The conditions on $r$ are imposed to avoid the ambiguity of the exceptional isomorphisms $A_1=B_1=C_1$, $B_2=C_2$ and $A_3=D_3$.
\end{remark}

The following proposition shows how one can determine $R_i$ as a subset of $\calS_i$ by considering the action of $W(G,T)$ on $\calS_i$  (with the type of $R_i$ being computable by Proposition~\ref{P:Lie 2}).

\begin{prop}  \label{P:Lie 3}
Fix an $1\leq i \leq s$.   Set $r:=r_i$, $W:=W_i$ and $\calS:=\calS_i$. 
\begin{romanenum}
\item
If $r\geq 1$ and $R_i$ is of type $A_r$, then $R_i$ is the unique $W$-orbit of $\calS$ of cardinality $r(r+1)$.
\item
If $r\geq 3$ and $R_i$ is of type $B_r$, then $R_i$ is the union of the unique $W$-orbits of $\calS$ of cardinality  $2r$ and $2r(r-1)$.

\item
If $r\geq 2$ and $R_i$ is of type $C_r$, then $R_i=\calS$.

\item
If $r\geq 4$ and $R_i$ is of type $D_r$, then $R_i$ is the unique $W$-orbit of $\calS$ with cardinality $2r(r-1)$.

\item
If $R_i$ is of type $E_6$, then $R_i$ is the unique $W$-orbit of $\calS$ with cardinality $72$.

\item
If $R_i$ is of type $E_7$, then $R_i$ is the unique $W$-orbit of $\calS$ with cardinality $126$.

\end{romanenum}
\end{prop}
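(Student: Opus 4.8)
The plan is to combine the structural input of Propositions~\ref{P:Lie 1} and \ref{P:Lie 2} with an explicit analysis of the difference sets of minuscule weights. By Proposition~\ref{P:Lie 1}, $R_i$ is an irreducible root system spanning the same $\QQ$-subspace as $\calS_i$, of rank $r=r_i$, with Weyl group identified with $W=W_i$; by Proposition~\ref{P:Lie 2} we may assume its Lie type is known. The first step is to recall, from the standard description of the classical and exceptional root systems, the decomposition of $R_i$ into $W$-orbits: for the simply laced types $A_r$, $D_r$, $E_6$, $E_7$ there is a single orbit, of cardinality $r(r+1)$, $2r(r-1)$, $72$, $126$ respectively; for $B_r$ the roots split into the short roots $\{\pm e_k\}$, an orbit of size $2r$, and the long roots $\{\pm e_k\pm e_l\}$, an orbit of size $2r(r-1)$; and for $C_r$ they split into the long roots $\{\pm 2e_k\}$, of size $2r$, and the short roots $\{\pm e_k\pm e_l\}$, of size $2r(r-1)$. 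Thus in each case the set on the right-hand side of the proposition is a union of $W$-orbits equal to $R_i$, and what remains is the \emph{uniqueness} assertion: that no other $W$-orbit of $\calS_i$ has the stated cardinality (and, in type $C_r$, that $\calS_i$ has no elements outside $R_i$ at all).

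For uniqueness, the key point is that by construction $\calS_i\subseteq C_\Omega$ for a $W(G,T)$-orbit $\Omega$ of a weight of the minuscule representation $V$. Writing an irreducible summand $U\subseteq V\otimes_k\kbar$ whose weights form $\Omega$ as an external tensor product over the simple factors of the derived group of $G$, one expects the restriction to $\mathrm{span}(\OO)$ performed in Algorithm~\ref{algorithm 1} to isolate the contribution of a single simple factor, so that after this reduction $\calS_i$ is identified with the difference set $C_\Omega$ of the orbit $\Omega=W\lambda$ of a minuscule weight $\lambda$ of an \emph{irreducible} root system of the known Lie type, $W=W_i$ being its Weyl group. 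One then runs through the classification of minuscule weights (type $A_r$: the $\varpi_k$; type $B_r$: $\varpi_r$; type $C_r$: $\varpi_1$; type $D_r$: $\varpi_1,\varpi_{r-1},\varpi_r$; type $E_6$: $\varpi_1,\varpi_6$; type $E_7$: $\varpi_7$) and, in each case, lists the $W$-orbits occurring in $C_\Omega$ with their cardinalities. For instance $\varpi_1$ of $C_r$ gives $C_\Omega=\{\pm e_k\pm e_l:k<l\}\cup\{\pm 2e_k\}$, which is exactly $R(C_r)$, yielding the type $C_r$ statement; $\varpi_r$ of $B_r$ gives orbits indexed by the support size $m$ of $\tfrac12(\epsilon-\epsilon')$, of cardinality $2^m\binom{r}{m}$ for $1\le m\le r$, the root orbits being $m=1$ and $m=2$; and the exterior powers $\varpi_k$ in type $A_r$ give orbits of cardinality $\binom{r+1}{m}\binom{r+1-m}{m}$, the root orbit being $m=1$. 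Ruling out repeated root-orbit cardinalities then reduces to checking that elementary identities such as $2^m\binom rm=2r$ or $2r(r-1)$ for $m\ge 3$, and $\binom{r+1}{m}\binom{r+1-m}{m}=(r+1)r$ for $m\ge 2$, have no solutions in the admissible ranges, which one does by a short computation plus a monotonicity estimate for large $r$, and by a finite check in types $E_6$ and $E_7$.

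The main obstacle is exactly this last, bookkeeping-heavy step: one must enumerate \emph{all} $W$-orbits in $C_\Omega$ in each case — in particular for the half-spin representations of $D_r$ and the $27$- and $56$-dimensional representations of $E_6$ and $E_7$, where $C_\Omega$ typically contains several non-root orbits — and then exclude every coincidence of cardinalities, as well as justify carefully the reduction to the irreducible-root-system case via the $\mathrm{span}(\OO)$ restriction. I expect the cleanest organization is to carry this out simultaneously with the proofs of Propositions~\ref{P:Lie 1} and \ref{P:Lie 2}, since all three rest on the same case analysis over Lie types and minuscule weights; the assertions about which cardinalities are unique in $\calS_i$ are then read off directly from the orbit tables produced there.
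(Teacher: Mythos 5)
Your proposal follows essentially the same route as the paper: the paper proves Propositions~\ref{P:Lie 1}, \ref{P:Lie 2} and \ref{P:Lie 3} together, first reducing via the almost direct product decomposition of $G$ to the case of an almost simple group with an irreducible minuscule representation (where $\calS_i=C_{\Omega}$ for a single minuscule orbit $\Omega$), and then running the same case-by-case enumeration of $W$-orbits of $C_\Omega$ and their cardinalities over the classical minuscule weights, with the $E_6$ and $E_7$ cases checked by a finite (\texttt{Magma}) computation. The only part you leave heuristic — that the $\operatorname{span}(\OO)$ restriction isolates a single simple factor — is exactly what the paper's Lemma~\ref{L:Lie theory core} makes precise, so your outline matches the paper's argument.
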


Using Propositions~\ref{P:Lie 1}, \ref{P:Lie 2} and \ref{P:Lie 3}, we find that the set of roots $R(G,T)$ can be computed from the set of weights $\Omega_V\subseteq X(T)$ and the action of $W(G,T)$ on $X(T)$.

\subsection{Proof of Propositions~\ref{P:Lie 1}, \ref{P:Lie 2} and \ref{P:Lie 3}}
Without loss of generality, we may assume that $k$ is algebraically closed.  In this proof, we will view $X(T)$ and all the other character groups that arise as additive groups.   For example,  $C_\Omega$ is now the set of $\alpha-\beta$ with distinct $\alpha,\beta \in \Omega$.  Additive notation will make it notational easier to work with the $\RR$-vector space $X(T)_\RR:=X(T)\otimes_\ZZ \RR$ and the $\QQ$-vector space $X(T)_\QQ:=X(T)\otimes_\ZZ \QQ$.  

In \S\ref{SS:special case}, we will prove the three propositions under  additional assumptions.   The general case will be considered in \S\ref{SS:general case} where we will reduce to the special case considered in  \S\ref{SS:special case}.

\subsubsection{Special case} \label{SS:special case}
Throughout \S\ref{SS:special case}, we further suppose that $G$ is semisimple and almost simple, and that the representation $\rho$ is irreducible.   By \emph{almost simple}, we mean that the quotient of $G$ by its (finite) center is a simple algebraic group.  

To ease notation, set $W:=W(G,T)$ and $X:=X(T)$.   Let $r$ be the rank of $G$.   Since $G$ is almost simple, the root system $R:=R(G,T)$ in $X_\RR:=X(T)\otimes_\ZZ \RR$ is irreducible.

\begin{lemma}  \label{L:W irreducible}
The representation $X_\RR$ of the group $W$ is irreducible.   The subspace of $X_\RR$ fixed by $W$ is $0$.
\end{lemma}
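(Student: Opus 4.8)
\textbf{Proof proposal for Lemma~\ref{L:W irreducible}.}

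The plan is to deduce both assertions from the standard structure theory of irreducible root systems together with the hypothesis that $\rho$ is faithful (modulo the center) and minuscule, so that $\Omega_V$ generates a finite-index subgroup of $X$. First I would observe that since $G$ is semisimple, the root lattice $Q:=\ZZ R$ has finite index in $X$, so $X_\RR = Q\otimes_\ZZ\RR = R_\RR$; thus $X_\RR$ is the ambient space of the irreducible root system $R$, and the $W$-action on it is the usual reflection representation of the Weyl group $W(R)$. The key classical fact I would invoke is that for an irreducible root system the reflection representation of its Weyl group on $R_\RR$ is irreducible (this is Bourbaki, \emph{Groupes et algèbres de Lie}, Ch.~V; it follows because $W$ acts transitively on a set of roots spanning $R_\RR$, and any proper $W$-invariant subspace would have to be orthogonal — with respect to a $W$-invariant inner product — to its complement, forcing a decomposition of $R$ into mutually orthogonal pieces, contradicting irreducibility). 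This gives the first statement.

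For the second statement, the subspace $X_\RR^W$ of $W$-fixed vectors: were it nonzero, then since $X_\RR$ is irreducible as a $W$-module and $X_\RR^W$ is a $W$-submodule, we would be forced to have $X_\RR^W = X_\RR$, i.e.\ $W$ acting trivially on $X_\RR$. But $W = W(G,T)$ acts faithfully on $X(T)$ by the discussion in \S\ref{SS:Weyl definition}, hence faithfully on $X_\RR$, so $W$ trivial on $X_\RR$ forces $W=1$; and $W=1$ would mean $R=\emptyset$, contradicting that $G$ is almost simple (hence $R$ nonempty). Therefore $X_\RR^W=0$. Alternatively, and perhaps more cleanly, I would note directly that $X_\RR^W$ is contained in the intersection of the reflection hyperplanes $\{x : s_\alpha(x)=x\} = \alpha^\perp$ over all $\alpha\in R$; since $R$ spans $X_\RR$, this intersection is $\bigcap_{\alpha\in R}\alpha^\perp = (\operatorname{span}_\RR R)^\perp = X_\RR^\perp = 0$.

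The main obstacle — such as it is — is bookkeeping rather than substance: one must make sure the identification $X_\RR = R_\RR$ is legitimate, which requires knowing $R$ spans $X_\RR$, equivalently that $G$ is semisimple (so the center is finite and $X$ has no free quotient on which $R$ vanishes). This is exactly the standing hypothesis of \S\ref{SS:special case}. Given that, both claims are immediate from the cited structure theory, so I expect the proof to be two or three lines citing Bourbaki plus the faithfulness of the $W$-action already recorded in \S\ref{SS:Weyl definition}.
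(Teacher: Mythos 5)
Your proposal is correct and follows essentially the same route as the paper: the paper writes out in full the orthogonal-decomposition argument you attribute to Bourbaki (a $W$-stable subspace $\calV$ and its complement $\calV^\perp$ for a $W$-invariant inner product must each absorb every root, so irreducibility of $R$ forces $\calV=0$ or $\calV=X_\RR$), and then deduces $X_\RR^W=0$ from irreducibility plus faithfulness of the $W$-action exactly as you do. Your alternative argument for the second claim via $\bigcap_{\alpha\in R}\alpha^\perp=0$ is also fine but is not needed.
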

\begin{proof}
Take any subspace $\calV\subseteq X_\RR$ stable under the action of $W$.   Let $(\,,\,)$ be an inner product on $X_\RR$ that is invariant under the action of $W$.  Let $\calV^\perp$ be the orthogonal complement of $\calV$ with respect to this inner product.   For any root $\alpha \in R$, the reflection $s_\alpha$ of $X_\RR$ defined by $s_\alpha(v)=v- (v,\alpha)/(\alpha,\alpha)\cdot \alpha$ lies in $W$.  

Take any root $\alpha\in R$ not in $\calV^\perp$; we have $(v,\alpha)\neq 0$ for some $v\in \calV$.  Therefore, $\alpha = (\alpha,\alpha)/(v,\alpha)\cdot (s_\alpha(v)-v)$ is an element of $\calV$, where we have used that $\calV$ is stable under the action of $W$.   

Therefore, every root $\alpha\in R$ either lies in $\calV$ or $\calV^\perp$.   That $R$ is an irreducible root system in $X_\RR$ implies that $\calV=0$ or $\calV=X_\RR$.   This implies that $X_\RR$ is an irreducible representation of $W$.

Finally, suppose $X_\RR^W \neq 0$.  Since $X_\RR$ is an irreducible representation of $W$, we deduce that $W$ acts trivially on $X_\RR$ which contradicts that $W$ acts faithfully on $X$.   Therefore, $X_\RR^W=0$.
\end{proof}

\begin{lemma}
Algorithm~\ref{algorithm 1} terminates with $s=1$.  We have 
\[
\calS_1 = C_{\Omega_V} = \{ \alpha-\beta: \alpha,\beta \in \Omega_V, \alpha\neq \beta\}
\]
and $\calS_1$ spans $X_\RR$.
\end{lemma}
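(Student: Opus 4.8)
The plan is to leverage the hypothesis (standing in \S\ref{SS:special case}) that $G$ is almost simple and $\rho$ irreducible, hence minuscule, so that $\Omega_V$ is a single $W$-orbit; here $W=W(G,T)$ and $X=X(T)$ is free of rank $r=\rank G$. Granting this, step~(1) of Algorithm~\ref{algorithm 1} produces $\mathcal{U}_1=C_{\Omega_V}=\{\alpha-\beta:\alpha,\beta\in\Omega_V,\ \alpha\neq\beta\}$, since $\Omega_V$ is the only $W$-orbit appearing in the union. First I would verify $\mathcal{U}_1\neq\emptyset$: were it empty, then $|\Omega_V|=1$, so the $W$-stable singleton $\Omega_V$ would be fixed pointwise by $W$ and hence, by Lemma~\ref{L:W irreducible}, lie in $X_\RR^W=0$; thus $\Omega_V=\{0\}$ and $T$ acts trivially on $V$, forcing $T\subseteq\ker\rho\subseteq Z(G)$, which is impossible because $Z(G)$ is finite while $\dim T=r\geq1$. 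So the algorithm does not halt with $s=0$, and in step~(2) the only admissible choice is $\Omega=\Omega_V$, while $\OO$ is some nonempty $W$-orbit contained in $\mathcal{U}_1=C_{\Omega_V}$.

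The crux is then a short representation-theoretic remark. Since $0\notin C_{\Omega_V}$ by definition, the orbit $\OO$ consists of nonzero vectors, so its $\RR$-span in $X_\RR$ is a nonzero $W$-stable subspace; by the irreducibility of the $W$-module $X_\RR$ (Lemma~\ref{L:W irreducible}) this span is all of $X_\RR$. Extending scalars to $\RR$, the $\QQ$-span of $\OO$ inside $X_\QQ$ then has $\QQ$-dimension $\dim_\RR X_\RR=r=\dim_\QQ X_\QQ$, so it equals $X_\QQ$. Consequently every element of $C_{\Omega}=C_{\Omega_V}$ lies in the $\QQ$-span of $\OO$, so step~(3) returns $\calS_1=C_{\Omega_V}$; and since the $\QQ$-span of $\calS_1$ is all of $X_\QQ\supseteq\mathcal{U}_1$, step~(4) gives $\mathcal{U}_2=\emptyset$, so the algorithm terminates with $s=1$. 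Finally $\calS_1=C_{\Omega_V}$ spans $X_\RR$, because it contains $\OO$ whose $\RR$-span is already $X_\RR$ (equivalently, apply Lemma~\ref{L:W irreducible} directly to the nonempty $W$-stable set $C_{\Omega_V}$).

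I do not expect a real obstacle here: once Lemma~\ref{L:W irreducible} is in hand, the argument is essentially a translation between ``$X_\RR$ is an irreducible $W$-module'' and ``the span of any nonempty $W$-orbit of nonzero weight differences fills $X_\RR$''. The only points needing a little care are the degenerate bookkeeping, namely checking that $C_{\Omega_V}$ is genuinely nonempty (equivalently ruling out the trivial one-dimensional $\rho$, which is exactly where almost-simplicity of $G$ and the centrality of $\ker\rho$ are used), and observing that in step~(2) there is no real freedom in choosing $\Omega$ since $\Omega_V$ is a single orbit.
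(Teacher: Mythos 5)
Your proposal is correct and follows essentially the same route as the paper: use minusculity to see $\Omega_V$ is a single $W$-orbit so $\mathcal{U}_1=C_{\Omega_V}$, rule out the degenerate case $|\Omega_V|=1$ via centrality of $\ker\rho$, and invoke Lemma~\ref{L:W irreducible} to show the span of the chosen nonzero orbit $\OO$ is all of $X_\QQ$, forcing $\calS_1=C_{\Omega_V}$ and $\mathcal{U}_2=\emptyset$. The only cosmetic difference is your justification of $\mathcal{U}_1\neq\emptyset$ (via $X_\RR^W=0$ and finiteness of $Z(G)$, rather than the paper's appeal to the image of $\rho$ being a nontrivial semisimple group), which is equally valid.
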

\begin{proof}
Since $V$ is irreducible, it must be a minuscule representation of $G$ by our assumption on $\rho$.  Therefore, $W$ acts transitively on $\Omega_V$.  So in Algorithm~\ref{algorithm 1}, we have $\mathcal{U}_1=C_{\Omega_V}$.

 We have $|\Omega_V|>1$ since $\rho$ is irreducible and it image is a nontrivial semisimple group (by assumption the kernel of $\rho$ is central).  Since $|\Omega_V|>1$, we find that $\mathcal{U}_1$ is nonempty.   Choose any $W$-orbit $\OO$ of $\mathcal{U}_1$ that has minimal cardinality.    We have $\OO \subseteq C_{\Omega_V}$ and $\Omega_V$ is a transitive $W$-set.
 
 Let $\calV$ be the span of $\OO$ in $X(T)\otimes_\ZZ \QQ$.  The set $\OO$, and hence also the vector space $\calV$, is stable under the action of $W$.  Since $\OO\neq \{0\}$, we must have $\calV=X(T)\otimes_\ZZ \QQ$ by Lemma~\ref{L:W irreducible}.
So the set $\calS_1$ of elements in $C_{\Omega_V}$ that lie in $\calV$ is simply $C_{\Omega_V}$.  Since $\OO \subseteq C_{\Omega_V}$, we find that $\calS_1$ spans $\calV\otimes_\QQ \RR = X_\RR$.

Let $\mathcal{U}_{2}$ be the set of elements of $\mathcal{U}_1$ that are not in the span of $\calS_1$ in $\calV$; it is empty.  The lemma follows by applying Algorithm~\ref{algorithm 1} with the above computations.
\end{proof}

Since $\calS_1$ spans $X_\RR$, the quotient $W_1$ of $W$ that acts faithfully on $\calS_1$ is simply $W$.    The dimension $r_1$ of the span of $\calS_1$ in $X_\RR$ is  equal to $r$.   Therefore, $W_1=W$ and $r_1=r$.  We set $\calS:=\calS_1$.\\

Since $R\subseteq X_\RR$ is an irreducible root system, it must be isomorphic to precisely one of the following: 
\begin{align} \label{E:Lie types}
\text{$A_r$, \quad$B_r$ ($r\geq 3$), \quad$C_r$ ($r\geq 2$), \quad$D_r$ ($r\geq 4$),\quad $E_6$, \quad $E_7$, \quad $E_8$,\quad $F_4$,\quad $G_2$;}
\end{align}
note that the constraints on $r$ are added to avoid exceptional isomorphisms.  See \cite{MR0240238} for a description of these irreducible root systems.

Once a choice of positive roots $R^+$ of $R\subseteq X_\RR$ has been made, the irreducible representation $\rho$ of $G$ will have a highest weight $\varpi \in \Omega_V \subseteq X_\RR$.   The actual choice of positive roots does not matter for us since we are interested in the set of weights
\[
\Omega_V = W\cdot \varpi;
\]
recall $W$ acts transitively on $\Omega_V$ since the representation $V$ is irreducible and hence minuscule.

We can now assume that the root system $R$ is one of (\ref{E:Lie types}) with explicit definitions as in \cite{MR0240238}*{Planche I--VI, p.~250--266}.    Bourbaki chooses a set of positive roots $R^+$ and \cite{MR0453824}*{VIII, \S7, no.~3} then gives all highest weights that could arise from a nontrivial irreducible minuscule representation.   This data has been collected in Table~\ref{table:minuscule} below.

For each irreducible root system, we give the highest weights of irreducible minuscule representations with notation as in \cite{MR0240238}*{Planche I--VI}; for the classical types, we will later express these weights in terms of roots.  Note that the root systems $G_2$, $F_4$ and $E_8$ do not have minuscule weights.  In Table~\ref{table:minuscule}, we also given the dimension of the corresponding representation and the order of the Weyl group.  

\begin{table}[ht]
\begin{tabular}{|c|c|c|c|c|c|c|c|}\hline \label{T:table 1} root system & $A_r$ & $B_r$ & $C_r$ & $D_r$ & $D_r$ & $E_6$ & $E_7$ \\\hline 
highest weight & $\varpi_s$ ($1\leq s\leq r$) & $\varpi_r$ & $\varpi_1$ & $\varpi_1$ & $\varpi_{r-1},\varpi_r$ & $\varpi_1$, $\varpi_6$ & $\varpi_7$\\
dimension & $\binom{r+1}{s}$ & $2^r$ & $2r$ & $2r$ & $2^{r-1}$ & $27$ & $56$\\
$|W|$ & $(r+1)!$ & $2^r r!$ & $2^r r!$ & $2^{r-1} r!$ & $2^{r-1} r!$ & $51840$ & $2903040$
 \\
\hline 
 \end{tabular}
\caption{Irreducible root systems and minuscule weights.}
 \label{table:minuscule}
\end{table}
The proof of the propositions, in the special setting of \S\ref{SS:special case}, can now be reduced to verifying the cases from Table~\ref{table:minuscule}.   For each of the root systems $R$ and highest weight $\varpi$ occurring in Table~\ref{table:minuscule}, we compute
\[
\Omega:=W\cdot \varpi,
\]
where $W$ is the Weyl group of $R$.  Now define
\[
\calS:=\{\alpha-\beta : \alpha,\beta \in \Omega,\, \alpha\neq \beta\}.
\]
Case by case, we will check below that $R$ can be determined from the action of $W$ on $\calS$ as stated in Proposition~\ref{P:Lie 3}.  Also we will verify that $\calS$ has two $W$-orbits when $R=C_r$ ($r\geq 2$) and at least three $W$-orbits when $R=B_r$ ($r\geq 3$).    This will complete the proof of Proposition~\ref{P:Lie 3} in the special case of \S\ref{SS:special case}.    Since $s=1$ and $R\subseteq \calS$, this will also complete the proof of Proposition~\ref{P:Lie 1} in the setting of \S\ref{SS:special case}. 

Finally, Proposition~\ref{P:Lie 2} in the setting of \S\ref{SS:special case} is now immediate; note that $r$ as given and $|W|$ will distinguish the cases except for $B_r$ and $C_r$.\\

\noindent $\bullet$
\textbf{Case 1: $R=A_r$ ($r\geq 1$).}

Let $\calV$ be the subspace of $\RR^{r+1}$ consisting of $x\in \RR^{r+1}$ satisfying $\sum_{i=1}^{r+1} x_i = 0$.   Let $e_1,\ldots, e_{r+1}$ be the standard basis of $\RR^{r+1}$.    Then 
\[
R=\{ e_i-e_j : i\neq j\}
\]
is a root system of type $A_r$ in $\calV$.  The Weyl group $W$ acts on $e_1,\ldots, e_{r+1}$ by all permutations.   

We have $\varpi_s=(e_1+\cdots + e_s) - \tfrac{s}{r+1}(e_1+\cdots+e_{r+1})$ for an integer $1\leq s\leq r$.    Therefore, 
\[
\Omega :=W\varpi_s=\Big\{ {\sum}_{i\in I} e_i - \frac{s}{r+1}(e_1+\cdots+e_{r+1}) : I\subseteq \{1,\ldots,r+1\},\, |I|=s\Big\}.
\] 
One can verify that the set $\calS$ consists of 
\begin{align} \label{E:weight Ar}
\alpha = {\sum}_{i\in I} e_i - {\sum}_{j\in J} e_j
\end{align}
with $I$ and $J$ disjoint subsets of $\{1,\ldots, r+1\}$ of equal cardinality $m$, where $1\leq m \leq r+1-s$.  The $W$-orbit of $\alpha \in \calS$ given by (\ref{E:weight Ar}) depends only on $m$ and has cardinality
\[
\binom{r+1}{m} \binom{r+1-m}{m} = \frac{(r+1)!}{(r+1-m)!m!}\frac{(r+1-m)!}{(r+1-2m)! m!} = \binom{r+1}{2m} \binom{2m}{m}.
\]
It is a straightforward exercise to verify that $\binom{r+1}{2m} \binom{2m}{m} \neq \binom{r+1}{2} \binom{2}{1}=r(r+1)$ for all $1< m \leq r$; in fact, we have $\binom{r+1}{2m} \binom{2m}{m} > \binom{r+1}{2} \binom{2}{1}$ whenever $r\geq 6$.   Therefore, $R$ is the unique $W$-orbit in $\calS$ of cardinality $r(r+1)$.\\

\noindent $\bullet$
\textbf{Case 2: $R=B_r$ ($r\geq 3$).}    

Let $e_1,\ldots, e_{r}$ be the standard basis of $\RR^{r}$.    Then 
\[
R=\{\pm e_i\} \cup\{\pm e_i \pm e_i : i <j \}
\]
is a root system of type $B_r$ in $\RR^r$.  The Weyl group $W$ acts on $\pm e_1,\ldots, \pm e_r$ via signed permutations.   Note that $R$ is the union of two $W$-orbits with cardinality $2r$ and $2r(r-1)$.

We have $\varpi_r= \frac{1}{2}(e_1+\cdots +e_r)$ and hence
\[
\Omega:=W\varpi_r=\big\{\tfrac{1}{2}(\pm  e_1 \pm \cdots \pm e_r)\big\}.
\]
The set $\calS$ consists of elements of the form $\alpha:=\sum_{i \in I} \varepsilon_i e_i$, where $I$ is a nonempty subset of $\{1,\ldots, r\}$ and $\varepsilon_i\in \{\pm 1\}$.   Note that the $W$-orbit $W\alpha$ depends only on $m:=|I|\geq 1$ and we have
\[
|W\alpha| =  \binom{r}{m} \cdot 2^m.
\]
Using $1\leq m \leq r$, it is a straightforward exercise to show that $m=1$ and $m=2$ give the unique $W$-orbits in $\calS$ of cardinality $2r$ and $2r(r-1)$, respectively. Therefore, $R$ is the union of the $W$-orbits with $m=1$ and $m=2$.  Finally, observe that $\calS$ consists of precisely $r$ orbits of $W$; they are indexed by $1\leq m \leq r$.\\

\noindent $\bullet$
\textbf{Case 3: $R=C_r$ ($r\geq 2$). }   

Let $e_1,\ldots, e_{r}$ be the standard basis of $\RR^{r}$.    Then 
\[
R=\{ \pm e_i \pm e_j  :  i<j  \} \cup \{ \pm 2 e_i \}
\]
is a root system of type $C_r$ in $\RR^r$.   The Weyl group $W$ acts on $\pm e_1,\ldots, \pm e_r$ via signed permutations.   Note that $R$ is the union of two $W$-orbits with cardinality $2r$ and $2r(r-1)$.

We have $\varpi_1= e_1$ and hence
\[
\Omega:=W\cdot \varpi_1= \{\pm e_1,\ldots, \pm e_r\}.
\]
In this case, one can check directly that $\calS=\{\alpha-\beta: \alpha,\beta\in \Omega,\, \alpha\neq \beta\}$ equals $R$.  Observe that $\calS=R$ consists of two $W$-orbits.
\\

\noindent $\bullet$
\textbf{Case 4: $R=D_r$ ($r\geq 4$) and highest weight $\varpi_1$. }

Let $e_1,\ldots, e_{r}$ be the standard basis of $\RR^{r}$.    Then 
\[
R= \{\pm e_i \pm e_j: i <j \}
\]
is a root system of type $D_r$ in $\RR^r$.   The Weyl group $W$ acts on $\pm e_1,\ldots, \pm e_r$ via signed permutations that only change an even number of signs. 

We are considering the minuscule weight $\varpi_1$ in this case.  We have $\varpi_1=e_1$ and hence
\[
\Omega:=W\cdot \varpi_1= \{\pm e_1,\ldots, \pm e_r\}.
\]
The set $\calS$ is the union of two $W$-orbits $\{ \pm e_i \pm e_j  :  i<j  \}$ and  $\{ \pm 2 e_i \}$ which have cardinality $2r(r-1)$ and $2r$, respectively.   Therefore, $R$ is the unique $W$-orbit in $\calS$ of cardinality $2r(r-1)$.
\\

\noindent $\bullet$
\textbf{Case 5: $R=D_r$ $(r\geq 4)$ and highest weight $\varpi_{r-1}$ or $\varpi_r$.}

Let $e_1,\ldots, e_{r}$ be the standard basis of $\RR^{r}$.    Then 
\[
R= \{\pm e_i \pm e_j : i <j \}
\]
is a root system of type $D_r$ in $\RR^r$.   The Weyl group $W$ acts on $\pm e_1,\ldots, \pm e_r$ via signed permutations that only change an even number of signs. 

We are considering a minuscule weight $\varpi\in\{\varpi_{r-1},\varpi_r\}$ in this case.  We have 
\[
\varpi=\frac{1}{2}(e_1+\cdots+ e_{r-1}+\varepsilon e_r),
\]
where $\varepsilon=-1$ if $\varpi=\varpi_{r-1}$ and $\varepsilon=1$ if $\varpi=\varpi_r$.  Therefore, 
\[
\Omega :=W\varpi=\big\{(\varepsilon_1  e_1 + \cdots +\varepsilon_r e_r)/2: \varepsilon_i\in \{\pm 1\},\, {\prod}_i \varepsilon_i=\varepsilon \big\}.
\] 
One can check that $\calS$ consists of elements of the form 
\[
\alpha:=\sum_{i \in I} \varepsilon_i e_i
\] 
with $\varepsilon_i\in \{\pm 1\}$, where $I$ is a nonempty subset of $\{1,\ldots, r\}$ of \emph{even} cardinality, and $\prod_{i=1}^r \varepsilon_i = \varepsilon$ if $I=\{1,\ldots, r\}$.

 Note that the $W$-orbit $W\alpha$ depends only on the even integer $m:=|I|\geq 1$.  We have
\[
|W\alpha| =  \binom{r}{m} \, 2^{m}
\]
if $m<r$ and $|W\alpha| =  2^{r-1}$ if $m=r$.  
Using that $1\leq m \leq r$ is even, it is a straightforward exercise to show there is a unique $W$-orbit in $\calS$ of cardinality $\binom{r}{2} \, 2^{2} = 2r(r-1)$.  
This orbit is  $\{\pm e_i \pm e_i : i <j \}=R$.    
\\

\noindent $\bullet$
\textbf{Case 6: $R=E_6$ or $R=E_7$.}   

This case is a direct and not particularly interesting computation.  We have also used \texttt{Magma} \cite{MR1484478}, which contains the information on the irreducible root systems from Bourbaki \cite{MR0240238}, to verify this case.  

The following \texttt{Magma} code checks the $E_7$ case; it shows that $\calS$ is the union of $W$-orbits of cardinality $56$, $126$ and $756$, and that $R$ is the $W$-orbit of cardinality $126$.

\begin{verbatimtab}
	RR:=RootSystem("E7");
	R:=Roots(RR);
	W:=ReflectionGroup(RR);
	DynkinDiagram(RR); // same numbering as Bourbaki!

	// minuscule highest weight from Bourbaki in terms of simple roots
	pi:=Vector([2/2,3/2,4/2,6/2,5/2,4/2,3/2]);	
	Omega:=pi^W; 
	S:={a-b : a,b in Omega | a ne b};	
	while #S ne 0 do
		O:=Rep(S)^W;
		print #O, O eq R;
		S:=S diff O;
	end while;
\end{verbatimtab}

The two $E_6$ cases are computed in the same manner.  Besides replacing ``E7'' with ``E6'' in the above code, one also uses one of the two vectors:
\[
(4/3,3/3,5/3,6/3,4/3,2/3),\quad (2/3,3/3,4/3,6/3,5/3,4/3);
\]
 they are the coordinates of $\varpi_1$ and $\varpi_6$ with respect to the simple roots as chosen in \cite{MR0240238}*{Planche V}.  In both cases, we find that $\calS$ is the union of two $W$-orbits of cardinality $72$ and $270$.   The set $R$ is the orbit with $72$ elements.

\begin{remark}
Due to Proposition~\ref{P:minuscule MT}(\ref{P:minuscule MT ii}), we know that the $E_6$ and $E_7$ cases will not occur in our application to Mumford--Tate groups of abelian varieties.
\end{remark}

\subsubsection{General case} \label{SS:general case}

We now consider the general case.  The goal is to reduce to the special case of \S\ref{SS:special case}.

Let $G_0$ be the neutral component of the center of $G$; it is a torus.    Let $G^\der$ be the derived subgroup of $G$; it is a semisimple group.     Let $G_1,\ldots, G_m$ be the nontrivial normal subgroups of $G^\der$ that are connected and minimal with respect to inclusion; these groups are almost simple, i.e., the center $Z(G_i)$ of $G_i$ is finite and the quotient $G_i/Z(G_i)$ is a simple algebraic group.  The group $G$ is an \defi{almost direct product} of $G_0,\ldots, G_m$, i.e., the morphism
\[
\varphi\colon G_0\times G_1\times \cdots \times G_m \to G
\]
defined by multiplication is a surjective homomorphism whose kernel is finite and contained in the center, cf.~\cite{MR1278263}*{Proposition~2.4}.  

For each $0\leq i \leq m$, define $T_i := G_i \cap T$; it is a maximal torus of $G_i$.   Note that $T_0\times \cdots\times T_m$ is a maximal torus of $G_0\times\cdots\times G_m$ and $\varphi(T_0\times \cdots\times T_m)=T$.  We then have a homomorphism 
\[
X(T) \to X(T_0\times \cdots \times T_m) =  X(T_0)\oplus \cdots \oplus X(T_m),\quad \alpha \mapsto \alpha \circ \varphi.
\]
Tensoring up to $\QQ$ gives an isomorphism
\[
X(T)_\QQ = X(T_0)_\QQ \oplus \cdots \oplus X(T_m)_\QQ,
\]
where $X(T_i)_\QQ:=X(T_i)\otimes_\ZZ \QQ$; we will use this isomorphism  as an identification.  We thus have a disjoint union
\begin{align} \label{E:root decomposition}
R(G,T)= \bigcup_{i=1}^m R(G_i,T_i);
\end{align}
this is the decomposition of $R(G,T)$ into irreducible root systems. 

For each $0\leq i \leq m$, we have a natural map $W(G,T)\to W(G_i,T_i)$.  This induces an isomorphism
\[
W(G,T) = W(G_0,T_0)\times \cdots \times W(G_m,T_m)
\]
that we will also use an identification.   From the action of $W(G,T)$ on $X(T)_\QQ$, we see that every $W(G,T)$-orbit $\OO \subseteq X(T)_\QQ$ is of the form 
\[
\OO_0\oplus \cdots \oplus \OO_m :=\{\alpha_0+\cdots+\alpha_m : \alpha_i \in \OO_i\}
\]
for unique $W(G_i,T_i)$-orbits $\OO_i$ in $X(T_i)_\QQ$.  

\begin{lemma}
\label{L:Lie theory core}
We have $s\leq m$.  After reordering the groups $G_1,\ldots, G_m$, the following will hold for all $1\leq i \leq s$:
\begin{alphenum}
\item  
\label{L:Lie theory core a}
The set $\calS_i$ spans the $\QQ$-vector space $X(T_i)_\QQ$. 
\item
\label{L:Lie theory core b}
There is an irreducible and minuscule representation $V_i$ of $G_i$ such that $\calS_i=\{\alpha-\beta : \alpha,\beta\in \Omega_{V_i},\, \alpha\neq \beta\}$, where $\Omega_{V_i}\subseteq X(T_i)$ is the set of weights of $T_i$ acting on $V_i$.
\end{alphenum}
\end{lemma}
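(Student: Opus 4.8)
The plan is to reduce everything to the almost simple, irreducible situation of \S\ref{SS:special case} by tracking how the irreducible constituents of $V$ are distributed over the almost simple factors $G_1,\dots,G_m$. If $m=0$ then $G$ is a torus, $\mathcal{U}_1=\emptyset$, $s=0$, and there is nothing to prove, so assume $m\geq 1$.

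First I would decompose $V\otimes_k\kbar=\bigoplus_j U_j$ into irreducible representations of $G_{\kbar}$; by hypothesis each $U_j$ is minuscule, so the $W(G,T)$-orbits of $\Omega_V$ are exactly the distinct sets $\Omega_{U_j}$. Pulling back along $\varphi$, each $U_j$ is an external tensor product $U_j=\chi_j\boxtimes U_{j,1}\boxtimes\cdots\boxtimes U_{j,m}$ with $\chi_j\in X(T_0)$ and $U_{j,i}$ an irreducible representation of $G_i$, and under the identification $X(T)_\QQ=\bigoplus_i X(T_i)_\QQ$ we have $\Omega_{U_j}=\{\chi_j\}\oplus\Omega_{U_{j,1}}\oplus\cdots\oplus\Omega_{U_{j,m}}$ (a product set, with the $G_0$-component constant). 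Transitivity of $W(G,T)=W(G_1,T_1)\times\cdots\times W(G_m,T_m)$ on this product forces each $W(G_i,T_i)$ to act transitively on $\Omega_{U_{j,i}}$, i.e. each $U_{j,i}$ is minuscule (possibly trivial). Since $\ker\rho$ lies in the center of $G$ while each $G_i$ is almost simple hence noncentral, $G_i\not\subseteq\ker\rho$, so for every $i$ some $U_{j,i}$ is nontrivial. Moreover, whenever $U_{j,i}$ is nontrivial the set $C_{\Omega_{U_{j,i}}}$ spans $X(T_i)_\QQ$: it is a nonempty $W(G_i,T_i)$-stable set not containing $0$, and $X(T_i)_\RR$ is an irreducible $W(G_i,T_i)$-module with no nonzero fixed vector by (the proof of) Lemma~\ref{L:W irreducible} applied to the almost simple group $G_i$. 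Combining these facts, $\mathcal{U}_1$ spans all of $X(T_1)_\QQ\oplus\cdots\oplus X(T_m)_\QQ$.

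The core is an induction on the steps of Algorithm~\ref{algorithm 1}, in which I would prove that at step $i$ every minimal-cardinality $W(G,T)$-orbit $\OO$ of $\mathcal{U}_i$ is supported on a single summand $X(T_{l_i})_\QQ$ with $l_i\notin\{l_1,\dots,l_{i-1}\}$, where $\mathrm{span}(\calS_1\cup\cdots\cup\calS_{i-1})=\bigoplus_{a\le i-1}X(T_{l_a})_\QQ$ by the inductive hypothesis. Indeed, any $x\in\mathcal{U}_i$ lies in some $C_{\Omega_{U_j}}$, so $x=\alpha-\beta$ with $\alpha,\beta\in\Omega_{U_j}$, and $x\notin\mathrm{span}(\calS_1\cup\cdots\cup\calS_{i-1})$ forces $x$ to be nonzero in some summand $X(T_l)_\QQ$ with $l\notin\{l_1,\dots,l_{i-1}\}$. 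If $x$ were nonzero in a second summand as well, then replacing the $l$-component of $\beta$ by that of $\alpha$ yields (because $\Omega_{U_j}$ is a product set) a vector $\beta''\in\Omega_{U_j}$ with $\alpha\neq\beta''$ and $\alpha-\beta''$ equal to the $l$-component of $x$ alone; this element lies in $C_{\Omega_{U_j}}\subseteq\mathcal{U}_1$, still avoids $\mathrm{span}(\calS_1\cup\cdots\cup\calS_{i-1})$, hence lies in $\mathcal{U}_i$, and its $W(G,T)$-orbit is strictly smaller than that of $x$ (each $W(G_a,T_a)$-orbit of a nonzero vector has at least two elements since $X(T_a)_\RR^{W(G_a,T_a)}=0$), contradicting minimality. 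Once $\OO\subseteq X(T_{l_i})_\QQ$ is known, $\OO$ is a nonzero $W(G_{l_i},T_{l_i})$-orbit, hence spans $X(T_{l_i})_\QQ$; so $\calS_i$ — the elements of the chosen $C_{\Omega_{U_j}}$ lying in $\mathrm{span}(\OO)=X(T_{l_i})_\QQ$ — is exactly $\{\alpha-\beta:\alpha,\beta\in\Omega_{U_{j,l_i}},\ \alpha\neq\beta\}$, with $U_{j,l_i}$ nontrivial and minuscule. Taking $V_i:=U_{j,l_i}$ gives (a) and (b), and reordering the $G_i$ so that $l_i=i$ finishes the proof; the $l_i$ being distinct gives $s\le m$ (in fact $s=m$, since $\mathcal{U}_1$ spans all $m$ summands and so the algorithm cannot terminate before all of them have been absorbed).

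The main obstacle is the bookkeeping in the inductive step: one must check that "zeroing out a component'' of a difference $\alpha-\beta$ of weights again produces a difference of weights in the \emph{same} $C_{\Omega_{U_j}}$ (this is exactly where the product structure of $\Omega_{U_j}$ is used), and that the resulting vector still lies outside the already-spanned subspace so that it genuinely belongs to $\mathcal{U}_i$ and violates minimality of $\OO$. Keeping accurate track of which summands $X(T_a)_\QQ$ have already been absorbed into $\mathrm{span}(\calS_1\cup\cdots\cup\calS_{i-1})$ throughout the induction is the delicate part; the rest is routine.
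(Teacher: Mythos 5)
Your proposal is correct and follows essentially the same route as the paper: decompose $V$ into irreducibles, write each as an external tensor product over $G_0\times G_1\times\cdots\times G_m$ so that its weight set is a product set, and use minimality of the chosen $W(G,T)$-orbit together with the fact that $X(T_i)_\RR$ is an irreducible $W(G_i,T_i)$-module with no nonzero fixed vectors to force the orbit into a single summand $X(T_{l_i})_\QQ$. One tiny wording slip: to get $\alpha-\beta''$ equal to the $l$-component of $x$ alone you must take $\beta''$ agreeing with $\beta$ in the $l$-component and with $\alpha$ elsewhere (not "replace the $l$-component of $\beta$ by that of $\alpha$"), but the stated conclusion makes the intent unambiguous and the argument goes through.
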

\begin{proof}
Take any $1 \leq i \leq s$.     Assume, after possibly reordering the groups $G_1,\ldots,G_m$,  that $\calS_j$ spans $X(T_j)_\QQ$ for all $1\leq j< i$.   Note that this assumption is vacuous if $i=1$.

We claim that the following hold:
\begin{itemize}
\item $i\leq m$,
\item 
$\calS_i$ spans $X(T_j)_\QQ$ for some $i\leq j\leq m$,
\item
$\calS_i=\{\alpha-\beta : \alpha,\beta\in \Omega_U,\, \alpha\neq \beta\}$ for some irreducible and minuscule representation $U$ of $G_j$.
\end{itemize}    
We could further assume that $j=i$ after swapping $G_i$ and $G_j$.

Note that once the claim has been proved, the lemma will follow by induction on $i$.\\

We now prove the claim.  In the construction of $\calS_i$, we chose a $W(G,T)$-orbit $\OO\subseteq\mathcal{U}_i$ of minimal cardinality and a $W(G,T)$-orbit $\Omega\subseteq \Omega_V$ for which $\OO\subseteq C_\Omega$.    The orbit $\Omega$ is equal to $\Omega_U$ for some irreducible representation $U\subseteq V$ of $G$; this makes use of our minuscule assumption on $\rho$.

Since $U$ is an irreducible representation of $G$, there are irreducible representations $U_j$ of $G_j$ for all $0\leq j \leq m$ such that the representation $U_0\otimes \cdots \otimes U_m$ of $G_0\times \cdots \times G_m$ is isomorphic to the representation $U$ of $G_0\times \cdots \times G_m$ obtained by composing $\varphi$ with the representation of $G$.    

 We have 
\begin{equation} \label{E:omegaU}
\Omega=\Omega_U  = \{\alpha_0+\cdots +\alpha_m : \alpha_j \in \Omega_{U_j}\},
\end{equation}
where $\Omega_{U_j} \subseteq X(T_j)$ is the set of weights of $G_j$ acting on $U_j$ with respect to $T_j$.  The group $W(G,T)$ acts transitively on $\Omega_U$ since $U$ is minuscule by our assumptions on $\rho$.   Therefore, $W(G_j,T_j)$-acts transitively on $\Omega_{U_j}$ for each $j$, i.e., $U_j$ is a minuscule representation of $G_j$.

From (\ref{E:omegaU}), we have
\[
C_\Omega  =  \Big\{ \sum_{j=1}^m (\alpha_j-\beta_j) : \alpha_j,\beta_j \in \Omega_{U_j}\Big\} \setminus \{0\};
\]
we do not need the $j=0$ term since $|\Omega_{U_0}|=1$ ($U_0$ is an irreducible representation of the torus $G_0$ and hence is $1$-dimensional).  
Therefore, the $W(G,T)$-orbit $\OO\subseteq C_\Omega$ is equal to 
\[
\{\alpha_1+\ldots+\alpha_m : \alpha_j \in \OO_j\},
\] 
where $\OO_j$ is a $W(G_j,T_j)$-orbit in $\{\alpha-\beta: \alpha,\beta \in \Omega_{U_j}\}\subseteq X(T_j)_\QQ$.   In particular, $|\OO|=\prod_{j=1}^m |\OO_j|$.

Suppose that $\OO_j = \{0\}$ for all $i\leq j \leq m$.    Then $\OO$ is contained in $\bigoplus_{1\leq j <i } X(T_j)_\QQ$ which is the span of $\bigcup_{1\leq j <i} \calS_j$ in $X(T)_\QQ$ by the assumption at the beginning of the lemma.  This implies that $\OO$ is not a subset of the set $\mathcal{U}_i$ from Algorithm~\ref{algorithm 1}.  However, this contradicts our choice of $\OO$.  Therefore, $\OO_j \neq \{0\}$ for some $i \leq j \leq m$.   In particular, $i\leq m$.

  We may thus assume that $\OO_i \neq \{0\}$  after possibly swapping $G_i$ and $G_j$.  We have $\OO_i\subseteq \mathcal{U}_i$ since $\OO_i$ is not contained in $\bigoplus_{1\leq j <i } X(T_j)_\QQ=\operatorname{span}_{X(T)_\QQ}\big( \bigcup_{1\leq j <i} \calS_j\big)$.   Since $|\OO|=\prod_{j=1}^m |\OO_j|$ and $\OO_i\subseteq \mathcal{U}_i$, the minimality condition in our choice of $W(G,T)$-orbit $\OO$ implies that $|\OO_j|=1$ for all $j \in \{1,\ldots, m\}-\{i\}$.   We have $\OO_j=\{0\}$ for all $j \in \{1,\ldots, m\}-\{i\}$, since otherwise we would have a nonzero element of $X(T_j)_\QQ$ fixed by $W(G_j,T_j)$ which contradicts Lemma~\ref{L:W irreducible}.   Therefore, $\OO=\OO_i$.

Since $\OO=\OO_i\neq \{0\}$ is a $W(G_i,T_i)$-set, it spans $X(T_i)_\QQ$ by Lemma~\ref{L:W irreducible}.   Recall that $\calS_i$ is the set of elements of $C_\Omega$ that lie in the span of $\OO$ in $X(T)_\QQ$, i.e., $X(T_i)_\QQ$.  Therefore, $\calS_i$ equals $\{\alpha-\beta : \alpha,\beta\in \Omega_{U_i},\, \alpha\neq \beta\}$ and spans $X(T_i)_\QQ$.    We have now verified the claim.
\end{proof}

We shall assume that the groups $G_1,\ldots, G_m$ have been reordered so that the conditions of Lemma~\ref{L:Lie theory core} hold.  

\begin{lemma} \label{L:m eq s}
We have $m=s$.
\end{lemma}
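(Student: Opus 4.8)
The plan is to prove the reverse inequality $m \le s$; together with the bound $s \le m$ supplied by Lemma~\ref{L:Lie theory core} this yields $m = s$. The mechanism is that $\rho$ has central kernel, so no almost-simple factor $G_i$ can act trivially on $V$, and hence none can be ``missed'' by Algorithm~\ref{algorithm 1}.

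First I would record that, by Lemma~\ref{L:Lie theory core}(\ref{L:Lie theory core a}) and the reordering fixed there, $\calS_i$ spans $X(T_i)_\QQ$ for $1 \le i \le s$, so
\[
\operatorname{span}_{X(T)_\QQ}(\calS_1 \cup \cdots \cup \calS_s) = X(T_1)_\QQ \oplus \cdots \oplus X(T_s)_\QQ .
\]
Next I would observe that $\mathcal{U}_{s+1} = \emptyset$ by construction of the algorithm, and that a short induction on $i$ — using $\operatorname{span}(\calS_1 \cup \cdots \cup \calS_{i-1}) \subseteq \operatorname{span}(\calS_1 \cup \cdots \cup \calS_i)$ — identifies $\mathcal{U}_{i+1}$ with the set of elements of $\mathcal{U}_1$ not lying in $\operatorname{span}_{X(T)_\QQ}(\calS_1 \cup \cdots \cup \calS_i)$. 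Consequently $\mathcal{U}_{s+1} = \emptyset$ forces every element of $\mathcal{U}_1 = \bigcup_\Omega C_\Omega$ to lie in $X(T_1)_\QQ \oplus \cdots \oplus X(T_s)_\QQ$. Thus it is enough to exhibit, whenever $s < m$, an element of $\mathcal{U}_1$ with a nonzero component in $X(T_m)_\QQ$.

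To produce such an element I would argue as follows. Since $\ker\rho$ lies in the center of $G$, while $G_m \cap Z(G) \subseteq Z(G_m)$ is finite and $G_m$ is connected of positive dimension, $G_m$ acts nontrivially on $V$; hence there is an irreducible $G_{\kbar}$-subrepresentation $U \subseteq V \otimes_k \kbar$ on which $G_m$ acts nontrivially. Writing $U = U_0 \otimes U_1 \otimes \cdots \otimes U_m$ as a representation of $G_0 \times \cdots \times G_m$ via $\varphi$, as in the proof of Lemma~\ref{L:Lie theory core}, the tensor factor $U_m$ is then a nontrivial irreducible representation of the almost-simple group $G_m$. By the minuscule hypothesis $W(G_m,T_m)$ acts transitively on $\Omega_{U_m}$, and if $|\Omega_{U_m}| = 1$ its single weight would be fixed by $W(G_m,T_m)$, hence equal to $0$ by Lemma~\ref{L:W irreducible}, contradicting the nontriviality of $U_m$; therefore $|\Omega_{U_m}| \ge 2$. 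Choosing distinct $\alpha, \beta \in \Omega_{U_m}$ and any $\gamma_j \in \Omega_{U_j}$ for $0 \le j < m$, the weights $\gamma_0 + \cdots + \gamma_{m-1} + \alpha$ and $\gamma_0 + \cdots + \gamma_{m-1} + \beta$ both lie in $\Omega_U$ by (\ref{E:omegaU}), so $\alpha - \beta \in C_{\Omega_U} \subseteq \mathcal{U}_1$; and $\alpha - \beta$ is a nonzero element of $X(T_m)_\QQ$. When $m > s$ this contradicts the previous paragraph, so $m \le s$, and hence $m = s$.

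I expect the one point needing care to be the bookkeeping in the second paragraph: verifying that $\mathcal{U}_{s+1} = \emptyset$ really pushes all of $\mathcal{U}_1$, and not merely all of $\mathcal{U}_s$, into $\operatorname{span}(\calS_1 \cup \cdots \cup \calS_s)$. Everything else is a direct consequence of Lemmas~\ref{L:W irreducible} and \ref{L:Lie theory core} together with the hypothesis that $\ker\rho$ is central.
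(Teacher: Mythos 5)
Your proposal is correct and follows essentially the same route as the paper: both use the bound $s\le m$ from Lemma~\ref{L:Lie theory core}, and both derive a contradiction from $s<m$ by using the centrality of $\ker\rho$ to find an irreducible $U\subseteq V\otimes_k\kbar$ on which the extra factor acts with finite kernel, then invoking Lemma~\ref{L:W irreducible} to produce a nonzero element of $C_{\Omega_U}$ outside $\bigoplus_{1\le j\le s}X(T_j)_\QQ$, forcing $\mathcal{U}_{s+1}\neq\emptyset$. Your explicit bookkeeping identifying $\mathcal{U}_{s+1}$ with the elements of $\mathcal{U}_1$ outside $\operatorname{span}(\calS_1\cup\cdots\cup\calS_s)$ is a minor elaboration of the same step in the paper's proof.
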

\begin{proof}
We have $s\leq m$ by Lemma~\ref{L:Lie theory core}.    Suppose that $s<m$ and hence there is an integer $s<i \leq m$.

Take an irreducible $U\subseteq V$ representation of $G$.  There are irreducible representations $\rho_j\colon G_j\to \GL_{U_j}$ for all $0\leq j \leq m$ such that the representation $U_0\otimes \cdots \otimes U_m$ of $G_0\times \cdots \times G_m$ is isomorphic to the representation $U$ of $G_0\times \cdots \times G_m$ obtained by composing $\varphi$ with the representation $U$ of $G$.   

Since $G_i$ is almost simple, the kernel of $\rho_i$ is finite or $G_i$.  Therefore, $U\subseteq V$ can be chosen so that $\rho_i\colon G_i \to \GL_{U_i}$ has finite kernel; otherwise, this would contradict that the kernel of $\rho$ is in the center of $G$.     In particular, $\Omega_{U_i} \neq \{0\}$ is a $W(G,T)$-orbit in $\Omega_V$ and is a subset of $X(T_i)_\QQ$.   The set $C_{\Omega_{U_i}} \subseteq X(T_i)_\QQ$ is nonempty; otherwise $\Omega_{U_i}$ would consist of one nonzero element stable under the action of $W(G_i,T_i)$-action and this would contradict Lemma~\ref{L:W irreducible}. 

By Lemma~\ref{L:Lie theory core}, the set $\bigcup_{1\leq j \leq s} \calS_j$ spans $\oplus_{1\leq j\leq s} X(T_j)_\QQ$. However, $\oplus_{1\leq j\leq s} X(T_j)_\QQ$ does not contain $C_{\Omega_{U_i}}$ since $i>s$.   In Algorithm~\ref{algorithm 1}, this implies that $C_{\Omega_{U_i}}\subseteq \mathcal{U}_{s+1}$ and in particular $\mathcal{U}_{s+1}\neq \emptyset$.  However, $\mathcal{U}_{s+1}$ is empty by the definition of $s$.   This contradicts our initial assumption that $s<m$.  Since $s\leq m$, we deduce that $s=m$.
\end{proof}

The decomposition of $R(G,T)$ into irreducible root systems is $R(G,T)= \bigcup_{i=1}^s R(G_i,T_i)$ by (\ref{E:root decomposition}) and Lemma~\ref{L:m eq s}.    For each $1\leq i \leq s$, we have $R(G_i,T_i) \subseteq X(T_i)_\QQ$ and $X(T_i)_\QQ$ is spanned by $\calS_i$ by Lemma~\ref{L:Lie theory core}(\ref{L:Lie theory core a}).\\

Take any $1\leq i \leq s$,   By Lemma~\ref{L:Lie theory core}(\ref{L:Lie theory core b}), there is an irreducible and minuscule representation 
\[
\rho_i\colon G_i \to \GL_{V_i} 
\]
such that $\calS_i=\{\alpha-\beta : \alpha,\beta\in \Omega_{V_i},\, \alpha\neq \beta\}$, where $\Omega_{V_i}\subseteq X(T_i)$ is the set of weights of $T_i$ acting on $V_i$.  The representation $\rho_i$ is nontrivial since otherwise $\calS_i=\{\alpha-\beta : \alpha,\beta\in \Omega_{V_i},\, \alpha\neq \beta\}$ is empty which is impossible by the construction of the sets $\calS_i$.    Since $G_i$ is almost simple, we deduce that the kernel of $\rho_i$ is contained in the center of $G_i$.

Now observe that the assumptions in the beginning of \S\ref{S:roots} hold with $(G,T,\rho)$ replaced by $(G_i,T_i, \rho_i)$.    Applying Algorithm~\ref{algorithm 1} to the triple $(G_i,T_i, \rho_i)$, we obtain $s=1$ and the new set ``$\calS_1$'' agrees with our set $\calS_i$.    The triple $(G_i,T_i, \rho_i)$ satisfies the conditions of \S\ref{SS:special case} and hence Propositions~\ref{P:Lie 1}, \ref{P:Lie 2} and \ref{P:Lie 3} hold for it.  

By Proposition~\ref{P:Lie 1} in the setting of \S\ref{SS:special case}, we find that $R(G_i,T_i)\subseteq \calS_i$.    The set $\calS_i$ does not contain $R(G_j,T_j)$ for any $j\neq i$ since it is a subset of $X(T_i)_\QQ$.   This completes the proof of Proposition~\ref{P:Lie 1}

Let $W_i$ be the quotient of $W(G,T)$ that acts faithfully on $\calS_i$.  Since $\calS_i \subseteq X(T_i)_\QQ$, $W_i$ is also the quotient of $W(G_i,T_i)$ that acts faithfully on $\calS_i$.  Let $r_i$ be the dimension of the span of $\calS_i$ in $X(T)_\QQ$; equivalently, in $X(T_i)_\QQ$.  Propositions~\ref{P:Lie 2} and \ref{P:Lie 3} now follow directly from the special cases of the propositions that hold for $(G_i,T_i, \rho_i)$ with $1\leq i \leq s$.

\section{Finding root datum} \label{S:root datum of MT}

Fix a nonzero abelian variety $A$ over a number field $K$. 
Assume that Conjectures~\ref{C:MT} and \ref{C:Frob conj} hold for $A$.  Denote by $r$ the rank of $\MT_A=G_A^\circ$.  The common rank of the $\ell$-adic monodromy groups $G_{A,\ell}^\circ$ is $r$ since we are assuming the Mumford--Tate conjecture.  Let $\calS_A$ be the set of prime ideals of $\OO_K$ from Definition~\ref{D:SA}; it has density $1/[K_A^\conn:K]$ by Lemma~\ref{L:density of SA}(\ref{L:density of SA i}).  \\

The main goal of this section is to explain how one can compute the root datum of $\MT_A$ directly from the Frobenius polynomials of $P_{A,\q}(x)$ and $P_{A,\p}(x)$ for two appropriately chosen prime ideals $\q$ and $\p$ of $\OO_K$.   

More precisely, we will compute the abstract root datum $\Psi(\MT_A)$ along with the homomorphism $\mu_{\MT_A}\colon \Gal_\QQ \to \Out(\Psi(\MT_A))$ from \S\ref{SS:Galois action}.     By Proposition~\ref{P:characterize inner forms}, this information determines $\MT_A$ up to an inner twist.  Along with this  root data, we will also compute the set of weights, with multiplicities, of the natural representation $\MT_A \subseteq \GL_V$.

\subsection{Choice of primes} \label{SS:choice of primes}

We first select our two prime ideals.
\begin{alphenum}
\item \label{I:choice of q}
Let $\q$ be any prime ideal in $\calS_A$.
\item \label{I:choice of p}
Let $\p$ be any prime ideal in $\calS_A$ for which $[L(\calW_{A,\p}):L] = |W(\MT_A)|$, where $L:=\QQ(\calW_{A,\q})$.
\end{alphenum}

There are many possibilities for $\q$ since $\calS_A$ has positive density.  The following lemma shows that for a fixed $\q$, there are also many possibly $\p$.

\begin{lemma} \label{L:p exists}
Fix $\q\in \calS_A$ and set $L=\QQ(\calW_{A,\q})$.  Then the set of prime ideals $\p \in \calS_A$ that do {not} satisfy (\ref{I:choice of p}) has density $0$.
\end{lemma}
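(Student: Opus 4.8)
The plan is to obtain Lemma~\ref{L:p exists} as a direct consequence of Theorem~\ref{T:Frobenius Galois groups new}. The one point requiring care is to verify that the number field $L=\QQ(\calW_{A,\q})$ is eligible as the base field in that theorem, namely that $k_{\MT_A}\subseteq L\subseteq\Qbar$.

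First I would note that $L$ is a number field, since $P_{A,\q}(x)\in\ZZ[x]$ has only finitely many roots. Because $\q\in\calS_A$, the group $\Phi_{A,\q}$ is torsion-free, so by Theorem~\ref{T:Frobenius tori} the quasi-split inner form $G$ of $\MT_A$ has a maximal torus $T_\q$ whose splitting field is $\QQ(\calW_{A,\q})=L$; as recorded in \S\ref{S:Galois groups of Frob}, this forces $k_G\subseteq L$, and $k_G=k_{\MT_A}$ by Proposition~\ref{P:characterize inner forms}. Hence $k_{\MT_A}\subseteq L\subseteq\Qbar$, so Theorem~\ref{T:Frobenius Galois groups new} applies with this $L$: there is a set $S$ of prime ideals of $\OO_K$ of density $0$ such that $\Gal(L(\calW_{A,\p})/L)\cong W(G_A^\circ)=W(\MT_A)$ for every $\p\notin S$ that splits completely in $K_A^\conn$, and in particular $[L(\calW_{A,\p}):L]=|W(\MT_A)|$ for all such $\p$.

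Finally I would observe that every $\p\in\calS_A$ splits completely in $K_A^\conn$: by the definition of $\calS_A$ the group $\Phi_{A,\p}$ is torsion-free, so this is exactly Lemma~\ref{L:common rank}(\ref{L:common rank i}). Consequently the set of $\p\in\calS_A$ that fail condition~(\ref{I:choice of p}) is contained in $S$, hence has density $0$, which is the assertion of the lemma.

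There is no real obstacle here, since the substantive work is carried out in Theorem~\ref{T:Frobenius Galois groups new}; the only things to be careful about are the bookkeeping showing $k_{\MT_A}\subseteq L$ (so that the theorem may be invoked for this specific $L$) and the fact that the primes of $\calS_A$ are precisely among those to which the exceptional density-$0$ set of that theorem pertains.
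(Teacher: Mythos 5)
Your proposal is correct and follows essentially the same route as the paper: both rest on the fact that the generic Galois group of $P_{A,\p}(x)$ over $L$ is the Weyl group, after checking $k_{\MT_A}\subseteq L$ via the Frobenius torus $T_\q$ of Theorem~\ref{T:Frobenius tori}. The only (harmless) difference is that you cite Theorem~\ref{T:Frobenius Galois groups new} directly, whereas the paper invokes Corollary~\ref{C:Frobenius Galois groups} in the case $K_A^\conn=K$ and then redoes the base-change reduction to $K_A^\conn$ inside the proof of the lemma; since Theorem~\ref{T:Frobenius Galois groups new} is established earlier (in \S\ref{S:Galois groups of Frob}) by exactly that reduction, there is no circularity and your streamlining is legitimate.
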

\begin{proof}
Let $G$ be the quasi-split inner form of the Mumford--Tate group $\MT_A$.   Let $k_G$ be the subfield of $\Qbar$ fixed by $\ker \mu_G$ with notation as in \S\ref{SS:Galois action}; it agrees with the similarly defined field $k_{\MT_A}$ by Proposition~\ref{P:characterize inner forms}.   

First suppose that $K_A^\conn=K$.       Since $\q\in \calS_A$, we have a maximal torus $T_\q$ of $G$ as in Theorem~\ref{T:Frobenius tori}.   The splitting field of the torus $T_\q$ is $L=\QQ(\calW_{A,\q})$ and hence  $k_G \subseteq L$.  Corollary~\ref{C:Frobenius Galois groups} implies that $\Gal(L(\calW_{A,\p})/L) \cong W(\MT_A)$ for all nonzero prime ideals $\p$ of $\OO_K$ away from a set of density $0$; note that this uses our assumption $K_A^\conn=K$.  This proves the lemma in the special case $K_A^\conn=K$.

We now consider the general case.     Let $A'$ be the base change of $A$ to $K_A^\conn$.  Note that $A$ and $A'$ have the same Mumford--Tate group and hence the same $G$.  Take any prime ideal $\p \in \calS_{A}$ and choose a prime ideal $\P$ of $\OO_{K_A^\conn}$ dividing $\p$.   The prime $\p$ splits completely in $K_A^\conn$ by Lemma~\ref{L:common rank}(\ref{L:common rank i}) and hence $N(\P)=N(\p)$.    By Lemma~\ref{L:density of SA}(\ref{L:density of SA ii}), we have $\P \in \calS_{A'}$ and $P_{A',\P}(x)=P_{A,\p}(x)$.    In particular, $L(\calW_{A',\P})=L(\calW_{A,\p}) $.  Therefore, we have an inequality
\begin{align} \label{E:density conn}
&|\{ \p \in \calS_A : N(\p)\leq x,\, [L(\calW_{A,\p}):L] \neq |W(\MT_A)| \}| \\
\notag
\leq & |\{ \P \in \calS_{A'} : N(\P)\leq x,\, [L(\calW_{A',\P}):L] \neq |W(\MT_{A'})|\}|.
\end{align}
A similar argument as above shows that $L=\QQ(\calW_{A',\mathfrak{Q}})$ for any prime ideal $\mathfrak{Q}$ of $\OO_{K_A^\conn}$ that divides $\q$.    By the connected case of the lemma that we have already proved, we have
\[
|\{ \P \in \calS_{A'} : N(\P)\leq x,\, [L(\calW_{A',\P}):L] \neq  | W(\MT_{A'})|\}| = o(x/\log x).
\]
The general case of the lemma then follows from the inequality (\ref{E:density conn}).
\end{proof}

For the rest of \S\ref{S:root datum of MT}, we fix prime ideals $\p$ and $\q$ as in (\ref{I:choice of q}) and (\ref{I:choice of p}) above.  For our later algorithmic considerations, we assume that the Frobenius polynomials $P_{A,\q}(x)$ and $P_{A,\p}(x)$ have been computed.

\subsection{Frobenius torus at \texorpdfstring{$\p$}{p}} \label{SS:Frob torus at p}

Let $G$ be the quasi-split inner form of the Mumford--Tate group $\MT_A$ and let
\[
\rho\colon G_{\Qbar}\xrightarrow{\sim} (\MT_A)_{\Qbar} \subseteq \GL_{V_A,\Qbar},
\] 
be a representation  where the isomorphism is one arising from $G$ being an inner twist of $\MT_A$.     \commentr{$V_A$}
  
By Theorem~\ref{T:Frobenius tori}, there is a maximal torus $T_\p$ of $G$ for which we have an isomorphism $X(T_\p)=\Phi_{A,\p}$ of $\Gal_\QQ$-modules which we will use as an identification.     Explicitly, there is a $t_\p \in T_\p(\QQ)$ as in Theorem~\ref{T:Frobenius tori} such that the isomorphism is given by $\alpha\mapsto \alpha(t_\p)$.

\begin{remark}
We will construct the root datum of $G$ with respect to the maximal torus $T_\p$.   Our identification $X(T_\p)=\Phi_{A,\p}$ gives a tangible place to start since the group $\Phi_{A,\p}$ with its Galois action are computable directly from the polynomial $P_{A,\p}(x)$, cf.~\S\ref{SS:computing PhiAp}.  
\end{remark}

 Moreover, under this identification, we can assume by Theorem~\ref{T:Frobenius tori} that $\Omega_\p:=\calW_{A,\p}$ is the set of weights in $X(T_\p)$ of the representation $\rho$.   The multiplicity of a weight $\alpha$ agrees with the multiplicity of it viewed as a root of $P_{A,\p}(x)$.

\subsection{Computing the Weyl group}

Define the field $L:=\QQ(\calW_{A,\q})$ and the Galois group 
\[
W:=\Gal(L(\calW_{A,\p})/L).
\]  
The group $W$ acts faithfully on $\calW_{A,\p}$ and hence also faithfully on the group $\Phi_{A,\p}$.   Using our identification $X(T_\p)=\Phi_{A,\p}$, we can thus view $W$ as a subgroup of $\Aut_\ZZ(X(T_\p))$.   We can also identify $W(G,T_\p)$ with a subgroup of $\Aut_\ZZ(X(T_\p))$.   The following lemma shows that $W$ recovers the Weyl group of $G$.

\begin{lemma}
We have $W=W(G,T_\p)$.
\end{lemma}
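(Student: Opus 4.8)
The plan is to show the two subgroups $W$ and $W(G,T_\p)$ of $\Aut_\ZZ(X(T_\p))$ coincide by proving each is contained in the other, using the cardinality count from our choice of $\p$ to upgrade an inclusion to an equality. First I would recall from \S\ref{SS:overview} and Theorem~\ref{T:Frobenius tori} that under the identification $X(T_\p)=\Phi_{A,\p}$, the natural action of $\Gal_\QQ$ on $X(T_\p)$ is the representation $\varphi_\p=\varphi_{G,T_\p}\colon \Gal_\QQ\to \Aut(\Psi(G,T_\p))\subseteq \Aut_\ZZ(X(T_\p))$, and the splitting field of the torus $T_\p$ is $\QQ(\calW_{A,\p})$. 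Since $t_\q$ generates the maximal torus $T_\q$, the field $L=\QQ(\calW_{A,\q})$ is the splitting field of $T_\q$, so $G_L$ is split and in particular $k_G\subseteq L$, where $k_G$ is the field from \S\ref{S:Galois groups of Frob} cutting out $\mu_G$. Restricting $\varphi_\p$ to $\Gal_L$ therefore lands in $W(G,T_\p)$ (this is exactly the statement that for $\sigma\in\Gal_L$, $\mu_G(\sigma)=1$, i.e.\ $\varphi_\p(\sigma)\in W(G,T_\p)$), and its image is $\Gal(L(\calW_{A,\p})/L)=W$ via the faithful action on $\Phi_{A,\p}\cong X(T_\p)$. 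Hence $W\subseteq W(G,T_\p)$ as subgroups of $\Aut_\ZZ(X(T_\p))$.

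Next I would use the cardinality. Since $G$ and $\MT_A$ are inner forms of one another, Proposition~\ref{P:characterize inner forms} (or just the fact that inner forms share the same absolute root datum) gives $|W(G,T_\p)|=|W(G)|=|W(\MT_A)|$. By our choice of $\p$ in \S\ref{SS:choice of primes}\eqref{I:choice of p}, we have $[L(\calW_{A,\p}):L]=|W(\MT_A)|$, and $[L(\calW_{A,\p}):L]=|\Gal(L(\calW_{A,\p})/L)|=|W|$. Therefore $|W|=|W(\MT_A)|=|W(G,T_\p)|$, and combined with the inclusion $W\subseteq W(G,T_\p)$ from the previous paragraph we conclude $W=W(G,T_\p)$.

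The main point to be careful about — though it is not really an obstacle, having been set up already — is the identification of $\Gal(L(\calW_{A,\p})/L)$ with a subgroup of $W(G,T_\p)$ acting on $X(T_\p)$: this relies on the isomorphism $X(T_\p)\xrightarrow{\sim}\Phi_{A,\p}$ of $\Gal_\QQ$-modules from Theorem~\ref{T:Frobenius tori}\eqref{T:Frobenius tori c}, the fact that $\Phi_{A,\p}$ is generated by $\calW_{A,\p}$ (so the Galois action on $\Phi_{A,\p}$ is faithful as a quotient of $\Gal(\QQ(\calW_{A,\p})/\QQ)$ on that set), and the description in \S\ref{SS:Weyl definition} of $W(G,T_\p)$ as a subgroup of $\Aut_\ZZ(X(T_\p))$. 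Everything needed is in hand, so the proof is a short matching of an inclusion with an equality of orders; I would write it in two or three sentences, citing Theorem~\ref{T:Frobenius tori}, the definition of $L$ and $\p$, Corollary~\ref{C:Frobenius Galois groups} (or the relevant part of \S\ref{S:Galois groups of Frob}) for why $\varphi_\p(\Gal_L)\subseteq W(G,T_\p)$, and Proposition~\ref{P:characterize inner forms} for $|W(G)|=|W(\MT_A)|$.
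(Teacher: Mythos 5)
Your proposal is correct and follows essentially the same route as the paper: establish $W\subseteq W(G,T_\p)$ via $k_G\subseteq L=\QQ(\calW_{A,\q})$ (the splitting field of $T_\q$) so that $\varphi_\p(\Gal_L)\subseteq W(G,T_\p)$, then invoke the cardinality condition $[L(\calW_{A,\p}):L]=|W(\MT_A)|=|W(G,T_\p)|$ from the choice of $\p$ to upgrade the inclusion to an equality. The extra care you take in justifying the identification of $\Gal(L(\calW_{A,\p})/L)$ with a subgroup of $\Aut_\ZZ(X(T_\p))$ is consistent with the setup the paper has already put in place.
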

\begin{proof}
  Since $\q\in \calS_A$, we have a maximal torus $T_\q$ of $G$ as in Theorem~\ref{T:Frobenius tori}.   The splitting field of the torus $T_\q$ is the field $L$.  With notation as in \S\ref{S:Galois groups of Frob}, we have $k_G \subseteq L$ and hence $\varphi_\p(\Gal_L) \subseteq W(G,T_\p)$.  Using our identification of $W$ with a subgroup of $\Aut_\ZZ(X(T_\p))$, we have $W \subseteq W(G,T_\p)$.   By our choice of $\p$, we find that $|W|=[L(\calW_{A,\p}):L]$ equals $|W(\MT_A)|=|W(G,T_\p)|$.    Therefore, $W=W(G,T_\p)$.
\end{proof}

\subsection{Computing the roots} \label{SS:computing roots}

So far we have described how to compute the following from our polynomials $P_{A,\q}(x)$ and $P_{A,\p}(x)$:
\begin{itemize}
\item
$X(T_\p)$ as a $\Gal_\QQ$-module,  
\item
the Weyl group $W(G,T_\p)$ and its action on $X(T_\p)$, 
\item 
the set of weights $\Omega_\p \subseteq X(T_\p)$, with multiplicities, of the faithful representation $\rho\colon G_{\Qbar}\hookrightarrow (\GL_{V_A})_\Qbar= \GL_{V_A\otimes_\QQ \Qbar}$.   
\end{itemize} 
We now explain how to compute the set $R:=R(G,T_\p)\subseteq X(T_\p)$ of roots of $G$ with respect to $T_\p$. \\

By Proposition~\ref{P:minuscule MT}, every irreducible representation $U \subseteq V_A\otimes_\QQ \Qbar$ of $G_{\Qbar}$ is minuscule.     We are thus in the setting of \S\ref{S:roots} with the representation $\rho\colon G_{\Qbar} \hookrightarrow \GL_{V_A\otimes_\QQ \Qbar}$.   Applying Algorithm~\ref{algorithm 1}, we obtain nonempty finite subsets $\calS_1,\ldots, \calS_s$ of $X(T_\p)$ for some integer $s\geq 0$.   Note this algorithm only makes use of the set $\Omega_\p$ and the action of $W(G,T_\p)$ on $X(T_\p)$.  

By Proposition~\ref{P:Lie 1}, there is a unique irreducible component $R_i$ of the root system $R(G,T_\p)\subseteq X(T_\p)\otimes_{\ZZ} \RR$ with $R_i\subseteq \calS_i$ for each $1\leq i \leq s$.  Moreover, 
\[
R(G,T_\p)= \bigcup_{1\leq i \leq s} R_i
\]
is a disjoint union and is the decomposition into irreducible root systems.   By Proposition~\ref{P:Lie 2} and \ref{P:Lie 3}, we can compute the Lie type of each $R_i$ and then compute $R_i$ itself; note that these propositions only requires the sets $\calS_i \subseteq X(T_\p)$ and the action of $W(G,T_\p)$.  We can thus compute $R(G,T_\p)$!

\subsection{Computing the root datum of the Mumford--Tate group} \label{SS:computing MT}

Define the group $Y:=\Hom_\ZZ(X(T_\p),\ZZ)$; it has a natural pairing with $X(T_\p)$ and we can identify it with the group of cocharacters $X^\vee(T_\p)$.   Lemma~\ref{L:root datum from W} implies that the root datum $\Psi(G,T_\p)$ can be determined from the action of $W(G,T_\p)$ on $X(T_\p)$ and the set of roots $R(G,T_\p)$; moreover, the proof shows how to construct the bijective map $\alpha\mapsto \alpha^\vee$ from $R(G,T_\p)$ to the set of coroots $R^\vee(G,T_\p)$.   We thus know how to compute the root datum $\Psi(G,T_\p)$ since from \S\ref{SS:computing roots} we have already found $X(T_\p)$, $R(G,T_\p)$ and the action of $W(G,T_\p)$ on $X(T_\p)$.    Since we now know the root datum $\Psi(G,T_\p)$ and the action of $\Gal_\QQ$ on $X(T_\p)$, we can also compute $\mu_G \colon \Gal_\QQ \to \Out(\Psi(G,T_\p))$.  

Since $\MT_A$ is an inner form of $G$, we may take $\Psi(\MT_A)$ to be $\Psi(G,T_\p)$.  The homomorphism $\mu_{\MT_A}\colon \Gal_\QQ\to \Out(\Psi(\MT_A))$ agrees with $\mu_G$ by Lemma~\ref{P:characterize inner forms}.   

Finally note that $\Omega_\p \subseteq X(T_\p)$ gives the weights of the representation $\MT_A \subseteq \GL_V$.  The multiplicity of a weight $\alpha\in \Omega_\p$ of $\rho$ agrees with the multiplicity of $\alpha$ as a root of $P_{A,\p}(x)$ (recall we are identifying $\Omega_\p$ with $\calW_{A,\p}$).

\subsection{Computing the root datum of the Hodge group} \label{SS:root data for Hg}

The Hodge group $\Hg_A$ of $A$ was defined in \S\ref{SS:MT and Hodge groups}.   Recall that $\MT_A$ contains  the group of scalars in $\GL_V$ and that $\MT_A=\GG_m \cdot \Hg_A$. 

We first describe an algebraic subgroup $H$ of $G$ that is a quasi-split inner form of $\Hg_A$.   We have $\MT_A\subseteq \GSp_{V,E}$ and $\Hg_A = \MT_A \cap \Sp_{V,E}$, where $E$ is an appropriate nondegenerate alternating pairing on $V$.    Let $\nu\colon \GSp_{V,E}\to (\GG_m)_\QQ$ be the similitude character. Define the homomorphism 
\[
\mu:= \nu\circ \rho \colon G_{\Qbar}\to (\GG_m)_{\Qbar}.
\]    
For each $\sigma\in \Gal_\QQ$, we have
\[
\sigma(\mu)=\nu \circ \sigma(\rho) \circ \rho^{-1} \circ \rho= \nu\circ \rho=\mu,
\]
where we have used that $\sigma(\rho)\circ \rho^{-1}$ is an inner automorphism of $(\MT_A)_{\Qbar}$ and hence $\nu\circ (\sigma(\rho)\circ \rho^{-1})= \nu|_{(\MT_A)_{\Qbar}}$.    Therefore,  $\mu$ arises by base change from a unique homomorphism $G\to \GG_m$ defined over $\QQ$ that we also denote by $\mu$.    Let $H \subseteq G$ be the kernel of $\mu$.   The homomorphism $\rho$ induces an isomorphism between $H_{\Qbar}$ and $(\Hg_A)_{\Qbar}$.  In particular, $H$ is a connected and reductive group defined over $\QQ$.  The group $H$ is an inner form of $\Hg_A$ via $\rho$; note that each inner automorphism of $(\MT_A)_{\Qbar}$ arises from conjugation by some $\Hg_A(\Qbar)$ since $\MT_A=\GG_m\cdot \Hg_A$.   The centers of $\MT_A$ and $G$ are naturally isomorphic since they are inner forms of each other, so we can view the group $\GG_m$ of homotheties as a subgroup of $G$ and hence $G=\GG_m\cdot H$.   Since $G$ is quasi-split, we find that $H$ is a quasi-split inner form of $\Hg_A$ and is given by the isomorphism $\rho|_{H_\Qbar} \colon H_\Qbar \to (\Hg_A)_\Qbar$.

Thus to compute the abstract root datum $\Psi(\Hg_A)$ and the homomorphism $\mu_{\Hg_A}\colon \Gal_\QQ \to \Out(\Psi(\Hg_A))$, it suffices to compute them for the group $H$.    Let $T$ be the unique maximal torus of $H$ contained in $T_\p$.    Restriction to $T_{\Qbar}$ defines a surjective homomorphism
\begin{align} \label{E:X quotient}
X(T_\p) \to X(T)
\end{align}	
that respects the $\Gal_\QQ$-actions.   Using that $G=\GG_m\cdot H$, we find that (\ref{E:X quotient}) gives a bijection between $R(G,T_\p)$ and $R(H,T)$.      The Weyl group $W(G,T_\p)$ fixes the kernel of (\ref{E:X quotient}) and the induced action on $X(T)$ is faithfully.   The action of the group $W(G,T_\p)$ on $X(T)$ agrees with $W(H,T)$.    The set of roots $R(H,T) \subseteq X(T)$ and the action of $W(H,T)$ on $X(T)$ determine the root datum $\Psi(H,T)$ by Lemma~\ref{L:root datum from W} (and the proof shows how to compute it).  The homomorphism (\ref{E:X quotient}) respects the $\Gal_\QQ$-actions, so from $\Psi(H,T)$ and the $\Gal_\QQ$-action on $X(T_\p)$, we can compute $\mu_H\colon \Gal_\QQ \to \Out(\Psi(H))$.\\

We have already described how to compute $W(G,T_\p)$ and $R(G,T_\p)$.   So to make everything computable, it remains to describe the homomorphism (\ref{E:X quotient}) with respect to our identification $X(T_\p)=\Phi_{A,\p}$.   The following lemma shows that we may take $X(T)=\Phi_{A,\p}/\langle N(\p) \rangle$.
 
\begin{lemma} \label{L:kernel Np}
Under our identification $X(T_\p)=\Phi_{A,\p}$, the restriction homomorphism $X(T_\p)\to X(T)$, $\alpha\mapsto \alpha|_{T_\Qbar}$ has kernel $\langle N(\p) \rangle$.
\end{lemma}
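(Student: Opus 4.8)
The statement to prove is Lemma~\ref{L:kernel Np}: under the identification $X(T_\p)=\Phi_{A,\p}$ coming from $\alpha\mapsto\alpha(t_\p)$, the kernel of the restriction map $X(T_\p)\to X(T)$, $\alpha\mapsto\alpha|_{T_\Qbar}$, is the subgroup $\langle N(\p)\rangle$ generated by $N(\p)\in\Phi_{A,\p}$. The natural approach is to identify the kernel of restriction with the character group of the quotient torus $T_\p/T$, and then to recognize $T_\p/T$ via the similitude character. First I would recall that the exact sequence of tori $1\to T\to T_\p\to T_\p/T\to 1$ gives, upon taking character groups (an exact functor on tori over a field of characteristic $0$ since everything is a finitely generated free $\ZZ$-module with $\Gal_\QQ$-action), a short exact sequence $0\to X(T_\p/T)\to X(T_\p)\to X(T)\to 0$. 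So the kernel in question is exactly the image of $X(T_\p/T)\hookrightarrow X(T_\p)$, i.e. the characters of $T_\p$ that factor through $T_\p/T$.

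Next I would identify $T_\p/T$. By construction $H=\ker(\mu\colon G\to\GG_m)$ where $\mu=\nu\circ\rho$ and $\nu$ is the similitude character of $\GSp_{V,E}$; and $T=H\cap T_\p$ is the unique maximal torus of $H$ inside $T_\p$. Since $G=\GG_m\cdot H$ with $\GG_m$ the homotheties, the restriction $\mu|_{T_\p}\colon T_\p\to\GG_m$ is surjective (it is already surjective on the central $\GG_m\subseteq T_\p$, as $\mu$ restricted to scalars $z$ is $z^2$... more precisely $\mu$ is nontrivial on the central torus, hence surjective onto $\GG_m$), with kernel $T_\p\cap H = T$. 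Hence $\mu$ induces an isomorphism $T_\p/T\xrightarrow{\sim}\GG_m$, so $X(T_\p/T)$ is the rank-one subgroup of $X(T_\p)$ generated by $\mu|_{T_\p}$. Therefore the kernel of $X(T_\p)\to X(T)$ is precisely $\langle\mu|_{T_\p}\rangle$.

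It then remains to compute $\mu(t_\p)\in\Phi_{A,\p}$ under the identification $\alpha\mapsto\alpha(t_\p)$; I claim it is $N(\p)$, up to sign/choice of generator, which suffices for the statement about the subgroup generated. Here is the key point: $\mu(t_\p)=\nu(\rho(t_\p))$ is the similitude factor of $\rho(t_\p)\in\GSp_{V,E}(\Qbar)$. But $\rho(t_\p)$ has characteristic polynomial $P_{A,\p}(x)$ by Theorem~\ref{T:Frobenius tori}(\ref{T:Frobenius tori a}) (together with the definition of $F_\P$), and a symplectic similitude with similitude factor $c$ acting on a $2g$-dimensional space pairs its eigenvalues into $g$ pairs with product $c$ each; equivalently, the functional equation $x^{2g}P_{A,\p}(1/x)=N(\p)^{-g}\cdot(\text{const})\cdot P_{A,\p}(x)$ — concretely the fact that the roots of $P_{A,\p}(x)$ come in pairs $\{\beta,N(\p)/\beta\}$ — forces $\nu(\rho(t_\p))=N(\p)$. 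Under $\alpha\mapsto\alpha(t_\p)$ this says the character $\mu|_{T_\p}$ corresponds to $N(\p)\in\Phi_{A,\p}$, completing the proof.

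**Main obstacle.** The routine linear-algebra/torus bookkeeping (exactness of $X(-)$, surjectivity of $\mu|_{T_\p}$) is standard; the one step needing care is pinning down $\nu(\rho(t_\p))=N(\p)$ rather than merely ``some integer''. This uses that $\rho$ identifies $G_\Qbar$ with $(\MT_A)_\Qbar\subseteq\GSp_{V,E,\Qbar}$ so $\nu$ makes sense on $\rho(t_\p)$, and that the Weil pairing/polarization constraint gives the Frobenius eigenvalues the pairing $\beta\mapsto N(\p)/\beta$ — i.e. I must invoke $\MT_A\subseteq\GSp_{V,E}$ from \S\ref{SS:MT and Hodge groups} and the fact that $\rho(t_\p)$ represents the (suitably normalized) Frobenius conjugacy class, so that its symplectic similitude factor is exactly $N(\p)$. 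Everything else is formal.
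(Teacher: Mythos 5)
Your argument is essentially the paper's proof: both identify the kernel of the restriction map with $\langle \mu|_{T_\p}\rangle$ by showing $\ker(\mu|_{T_\p})=T$, and then compute $\mu(t_\p)=\nu(\rho(t_\p))=N(\p)$ from the fact that $\rho(t_\p)$ is a symplectic similitude whose characteristic polynomial is $P_{A,\p}(x)$. The one step you assert without justification is the scheme-theoretic equality $T_\p\cap H=T$ (equivalently, that $\ker(\mu|_{T_\p})$ is connected): a priori $T_\p\cap H$ could be $T$ times a nontrivial finite group, in which case $\mu|_{T_\p}$ would be a proper power of a generator of the saturated rank-one kernel of $X(T_\p)\to X(T)$, and that kernel would be generated by a root of $N(\p)$ in $\Phi_{A,\p}$ rather than by $N(\p)$ itself. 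The paper closes this by noting that $T_\p\cap H$ is a commutative subgroup of $H$ containing the maximal torus $T$, and maximal tori of reductive groups are their own centralizers, so $T_\p\cap H\subseteq Z_H(T)=T$; with that sentence added, your proof matches the paper's.
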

\begin{proof}
Let $\beta\colon T_\p \to \GG_m$ be the character defined by $\beta=\mu|_{T_\p}$.  The kernel of $\beta$ is a commutative algebraic subgroup of $H$ containing $T$.  Since maximal tori of reductive groups are their own centralizers, we find that $T=\ker\beta$.   So the kernel of $X(T_\p)\to X(T)$ is $\langle \beta \rangle$.   With our identification $X(T_\p)=\Phi_{A,\p}$ given by $\alpha\mapsto \alpha(t_\p)$, it thus suffices to show that $\beta(t_\p)=N(\p)$.

The eigenvalues of an element in $\Sp_{V,E}(\Qbar)$ occur in inverse pairs.  Since the characteristic polynomial of $\rho(t_\p)$ is $P_{A,\p}(x)$ and the roots of $P_{A,\p}(x)$ have absolute value $N(\p)$ (with respect to any embedding into $\CC$), we deduce that $\pm \rho(t_\p) /\sqrt{N(\p)}$ are precisely the elements of $\Hg_A(\Qbar)$ that are scalar multiples of $\rho(t_\p)$.   Therefore, $\nu(\pm \rho(t_\p)/\sqrt{N(\p)})=1$ and hence $\nu(\rho(t_\p))=N(\p)$.   In particular, $\beta(t_\p)=\mu(t_\p)=\nu(\rho(t_\p))=N(\p)$. 
\end{proof}

Finally, the image $\Omega \subseteq X(T)$ of $\Omega_\p$ under the restriction map $X(T_\p)\to X(T)$ is the set of weights of the representation $\rho|_{H_{\Qbar}}$ with respect to $T$.   

We claim that the map $\Omega_\p\to \Omega$ is a bijection that respects multiplicities.   If the claim fails, then by Lemma~\ref{L:kernel Np} and our identification $\Omega_\p=\calW_{A,\p}$ there are distinct $z$ and $z' \in \calW_{A,\p}$ such that  $z' z^{-1} \in \langle N(\p) \rangle$. Since  $z$ and $z'$ are roots of $P_{A,\p}(x)$, they have absolute value $\sqrt{N(\p)}$ under any embedding into $\CC$.   Therefore, $z=z'$ which contradicts that they are distinct and proves the claim.

\subsection{Proof of Theorem~\ref{T:main}} \label{SS:main proof}
Recall that $G_A^\circ=\MT_A$.  Let $\Sigma$ be the set of nonzero prime ideals $\p$ of $\OO_K$ that split completely in $K_A^\conn$.  By Lemma~\ref{L:common rank}, we have $\calS_A \subseteq \Sigma$ and the set $\Sigma - \calS_A$ has density $0$.   So it suffices to proof the theorem with any $\q\in  \calS_A$.    Let $S_2$ be the set of $\p\in \calS_A$ for which condition (\ref{I:choice of p}) in \S\ref{SS:choice of primes} does not hold for our fixed $q$.   The set $S_2$ has density 0 by Lemma~\ref{L:p exists}.  So it suffices to proof the theorem with any choice of $\p \in \calS_A-S_2$ since $\Sigma - (\calS_A-S_2)$ has density $0$.

Our primes $\q$ and $\p$ satisfies condition  (\ref{I:choice of q}) and  (\ref{I:choice of p}) in \S\ref{SS:choice of primes}.  In \S\S\ref{SS:Frob torus at p}--\ref{SS:computing MT}, we have explained how given the polynomials $P_{A,\p}(x)$ and $P_{A,\q}(x)$, one can compute the abstract root datum $\Psi(\MT_A)$,  the homomorphism $\varphi_G\colon \Gal_\QQ\to\Out(\Psi(\MT_A))$, and
 the set of weights $\Omega$ of the representation $\MT_A\subseteq \GL_{V_A}$ and their multiplicities.  This proves Theorem~\ref{T:main}.
 
\subsection{Proof of Theorem~\ref{T:ST}}
The theorem is an immediate consequence of Theorem~\ref{T:main} and the construction of \S\ref{SS:root data for Hg}.  Note that the part concerning the group $\ST_A^\circ$ follows since it is a maximal compact subgroup of $\Hg_A(\CC)$.

\section{Some computational remarks}  \label{S:computational remarks}

Fix a nonzero abelian variety $A$ over a number field $K$.   When trying to implement the algorithm described in \S\ref{SS:algorithms}, the most computationally intensive part is computing the splitting field of the polynomials $P_{A,\p}(x)$.  

\subsection{Computing \texorpdfstring{$\Phi_{A,\p}$}{Phi{A,p}}, revisited} \label{SS:computing Phi again}
  
Let $\p$ be a nonzero prime ideal of $\OO_K$ for which $A$ has good reduction.  In \S\ref{SS:computing PhiAp}, we described how to compute the group $\Phi_{A,\p}$ from $P_{A,\p}(x)$.    The method outlined involves computing a splitting field $L/\QQ$ of $P_{A,\p}(x)\in \ZZ[x]$ and working with $\Gal(L/\QQ)$.    Unfortunately, computing splitting fields can be extremely time consuming.   

For simplicity, we now assume further that $A$ has ordinary reduction at $\p$; this can be verified by checking that the middle coefficient of $P_{A,\p}(x)$ is $0$ modulo $\p$.   We will now explain how to compute $\Phi_{A,\p}$ without computing a splitting field $L$ of $P_{A,\p}(x)$ over $\QQ$.\\

The function \texttt{GaloisGroup} implemented in \texttt{Magma} is suitable for our purposes, see \cite{MR3838226} for an overview of the algorithm.  The function produces the distinct roots $\pi_1,\ldots, \pi_n$ of $P_{A,\p}(x)$ in a splitting field $L'/\QQ_\ell$ for some prime $\ell$; more precisely, approximations are produced for the $\pi_i$ which  can be computed to arbitrary accuracy.  Using the numbering of these roots, there is an injective homomorphism $\Gal(\QQ(\pi_1,\ldots, \pi_n)/\QQ)\hookrightarrow S_n$.   The function \texttt{GaloisGroup} also produces the image $\Gamma \subseteq S_n$ of this homomorphism.  Note that the splitting field $L'$ of $P_{A,\p}(x)$ over $\QQ_\ell$ is easy to compute. This Galois group algorithm also has the advantage that it  does not impose any restrictions on the degree of $P_{A,\p}(x)$.\\

For each root $\alpha$ of $P_{A,\p}(x)$, $N(\p)/\alpha$ is also a root.   We have $\alpha^2\neq N(\p)$ for each root $\alpha$ of $P_{A,\p}(x)$ since otherwise $A$ would not have ordinary reduction at $\p$.   So the integer $n$ is even and we can assume that the roots are chosen so that $\alpha_i \alpha_{i+n/2}=N(\p)$ for all $1\leq i \leq n/2$.  Let $\OO_{L'}$ be the valuation ring of $L'$.  There is enough flexibility in the above algorithm to ensure that $\ell$ is odd, $L'/\QQ_\ell$ is unramified, and that all the $\pi_i$ lie in $\OO_{L'}^\times$. \\

Given $\pi_1,\ldots, \pi_n \in L'$ and $\Gamma$ as above, the rest of \S\ref{SS:computing Phi again} is devoted to computing $\Phi_{A,\p}$.  

\begin{lemma} \label{L:set S from ordinary}
There is a subset $S \subseteq \{1,\ldots, n\}$ satisfying the following conditions:
\begin{alphenum} 
\item \label{I:set S from ordinary a}
For each $1\leq i \leq n/2$, exactly one of the two values $i$ and $i+n/2$ are in $S$.   
\item \label{I:set S from ordinary b}
For $e\in \ZZ^n$, $\prod_{i=1}^n \pi^{e_i}$ is a root of unity if and only if $\sum_{i\in \sigma(S)} e_i=0$ for all $\sigma\in \Gamma$.
\end{alphenum}
\end{lemma}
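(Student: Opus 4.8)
The plan is to exploit the $p$-adic structure coming from ordinary reduction, which guarantees a clean splitting of the Newton polygon of $P_{A,\p}(x)$ into two halves. First I would recall that since $A$ has good ordinary reduction at $\p$, the Frobenius polynomial $P_{A,\p}(x)$ factors, over an unramified extension $L'/\QQ_\ell$ (for a suitable auxiliary prime $\ell$ chosen as in the \texttt{GaloisGroup} setup, though it is cleaner here to work $p$-adically with $p$ the rational prime below $\p$), into a unit part and a part divisible by a power of $p$; more precisely, exactly half of the roots $\pi_1,\dots,\pi_n$ are $p$-adic units and the other half have $v_p(\pi_i) = v_p(N(\p))/(n/2)$ in the appropriate normalization — this is the defining feature of ordinariness (the slopes of the Newton polygon are $0$ and $1$, each with multiplicity $g$, after accounting for the doubling when $P_{A,\p}$ is not separable). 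Pairing $\pi_i$ with $\pi_{i+n/2}$ via $\pi_i\pi_{i+n/2}=N(\p)$, in each pair exactly one element is a unit at the chosen place and the other is not. I would let $S\subseteq\{1,\dots,n\}$ be the set of indices $i$ for which $\pi_i$ is a unit at a fixed place $\lambda$ of $L=\QQ(\pi_1,\dots,\pi_n)$ above $p$; this immediately gives condition (\ref{I:set S from ordinary a}).

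For condition (\ref{I:set S from ordinary b}), I would invoke Lemma~\ref{L:root of unity linear}, which already characterizes when $\varphi(e)=\prod_i\pi_i^{e_i}$ is a root of unity: it happens iff $\sum_i e_i v_\lambda(\pi_i)=0$ for all primes $\lambda$ of $\OO_L$ dividing $N(\p)$, equivalently (part (c) of that lemma) iff $\sum_i e_i v_\lambda(\sigma^{-1}(\pi_i))=0$ for a fixed $\lambda\mid N(\p)$ and all $\sigma\in\Gal(L/\QQ)$. The point is then to identify the linear functional $e\mapsto \sum_i e_i v_\lambda(\pi_i)$, up to a positive scalar, with $e\mapsto \sum_{i\in S}e_i$. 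Indeed, with $S$ as defined, $v_\lambda(\pi_i)=0$ for $i\in S$ and $v_\lambda(\pi_i)=v_\lambda(N(\p))>0$ a common positive constant $c$ for $i\notin S$. Hence $\sum_i e_i v_\lambda(\pi_i)=c\sum_{i\notin S}e_i$. Using that $\varphi(e)$ a root of unity forces $e_1+\cdots+e_n=0$ (shown inside the proof of Lemma~\ref{L:root of unity linear}, via the Weil bound and the norm computation), we have $\sum_{i\notin S}e_i=-\sum_{i\in S}e_i$, so the condition $\sum_i e_i v_\lambda(\pi_i)=0$ is equivalent to $\sum_{i\in S}e_i=0$. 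Replacing $\lambda$ by $\sigma(\lambda)$ and using $v_{\sigma(\lambda)}(\alpha)=v_\lambda(\sigma^{-1}(\alpha))$ translates the "all $\lambda\mid N(\p)$" quantifier into "all $\sigma$", and since $\Gal(L/\QQ)$ acts on the roots through its image $\Gamma\subseteq S_n$, the set $\sigma(S)$ is exactly the index set of units at $\sigma(\lambda)$; this yields (\ref{I:set S from ordinary b}) in the stated form.

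There is one subtlety I would need to handle carefully: the ordinariness criterion "the middle coefficient of $P_{A,\p}(x)$ is nonzero mod $\p$" must be converted into the precise statement that, in each inverse pair $\{\pi_i,\pi_{i+n/2}\}$, exactly one root is a $\lambda$-adic unit — this requires knowing that the Newton polygon of $P_{A,\p}$ has only slopes $0$ and $1$ and that no root satisfies $\alpha^2=N(\p)$ (which is noted in the text just before the lemma). I expect this Newton-polygon bookkeeping, together with checking that the subset $S$ (defined via one place $\lambda$) is well-defined independently of the choices of approximations to the $\pi_i$ and compatible with the $\Gamma$-labeling produced by \texttt{GaloisGroup}, to be the main obstacle; everything after that is a formal consequence of Lemma~\ref{L:root of unity linear}. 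The existence part is then clear: take $S$ to be the index set of roots that are units at any fixed place above $N(\p)$, and the two displayed properties follow as above.
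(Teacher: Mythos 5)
Your overall route is the same as the paper's: the paper's proof is exactly the observation that ordinarity forces $\{v_\lambda(\pi_i),v_\lambda(\pi_{i+n/2})\}=\{0,v_\lambda(N(\p))\}$ for every $\lambda\mid N(\p)$, combined with Lemma~\ref{L:root of unity linear}(\ref{L:root of unity linear c}). Your ``Newton-polygon bookkeeping'' worry is a non-issue: since $\pi_i\pi_{i+n/2}=N(\p)$ and the $\pi_i$ are algebraic integers, the two valuations are nonnegative and sum to $v_\lambda(N(\p))$, and ordinarity (slopes $0$ and $1$ only) forces each to be $0$ or $v_\lambda(N(\p))$, hence exactly one of each in every pair.

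There is, however, one genuine directionality gap in your treatment of (\ref{I:set S from ordinary b}). You take $S$ to be the \emph{unit} indices, so that $\sum_i e_iv_\lambda(\pi_i)=c\sum_{i\notin S}e_i$, and then convert $\sum_{i\notin S}e_i=0$ into $\sum_{i\in S}e_i=0$ by invoking $e_1+\cdots+e_n=0$. But that last identity is derived (inside the proof of Lemma~\ref{L:root of unity linear}) from the hypothesis that $\varphi(e)$ is a root of unity, so it is only available in the ``only if'' direction. In the ``if'' direction you are handed $\sum_{i\in\sigma(S)}e_i=0$ for all $\sigma\in\Gamma$ and must deduce $\sum_{i\notin\sigma(S)}e_i=0$; assuming $\sum_ie_i=0$ there is circular. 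Two clean fixes: (1) define $S$ to be the \emph{non-unit} indices instead, so that $\sum_ie_iv_{\sigma(\lambda)}(\pi_i)=v_\lambda(N(\p))\sum_{i\in\sigma(S)}e_i$ holds identically and condition (\ref{I:set S from ordinary b}) is literally Lemma~\ref{L:root of unity linear}(\ref{L:root of unity linear c}); or (2) keep your $S$ and note that complex conjugation induces an element $\tau\in\Gamma$ with $\tau(i)=i\pm n/2$ (since $\bbar{\pi}_i=N(\p)/\pi_i=\pi_{i+n/2}$), so $\sigma(S)^c=(\sigma\tau)(S)$ and the hypothesis applied to both $\sigma$ and $\sigma\tau$ already yields $\sum_ie_i=0$. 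With either fix your argument is complete and coincides with the paper's.
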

 \begin{proof}
 This is an immediate consequence of Lemma~\ref{L:root of unity linear} and our assumption that $A$ is ordinary at $\p$.   Note that for any prime ideal $\lambda$ of $\OO_L$ dividing $N(\p)$, with $L=\QQ(\pi_1,\ldots, \pi_n)$, we have $\{ v_\lambda(\pi_i), v_\lambda(\pi_{i+n/2})\} = \{0, v_\lambda(N(\p))\}$.
 \end{proof}
 
For a tuple $e\in \ZZ^n$, the following lemma will allow us to determine when $\prod_{i=1}^n \pi_i^{e_i}$ equals $1$ by using our approximations of the $\pi_i$ in the $\ell$-adic field $L'$. 
 
\begin{lemma} \label{L:computing products}
Take any $e\in \ZZ^n$.   Then $\prod_{i=1}^n \pi_i^{e_i}=1$ if and only if $\prod_{i=1}^n \pi_i^{e_i} - 1 \in \ell^m \OO_{L'}$ holds for some integer
\begin{align} \label{E:how big m}
m> |\Gamma| \,\frac{ \log 2 + \max\{\sum_{i=1}^n a_i,\sum_{i=1}^n b_i\}/2 \cdot \log N(\p) }{f \log \ell},
\end{align}
where $a_i := \max\{ e_i, 0 \}$, $b_i :=\max\{ -e_i,0\}$ and $f:=[L':\QQ_\ell]$.
\end{lemma}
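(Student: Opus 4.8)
The element $\beta := \prod_{i=1}^n \pi_i^{e_i}$ lies in the number field $L := \QQ(\pi_1,\ldots,\pi_n)$, whose Galois group over $\QQ$ is identified (via the numbering of roots) with a subgroup of the group $\Gamma$ produced by \texttt{GaloisGroup}; in particular $[L:\QQ] \le |\Gamma|$. One direction is trivial: if $\beta = 1$ then $\beta - 1 = 0 \in \ell^m\OO_{L'}$ for every $m$, where we view $\beta$ inside $L'$ via the fixed embedding $L \hookrightarrow L'$ determined by our approximations of the $\pi_i$. So the content is the converse: I would show that if $\beta \ne 1$, then $v_{L'}(\beta - 1)$ is bounded above by the right-hand quantity in (\ref{E:how big m}), where $v_{L'}$ is the normalized valuation on $L'$ with $v_{L'}(\ell) = 1$ (so that $\ell^m\OO_{L'} = \{x : v_{L'}(x) \ge m\}$, using that $L'/\QQ_\ell$ is unramified).

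The key estimate is a lower bound on the archimedean size of $N_{L/\QQ}(\beta - 1)$ when $\beta \ne 1$, combined with the product formula. First I would bound $v_{L'}(\beta - 1)$ by $v_{L'}(\beta-1) = e(L'/\QQ_\ell)\, v_\ell(N_{L'/\QQ_\ell}(\beta-1))/[L':\QQ_\ell] \le |\Gamma| \cdot \log_\ell |N_{L'/\QQ_\ell}(\beta-1)|_\ell^{-1}/f$ — more simply, since the local and global norms are compatible and $\beta - 1 \in \OO_{L'}$, the $\ell$-adic valuation of the rational integer $N_{L/\QQ}(\beta-1)$ (which is the relevant nonzero integer, as $\beta-1$ is an algebraic integer and $\beta \ne 1$) controls $v_{L'}(\beta-1)$ up to the factor $|\Gamma|/f$. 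Then $|N_{L/\QQ}(\beta-1)| \ge 1$ since it is a nonzero integer, so $v_\ell(N_{L/\QQ}(\beta-1)) \le \log_\ell |N_{L/\QQ}(\beta-1)|$. Finally I would bound $|N_{L/\QQ}(\beta-1)| = \prod_{\iota\colon L\hookrightarrow\CC} |\iota(\beta) - 1| \le \prod_\iota (|\iota(\beta)| + 1)$, and use Weil's bound $|\iota(\pi_i)| = N(\p)^{1/2}$ to get $|\iota(\beta)| = N(\p)^{(\sum a_i - \sum b_i)/2} \le N(\p)^{\max\{\sum a_i,\sum b_i\}/2}$, hence $|\iota(\beta)| + 1 \le 2\,N(\p)^{\max\{\sum a_i,\sum b_i\}/2}$. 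Raising to the power $[L:\QQ] \le |\Gamma|$ and taking $\log_\ell$ then yields exactly the bound in (\ref{E:how big m}), so any $m$ strictly larger than that bound forces $v_{L'}(\beta-1) < m$ to fail, i.e.\ $\beta - 1 \notin \ell^m\OO_{L'}$, contrapositively giving $\beta = 1$.

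The main obstacle is keeping the normalizations straight: one must be careful that $L'/\QQ_\ell$ being unramified makes $v_{L'}$ an honest extension of $v_\ell$ with $f = [L':\QQ_\ell] = [\kappa_{L'}:\FF_\ell]$, and that the embedding $L \hookrightarrow L'$ (implicit in computing the $\pi_i$ as elements of $\OO_{L'}$) restricts $v_{L'}$ to a valuation on $L$ lying over $\ell$, with residue degree dividing $f$ and ramification index $1$. The factor $|\Gamma|$ in (\ref{E:how big m}) is the crude but safe bound $[L:\QQ] \le |\Gamma|$, which simultaneously bounds $v_\ell(N_{L/\QQ}(\beta-1))/v_{L'}(\beta-1)$ from below (so dividing by $f$ and multiplying by $|\Gamma|$ overshoots in the correct direction) and controls the number of archimedean places. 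I expect no difficulty beyond this bookkeeping; everything else is Weil's Riemann-hypothesis bound for abelian varieties over finite fields (already invoked in \S\ref{SS:the group Phi}) and the elementary fact that a nonzero rational integer has absolute value at least $1$.
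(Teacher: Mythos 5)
Your overall architecture (trivial forward direction; for the converse, bound the $\ell$-adic valuation of a norm from below by $fm$ and from above via Weil's bound at the archimedean places) is the same as the paper's, but there is a genuine gap at the pivotal step. You assert that $\beta-1$, with $\beta:=\prod_{i=1}^n\pi_i^{e_i}$, is an algebraic integer and hence that $|N_{L/\QQ}(\beta-1)|\ge 1$. This is false whenever some $e_i<0$: since $\pi_i^{-1}=\bbar\pi_i/N(\p)$, the element $\beta$ has poles at primes of $\OO_L$ dividing $N(\p)$, so $N_{L/\QQ}(\beta-1)$ is only a nonzero rational whose denominator is a power of the residue characteristic of $\p$, and it can have absolute value far below $1$. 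Your chain of inequalities $fm\le v_\ell(N_{L/\QQ}(\beta-1))\le \log_\ell|N_{L/\QQ}(\beta-1)|$ therefore breaks at the second step. Moreover, patching this with the product formula (absorbing the denominators at places over $N(\p)$) costs an extra factor of $N(\p)^{[L:\QQ]\sum b_i/2}$ and yields an exponent $(\sum a_i+\sum b_i)/2$ rather than the $\max\{\sum a_i,\sum b_i\}/2$ appearing in (\ref{E:how big m}), so the stated bound would not be recovered this way.

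The missing idea is to clear denominators before taking norms: set $\alpha:=\prod_{i=1}^n\pi_i^{a_i}-\prod_{i=1}^n\pi_i^{b_i}$, which \emph{is} an algebraic integer, satisfies $\beta-1=\alpha/\prod_i\pi_i^{b_i}$ with $\prod_i\pi_i^{b_i}$ a unit at the prime $\lambda$ of $\OO_L$ corresponding to $L'$ (so $v_\lambda(\alpha)=v_\lambda(\beta-1)$), and obeys $|\iota(\alpha)|\le N(\p)^{\sum a_i/2}+N(\p)^{\sum b_i/2}\le 2N(\p)^{\max\{\sum a_i,\sum b_i\}/2}$ for every embedding $\iota\colon L\hookrightarrow\CC$. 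Then $\ell^{fm}$ divides the nonzero integer $N_{L/\QQ}(\alpha)$, and comparing with $|N_{L/\QQ}(\alpha)|\le\bigl(2N(\p)^{\max\{\sum a_i,\sum b_i\}/2}\bigr)^{|\Gamma|}$ gives the contradiction. This is exactly the paper's argument; the remainder of your bookkeeping (unramifiedness of $L'/\QQ_\ell$, $[L:\QQ]\le|\Gamma|$, the roles of $f$ and $v_{L'}$) is correct.
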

\begin{proof}
Define the algebraic integer $\alpha:= \prod_{i=1}^n \pi^{a_i}- \prod_{i=1}^n \pi^{b_i}$ in the number field $L:=\QQ(\pi_1,\ldots, \pi_n)$.  By Weil, we know that each $\pi_i$ has absolute value $N(\p)^{1/2}$ under any embedding $L\hookrightarrow \CC$.   Therefore,
\begin{align} \label{E:norm of alpha}
|N_{L/\QQ}(\alpha)| \leq (2 N(\p)^{\max\{\sum_i a_i, \sum_i b_i\}/2})^{|\Gamma|}.
\end{align}
Using the inclusion $L\subseteq L'$, we can identify $L'$ with the completion $L_\lambda$ of $L$ at some prime ideal $\lambda$ of $\OO_L$ dividing $\ell$.

First suppose that $\prod_{i=1}^n \pi_i^{e_i} - 1 \in \ell^m \OO_{L'}$ for an integer $m$ satisfying (\ref{E:how big m}).    Since the $\pi_i$ are units in $\OO_{L'}$ and $L'/\QQ_\ell$ is unramified, this implies that $\alpha \in \lambda^m$.  Therefore, $N(\lambda^m)=\ell^{fm}$ divides the integer $|N_{L/\QQ}(\alpha)|$.  So if $\alpha\neq0$, then $\ell^{fm} \leq |N_{L/\QQ}(\alpha)|$.  However, by (\ref{E:how big m}) and (\ref{E:norm of alpha}), we find that $\ell^{fm}>|N_{L/\QQ}(\alpha)|$.  Therefore, $\alpha=0$ and hence $\prod_i \pi_i^{e_i}=1$.

The other implication in the lemma is trivial.
\end{proof}

Observe that the above lemma gives a way to determine if $\prod_{i=1}^n \pi_i^{e_i} =1$ holds for a fixed $e\in \ZZ$.   By increasing the accuracy of the approximations of $\pi_i$, one can assume that $\pi_i + \ell^m \OO_{L'}$ is known for the smallest integer $m$ satisfying (\ref{E:how big m}).   Since the $\pi_i$ are units in $\OO_{L'}$, one can then compute $\prod_{i=1}^n \pi_i^{e_i} + \ell^m \OO_{L'}$.  Using Lemma~\ref{L:computing products}, we can then verify whether or not we have $\prod_{i=1}^n \pi_i^{e_i} =1$.  
\\
 
Take any subset $S \subseteq \{1,\ldots, n\}$ satisfying condition (\ref{I:set S from ordinary a}) of Lemma~\ref{L:set S from ordinary}.       We now explain how to verify if it satisfies condition (\ref{I:set S from ordinary b}).    Define the finite field $\FF=\OO_{L'}/\ell \OO_{L'}$.  Let $M_S$ be the group consisting of $e\in \ZZ^n$ satisfying $\sum_{i\in \sigma(S)} e_i=0$ for all $\sigma\in \Gamma$.  Let 
\[
\varphi\colon M_S \to \FF^\times 
\]
be the homomorphism that sends a tuple $e\in \ZZ^n$ to the image of $\prod_{i=1}^n \pi_i^{e_i} \in \OO_{L'}^\times$ in $\FF^\times$.  Let $M$ be the kernel of $\varphi$.  By our choice of extension $L'/\QQ_\ell$, the roots of unity in $L'$ can be distinguished by their images in $\FF^\times$.   So condition (\ref{I:set S from ordinary b}) holds for the set $S$ if and only if $\prod_{i=1}^n \pi_i^{e_i} = 1$ for a set of $e$ that generate the group $M$.  Using the remarks following Lemma~\ref{L:computing products}, one can thus verify whether condition (\ref{I:set S from ordinary b}) holds or not.
\\

One can thus compute the set $\scrS$ of subsets $S\subseteq \{1,\ldots, n\}$ that satisfy conditions (\ref{I:set S from ordinary a}) and (\ref{I:set S from ordinary b}) of Lemma~\ref{L:set S from ordinary}.  For making this computation practical, note that for a set $S\subseteq \{1,\ldots, n\}$, we have $S \in \scrS$ if and only if $\sigma(S) \in \scrS$ for all $\sigma\in \Gamma$.  Let $M'$ be the subgroup of $\ZZ^n$ generated by the sets $M_S$ with $S\in \scrS$.     For $e\in \ZZ^n$, Lemma~\ref{L:set S from ordinary} implies that $\prod_{i=1}^n \pi_i^{e_i}$ is a root of unity if and only if $e\in M'$.   Arguing as above, one can compute the subgroup $M\subseteq M'$ such that $\prod_{i=1}^n \pi_i^{e_i}=1$  if and only if $e\in M$.   Therefore, the map $\ZZ^n \to \Phi_{A,\p}$, $e\mapsto \prod_{i=1}^n \pi_i^{e_i}$ induces an isomorphism
\begin{align} \label{E:revisted Phi}
\ZZ^n/M \xrightarrow{\sim} \Phi_{A,\p}.
\end{align}
This is our explicit description of the group $\Phi_{A,\p}$.  

Observe that the action of $\Gamma\subseteq S_n$ on $\ZZ^n$ induces an action on $\ZZ^n/M$.   With respect to the embedding $\Gal(\QQ(\pi_1,\ldots, \pi_n)/\QQ)\xrightarrow{\sim} \Gamma \subseteq S_n$ and the isomorphism (\ref{E:revisted Phi}), this describes the Galois action on $\Phi_{A,\p}$.  

\subsection{Remarks on computing root data} \label{SS:computing Psi}

With notation and assumptions as in \S\ref{S:root datum of MT}, fix primes $\p$ and $\q$ as in \S\ref{SS:choice of primes}.     In \S\ref{SS:computing Phi again}, we described how to compute the group $\Phi_{A,\p}$ and the group $\Gamma=\Gal(\QQ(\calW_{A,\p})/\QQ)$ acting on it.   Note that with the method described, the splitting field $\QQ(\calW_{A,\p})$ of $P_{A,\p}(x)$ is not actually computed; the roots of $P_{A,\p}(x)$ are given in some nonarchimedean field and the action of $\Gamma$ is given as a permutation of its roots.

We now describe how to compute the subgroup $W:=\Gal(L(\calW_{A,\p})/L)$ of $\Gamma$, where $L:=\QQ(\calW_{A,\q})$.  We may assume that $P_{A,\p}(x)$ and $P_{A,\q}(x)$ are relatively prime; this is automatic if the residue fields of $\p$ and $\q$ have different characteristics.     Let $\Gamma_0$ be the Galois group of $P_{A,\p}(x) \cdot P_{A,\q}(x)$ over $\QQ$.   Let $m$ and $n$ be the number of distinct roots of $P_{A,\p}(x)$ and $P_{A,\q}(x)$, respectively.  As in \S\ref{SS:computing Phi again}, we can compute $\Gamma_0$ as a subgroup of $S_m \times S_n$, where the first and second factors describe the Galois action on the distinct roots of $P_{A,\p}(x)$ and $P_{A,\q}(x)$, respectively, with roots given explicitly in a suitable local field.   We can identify $\Gamma$ with the image of $\Gamma_0$ under the projection $\varphi\colon S_n\times S_m \to S_n$, $(a,b)\mapsto a$.   The subgroup $W$ of $\Gamma$ is then the image under $\varphi$ of the group $\Gamma_0 \cap (S_n\times \{1\})$.

We can then  proceed as in \S\ref{SS:algorithms}, to compute the root datum $\Psi(G_A^\circ)$ up to isomorphism, 
 the \emph{image} of the homomorphism $\mu_{G_A^\circ} \colon \Gal_\QQ \to \Out(\Psi(G_A^\circ))$, and 
the set of weights of the representation $G_A^\circ \subseteq \GL_{V_A}$ and their multiplicities.  Note that the image of $\mu_{G_A^\circ}$ is given by $\Gamma/W$.

\section{Proof of Proposition~\ref{P:endomorphism}} \label{S:endomorphism proof}
We have fixed an embedding $\Kbar\subseteq \CC$ and $\End(A_{\Kbar})=\End(A_\CC)$.   So there is no harm in replacing $A$ by its base extension by $\CC$; this does not change the group $\MT_A$ or its representation $V_A$. After replacing $A$ by an isogenous abelian variety, we may assume that $A = \prod_{i=1}^s A_i$ with $A_i=C_i^{e_i}$, where $e_i\geq 1$ and the $C_i$ are simple abelian varieties over $\CC$ that are pairwise nonisogenous.   We have
\[
\End(A)\otimes_\ZZ \QQ \cong B_1\times \cdots \times B_s,
\]
where $B_i:= \End(A_i)\otimes_\ZZ \QQ$.  Note that $B_i$ is a central simple algebra over its center $L_i$.  The field $L_i$ is a number field and let $m_i$ be the positive integer satisfying $\dim_{L_i} B_i = m_i^2$.\\

We first assume that $A$ is a simple abelian variety over $\CC$, i.e., $s=1$ and $e_1=1$.   In particular, $B_1= \End(A)\otimes_\ZZ \QQ$ is a division algebra with center $L_1$. Define $r:=[L_1:\QQ]$.  Recall that we can identify $B_1$ with the subring of $\End_\QQ(V_A)$ that commutes with the action of $\MT_A$.  Since $B_1$ is a division algebra, we find that $V_A$ is an irreducible representation of $\MT_A$ .   

Observe that the algebraic group $\MT_A$ and its faithful representation $V_A$ satisfy the assumptions of \S3.2 of \cite{MR563476}.    From \S3.2 of \cite{MR563476} (especially the irreducible case considered before Proposition~8),  we find that the following hold:
\begin{itemize}
\item 
$\Gamma$ acts transitively on $\Omega$, 
\item
the representation $V_A \otimes_\QQ \Qbar$ of $(\MT_A)_{\Qbar}$ decomposes as the direct sum of $r$ irreducible and minuscule representations that each occur with multiplicity $m_1$,
\item
each weight in $\Omega$ of $V_A$ has multiplicity $m_1$.  
\end{itemize}
From the above minuscule property, we find that $\Omega$ consists of $r$ distinct $W$-orbits.    Fix a $W$-orbit $\OO\subseteq \Omega$ and let $H$ be the stabilizer of $\OO\subseteq \Omega$.   We have $[\Gamma:H]=r$ since $\Gamma$ acts transitively on $\Omega$ and hence also transitively on the $W$-orbits in $\Omega$.  Using the natural isomorphism $\Gal(k/\QQ)=\Gamma/W$, we let $L'$ be the subfield of $k$ corresponding to the group $H/W$; it is a number field of degree $r$.  Using the transitivity of the action of $\Gamma$ again, we find that the number field $L'$, up to isomorphism, does not depend on the choice of $\OO$.

In the setting of the proposition, we have $s=s'=1$, $\Omega_1=\Omega$, $\OO_1:=\OO$, $L_1'=L'$ and $m_1'=m_1$.   So to complete the proof in the simple case, it remains to show that the fields $L_1$ and $L'$ are isomorphic.  

Since $\Gal_{L'}$ stabilizes the $W$-orbit $\OO$, we find that there is a representation $U\subseteq V_A \otimes_\QQ L'$ of $(\MT_A)_{L'}$ whose set of weights is $\OO$ and each occurs with multiplicity $m_1$ (in particular, $U\otimes_{L'} \Qbar \subseteq V_A \otimes_\QQ \Qbar$ is an isotypic representation of $(\MT_A)_{\Qbar}$).    So $L\otimes_\QQ L'$ acts on the $L'$-vector space $U$ by homotheties.   In particular, the homomorphism $L\otimes_\QQ L' \to L'$ induced by this action gives an embedding $L\hookrightarrow L'$ of fields.   Since $L$ and $L'$ are both number fields of  degree $r$, we deduce that they are isomorphic.
\\

Now consider the case where $s=1$, i.e., $A = C_1^{e_1}$ with $e_1\geq 1$ and $C_1$ a simple abelian variety over $\CC$.   We have $V_A = V_{C_1}^{\oplus e_1}$ and the action of $\MT_{C_1}$ on $V_A$ induces an isomorphism $\MT_{C_1}=\MT_A$.   In particular, $\MT_{C_1}$ and $\MT_A$ have the same root datum and the same set of weights $\Omega$.  For $\alpha\in \Omega$, the multiplicity of $\alpha$ as a weight of $\MT_A$ acting on $V_A$ is $e_1$ times the multiplicity as a weight of $\MT_{C_1}$ acting on $V_{C_1}$.    So using the previous case, it suffices to show that the rings $B_1=\End(A)\otimes_\ZZ \QQ$ and $B':=\End(C_1)\otimes_\ZZ \QQ$ have isomorphic centers and that $\dim_\QQ B_1 = e_1^2 \dim_\QQ B'$.  This is clear since $B_1\cong M_{e_1}(B')$.\\

Finally, we consider the general case.   We have $V_A = \bigoplus_{i=1}^s V_{A_i}$ which induces a homomorphism $\MT_A \hookrightarrow \prod_{i=1}^s \MT_{A_i}$ such that each projection $\MT_A\to \MT_{A_i}$ is surjective.         Note that $V_{A_i}$ is a representation of $\MT_A$ and agrees with the action via the natural homomorphism $\MT_A\to \MT_{A_i}$.  This induces surjective homomorphisms \begin{align*}
W\to W(\MT_{A_i},T_i)\quad \text{ and }\quad \Gamma/W \to \Gamma(\MT_{A_i},T_i)/W(\MT_{A_i},T_i), 
\end{align*}
where $T_i$ is the image of $T$.    

Let $\Omega_i \subseteq X(T)$ be the set of weights of the representation $V_{A_i}$; it is stable under the $\Gamma$-action.   Choose any $W$-orbit $\OO_i \subseteq \Omega_i$ and let $H_i$ be the subgroup of $\Gamma$ that stabilizes $\OO_i$.  From the previous cases, we find that $\Gamma$ acts transitively on $\Omega_i$, each weight $\alpha\in \Omega_i$ of the representation $V_{A_i}$ has multiplicity $m_i$, and $L_i$ is isomorphic to the subfield of $k$ fixed by $H_i/W \subseteq \Gamma/W=\Gal(k/\QQ)$.

We claim that the sets $\Omega_i$ are pairwise disjoint.   Assuming the claim, we find that sets $\Omega_1,\ldots, \Omega_s$ are the $\Gamma$-orbits of $\Omega$ and each $\alpha\in \Omega_1$ has multiplicity $m_i$ as a weight of $V_A$.   By reordering the $A_i$, we may assume that these are the same $\Gamma$-orbits as in the setup of the proposition and hence $s'=s$.      We now have $m_i'=m_i$ and the field $L_i'$ is isomorphic to $L_i$.
 
It remains to prove the claim.  Suppose that there are nondisjoint $\Omega_i$ and $\Omega_j$ with $1\leq i < j \leq s$. We have $\Omega_i=\Omega_j$ since they are transitive $\Gamma$-sets.   The representations $V_{A_i}^{\oplus m_j}$ and $V_{A_j}^{\oplus m_i}$ of $\MT_A$ thus have the same weights and multiplicities, and so are isomorphic.   In particular, this implies that there is a nonzero linear map $V_{A_i}\to V_{A_j}$ that respects the $\MT_A$-actions.   However, since $\End(A)\otimes_\ZZ \QQ$ agrees with the subring of $\End(V_A)$ that commutes with the $\MT_A$-action, we deduce that there is a nonzero homomorphism $A_i\to A_j$ of abelian varieties.  This is impossible since $A_i$ and $A_j$ are powers of nonisogenous simple abelian varieties.  This contradiction proves the claim.

\bibliographystyle{plain}

\begin{bibdiv}
\begin{biblist}

\bib{MR1423019}{article}{
   author={Andr\'{e}, Yves},
   title={Pour une th\'{e}orie inconditionnelle des motifs},
   language={French},
   journal={Inst. Hautes \'{E}tudes Sci. Publ. Math.},
   number={83},
   date={1996},
   pages={5--49},
   issn={0073-8301},
   review={\MR{1423019}},
}

\bib{MR2663452}{article}{
   author={Banaszak, Grzegorz},
   author={Gajda, Wojciech},
   author={Kraso\'{n}, Piotr},
   title={On the image of Galois $l$-adic representations for abelian
   varieties of type III},
   journal={Tohoku Math. J. (2)},
   volume={62},
   date={2010},
   number={2},
   pages={163--189},
   issn={0040-8735},
   review={\MR{2663452}},
   doi={10.2748/tmj/1277298644},
}

\bib{MR2062673}{book}{
   author={Birkenhake, Christina},
   author={Lange, Herbert},
   title={Complex abelian varieties},
   series={Grundlehren der Mathematischen Wissenschaften [Fundamental
   Principles of Mathematical Sciences]},
   volume={302},
   edition={2},
   publisher={Springer-Verlag, Berlin},
   date={2004},
   pages={xii+635},
   isbn={3-540-20488-1},
   review={\MR{2062673}},
   doi={10.1007/978-3-662-06307-1},
}

\bib{MR1484478}{article}{
   author={Bosma, Wieb},
   author={Cannon, John},
   author={Playoust, Catherine},
   title={The Magma algebra system. I. The user language},
   note={Computational algebra and number theory (London, 1993)},
   journal={J. Symbolic Comput.},
   volume={24},
   date={1997},
   number={3-4},
   pages={235--265},
   issn={0747-7171},
   review={\MR{1484478}},
   doi={10.1006/jsco.1996.0125},
}

\bib{MR0240238}{book}{
   author={Bourbaki, N.},
   title={\'El\'ements de math\'ematique. Fasc. XXXIV. Groupes et alg\`ebres de Lie.
   Chapitre IV: Groupes de Coxeter et syst\`emes de Tits. Chapitre V: Groupes
   engendr\'es par des r\'eflexions. Chapitre VI: syst\`emes de racines},
   language={French},
   series={Actualit\'es Scientifiques et Industrielles, No. 1337},
   publisher={Hermann, Paris},
   date={1968},
   pages={288 pp. (loose errata)},
   review={\MR{0240238}},
}

\bib{MR0453824}{book}{
   author={Bourbaki, N.},
   title={\'El\'ements de math\'ematique. Fasc. XXXVIII: Groupes et alg\`ebres de
   Lie. Chapitre VII: Sous-alg\`ebres de Cartan, \'el\'ements r\'eguliers. Chapitre
   VIII: Alg\`ebres de Lie semi-simples d\'eploy\'ees},
   language={French},
   publisher={Actualit\'es Scientifiques et Industrielles, No. 1364. Hermann,
   Paris},
   date={1975},
   pages={271},
   review={\MR{0453824}},
}

\bib{CFC}{article}{
      author={Cantoral Farf\'an, Victoria},
      author={Commelin, Johan},
      title={The Mumford-Tate conjecture implies the algebraic Sato-Tate conjecture of Banaszak and Kedlaya},
      date={2019},
      note={arXiv:1905.04086},
}

\bib{MR4038255}{article}{
   author={Costa, Edgar},
   author={Fit\'{e}, Francesc},
   author={Sutherland, Andrew V.},
   title={Arithmetic invariants from Sato-Tate moments},
   language={English, with English and French summaries},
   journal={C. R. Math. Acad. Sci. Paris},
   volume={357},
   date={2019},
   number={11-12},
   pages={823--826},
   issn={1631-073X},
   review={\MR{4038255}},
   doi={10.1016/j.crma.2019.11.008},
}

\bib{CLV}{article}{
      author={Costa, Edgar},
      author={Lombardo, Davide},
      author={Voight, John},
      title={Identifying central endomorphisms of an abelian variety via Frobenius endomorphisms},
      date={2019},
      note={arXiv:1906.02803 },
}

\bib{MR3904148}{article}{
   author={Costa, Edgar},
   author={Mascot, Nicolas},
   author={Sijsling, Jeroen},
   author={Voight, John},
   title={Rigorous computation of the endomorphism ring of a Jacobian},
   journal={Math. Comp.},
   volume={88},
   date={2019},
   number={317},
   pages={1303--1339},
   issn={0025-5718},
   review={\MR{3904148}},
   doi={10.1090/mcom/3373},
}

\bib{MR654325}{book}{
      author={Deligne, Pierre},
      author={Milne, James~S.},
      author={Ogus, Arthur},
      author={Shih, Kuang-yen},
       title={Hodge cycles, motives, and {S}himura varieties},
      series={Lecture Notes in Mathematics},
   publisher={Springer-Verlag},
     address={Berlin},
        date={1982},
      volume={900},
}

\bib{MR3838226}{article}{
   author={Elsenhans, Andreas-Stephan},
   title={Computation of Galois groups in {\tt magma}},
   conference={
      title={Mathematical and computational approaches in advancing modern
      science and engineering},
   },
   book={
      publisher={Springer, [Cham]},
   },
   date={2016},
   pages={621--628},
   review={\MR{3838226}},
}

\bib{EmoryGoodson}{article}{
	author={Emory, Melissa},
	author={Goodson, Heidi},	
	title={Sato--Tate Distributions of $y^2=x^p-1$ and $y^2=x^{2p}-1$},
	date={2020},
	note={arXiv:2004.10583},
}

\bib{MR861971}{incollection}{
      author={Faltings, Gerd},
       title={Finiteness theorems for abelian varieties over number fields},
        date={1986},
   booktitle={Arithmetic geometry ({S}torrs, {C}onn., 1984)},
   publisher={Springer},
     address={New York},
       pages={9\ndash 27},
        note={Translated from the German original [Invent. Math. {{\bf{7}}3}
  (1983), no. 3, 349--366; ibid. {{\bf{7}}5} (1984), no. 2, 381]},
}

\bib{MR2982436}{article}{
   author={Fit\'{e}, Francesc},
   author={Kedlaya, Kiran S.},
   author={Rotger, V\'{\i}ctor},
   author={Sutherland, Andrew V.},
   title={Sato-Tate distributions and Galois endomorphism modules in genus
   2},
   journal={Compos. Math.},
   volume={148},
   date={2012},
   number={5},
   pages={1390--1442},
   issn={0010-437X},
   review={\MR{2982436}},
   doi={10.1112/S0010437X12000279},
}

\bib{FKS2019}{article}{
   	author={Fit\'{e}, Francesc},
   	author={Kedlaya, Kiran S.},
   	author={Sutherland, Andrew V.},
	title={Sato--Tate groups of abelian threefolds: a preview of the classification},
      date={2019},
      note={arXiv:1911.02071},
}

\bib{MR3864839}{article}{
   author={Fit\'{e}, Francesc},
   author={Lorenzo Garc\'{\i}a, Elisa},
   author={Sutherland, Andrew V.},
   title={Sato-Tate distributions of twists of the Fermat and the Klein
   quartics},
   journal={Res. Math. Sci.},
   volume={5},
   date={2018},
   number={4},
   pages={Paper No. 41, 40},
   issn={2522-0144},
   review={\MR{3864839}},
   doi={10.1007/s40687-018-0162-0},
}

\bib{MR3218802}{article}{
   author={Fit\'{e}, Francesc},
   author={Sutherland, Andrew V.},
   title={Sato-Tate distributions of twists of $y^2=x^5-x$ and $y^2=x^6+1$},
   journal={Algebra Number Theory},
   volume={8},
   date={2014},
   number={3},
   pages={543--585},
   issn={1937-0652},
   review={\MR{3218802}},
   doi={10.2140/ant.2014.8.543},
}

\bib{MR3502940}{article}{
   author={Fit\'{e}, Francesc},
   author={Sutherland, Andrew V.},
   title={Sato-Tate groups of $y^2=x^8+c$ and $y^2=x^7-cx$},
   conference={
      title={Frobenius distributions: Lang-Trotter and Sato-Tate
      conjectures},
   },
   book={
      series={Contemp. Math.},
      volume={663},
      publisher={Amer. Math. Soc., Providence, RI},
   },
   date={2016},
   pages={103--126},
   review={\MR{3502940}},
   doi={10.1090/conm/663/13351},
}

\bib{MR1339927}{article}{
      author={Larsen, Michael},
      author={Pink, Richard},
       title={Abelian varieties, {$l$}-adic representations, and
  {$l$}-independence},
        date={1995},
     journal={Math. Ann.},
      volume={302},
      number={3},
       pages={561\ndash 579},
}

\bib{MR1441234}{article}{
      author={Larsen, Michael},
      author={Pink, Richard},
       title={A connectedness criterion for {$l$}-adic {G}alois
  representations},
        date={1997},
     journal={Israel J. Math.},
      volume={97},
       pages={1\ndash 10},
}

\bib{MR3729270}{book}{
   author={Milne, J. S.},
   title={Algebraic groups},
   series={Cambridge Studies in Advanced Mathematics},
   volume={170},
   note={The theory of group schemes of finite type over a field},
   publisher={Cambridge University Press, Cambridge},
   date={2017},
   pages={xvi+644},
   isbn={978-1-107-16748-3},
   review={\MR{3729270}},
   doi={10.1017/9781316711736},
}

\bib{MR1370746}{article}{
   author={Noot, Rutger},
   title={Classe de conjugaison du Frobenius des vari\'{e}t\'{e}s ab\'{e}liennes \`a
   r\'{e}duction ordinaire},
   language={French, with English and French summaries},
   journal={Ann. Inst. Fourier (Grenoble)},
   volume={45},
   date={1995},
   number={5},
   pages={1239--1248},
   issn={0373-0956},
   review={\MR{1370746}},
}

\bib{MR2472133}{article}{
      author={Noot, Rutger},
       title={Classe de conjugaison du {F}robenius d'une vari\'et\'e
  ab\'elienne sur un corps de nombres},
        date={2009},
     journal={J. Lond. Math. Soc. (2)},
      volume={79},
      number={1},
       pages={53\ndash 71},
}

\bib{MR1278263}{book}{
   author={Platonov, Vladimir},
   author={Rapinchuk, Andrei},
   title={Algebraic groups and number theory},
   series={Pure and Applied Mathematics},
   volume={139},
   note={Translated from the 1991 Russian original by Rachel Rowen},
   publisher={Academic Press, Inc., Boston, MA},
   date={1994},
   pages={xii+614},
   isbn={0-12-558180-7},
   review={\MR{1278263}},
}

\bib{MR228500}{article}{
   author={Pohlmann, Henry},
   title={Algebraic cycles on abelian varieties of complex multiplication
   type},
   journal={Ann. of Math. (2)},
   volume={88},
   date={1968},
   pages={161--180},
   issn={0003-486X},
   review={\MR{228500}},
   doi={10.2307/1970570},
}

\bib{MR2125504}{article}{
   author={Raghunathan, M. S.},
   title={Tori in quasi-split-groups},
   journal={J. Ramanujan Math. Soc.},
   volume={19},
   date={2004},
   number={4},
   pages={281--287},
   issn={0970-1249},
   review={\MR{2125504}},
}

\bib{MR0476753}{incollection}{
      author={Serre, Jean-Pierre},
       title={Repr\'esentations {$l$}-adiques},
        date={1977},
   booktitle={Algebraic number theory ({K}yoto {I}nternat. {S}ympos., {R}es.
  {I}nst. {M}ath. {S}ci., {U}niv. {K}yoto, {K}yoto, 1976)},
   publisher={Japan Soc. Promotion Sci.},
     address={Tokyo},
       pages={177\ndash 193},
}

\bib{MR563476}{article}{
   author={Serre, Jean-Pierre},
   title={Groupes alg\'{e}briques associ\'{e}s aux modules de Hodge-Tate},
   language={French},
   conference={
      title={Journ\'{e}es de G\'{e}om\'{e}trie Alg\'{e}brique de Rennes.},
      address={Rennes},
      date={1978},
   },
   book={
      series={Ast\'{e}risque},
      volume={65},
      publisher={Soc. Math. France, Paris},
   },
   date={1979},
   pages={155--188},
   review={\MR{563476}},
}

\bib{MR1265537}{incollection}{
      author={Serre, Jean-Pierre},
       title={Propri\'et\'es conjecturales des groupes de {G}alois motiviques
  et des repr\'esentations {$l$}-adiques},
        date={1994},
   booktitle={Motives ({S}eattle, {WA}, 1991)},
      series={Proc. Sympos. Pure Math.},
      volume={55},
   publisher={Amer. Math. Soc.},
     address={Providence, RI},
       pages={377\ndash 400},
}

\bib{MR1484415}{book}{
   author={Serre, Jean-Pierre},
   title={Abelian $l$-adic representations and elliptic curves},
   series={Research Notes in Mathematics},
   volume={7},
   note={With the collaboration of Willem Kuyk and John Labute;
   Revised reprint of the 1968 original},
   publisher={A K Peters, Ltd., Wellesley, MA},
   date={1998},
   pages={199},
   isbn={1-56881-077-6},
   review={\MR{1484415}},
}

\bib{MR1730973}{book}{
      author={Serre, Jean-Pierre},
       title={{\OE}uvres. {C}ollected papers. {IV}},
   publisher={Springer-Verlag},
     address={Berlin},
        date={2000},
        note={1985--1998},
}

\bib{MR1997347}{article}{
      author={Serre, Jean-Pierre},
       title={On a theorem of {J}ordan},
        date={2003},
     journal={Bull. Amer. Math. Soc. (N.S.)},
      volume={40},
      number={4},
       pages={429\ndash 440 (electronic)}
}

\bib{MR236190}{article}{
   author={Serre, Jean-Pierre},
   author={Tate, John},
   title={Good reduction of abelian varieties},
   journal={Ann. of Math. (2)},
   volume={88},
   date={1968},
   pages={492--517},
   issn={0003-486X},
   review={\MR{236190}},
   doi={10.2307/1970722},
}

\bib{MR641669}{article}{
   author={Shioda, Tetsuji},
   title={Algebraic cycles on abelian varieties of Fermat type},
   journal={Math. Ann.},
   volume={258},
   date={1981/82},
   number={1},
   pages={65--80},
   issn={0025-5831},
   review={\MR{641669}},
   doi={10.1007/BF01450347},
}

\bib{MR546587}{article}{
   author={Springer, T. A.},
   title={Reductive groups},
   conference={
      title={Automorphic forms, representations and $L$-functions},
      address={Proc. Sympos. Pure Math., Oregon State Univ., Corvallis,
      Ore.},
      date={1977},
   },
   book={
      series={Proc. Sympos. Pure Math., XXXIII},
      publisher={Amer. Math. Soc., Providence, R.I.},
   },
   date={1979},
   pages={3--27},
   review={\MR{546587}},
}

\bib{MR3384679}{article}{
   author={Yu, Chia-Fu},
   title={A note on the Mumford-Tate conjecture for CM abelian varieties},
   journal={Taiwanese J. Math.},
   volume={19},
   date={2015},
   number={4},
   pages={1073--1084},
   issn={1027-5487},
   review={\MR{3384679}},
   doi={10.11650/tjm.19.2015.4730},
}

\bib{MR3264675}{article}{
   author={Zywina, David},
   title={The splitting of reductions of an abelian variety},
   journal={Int. Math. Res. Not. IMRN},
   date={2014},
   number={18},
   pages={5042--5083},
   issn={1073-7928},
   review={\MR{3264675}},
   doi={10.1093/imrn/rnt113},
}

\bib{ZywinaEffectiveOpen}{article}{
      author={Zywina, David},
      title={An effective open image theorem for abelian varieties},
      date={2019},
      note={arXiv:1910.14171},
}

\bib{MonodromyZywinaGithub}{misc}{
author={Zywina, David},
date={2020},
title={\emph{GitHub repository related to} Determining monodromy groups of abelian varieties},
note={\url{https://github.com/davidzywina/monodromy}},
}

\end{biblist}
\end{bibdiv}

\end{document}